\newcommand{\hackcenter}[1]{
 \xy (0,0)*{#1}; \endxy}
\tikzset{->-/.style={decoration={
  markings,
  mark=at position #1 with {\arrow{>}}},postaction={decorate}}}
\tikzset{middlearrow/.style={
        decoration={markings,
            mark= at position 0.5 with {\arrow{#1}} ,
        },
        postaction={decorate}
    }
}
\newtheorem{theorem}{Theorem}[section]
\newtheorem{lemma}[theorem]{Lemma}
\newtheorem{proposition}[theorem]{Proposition}
\newcounter{defcounter}
\newenvironment{definition}{%
    \refstepcounter{defcounter}%
  \noindent\textbf{Definition \arabic{section}.\arabic{defcounter}}%
  \quad
}{%
}
\newcounter{examplecounter}[section]
\newenvironment{example}{%
    \refstepcounter{examplecounter}%
  \noindent\textbf{Example \arabic{section}.\arabic{examplecounter}}%
  \quad
}{%
}
\newenvironment{remark}[1][Remark]{\begin{trivlist}
\item[\hskip \labelsep {\bfseries #1}]}{\end{trivlist}}
\newcommand{\ds}{\displaystyle}
\newcommand{\scs}{\scriptstyle}
\newcommand{\h}[2]{\ifthenelse{\isempty{#2}{}}{h_{#1}}{h_{#1}^{#2}}}
\newcommand{\End}{\operatorname{End}}
\newcommand{\Res}{\operatorname{Res}}
\newcommand{\Heis}{\mathcal{H}_{tw}}
\newcommand{\Cl}{\mathcal{C}\ell_n}
\newcommand{\HC}{\mathfrak{H}^C}
\newcommand\numberthis{\addtocounter{equation}{1}\tag{\theequation}}
\newcommand{\gr}{\operatorname{gr}}
\newcommand{\grD}{\gr (\mathcal{D}^-)}
\newcommand{\Tr}{\operatorname{Tr}}
\newcommand{\TrH}{\Tr(\mathcal{H}_{tw})}
\newcommand{\TrHEv}{\TrH_{\overline{0}}}
\newcommand{\TrHOm}{\Tr^\omega(\mathcal{H}_{tw})_{\overline{0}}}
\newcommand{\TrHPl}{\Tr^>(\mathcal{H}_{tw})_{\overline{0}}}
\newcommand{\TrHMi}{\Tr^<(\mathcal{H}_{tw})_{\overline{0}}}
\newcommand{\Ind}{\operatorname{Ind}}
\newcommand{\refequal}[1]{\xy {\ar@{=}^{#1}
(-1,0)*{};(1,0)*{}};
\endxy}
\title{Trace of the Twisted Heisenberg Category}
\author{Can Ozan O\u{g}uz}
\address{Department of Mathematics\\ University of Southern California\\ Los Angeles, CA}
\email{coguz@usc.edu}
\author{Michael Reeks}
\address{Department of Mathematics\\ University of Virginia \\ Charlottesville, VA}
\email{mar3nf@virginia.edu}
\begin{document}

\maketitle
\begin{abstract}
	We show that the trace decategorification, or zeroth Hochschild homology, of the twisted Heisenberg category defined by Cautis and Sussan is isomorphic to a quotient of $W^-$, a subalgebra of $W_{1+\infty}$ defined by Kac, Wang, and Yan. Our result is a twisted analogue of that by Cautis, Lauda, Licata, and Sussan relating $W_{1+\infty}$ and the trace decategorification of the Heisenberg category.
\end{abstract}

\setcounter{tocdepth}{2}
\tableofcontents

\section{Introduction}

Categorification is the process of enriching an algebraic object by increasing its categorical dimension by one, e.g. passing from a set to a category or from a 1-category to a 2-category. The original object can be recovered through the inverse process of decategorification. The most commonly used decategorification functor is the split Grothendieck group $K_0$, but it is natural to ask whether alternative decategorification functors may give additional insight into the categorified object. One such alternative, advocated in \cite{BGHL}, is the trace decategorification, which often encodes more information than $K_0$. 

The trace, or zeroth Hochschild homology, of a $\Bbbk$-linear additive category $\mathcal{C}$ is the $\Bbbk$-vector space given by 
$$\Tr(\mathcal{C}) := \bigg(\bigoplus_{x\in \operatorname{Ob}(\mathcal{C})} \End_{\mathcal{C}}(X)\bigg)\bigg/ \operatorname{span}_{\Bbbk}\{fg-gf\},$$
where $f$ and $g$ run through all pairs of morphisms $f:x\rightarrow y$ and $g:y\rightarrow x$ with $x,y \in \operatorname{Ob}(\mathcal{C})$. If a $\Bbbk$-linear category $\mathcal{C}$ carries a monoidal structure, then $\operatorname{span}\{fg-gf\}$ is an ideal, and $\Tr(\mathcal{C})$ becomes an algebra where multiplication in the trace is induced from tensor product of $\mathcal{C}$.  The trace has the advantage that it is, unlike $K_0$, invariant under passage to the Karoubi envelope, cf. \cite[Proposition 3.2]{BHLZ}. Since passing to the Karoubi envelope often prevents one from working with diagrams, trace seems to be a more suitable option to decategorify diagrammatic categories.  

The traces of several interesting categories have been computed. in \cite{BHLZ} and \cite{BHLW}, the trace of any categorified type ADE quantum group is shown to be isomorphic to a current algebra. In \cite{SVV}, traces of quiver Hecke algebras are studied. In \cite{EL}, the trace of the Hecke category is shown to be a semidirect product of the Weyl group and a polynomial algebra. A unifying approach to Heisenberg categorifications was given in \cite{Rosso2016} via Frobenius algebras; in \cite{Jack}, the degree zero part of the trace of these categories are computed.  

The trace $\Tr(\mathcal{C})$ is closely related to the $K_0({\mathcal{C}})$ through the Chern character map $$h_\mathcal{C}: K_0(\mathcal{C}) \rightarrow \Tr(\mathcal{C})$$
which sends the isomorphism class of an object to the class of its identity morphism in the trace.
Interestingly, the map $h_\mathcal{C}$ is usually injective, but is often not surjective. Thus, the trace often contains additional structure which has no analogue in the Grothendieck group.

One interesting example in which $h_{\mathcal{C}}$ fails to be surjective is given by the Heisenberg category $\mathcal{H}$ defined in \cite{Khovanov}. It is a $\mathbb{C}$-linear additive monoidal category. Therefore $\Tr(\mathcal{H})$ carries an algebra structure. There is an injective algebra homomorphism from the Heisenberg algebra $\mathfrak{h}$ to $K_0(\mathcal{H})$ (they are conjecturally isomorphic).  In \cite{CLLS}, $\Tr(\mathcal{H})$ is shown to be isomorphic to a quotient of $W_{1+\infty}$, a filtered algebra which is important in conformal field theory. In particular, it properly contains $\mathfrak{h}$ in filtration degree zero. Hence $\Tr(\mathcal{H})$ likely contains more information than $K_0(\mathcal{H})$. This fits into a larger framework, studied in \cite{CLLSS}, involving the elliptic Hall algebra.

 We study a twisted version of Khovanov's Heisenberg category. The twisted Heisenberg algebra $\mathfrak{h}_{tw}$ is a unital associative algebra generated by $h_{m/2}$, $m\in 2\mathbb{Z}+1$, subject to the relations $$\left[h_{\frac{n}{2}}, h_{\frac{m}{2}}\right] = \frac{n}{2} \delta_{n,-m}.$$ In \cite{CS}, a twisted version of the Heisenberg category, denoted $\mathcal{H}_{tw}$, is introduced. It is also a $\mathbb{C}$-linear additive monoidal category, with an additional $\mathbb{Z}/2\mathbb{Z}$-grading. It is proved that $K_0({\mathcal{H}_{tw}})$ contains $\mathfrak{h}_{tw}$ (again, they are conjecturally isomorphic).

The goal of this paper is to study the trace $\TrH$, and determine additional structure analogous to that in the untwisted version. We show that the even part of $\TrH$ is isomorphic as an algebra to a quotient of a subalgebra of $W_{1+\infty}$ that we will denote by $W^-$. We give explicit descriptions of $W_{1+\infty}$ and $W^-$ in Section~\ref{W-algebra}. This confirms the expectation in \cite{CLLS} that there should be a relationship between $\mathcal{H}_{tw}$ and one of two subalgebras of $W_{1+\infty}$ defined in \cite{Wang}.

\begin{theorem} \label{main theorem} There is an algebra isomorphism $$\TrH_{\overline{0}} \longrightarrow  W^-/\langle w_{0,0}, C-1\rangle.$$ \end{theorem}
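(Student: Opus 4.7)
The plan is to construct an explicit algebra homomorphism $\Phi: W^- \to \TrH_{\overline{0}}$ by sending each generator $w_{k,\ell}$ to a carefully chosen closed diagram in $\End_{\Heis}(1)$ (obtained via the trace from an endomorphism of $P^kQ^\ell$), and to identify it with the isomorphism of the theorem after passing to the quotient. The strategy parallels that of \cite{CLLS} in the untwisted case, adapted to handle the extra $\mathbb{Z}/2\mathbb{Z}$-grading and the twisted Clifford algebras $\Cl$ that replace symmetric groups in the endomorphism structure.

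First I would establish a diagrammatic spanning set for $\TrH_{\overline{0}}$ by filtering $\Heis$ according to the total number of strands in a diagram. Using isotopy invariance, the $\Cl$ relations on dotted strands, and the twisted Heisenberg relations between $P$ and $Q$, each closed diagram can be reduced modulo commutators to a canonical form involving nested bubbles and "clasped" up-down pairs, indexed by pairs $(k,\ell)$ matching the bi-degree on $W^-$. This reduction simultaneously computes $\EndPm$, $\EndQn$, and the mixed pieces $\End_{\Heis}(P^k Q^\ell)$ modulo trace relations, yielding the correct-sized spanning set.

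Next I would define $\Phi$ on generators: to $w_{k,\ell}$ I assign the trace class of a specific endomorphism of $P^k Q^\ell$ formed by a product of dotted strands with the prescribed Clifford decorations, with coefficients and signs tuned to match the structure constants of $W^-$. Verifying that $\Phi$ respects the defining relations of $W^-$ reduces to computing the bracket $[\Phi(w_{a,b}),\Phi(c,d)]$ inside $\TrH_{\overline{0}}$. This is done by resolving all crossings between the two diagrams using the twisted Heisenberg relations, producing a sum of lower-degree diagrams that, after reduction to the canonical form of the previous step, reproduces the $W^-$-commutator verbatim. The classes of $w_{0,0}$ and $C-1$ are then checked to lie in the kernel: $w_{0,0}$ maps to an empty strand count that vanishes in the trace, and $C$ maps to the identity bubble normalized to $1$ under the conventions of $\Heis_{tw}$.

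The last step is to upgrade $\Phi$ to an isomorphism. Surjectivity follows from the spanning set constructed earlier, since each canonical closed diagram is visibly the image of a monomial in the $w_{k,\ell}$. For injectivity I would compare the induced grading on both sides: $\gr W^-/\langle w_{0,0}, C-1\rangle$ and $\gr \TrH_{\overline{0}}$ are both graded by bi-degree, and the previous spanning set shows that the dimensions of each graded piece agree, so $\Phi$ must be bijective on each level. The main obstacle will be the diagrammatic verification of the $W^-$ commutation relations: in the twisted setting the Clifford decorations produce subtle signs under strand crossings, and the bubble evaluations must be tracked carefully to match the $W_{1+\infty}$ structure constants restricted to $W^-$. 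This computation is where the twisted case genuinely diverges from \cite{CLLS}, and where the bulk of the work will lie.
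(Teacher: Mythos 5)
Your architecture runs in the opposite direction from the paper's and, more importantly, replaces its central device. The paper never verifies the $W^-$ relations diagrammatically: it computes a small generating set on each side ($w_{1,0}$, $w_{0,3}$, $w_{\pm2,1}\pm w_{\pm 2,0}$ for $W^-/\langle w_{0,0},C\rangle$; $\h{-1}{}$, $\h{\pm2}{(x_1+x_2)}$, $d_0+d_2$ for $\TrHEv$), shows that both algebras act faithfully on isomorphic Fock spaces, checks that the generators' actions are intertwined by the obvious linear isomorphism of Fock spaces, and then lets faithfulness do all the work --- well-definedness, injectivity, and surjectivity of $\Phi$ all fall out of the faithful representations. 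Your plan instead asks for a direct diagrammatic verification of the $W^-$ structure constants. That is where I see the first genuine gap: $W^-$ is the enveloping algebra of $\hat{\mathcal{D}}^-$, so to define a homomorphism out of it you must check the bracket \eqref{WForm} on \emph{every} pair of Lie algebra basis elements $t^{r}f(D+\cdots)$, $t^{s}g(D+\cdots)$ --- an infinite, two-parameter family of identities involving the cocycle $\psi$ --- and your proposal gives no mechanism for reducing this to a finite computation. (Mapping into $W^-$ from a generating set of the trace, as the paper does, avoids needing any presentation of $W^-$ at all.)

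The second gap is in your injectivity argument. The bigraded pieces of $W^-/\langle w_{0,0},C-1\rangle$ and of $\TrHEv$ under the (rank, dot) bifiltration are \emph{not} finite dimensional: for instance $w_{1,0}w_{-1,0},\,w_{3,0}w_{-3,0},\dots$ all sit in bidegree $(0,0)$, so "the dimensions of each graded piece agree, hence $\Phi$ is bijective on each level" does not parse as stated. To make a dimension count work you must first pass through the triangular decomposition (the paper's Proposition \ref{triangularDecomposition} and the graded dimension formulas of Propositions \ref{grDim Walg} and \ref{grDim Tr}, which it computes but deliberately does not use), compare the $>$, $<$, and $0$ factors separately where the graded pieces are finite dimensional, and show your map respects that decomposition. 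Your spanning-set step is essentially the paper's reduction to the elements $\h{\pm n}{x_1^{\ell}c_1^{k}}$ together with the identification of $\End_{\Heis}(1)$ with $\mathbb{C}[d_0,d_2,\dots]$, so that part is sound; but without repairing the two points above the proof does not close. If you want to keep your overall shape, the cheapest repair is the paper's: borrow the faithful Fock space representations and use them both to certify well-definedness and to replace the dimension count.
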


Even though the isomorphism between $K_0(\mathcal{H}_{tw})$ and the twisted Heisenberg algebra $\mathfrak{h}_{tw}$ is still conjectural, we are able to completely characterize $\TrH$.

To prove Theorem \ref{main theorem}, we first compute sets of algebra generators and relations for both $W^-$ and $\TrHEv$, adapting arguments used in \cite{CLLS} to accommodate the new supercommutative elements arising from the twisting (cf. Section~\ref{trace}). We then study actions of each algebra on its canonical level one Fock space representation. These Fock space representations are isomorphic, and so induce a linear map $\Phi:\TrH\rightarrow W^-$. We prove that $\Phi$ is an algebra homomorphism by studying the actions of  both $W^-$ and $\TrH$ on their Heisenberg subalgebras. Finally, we check that the actions of the generators are identified under $\Phi$, and deduce that $\Phi$ is an algebra isomorphism.

An important tool in studying the connection between these algebras is the relationship between $\TrH$ and the degenerate affine Hecke-Clifford algebra $\HC_n$ of type $A_{n-1}$. The trace of $\HC_n$ as a vector space was computed by the second author in \cite{Mike}. The algebra $\TrH$ admits a triangular decomposition, where $\Tr(\HC_n)$ is identified with the upper (respectively lower) half. This identification simplifies some of the computations and the calculation of the graded rank of $\TrH$.

The structure of the paper is as follows. In Section 2, we describe the $W$-algebra $W^-$ of interest, describe its gradings and a set of generators, and study its Fock space representation. In Section 3, we describe trace decategorification in more detail and present the twisted Heisenberg category studied in \cite{CS}, as well as its gradings. We also identify a copy of the degenerate affine Hecke-Clifford algebra within the trace. In Section 4, we study a subalgebra of $\TrH$ consisting of circular diagrams called bubbles, and describe how they interact with other elements of the trace. Section 5 contains a number of calculations of diagrammatic relations in the trace that are useful for computing a generating set of $\TrHEv$. Finally, in Section 6, we describe a triangular decomposition of the trace, and then establish a generating set. This allows us to prove the desired isomorphism by using the action of each algebra on its Fock space.

{\bf Acknowledgements}
The authors thank Andrea Appel, Victor Kleen, Aaron Lauda,  Joshua Sussan, and Weiqiang Wang for helpful discussions and advice concerning the paper; Maxwell Siegel for his notation suggestions; and Joshua Sussan for suggesting this project.  The first author was partially supported by the NSF grant DMS-1255334. The second author was partially supported by a GRA fellowship in Weiqiang Wang's NSF grant and by a GAANN fellowship.

\section{W-algebra}
In this section, we will recall the W-algebra we are interested in, its structure as a $\mathbb{Z}$-graded and $\mathbb{N}$-filtered algebra, and one of its subalgebras -- the twisted Heisenberg algebra -- as well as their Fock space representations.


\subsection{Twisted Heisenberg algebra $\mathfrak{h}_{tw}$}
We recall the definition of the twisted Heisenberg algebra. The twisted Heisenberg algebra $\mathfrak{h}_{tw}$ is a unital associative algebra generated by $h_{n}$ for $n\in \mathbb{Z}+\frac{1}{2}$ subject to the relation that $[h_n,h_m]=n\delta_{n,-m}$.



\subsection{W-algebra $W^-$}\label{W-algebra}

Let $\mathcal{D}$ denote the Lie algebra of differential operators on the circle. The central extension $\hat{\mathcal{D}}$  of $\mathcal{D}$ is described in \cite{Wang}.  It is generated by  by $C$ and by $w_{k,l}=t^kD^l$ for $l\in\mathbb{Z}$ and $k\in\mathbb{Z}_{\geq 0}$ where $t$ is a variable over $\mathbb{C}$, and $D=t\frac{d}{dt}$, subject to relations that $C$ and $w_{0,0}$ are central, and:
\begin{equation}\label{WForm}
[t^rf(D),t^sg(D)]=t^{r+s}(f(D+s)g(D)-f(D)g(D+r))+\psi(t^rf(D),t^sg(D))C,
\end{equation}
where
\begin{equation}
\psi(t^rf(D),t^sg(D))= \begin{cases} 
      \ds\sum_{-r\leq j\leq -1}f(j)g(j+r) & r=-s\geq 0 \\
      0 & r+s\neq 0 
   \end{cases}
\end{equation}
for $f,g$ polynomials.

The W-algebra $W_{1+\infty}$ is the universal envelopping algebra of $\hat{\mathcal{D}}$. It is shown in \cite{CLLS} that trace of Khovanov's Heisenberg category is isomorphic to a quotient of $W_{1+\infty}$. 


In this paper, we are interested in the universal enveloping algebra of a central extension of a Lie subalgebra of $\mathcal{D}$ fixed by a degree preserving anti-involution. Define the map:

\begin{eqnarray*}
\sigma:&\mathcal{D}&\longrightarrow \mathcal{D} \\
&1&\mapsto \sigma(1) = -1\\
&t&\mapsto \sigma(t)=-t\\
&D&\mapsto \sigma(D)=-D.
\end{eqnarray*}

This is a degree preserving anti-involution of $\mathcal{D}$, and the Lie subalgebra fixed by $-\sigma$ is $$\mathcal{D}^-:=\{a\in\mathcal{D}|\sigma(a)=-a\}.$$ Let $\hat{\mathcal{D}}^-$ be a central extension of $\mathcal{D^-}$ where the $2$-cocycle is the restriction of the $2$-cocycle $\psi$ given above. Therefore $\hat{\mathcal{D}}^-$ is a Lie subalgebra of $\hat{\mathcal{D}}$.

  More explicitly, $\hat{\mathcal{D}}^-$ is the Lie algebra over the vector space spanned by $\{C\}\cup \{ t^{2k-1} g(D+(2k-1)/2); \ g\text{ even}\} \cup \{ t^{2k} f(D+k) ;\ f\text{ odd}\}$ where $k\in\mathbb{Z}$ and even and odd refer to even and odd polynomial functions. Its Lie bracket is given by Equation \eqref{WForm}.

Denote by $W^-$ the universal enveloping algebra of $\hat{\mathcal{D}}^-$. Our main result relates the trace of twisted Heisenberg category to a quotient of $W^-$.

$$\begin{tikzpicture}[scale=1.1]
\node[font=\Large] at (-0.5,1.5) {$\mathcal{D}$};
\draw[->] (0,1.5) to (3,1.5);
\node[font=\Large] at (3.4,1.55) {$\widehat{\mathcal{D}}$};
\node at (1.4,1.7) {central extension};
\draw[->] (3.9,1.5) to (6.7,1.5);
\node[font=\Large] at (7.5,1.5) {$W_{1+\infty}$};
\node at (5.3,1.7) {enveloping algebra};

\node[rotate=90,scale=1.5] at (-.5,.8) {$\subset$};
\node[rotate=90,scale=1.5] at (3.4,.8) {$\subset$};
\node[rotate=90,scale=1.5] at (7.4,.8) {$\subset$};
\node[text width=1cm] at (-1,.8) {fixed by -$\sigma$};

\node[font=\Large] at (-0.4,0.05) {$\mathcal{D}^-$};
\draw[->] (0,0) to (3,0);
\node[font=\Large] at (3.4,0.05) {$\widehat{\mathcal{D}^-}$};
\node at (1.4,0.2) {central extension};
\draw[->] (3.9,0) to (6.7,0);
\node[font=\Large] at (7.5,0) {$W^-$};
\node at (5.3,.2) {enveloping algebra};
\end{tikzpicture}
$$

Note that not all $w_{k,\ell}$ are contained in $W^-$.

\begin{example}
When $k-\ell$ is an even integer, $w_{k,\ell}\not\in W^-$. Moreover, the difference $k-\ell$ being odd is not sufficient. For example, $t^2D=w_{2,1}\not\in W^-$ since an element starting with $t^2$ should be followed by $f(D+1)$ where $f$ is an odd polynomial function. Hence $t^2D=w_{2,1}\not\in W^-$ but $t^2(D+1)=t^2D+t^2=w_{2,1}+  w_{2,0}\in W^-$ (and, indeed, $\sigma(t^2 (D+1)) = t^2(-D-1) = -  t^2(D+1)$).
\end{example}

\subsection{Gradings on $W^-$}

There is a natural $ \mathbb{Z}^{\geq 0}$ filtration of $W^-$ called the \emph{differential filtration} with $w_{k, \ell}$ in degree $ \ell$; denote this filtration by $|\cdot|_{dot}$. It is convenient to define an additional filtration: the \emph{difference filtration}, where $w_{k, \ell}$ is in degree $\ell-k$, denoted $|\cdot|_{diff}$. That this is a filtration follows from the fact that $W^-$ also carries a filtration with $w_{k, \ell}$ in degree $k$.

These filtrations are compatible, so we have a $(\mathbb{Z}\times \mathbb{Z}^{\geq 0})$-filtration with with an element $f=t^j g(D-j/2) \in W^-$ in bidegree $\leq(|f|_{diff}, |f|_{dot}) = (\operatorname{deg}(g)-j, \operatorname{deg}(g))$, where $\operatorname{deg}(g)$ is the polynomial degree of $g(w) \in \mathbb{C}[w]$. Define the following subalgebras of $W^-$: $$W^{-,>} = \mathbb{C}\langle t^j g(D-j/2) | \deg(g)-j\geq 1\rangle;$$   $$W^{-,<} = \mathbb{C}\langle t^j g(D-j/2) | \deg(g)-j\leq  1\rangle;$$  $$W^{-,0} = \mathbb{C}\langle g(D) |\deg(g) \text{ odd}\rangle.$$
Let $W^{-,{\omega}}[\leq r, \leq k]$ denote the set of elements in difference degree $\leq r$ and differential degree $\leq k$, with $\omega \in \{>,<,0\}$.

Denote by $\gr W^-$ the associated graded object with respect to this filtration. Hence $\gr W^-$ is $(\mathbb{Z}\times \mathbb{Z}^{\geq 0})$-graded with $|w_{k,\ell}| = (\ell-k, \ell)$. For $\omega \in \{>,<,0\}$, define a generating series for the graded dimension of $\gr (W^-)^{\omega}$ by $$P_{\gr (\mathcal{W^-})^\omega}(t,q) = \sum_{r\in \mathbb{Z}} \sum_{k\in \mathbb{Z}, k\geq 0} \dim \gr (W^-)^\omega[r,k] t^r q^k.$$ 

\begin{proposition}\label{grDim Walg} The graded dimensions of $\gr (W^-)^>$ and $\gr (W^-)^<$ are given by:

$$P_{\grD^>} = \prod_{r\geq 0} \prod_{k>0} \frac{1}{1-t^{2r+1} q^k};$$
$$P_{\grD^<} = \prod_{r\leq 0} \prod_{k>0} \frac{1}{1-t^{2r-1} q^k}. $$
\end{proposition}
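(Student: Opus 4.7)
The plan is a PBW-style computation: since $W^- = U(\hat{\mathcal{D}}^-)$, I would pass to the associated graded under the $(\mathbb{Z}\times\mathbb{Z}^{\geq 0})$-bi-filtration, verify it is a polynomial algebra on a bihomogeneous basis of $\hat{\mathcal{D}}^-$, and then count basis elements bidegree by bidegree. The first step is to show that $\gr W^-$ is commutative. Expanding (\ref{WForm}) modulo the central term gives $[t^r f(D), t^s g(D)] = t^{r+s}(f(D+s)g(D) - f(D)g(D+r))$; the leading $D$-monomials of the two summands both equal $D^{\deg f + \deg g}$ and cancel, so the resulting polynomial in $D$ has degree strictly less than $\deg f + \deg g$, while the power of $t$ remains exactly $r+s$. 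Hence the bracket has both differential and difference degree strictly less than the corresponding sums, and $\gr W^-$ is commutative with respect to the bi-filtration.

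Next, I would invoke PBW for $U(\hat{\mathcal{D}}^-)$ together with the explicit basis description of $\hat{\mathcal{D}}^-$. The two families $t^{2k-1} g(D+(2k-1)/2)$ with $g$ even and $t^{2k} f(D+k)$ with $f$ odd each contribute a bihomogeneous leading term, and a short parity analysis shows that for each odd integer $d$ and each nonnegative integer $k$ in the allowed range there is exactly one linearly independent basis vector of bidegree $(d,k)$ in $\hat{\mathcal{D}}^-$. Combined with the previous step, $\gr W^-$ is then identified with the polynomial algebra on this bigraded vector space.

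Finally, I would restrict to the subalgebras of interest. The generators of $W^{-,>}$ are those of difference degree $d \geq 1$, namely $d = 2r+1$ with $r \geq 0$; the Hilbert series of the polynomial algebra on these generators is a product of geometric series, one factor $\frac{1}{1-t^{2r+1}q^k}$ for each generator, yielding the claimed formula. The case $W^{-,<}$ is completely symmetric, with $r \leq 0$ producing the negative odd difference degrees $2r-1$.

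The main obstacle is pinning down the parity enumeration precisely enough that the generator count matches exactly the index range appearing in the proposition, and ensuring that the bi-filtration (rather than either single filtration alone) allows the PBW reduction to a polynomial algebra in both variables simultaneously. A secondary bookkeeping point is how the central element $C$ and any differential-degree-zero generators are treated relative to the stated restriction $k > 0$, as these live outside the range of the product but must be checked to be consistent with the stated definition of $W^{-,>}$.
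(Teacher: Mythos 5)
Your proposal is correct and follows essentially the same route as the paper: the paper's (very terse) proof likewise observes that $\gr(W^-)^{>}$ is freely generated by the classes of $w_{k,\ell}$ with $k-\ell$ odd in bidegree $(\ell-k,\ell)$ and reads off the product formula, which is exactly your PBW-plus-counting argument spelled out in detail. Your closing remarks about the degree-drop of the bracket and the treatment of the differential-degree-zero generators are reasonable points of care, but they do not change the argument, which matches the paper's.
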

\begin{proof}
The algebra $W^-$ is generated by elements of the form $t^j g(D-j/2)$, where $ \deg(g)-j$ is odd. Hence $\grD^>$ is freely generated by elements $w_{k, \ell}$ with $k-\ell$ odd; such elements have bidgegree $(k-\ell, \ell)$. The proposition follows. \end{proof}

Let $W^-_{r,s}$ denote the rank $r$, differential filtration $s$ part of $W^-$. It is easy to see that the differential filtration zero part of $W^-$, namely $\ds \bigcup_{r\in\mathbb{Z}} W^-_{r,0}$, is spanned as a vector space by $\{C\}\cup\{t^{2n+1}\}_{n\in\mathbb{Z}}$. As an algebra, we have that
\begin{equation}
[t^{2n+1},t^{2m+1}]=(2n+1)\delta_{n,-m}
\end{equation}
  Hence we have an isomorphism between the differential filtration zero part of $W^-$ and the twisted Heisenberg algebra $\mathfrak{h}_{tw}$ given by:
  \begin{eqnarray*}
\phi:&\mathfrak{h}_{tw}&\longrightarrow \ds \bigcup_{r\in\mathbb{Z}} W^-_{r,0}\\
&h_{\frac{2n+1}{2}}&\mapsto \frac{1}{\sqrt{2}} t^{2n+1}
\end{eqnarray*}
where $n\in\mathbb{Z}$.


\subsection{Generators of the algebra $W^-$}

The following lemma describes a generating set for $W^-$ as an algebra.

\begin{lemma} \label{GenSetW} The algebra $W^-/\langle w_{0,0}, C\rangle$ is generated by $w_{1,0}$, $w_{0,3}$, and $w_{\pm 2,1}  \pm w_{\pm 2,0}$. \end{lemma}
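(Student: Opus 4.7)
The strategy is to exploit the fact that $W^-$ is the universal enveloping algebra of $\hat{\mathcal{D}}^-$, reducing the question to: which elements of $\hat{\mathcal{D}}^-$ modulo $\langle w_{0,0}, C\rangle$ can be produced by iterated Lie brackets of the four given elements? I would work in the basis of $\hat{\mathcal{D}}^-$
\[
\{E_{2k-1}(w^{2j}) := t^{2k-1}(D + \tfrac{2k-1}{2})^{2j}\}_{k \in \mathbb{Z},\,j \geq 0} \cup \{F_{2k}(w^{2j+1}) := t^{2k}(D+k)^{2j+1}\}_{k \in \mathbb{Z},\,j \geq 0},
\]
in which the four generators are $E_1(1)$, $F_0(w^3)$, $F_2(w)$, and $F_{-2}(w)$. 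The argument will be induction on the differential filtration degree $n$ (the degree in $D$).

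For the base case $n = 0$ I would iterate the identity $[w_{k, 0}, w_{\pm 2, 1} \pm w_{\pm 2, 0}] = -k\,w_{k \pm 2, 0}$ (the $w_{\pm 2, 0}$ contribution vanishes since $t^a$'s commute among themselves), starting from $w_{1,0}$ and reaching every $w_{2k+1, 0}$. The next two layers must be handled out of the natural order. First, layer $n = 2$ follows from $[F_0(w^3), E_{2k-1}(1)] = 3(2k-1) E_{2k-1}(w^2) + \tfrac{(2k-1)^3}{4} E_{2k-1}(1)$ upon solving for $E_{2k-1}(w^2)$; the coefficient $2k-1$ is always nonzero. Then layer $n = 1$ follows: the bracket $[E_1(1), E_{2k-1}(w^2)] = -2\,F_{2k}(w)$ covers $|k| \geq 2$, the generators themselves cover $k = \pm 1$, and $F_0(w)$ appears as $-\tfrac14[F_2(w), F_{-2}(w)]$ after the central term produced by the cocycle $\psi$ is killed in the quotient.

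For the inductive step at $n \geq 3$, $\operatorname{ad}_{F_0(w^3)}$ does most of the work: for $n = 2j$ even, $[F_0(w^3), E_{2k-1}(w^{2j-2})]$ produces $3(2k-1) E_{2k-1}(w^{2j})$ plus terms of strictly lower differential degree; and for $n = 2j+1$ odd with $k \neq 0$, $[F_0(w^3), F_{2k}(w^{2j-1})]$ produces $6k\, F_{2k}(w^{2j+1})$ plus lower-degree corrections. All correction terms lie in the inductive subalgebra by hypothesis. The remaining case $F_0(w^{2j+1})$ at $k = 0$, $n = 2j+1 \geq 3$, comes from $[F_{-2}(w^{2j+1}), F_2(w)] = (4j+4) F_0(w^{2j+1}) + (\text{lower } F_0 \text{ terms})$, where $F_{-2}(w^{2j+1})$ is already in the subalgebra by the preceding $k = -1$ subcase.

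The main obstacle is the low-degree bootstrap: no Lie bracket among the generators lands directly in layer $n = 1$, since $w_{0,3}$ raises differential degree by $2$ while $w_{1, 0}$ and $w_{\pm 2, 1} \pm w_{\pm 2, 0}$ by themselves do not assemble a layer-$1$ element out of layer $0$. One is forced to first climb to layer $n = 2$ via $F_0(w^3)$ and then descend via $E_1(1)$. This dictates the unusual inductive order (base, then $n = 2$, then $n = 1$, then $n \geq 3$); once past $n = 2$, the induction is uniform and driven almost entirely by $\operatorname{ad}_{F_0(w^3)}$, with one small ancillary computation for the zero-weight sector in each odd layer.
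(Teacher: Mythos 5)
Your proposal is correct, and at the level of strategy it coincides with the paper's: both arguments generate the Lie algebra $\hat{\mathcal{D}}^-$ (hence its enveloping algebra $W^-$) by iterated brackets of the four listed elements, use $\operatorname{ad}_{w_{0,3}}$ to climb the differential filtration two steps at a time, use the $t$-degree-$\pm 2$ generators to move between $t$-degrees, and bootstrap from the Heisenberg layer $\{w_{2k+1,0}\}$. The genuine difference is the bookkeeping, and yours is the cleaner of the two. The paper works in the monomial basis $w_{k,\ell}=t^kD^\ell$ (whose individual elements mostly do not lie in $W^-$): it produces, for each target, an element with the correct leading term and then argues at the end that the lower-order terms can be matched by rescaling the various bracket identities --- the vaguest step of its proof, and one that also leads it to write intermediate expressions such as $w_{1,2}-w_{1,1}$ that are not actually fixed by $-\sigma$ (the bracket $[w_{1,0},w_{0,3}]$ in fact yields $-3(w_{1,2}+w_{1,1})-w_{1,0}$, i.e.\ a multiple of your $E_1(w^2)$ modulo $w_{1,0}$). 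By working in the honest basis $E_{2k-1}(w^{2j})$, $F_{2k}(w^{2j+1})$ of $\hat{\mathcal{D}}^-$, every correction term in your brackets is an exact lower-degree basis element already covered by the inductive hypothesis, e.g.
\[
[F_0(w^3),E_{2k-1}(w^{2j-2})]=3(2k-1)\,E_{2k-1}(w^{2j})+\tfrac{(2k-1)^3}{4}\,E_{2k-1}(w^{2j-2}),
\]
so no a posteriori adjustment of lower terms is needed; your layer-$1$ bootstrap through layer $2$ is the same phenomenon the paper handles with $[w_{-2,1}-w_{-2,0},w_{1,0}]=w_{-1,0}$ and its relatives, just made explicit. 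I verified your key identities ($[t^k,t^{\pm2}(D\pm1)]=-k\,t^{k\pm2}$, $[E_1(1),E_{2k-1}(w^2)]=-2F_{2k}(w)$, $[F_2(w),F_{-2}(w)]=-4D+C$, and the two $\operatorname{ad}_{F_0(w^3)}$ formulas) and they are all right, with the cocycle contributions dying in the quotient by $C$ and $w_{0,0}$ exactly as you say. The only point worth writing out explicitly in a final version is the reduction you use implicitly at the start: closing the span of the generators under Lie bracket to all of $\hat{\mathcal{D}}^-$ modulo the center suffices because $W^-$ is the universal enveloping algebra, so it is generated as an associative algebra by any Lie generating set.
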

\begin{proof} Let $t^k g(D-{k/2})$ be an arbitrary element of $W^-$. Without loss of generality, we may assume $g$ is a monic monomial of the form $g(w) = w^\ell$ with $\ell-k$ odd, since lower terms in $g$ are just monomials of this form with lower degree, and thus can be generated separately. Therefore, we have \begin{equation} \label{lowerTerms} t^k g(D-k/2) = \sum_{i=0}^\ell \binom{\ell}{i}(-1)^{\ell - i} (k/2)^{\ell - i} t^k D^i. \end{equation} The leading term of this element with respect to differential degree is $t^k D^\ell.$ We will generate the leading term first, and address lower terms afterwards. There are two cases, depending on the parities of $k$ and $\ell$.

First, suppose that $k=2n$ is even and $\ell = 2m+1$ is odd (recall that $k$ and $\ell$ must have opposite parity in $W^-$). Hence, we must generate $w_{\pm 2n, 2m+1}$. The following calculations are easy, using Formula \ref{WForm}:
$$ [w_{-2,1} - w_{-2,0}, w_{1,0}] = w_{-1,0}, $$
$$ [w_{1,0}, w_{0,3}] = -3(w_{1,2}+ w_{1,1})-w_{1,0},$$
\begin{equation}\label{1,2n} [w_{1,2b},w_{0,3}] = -3w_{1, 2b+2} + O(w_{1,2b+1}),\end{equation}

where $O(\omega)$ refers to terms with lower differential degree than $\omega$. Hence, starting with $w_{1,2} - w_{1,1}$, we can use the Equation \eqref{1,2n} above to generate $w_{1,2b}$ for any $b$. Now we have:

\begin{equation}\label{2n+1,0}[w_{\pm 2a, 1}, w_{1,0}] = w_{\pm 2a+1,0},\end{equation}
\begin{equation} \label{2n,0} [w_{\pm 2a+1, 0}, w_{1,2} - w_{1,1}] = -(4a+2)w_{2a+2,1} - (2a+1)(2a+2)w_{2a+2,0}.\end{equation}

Thus, starting from $w_{2,1} + w_{2,0}$, we can generate $w_{2a,1}$ for any $a$. Finally, we have:

\begin{equation}\label{0,2n}[w_{-1,0}, w_{1,2b}] = \sum_{i=0}^{2b-1} \binom{2b}{i} (-1)^{2b-i+1} w_{0,i} = w_{0,2b-1} + O(w_{0,2b-2}),\end{equation}
\begin{align*}[w_{\pm 2a, 1}, w_{0,2b-1}] &= -\sum_{i=0}^{2b-2} \binom{2b-1}{i} (\pm 1)^{2b-i}(2)^{2b-2-i} t^{2a}D^{i+1}  \\&= w_{2a,2b-1} + O(w_{2a,2b-2}).\end{align*}

So, we can generate a polynomial with leading term $w_{\pm 2n, 2m+1}$.

Next, suppose that $k = 2n+1$ is odd and positive and $\ell = 2m$ is even. Using Formula \eqref{WForm}, we have:

\begin{align*}[w_{2a+1,0}, w_{0,2b+1}] &=  t^{2a+1} \sum_{i=0}^{2b} \binom{2b+1}{i} (2a+1)^{2b+1 -i} D^i \\&= w_{2a+1, 2b} + O(w_{2a+1, 2b-1}).\end{align*}

Now Equations \eqref{2n+1,0} and \eqref{0,2n} give that we can generate $w_{2a+1,0}$ and $w_{0,2b+1}$. Hence we can generate a polynomial with leading term $w_{2a+1,2b}.$

Finally, assume that $k=-(2n+1)$ is odd and $n=2m$ is even. Using Formula \eqref{WForm}, we have:

$$[w_{-2a,1}, w_{1,0}] = w_{1-2a,0}.$$

By Equation \eqref{2n,0}, we can therefore generate $w_{-(2a+1),0}$ for any $a$. Next, note that:

$$[w_{-1,0}, w_{1,2b}] = -\sum_{i=0}^{2b-1} \binom{2b-1}{i} (-1)^{2b-1-i} D^i = w_{0,2b-1} + O(w_{0,2b-2}).$$

By Equation \eqref{1,2n}, we can generator $w_{0, 2b+1}$ for any $b$. Finally, we have

\begin{align*} [w_{-(2a+1),0}, w_{0,2b-1}]& = t^{-(2a+1)} \sum_{i=0}^{2n-2} \binom{2n-1}{i} (-1)^{2n-i} (2a+1)^{2n-1-i} D^{i} \\&= w_{-(2a+1), 2b-2} + O(w_{-(2a+1), 2b-3}).\end{align*}

Thus, we can generate a polynomial with leading term $w_{-(2n+1), 2m}.$

It remains to adjust the lower terms of these equations so that they match those in Equation \eqref{lowerTerms}. But note that each equation used above to generate the leading term results in lower terms which lie in different filtrations of $W^-$. Therefore we can adjust the coefficients of lower terms by scaling individual equations above. Since there is no dependency between these equations, we can choose constant coefficients for the generators so that our generated polynomial has the correct lower terms.
\end{proof}


\subsection{Fock space representation of $W^-$}

The algebra $W^-$ inherits a Fock space representation from $W_{1+\infty}$. Let $W^{-,\geq} = W^{-,0} \oplus W^{-,>}$. For parameters $c,d \in \mathbb{C}$, let $\mathbb{C}_{c,d}$ be a one-dimensional module for $W^{-,\geq}$ on which each $w_{k,\ell}$ with $(k,\ell) \not= (0,0)$ acts as zero, $C$ acts as $c$, and $w_{0,0}$ acts as $d$. Let $\mathcal{M}_{c,d} := \Ind_{W^{-,\geq}}^{W^-} \mathbb{C}_{c,d}$. This induced module possesses the following properties:

\begin{proposition}\label{FockSpaceW} \cite{AFMO,FKRW} The $W^-$-module $\mathcal{M}_{c,d}$ has a unique irreducible quotient $\mathcal{V}_{c,d}$, which is isomorphic as a vector space to $\mathbb{C}[w_{-1,0}, w_{-2,0},w_{-3,0},\ldots ]$. \end{proposition}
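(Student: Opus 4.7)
The plan is to adapt the standard induced-module analysis for $W_{1+\infty}$ from \cite{AFMO,FKRW} to the subalgebra $W^-$. First I would establish a PBW-style triangular decomposition at the vector space level, $W^- = W^{-,<} \oplus W^{-,0} \oplus W^{-,>}$, under which the induced module $\mathcal{M}_{c,d}$ is naturally identified as a vector space with $U(W^{-,<})$ acting on the cyclic vector $|c,d\rangle := 1\otimes 1_{c,d}$. Equipping $\mathcal{M}_{c,d}$ with the $\mathbb{Z}_{\leq 0}$ grading by $t$-rank, each graded piece becomes finite-dimensional and the degree-zero piece is spanned by $|c,d\rangle$; any proper submodule $N \subsetneq \mathcal{M}_{c,d}$ must avoid $|c,d\rangle$ and therefore lie in strictly negative degrees, so the sum $\mathcal{N}$ of all proper submodules is itself proper, and $\mathcal{V}_{c,d} := \mathcal{M}_{c,d}/\mathcal{N}$ is the unique irreducible quotient.

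For the vector space identification with $\mathbb{C}[w_{-1,0}, w_{-2,0}, w_{-3,0}, \ldots]$, I would restrict the $W^-$-action on $\mathcal{V}_{c,d}$ to the embedded twisted Heisenberg subalgebra $\mathfrak{h}_{tw}$ (via the isomorphism $\phi$ from the previous subsection). At nonzero central charge the Fock representation of $\mathfrak{h}_{tw}$ is irreducible with underlying vector space the polynomial ring in the negative modes, and under $\phi$ these modes correspond to the indicated generators $w_{-k,0}$ (with the parity/indexing convention inherited from \cite{AFMO,FKRW}).

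The main obstacle is to show that the "non-Heisenberg" negative generators of $W^-$, namely $w_{-k,\ell}$ with $\ell > 0$, do not produce additional linearly independent vectors in the quotient $\mathcal{V}_{c,d}$. I would handle this by induction on the differential filtration $\ell$, using the commutator formula \eqref{WForm} to express the action of each such generator on the vacuum, modulo $\mathcal{N}$, in terms of brackets of lower-$\ell$ generators with Heisenberg modes applied to $|c,d\rangle$. This reduction step is where the arguments of \cite{AFMO,FKRW} for $W_{1+\infty}$ need to be checked carefully in the $\sigma$-fixed setting, since one must verify that no new central-like obstructions appear and that the inductive hypothesis survives the $-\sigma$-symmetrization used to cut out $W^-$ from $W_{1+\infty}$.
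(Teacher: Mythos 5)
The paper itself gives no argument for this proposition: it is imported wholesale from \cite{AFMO,FKRW}, so your sketch is being measured against the cited literature rather than against anything in the text. Your first paragraph is fine. The $t$-degree grading on $\mathcal{M}_{c,d}$ is the eigenspace decomposition for $w_{0,1}=D\in W^{-,0}$, so every submodule is graded, a proper submodule cannot meet the one-dimensional degree-zero piece, and the sum of all proper submodules is proper; this is the standard Verma-type argument and it transfers to $W^-$ verbatim.

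The gap is in the second half, and it is quantitative. The twisted Heisenberg subalgebra of $W^-$ contains only the \emph{odd} negative modes $w_{-1,0}, w_{-3,0}, w_{-5,0},\ldots$ (the differential-degree-zero part of $W^-$ is spanned by $C$ and the $t^{2n+1}$), so its Fock space is $\mathbb{C}[w_{-1,0},w_{-3,0},\ldots]$, with graded dimension $\prod_{k\geq 0}(1-q^{2k+1})^{-1}$ counting partitions into odd parts. The module asserted in the proposition is $\mathbb{C}[w_{-1,0},w_{-2,0},w_{-3,0},\ldots]$, with graded dimension $\prod_{n\geq 1}(1-q^{n})^{-1}$ counting all partitions; already in degree $2$ these are $1$ and $2$. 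Hence your steps (2) and (3) cannot both succeed: if restriction to $\mathfrak{h}_{tw}$ exhausted the quotient and the generators $w_{-k,\ell}$ with $\ell>0$ contributed nothing new, you would prove $\mathcal{V}_{c,d}\cong\mathbb{C}[w_{-1,0},w_{-3,0},\ldots]$, which is strictly smaller than the claimed answer. In fact the non-Heisenberg negative generators \emph{must} produce new vectors --- e.g.\ in $\mathcal{V}_{1,0}$ the element $(w_{-2,1}-w_{-2,0})|c,d\rangle$ supplies the second basis vector in degree $2$ --- and these are precisely what account for the even modes $w_{-2n,0}$ appearing in the statement. The route actually taken in \cite{FKRW} and by Kac--Wang--Yan runs in the opposite direction from yours: one constructs the free-boson Fock space as an explicit module, restricts it to $W^-$, checks it is cyclic on the vacuum (hence a quotient of $\mathcal{M}_{c,d}$) and irreducible, and reads the character off the realization. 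Your inductive reduction in the final paragraph is aimed at an upper bound that, as you have set it up, is false; it would also need the caveat that the character identification can fail for special values of $(c,d)$ (e.g.\ $c=0$), which is harmless here only because the paper ultimately uses $\mathcal{V}_{1,0}$ alone.
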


\begin{proposition}\label{FockSpaceW faithful} \cite{SV} The action of $W^-/(C-1,w_{0,0})$ is faithful on $\mathcal{V}_{1,0}$. \end{proposition}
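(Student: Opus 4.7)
The plan is to deduce faithfulness from the PBW triangular decomposition of $W^-$ together with the explicit description of $\mathcal{V}_{1,0}$ from Proposition \ref{FockSpaceW}. Write $W^- = W^{-,<}\cdot W^{-,0}\cdot W^{-,>}$ as vector spaces via a PBW basis. The cyclic vector $v_0 \in \mathcal{V}_{1,0}$ is annihilated by $W^{-,>}$ and by $W^{-,0}$ modulo $w_{0,0}$ by construction, while $C$ acts as $1$; after passing to the quotient $W^-/\langle C-1,\, w_{0,0}\rangle$, every element of $W^{-,\geq}$ aside from scalars annihilates $v_0$, and $\mathcal{V}_{1,0}$ acquires a basis obtained by applying PBW monomials in $W^{-,<}$ to $v_0$.

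Next, I would suppose that $x \in W^-/\langle C-1,\, w_{0,0}\rangle$ acts as zero on $\mathcal{V}_{1,0}$. By homogeneity with respect to the difference-degree grading, I may assume $x$ is homogeneous. Expanding in PBW form $x = \sum_i a_i b_i$ with $a_i \in W^{-,<}$ linearly independent and $b_i$ in a PBW basis of $W^{-,\geq}$, evaluating on the vacuum gives $x\cdot v_0 = \sum_{i:\, b_i \in \mathbb{C}} c_i\, a_i\cdot v_0 = 0$, forcing those $c_i$ to vanish since the distinguished PBW vectors in $\mathcal{V}_{1,0}$ are linearly independent. To eliminate the remaining terms, I would evaluate $x$ on $y\cdot v_0$ for varying $y \in W^{-,<}$. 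Using the commutation formula \eqref{WForm} to move each $b_i$ successively past $y$, the only surviving contributions come from iterated brackets of $b_i$ with factors of $y$ that eventually land in $W^{-,<}$ or produce central scalars (which vanish in the quotient). Thus the problem reduces to non-degeneracy of the pairing $W^{-,<}\times W^{-,>} \to \mathbb{C}$ induced by repeated commutators modulo $\langle C-1,\, w_{0,0}\rangle$.

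The main obstacle is precisely this non-degeneracy. I would prove it by induction on the total differential filtration of $b_i$. The base case is filtration zero: by the identification $\phi$ of $\bigcup_r W^-_{r,0}$ with the twisted Heisenberg algebra $\mathfrak{h}_{tw}$, it reduces to the classical faithfulness of $\mathfrak{h}_{tw}$ on its Fock space. For the inductive step, the commutator of elements in differential filtrations $k$ and $\ell$ has leading filtration $k+\ell-1$, so by testing against $y$ of appropriately chosen filtration one isolates the top component of $x$ and shows it vanishes, leaving an $x'$ of strictly smaller filtration to which the inductive hypothesis applies. This parallels the argument of \cite{SV} for the untwisted $W_{1+\infty}$; the remaining technicality is to verify that restriction to the $-\sigma$-fixed subalgebra $W^-$ preserves the relevant non-degeneracies, which amounts to checking that the commutator pairing restricted to elements of odd rank-differential-degree parity remains perfect.
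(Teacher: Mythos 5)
Your proposal takes a genuinely different, and much harder, route than the paper. The paper's entire proof is the observation that $W^-/(C-1,w_{0,0})$ sits inside $W_{1+\infty}/(C-1,w_{0,0})$, that $\mathcal{V}_{1,0}$ is the restriction to this subalgebra of the level-one Fock space of $W_{1+\infty}$, and that the restriction of a faithful representation to a subalgebra is automatically faithful; the nontrivial input, faithfulness of $W_{1+\infty}/(C-1,w_{0,0})$ on its Fock space, is simply quoted from \cite{SV}. You instead attempt to reprove the \cite{SV}-type statement from scratch inside $W^-$, which is not needed and, as executed, does not go through.

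The genuine gap is at your first substantive step. You expand $x=\sum_i a_ib_i$ in PBW form and conclude from $x\cdot v_0=0$ that the coefficients of the terms with $b_i$ scalar vanish ``since the distinguished PBW vectors in $\mathcal{V}_{1,0}$ are linearly independent.'' That is true in the induced module $\mathcal{M}_{1,0}$, where PBW monomials of $W^{-,<}$ applied to $v_0$ form a basis, but $\mathcal{V}_{1,0}$ is the \emph{irreducible quotient}, which by Proposition \ref{FockSpaceW} is only a polynomial algebra on degree-zero modes: in each fixed difference degree its weight space is finite dimensional (indexed by partitions), while the space of PBW monomials of $W^{-,<}$ in that degree is infinite dimensional because the differential degree is unbounded. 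So the vectors $a_i\cdot v_0$ are massively linearly dependent, the coefficients need not vanish, and the subsequent reduction to non-degeneracy of a commutator pairing between $W^{-,<}$ and $W^{-,>}$ would at best prove faithfulness on $\mathcal{M}_{1,0}$ --- a strictly weaker and essentially formal statement. The entire difficulty of the theorem lives in the collapse from $\mathcal{M}_{1,0}$ to $\mathcal{V}_{1,0}$, which your induction on differential filtration never engages; moreover the closing ``remaining technicality'' (perfectness of the restricted pairing) is precisely the unproved crux rather than a routine check. The short repair is the paper's argument: invoke \cite{SV} for $W_{1+\infty}$ and restrict to the subalgebra $W^-$.
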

\begin{proof} This follows immmediately from the argument in \cite{SV} for $W_{1+\infty}$ because $W^-$ is a subalgebra. \end{proof}

Proposition \ref{FockSpaceW} allows us to the compute the action of the generators on $\mathcal{V}_{1,0}$, which we record for convenience below. 

\begin{proposition} \label{FockSpaceW action} Let $k$ be a positive integer. The generators of $W^-$ act on $\mathcal{V}_{1,0}$ as follows:  
\begin{align*} [w_{1,0}, w_{-k,0}] &= \delta_{1,k},\\
 [w_{-2,1}-w_{-2,0}, w_{-k,0}] &= (k+2) w_{-(k+2),0} ,\\
 [w_{2,1} + w_{2,0}, w_{-k,0}] &= -(k+2)w_{2-k,0} ,\\
 [w_{0,3}, w_{-k,0}] &= 3k w_{-k,2} - 3k^2 w_{-k,1} + k^3 w_{-k,0}.\end{align*}

 \end{proposition}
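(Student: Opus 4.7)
The plan is to derive each of the four identities by a direct application of the structural formula \eqref{WForm} for commutators in $\hat{\mathcal{D}}^-$, using the fact that on $\mathcal{V}_{1,0}$ the central element $C$ acts as $1$ and $w_{0,0}$ acts as $0$. Each generator on the left is either in $W^{-,\geq}$ (so annihilates the vacuum $|0\rangle$) or is a specific negative mode, and the stated commutators are exactly what is needed to determine the action on the basis vectors $w_{-k,0}|0\rangle$ of $\mathcal{V}_{1,0}$. In each case I specialize $(r,s,f,g)$ to the relevant pair, compute the polynomial term $t^{r+s}\bigl(f(D+s)g(D)-f(D)g(D+r)\bigr)$, and separately evaluate the cocycle term $\psi(t^rf,t^sg)\,C$.

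For $[w_{1,0},w_{-k,0}]=[t,t^{-k}]$ I take $f=g=1$, $r=1$, $s=-k$. The polynomial term vanishes because $f$ and $g$ are constants. The cocycle term is nonzero precisely when $r=-s\geq 0$, i.e.\ $k=1$, in which case $\psi=\sum_{j=-1}^{-1}1\cdot 1 =1$; combined with $C=1$ this gives $\delta_{k,1}$.

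For the remaining three identities the hypothesis $r+s\neq 0$ holds (since $k$ is a positive integer and the mode shifts are $0$ or $\pm 2$ against $-k$), so the cocycle term is automatically zero and only the polynomial piece contributes. Writing $w_{-2,1}-w_{-2,0}=t^{-2}(D-1)$ and $w_{2,1}+w_{2,0}=t^{2}(D+1)$, I apply \eqref{WForm} with $f(D)=D\pm 1$, $g=1$, $r=\pm 2$, $s=-k$; the difference $f(D+s)-f(D)$ is a constant (independent of $D$), producing a scalar multiple of $t^{\pm 2-k}=w_{\pm 2-k,0}$. For the last identity I take $f(D)=D^3$, $g=1$, $r=0$, $s=-k$ and expand
\begin{equation*}
(D-k)^3-D^3 = -3kD^2+3k^2D-k^3,
\end{equation*}
so that $[D^3,t^{-k}]$ reads off immediately as a linear combination of $w_{-k,2}$, $w_{-k,1}$, $w_{-k,0}$ with the coefficients stated.

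The only obstacle is careful bookkeeping: one must keep track of signs when combining the two summands in each bracketed generator, correctly pair the $t^j$ with $g(D-j/2)$ when translating between the ``normal-ordered'' presentation of $W^-$ and the $w_{k,\ell}=t^kD^\ell$ monomials, and verify in each instance that the central term is absent (which it is, except in the first case). Otherwise the computation is entirely routine, resting only on \eqref{WForm} and Proposition \ref{FockSpaceW}.
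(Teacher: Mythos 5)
Your overall strategy---specializing \eqref{WForm} to $g=1$ and $f$ the appropriate linear or cubic polynomial, and invoking $C=1$, $w_{0,0}=0$ on $\mathcal{V}_{1,0}$---is the right one, and it is evidently what the paper intends, since no proof is written out beyond the remark that Proposition \ref{FockSpaceW} ``allows us to compute the action.'' The first identity is handled correctly. But for the other three the proposal never actually closes the gap between what \eqref{WForm} yields and what the proposition asserts, and in one place your own algebra contradicts your conclusion. For the cubic generator you correctly expand $(D-k)^3-D^3=-3kD^2+3k^2D-k^3$, so the formula gives $[w_{0,3},w_{-k,0}]=-3kw_{-k,2}+3k^2w_{-k,1}-k^3w_{-k,0}$, which is the \emph{negative} of the stated right-hand side (the stated expression is $[w_{-k,0},w_{0,3}]$); asserting that the stated coefficients ``read off immediately'' from your displayed expansion is not a proof, and as written the step fails. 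Likewise, for $w_{\pm2,1}\pm w_{\pm2,0}=t^{\pm2}(D\pm1)$ the polynomial term is $t^{\pm2-k}\bigl(f(D-k)-f(D)\bigr)=-k\,t^{\pm2-k}$, so the scalar is $-k$, not $\pm(k+2)$; you leave it as ``a scalar multiple of $t^{\pm2-k}$,'' which is precisely where the verification has to happen. A careful writeup must either derive the stated coefficients or observe that the literal commutator does not produce them (e.g.\ by identifying whatever reordering or normalization reconciles the two).

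There is also a concrete error in your treatment of the cocycle: the claim that $r+s\neq0$ holds for all three remaining identities is false for $[w_{2,1}+w_{2,0},w_{-k,0}]$ at $k=2$, where $r=-s=2$ and $\psi(t^2(D+1),t^{-2})=\sum_{j=-2}^{-1}(j+1)\cdot 1=-1$, so a central term $-C$ appears; on $\mathcal{V}_{1,0}$ this contributes $-1$ and is not absorbed by the vanishing of $w_{0,0}$. A complete argument must treat $k=2$ separately in that case. In short: same method as the paper's (implicit) computation, but the coefficient verification---which is the entire content of the proposition---is missing, and the fragments you do display are inconsistent with the statement being proved.
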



\section{Twisted Heisenberg category}\label{twistedHeisenberg}

We will now describe the main object of interest in the paper, the twisted Heisenberg category $\mathcal{H}_{tw}$. After defining the category, we recall the trace decategorification functor and some of its properties. We then describe some filtrations of $\TrH$, identify a copy of the degenerate affine Hecke-Clifford algebra $\HC_n$, and describe the trace of $\HC_n$.   Finally, we identify a set of distinguished elements in $\TrH$ which generate the nonzero filtration degrees of the algebra.

\subsection{Definition of $\mathcal{H}_{tw}$}
The twisted Heisenberg category $\mathcal{H}^{t}$ is defined in \cite{CS} as the Karoubi envelope of a $\mathbb{C}$-linear $\mathbb{Z}/2\mathbb{Z}$-graded additive monoidal category, whose moprhisms are described diagrammatically. There is an injective algebra homomorpshim from $\mathfrak{h}_{tw}$ to the split Grothendieck group of the twisted Heisenberg caterogy $K_0({\mathcal{H}^{t}})$. As in the untwisted case, this map is conjecturally surjective.
   
   The object of our main interest is the trace decategorification or zeroth Hochschild homology of $\mathcal{H}^{t}$. It is shown in \cite[Proposition 3.2]{BGHL} that trace of an additive category is isomorphic to the trace of its Karoubi envelope,. Therefore, we can work with the non-idempotent completed version of $\mathcal{H}^t$. We will denote it by $\mathcal{H}_{tw}$. Focusing our attention to $\mathcal{H}_{tw}$ allows us to work with the diagrammatics introduced in \cite{CS}. 

     The category $\mathcal{H}_{tw}$ is the $\mathbb{C}$-linear, $\mathbb{Z}/2\mathbb{Z}$-graded monoidal additive category whose objects are generated by $P$ and $Q$. A generic object is a sequence of $P$'s and $Q$'s. The morphisms of $\mathcal{H}_{tw}$ are generated by oriented planar diagrams up to boundary fixing isotopies, with generators 
\begin{equation*}
    \hspace{0.9cm}
\begin{tikzpicture}
\draw[->](-2,-0.25) to (-2,0.75);
\draw (-1.99,0.25) circle [radius=2pt];
\draw[<-](-1,-0.25) to (-1,0.75);
\draw (-1,0.25) circle [radius=2pt];
\draw[->] (0,-0.25) to (1,0.75);
\draw[->] (1,-0.25) to (0,0.75);
\draw[->] (1.5,0.5) arc (180:360:5mm);
\draw[<-] (3,0.5) arc (180:360:5mm);
\draw[->] (4.5,0) arc (180:0:5mm);
\draw[<-] (6,0) arc (180:0:5mm);
\end{tikzpicture}
\end{equation*}
where the first diagram corresponds to a map $P\rightarrow P\{1\}$ and the second diagram corresponds to a map $Q\rightarrow Q\{1\}$, where $\{1\}$ denotes the $\mathbb{Z}/2\mathbb{Z}$-grading shift. The first two diagrams above have degree one, and the last five have degree zero. The identity morphisms of $P$ and $Q$ are indicated by an undecorated upward and downward pointing arrow, respectively. These generators satisfy the following relations:

\begin{equation}\label{R2}
\begin{tikzpicture}[baseline=(current bounding box.center),scale=0.75]
\draw [->](0,0) to [out=45,in=-45] (0,2);
\draw [->](0.5,0) to [out=135,in=-135] (0.5,2);
\end{tikzpicture}\hspace{6pt}
=
\hspace{6pt}
\begin{tikzpicture}[baseline=(current bounding box.center),scale=0.75]
\draw[->] (1.5,0) to (1.5,2);
\draw[->] (2,0) to (2,2);
\end{tikzpicture}
\hspace{1cm}
\begin{tikzpicture}[baseline=(current bounding box.center),scale=0.75]
\draw [->](0,0) to [out=45,in=-45] (0,2);
\draw [<-](0.5,0) to [out=135,in=-135] (0.5,2);
\end{tikzpicture}\hspace{6pt}
=
\hspace{6pt}
\begin{tikzpicture}[baseline=(current bounding box.center),scale=0.75]
\draw[->] (1.5,0) to (1.5,2);
\draw[<-] (2,0) to (2,2);
\end{tikzpicture}
\hspace{1.5cm}
\begin{tikzpicture}[baseline=(current bounding box.center),scale=0.75]
\draw [->](0,0) to (1.5,2);
\draw [->](1.5,0) to (0,2);
\draw[->](0.75,0) to [out=45,in=-45] (0.75,2);
\end{tikzpicture}\hspace{6pt}
=
\hspace{6pt}
\begin{tikzpicture}[baseline=(current bounding box.center),scale=0.75]
\draw [->](0,0) to (1.5,2);
\draw [->](1.5,0) to (0,2);
\draw[->](0.75,0) to [out=135,in=225] (0.75,2);
\end{tikzpicture}
\end{equation}

\begin{equation}\label{R3}
\begin{tikzpicture}[baseline=(current bounding box.center),scale=0.75]
\draw [<-](0,0) to [out=45,in=-45] (0,2);
\draw [->](0.5,0) to [out=135,in=-135] (0.5,2);
\end{tikzpicture}\hspace{6pt}
=
\hspace{6pt}
\begin{tikzpicture}[baseline=(current bounding box.center),scale=0.75]
\draw[<-] (1.5,0) to (1.5,2);
\draw[->] (2,0) to (2,2);
\end{tikzpicture}\hspace{6pt}
-
\hspace{6pt}
\begin{tikzpicture}[baseline=(current bounding box.center),scale=0.75]
\draw[->] (3,2) arc (180:360:5mm);
\draw[<-] (3,0) arc (180:0:5mm);
\end{tikzpicture}\hspace{6pt}
-
\hspace{6pt}
\begin{tikzpicture}[baseline=(current bounding box.center),scale=0.75]
\draw[->] (3,2) arc (180:360:5mm);
\draw[<-] (3,0) arc (180:0:5mm);
\draw (3.05,1.8) circle [radius=3pt];
\draw (3.95,0.2) circle [radius=3pt];
\end{tikzpicture}
\end{equation}

\begin{equation}\label{d00=1}
\begin{tikzpicture}[baseline=(current bounding box.center),scale=0.75]
\draw[->] (3,2) arc (-180:180:5mm);
\end{tikzpicture}\hspace{6pt}
=1
\hspace{1.5cm}
\begin{tikzpicture}[baseline=(current bounding box.center),scale=1.5]
\draw (1,0) to [out=90, in=-75](0.95,0.5);
\draw (0.95,0.5) arc (5:355:2mm);
\draw[->] (0.95,0.46) to [out=75, in=270] (1,1);
\end{tikzpicture}\hspace{6pt}
=0
\end{equation}

\begin{equation}
    \begin{tikzpicture}[baseline=(current bounding box.center),scale=0.75]
    \draw[->] (0,0) to (1,2);
    \draw[->] (1,0) to (0,2);
    \draw (0.25,0.5) circle [radius=3pt];
    \end{tikzpicture}\hspace{6pt}
    =
    \hspace{6pt}
    \begin{tikzpicture}[baseline=(current bounding box.center),scale=0.75]
    \draw[->] (0,0) to (1,2);
    \draw[->] (1,0) to (0,2);
    \draw (0.75,1.5) circle [radius=3pt];
    \end{tikzpicture}
    \hspace{1cm}
    \begin{tikzpicture}[baseline=(current bounding box.center),scale=0.75]
    \draw[->] (0,0) to (1,2);
    \draw[->] (1,0) to (0,2);
    \draw (0.75,0.5) circle [radius=3pt];
    \end{tikzpicture}\hspace{6pt}
    =
    \hspace{6pt}
    \begin{tikzpicture}[baseline=(current bounding box.center),scale=0.75]
    \draw[->] (0,0) to (1,2);
    \draw[->] (1,0) to (0,2);
    \draw (0.25,1.5) circle [radius=3pt];
    \end{tikzpicture}
\end{equation}

\begin{equation}\label{caps}
    \begin{tikzpicture}[baseline=(current bounding box.center)]
    \draw[<-] (3,2) arc (180:0:5mm);
    \draw (3.05,2.2) circle [radius=2.5pt];
    \end{tikzpicture}\hspace{6pt}
    =
    \hspace{6pt}-\hspace{4pt}
    \begin{tikzpicture}[baseline=(current bounding box.center)]
    \draw[<-] (3,2) arc (180:0:5mm);
    \draw (3.95,2.2) circle [radius=2.5pt];
    \end{tikzpicture}
    \hspace{1cm}
    \begin{tikzpicture}[baseline=(current bounding box.center)]
    \draw[->] (3,2) arc (180:0:5mm);
    \draw (3.05,2.2) circle [radius=2.5pt];
    \end{tikzpicture}\hspace{6pt}
    =
    \hspace{6mm}
    \begin{tikzpicture}[baseline=(current bounding box.center)]
    \draw[->] (3,2) arc (180:0:5mm);
    \draw (3.95,2.2) circle [radius=2.5pt];
    \end{tikzpicture}
\end{equation}

\begin{equation}\label{cups}
    \begin{tikzpicture}[baseline=(current bounding box.center)]
    \draw[->] (3,2) arc (180:360:5mm);
    \draw (3.05,1.8) circle [radius=2.5pt];
    \end{tikzpicture}\hspace{6pt}
    =
    \hspace{6mm}
    \begin{tikzpicture}[baseline=(current bounding box.center)]
    \draw[->] (3,2) arc (180:360:5mm);
    \draw (3.95,1.8) circle [radius=2.5pt];
    \end{tikzpicture}
    \hspace{1cm}
    \begin{tikzpicture}[baseline=(current bounding box.center)]
    \draw[<-] (3,2) arc (180:360:5mm);
    \draw (3.05,1.8) circle [radius=2.5pt];
    \end{tikzpicture}\hspace{6pt}
    =
    \hspace{6pt}-\hspace{4pt}
    \begin{tikzpicture}[baseline=(current bounding box.center)]
    \draw[<-] (3,2) arc (180:360:5mm);
    \draw (3.95,1.8) circle [radius=2.5pt];
    \end{tikzpicture}
\end{equation}

\begin{equation}\label{d01=0}
    \begin{tikzpicture}[baseline=(current bounding box.center),scale=0.75]
    \draw[->] (0,0) to (0,2);
    \end{tikzpicture}\hspace{6pt}
    =
    \hspace{6pt}
    \begin{tikzpicture}[baseline=(current bounding box.center),scale=0.75]
    \draw[->] (0,0) to (0,2);
    \draw (0,0.6) circle [radius=3pt];
    \draw (0,1.2) circle [radius=3pt];
    \end{tikzpicture}
    \hspace{1cm}
    \begin{tikzpicture}[baseline=(current bounding box.center),scale=0.75]
    \draw[<-] (0,0) to (0,2);
    \end{tikzpicture}\hspace{6pt}
    =
    \hspace{6pt}-\hspace{1mm}
    \begin{tikzpicture}[baseline=(current bounding box.center),scale=0.75]
    \draw[<-] (0,0) to (0,2);
    \draw (0,0.6) circle [radius=3pt];
    \draw (0,1.2) circle [radius=3pt];
    \end{tikzpicture}
\hspace{1cm}
    \begin{tikzpicture}[baseline=(current bounding box.center),scale=0.75]
    \draw[->] (3,2) arc (-180:180:5mm);
    \draw (3.95,2.2) circle [radius=3pt];
    \end{tikzpicture}\hspace{6pt}
    =\hspace{6pt}0
\end{equation}

\begin{equation}\label{cicj=-cjci}
    \begin{tikzpicture}[baseline=(current bounding box.center),scale=0.75]
    \draw[->] (0,0) to (0,2);
    \draw (0,1.6) circle [radius=3pt];
    \draw[fill=] (0.2,1) circle [radius=0.3pt];
    \draw[fill=] (0.4,1) circle [radius=0.3pt];
    \draw[fill=] (0.6,1) circle [radius=0.3pt];
    \draw[->] (0.8,0) to (0.8,2);
    \draw (0.8,0.3) circle [radius=3pt];
    \end{tikzpicture}\hspace{6pt}
    =\hspace{6pt}-\hspace{6pt}
     \begin{tikzpicture}[baseline=(current bounding box.center),scale=0.75]
    \draw[->] (0,0) to (0,2);
    \draw (0,0.3) circle [radius=3pt];
    \draw[fill=] (0.2,1) circle [radius=0.3pt];
    \draw[fill=] (0.4,1) circle [radius=0.3pt];
    \draw[fill=] (0.6,1) circle [radius=0.3pt];
    \draw[->] (0.8,0) to (0.8,2);
    \draw (0.8,1.6) circle [radius=3pt];
    \end{tikzpicture}. \hspace{2cm}
 \begin{tikzpicture}[baseline=(current bounding box.center),scale=0.75]
    \draw[<-] (0,0) to (0,2);
    \draw (0,1.6) circle [radius=3pt];
    \draw[fill=] (0.2,1) circle [radius=0.3pt];
    \draw[fill=] (0.4,1) circle [radius=0.3pt];
    \draw[fill=] (0.6,1) circle [radius=0.3pt];
    \draw[<-] (0.8,0) to (0.8,2);
    \draw (0.8,0.3) circle [radius=3pt];
    \end{tikzpicture}\hspace{6pt}
    =\hspace{6pt}-\hspace{6pt}
     \begin{tikzpicture}[baseline=(current bounding box.center),scale=0.75]
    \draw[<-] (0,0) to (0,2);
    \draw (0,0.3) circle [radius=3pt];
    \draw[fill=] (0.2,1) circle [radius=0.3pt];
    \draw[fill=] (0.4,1) circle [radius=0.3pt];
    \draw[fill=] (0.6,1) circle [radius=0.3pt];
    \draw[<-] (0.8,0) to (0.8,2);
    \draw (0.8,1.6) circle [radius=3pt];
    \end{tikzpicture}.
\end{equation}
Also, if we let 
\begin{equation}
    \begin{tikzpicture}[baseline=(current bounding box.center),scale=1.5]
\draw (1,0) to [out=90, in=85](1.05,0.5);
\draw (1.05,0.5) arc (-175:175:2mm);
\draw[->] (1.05,0.46) to [out=95, in=270] (1,1);
\end{tikzpicture}\hspace{6pt}
    :=
    \hspace{6pt}
    \begin{tikzpicture}[baseline=(current bounding box.center),scale=0.75]
    \draw[->](0,0) to (0,2);
    \draw[fill](0,1) circle[radius=3pt];
    \end{tikzpicture}
\end{equation}
we get the following relations:

\begin{equation}\label{anticommute}
    \begin{tikzpicture}[baseline=(current bounding box.center),scale=0.75]
    \draw[->](0,0) to (0,2);
    \draw[fill](0,0.6) circle[radius=3pt];
    \draw(0,1.2) circle[radius=3pt];
    \end{tikzpicture}\hspace{6pt}
    =
    \hspace{6pt}-\hspace{4pt}
    \begin{tikzpicture}[baseline=(current bounding box.center),scale=0.75]
    \draw[->](0,0) to (0,2);
    \draw(0,0.6) circle[radius=3pt];
    \draw[fill](0,1.2) circle[radius=3pt];
    \end{tikzpicture}
\end{equation}

\begin{equation}
    \begin{tikzpicture}[baseline=(current bounding box.center),scale=0.75]
    \draw[->] (0,0) to (0,2);
    \draw[fill] (0,1.6) circle [radius=3pt];
    \draw[fill] (0.2,1) circle [radius=0.3pt];
    \draw[fill] (0.4,1) circle [radius=0.3pt];
    \draw[fill] (0.6,1) circle [radius=0.3pt];
    \draw[->] (0.8,0) to (0.8,2);
    \draw[fill] (0.8,0.3) circle [radius=3pt];
    \end{tikzpicture}\hspace{6pt}
    =
    \hspace{6pt}
     \begin{tikzpicture}[baseline=(current bounding box.center),scale=0.75]
    \draw[->] (0,0) to (0,2);
    \draw[fill] (0,0.3) circle [radius=3pt];
    \draw[fill] (0.2,1) circle [radius=0.3pt];
    \draw[fill] (0.4,1) circle [radius=0.3pt];
    \draw[fill] (0.6,1) circle [radius=0.3pt];
    \draw[->] (0.8,0) to (0.8,2);
    \draw[fill] (0.8,1.6) circle [radius=3pt];
    \end{tikzpicture}
    \hspace{1.5cm}
    \begin{tikzpicture}[baseline=(current bounding box.center),scale=0.75]
    \draw[->] (0,0) to (0,2);
    \draw (0,1.6) circle [radius=3pt];
    \draw[fill] (0.2,1) circle [radius=0.3pt];
    \draw[fill] (0.4,1) circle [radius=0.3pt];
    \draw[fill] (0.6,1) circle [radius=0.3pt];
    \draw[->] (0.8,0) to (0.8,2);
    \draw[fill] (0.8,0.3) circle [radius=3pt];
    \end{tikzpicture}\hspace{6pt}
    =
    \hspace{6pt}
     \begin{tikzpicture}[baseline=(current bounding box.center),scale=0.75]
    \draw[->] (0,0) to (0,2);
    \draw (0,0.3) circle [radius=3pt];
    \draw[fill] (0.2,1) circle [radius=0.3pt];
    \draw[fill] (0.4,1) circle [radius=0.3pt];
    \draw[fill] (0.6,1) circle [radius=0.3pt];
    \draw[->] (0.8,0) to (0.8,2);
    \draw[fill] (0.8,1.6) circle [radius=3pt];
    \end{tikzpicture}
\end{equation}

\begin{equation}\label{dotSlide: bottomLeft}
\begin{tikzpicture}[baseline=(current bounding box.center),scale=0.75]
    \draw[->](0,0) to (1,2);
    \draw[->](1,0) to (0,2);
    \draw[fill](0.25,0.5) circle[radius=3pt];
\end{tikzpicture}\hspace{6pt}
=
\hspace{6pt}
\begin{tikzpicture}[baseline=(current bounding box.center),scale=0.75]
    \draw[->](0,0) to (1,2);
    \draw[->](1,0) to (0,2);
    \draw[fill](0.75,1.5) circle[radius=3pt];
\end{tikzpicture}\hspace{6pt}
+\hspace{2mm}
\begin{tikzpicture}[baseline=(current bounding box.center),scale=0.75]
    \draw[->](0,0) to (0,2);
    \draw[->](0.5,0) to (0.5,2);
\end{tikzpicture}
\hspace{2mm}+\hspace{2mm}
\begin{tikzpicture}[baseline=(current bounding box.center),scale=0.75]
    \draw[->](0,0) to (0,2);
    \draw[->](0.5,0) to (0.5,2);
    \draw(0,1.2) circle[radius=3pt];
    \draw(0.5,0.6) circle[radius=3pt];
\end{tikzpicture}
\end{equation}

\begin{equation}\label{dotSlide: topLeft}
\begin{tikzpicture}[baseline=(current bounding box.center),scale=0.75]
    \draw[->](0,0) to (1,2);
    \draw[->](1,0) to (0,2);
    \draw[fill](0.25,1.5) circle[radius=3pt];
\end{tikzpicture}\hspace{6pt}
=
\hspace{6pt}
\begin{tikzpicture}[baseline=(current bounding box.center),scale=0.75]
    \draw[->](0,0) to (1,2);
    \draw[->](1,0) to (0,2);
    \draw[fill](0.75,0.5) circle[radius=3pt];
\end{tikzpicture}\hspace{6pt}
+
\hspace{2mm}
\begin{tikzpicture}[baseline=(current bounding box.center),scale=0.75]
    \draw[->](0,0) to (0,2);
    \draw[->](0.5,0) to (0.5,2);
\end{tikzpicture}\hspace{2mm}
-
\hspace{2mm}
\begin{tikzpicture}[baseline=(current bounding box.center),scale=0.75]
    \draw[->](0,0) to (0,2);
    \draw[->](0.5,0) to (0.5,2);
    \draw(0,1.2) circle[radius=3pt];
    \draw(0.5,0.6) circle[radius=3pt];
\end{tikzpicture}
\end{equation}

If $x$ and $y$ are morphisms, the diagram corresponding to $x\otimes y$ is obtained by placing the diagram of $y$ to the right of the diagram of $x$. Since the relative positions of the hollow dots are important, we will work with the convention that the hollow dots in the diagram of $y$ will be placed below the height of hollow dots in the diagram of $x$.

\subsection{Trace decategorification}\label{trace}
In \cite{BGHL}, the trace or zeroth Hochschild homology of a $\Bbbk$-linear additive category $\mathcal{C}$ is proposed as an alternative decategorification functor. Here we will recall its definition, and point out one subtlety occuring in our case due to the supercommutative nature of hollow dots and solid dots.

Let $\mathcal{C}$ be a $\Bbbk$-linear additive category. Then its trace decategorification, denoted $\Tr(\mathcal{C})$, is defined as follows:

\begin{equation*}
	\Tr(\mathcal{C})\simeq \bigg( \bigoplus_{x\in \mathcal{O}b(\mathcal{C})} \End(x) \bigg) \big/\mathcal{I}, 
\end{equation*}
where $\mathcal{I}$ is the ideal generated by $\operatorname{span}_{\Bbbk}\{fg-gf\}$ for all $f:x\rightarrow y$ and $g:y\rightarrow x$ , $x,y\in \mathcal{O}b(\mathcal{C})$. Note that here we quotient out by an ideal, so $\Tr(\mathcal{C})$ has an algebra structure.

Trace decategorification has a nice diagrammatic interpretation, in which we consider our string diagrams to be drawn on an annulus instead of a plane. The annulus recaptures the trace relation $fg=gf$ diagrammatically since we can slide $f$ or $g$ around the annulus to change their composition order.

\begin{equation*}
	\begin{tikzpicture}[baseline=(current bounding box.center),rounded corners]
	\filldraw[blue,dashed,fill=blue!5!white] (0.5,1) circle [radius=40pt];
	\filldraw[fill,blue,fill=white] (0.5,1) circle [radius=4pt];
	\draw (0,0.3) to (0,0.5);
	\draw (-0.25,0.5) rectangle (0.25,0.9);
	\draw (0,0.9) to (0,1.2);
	\draw (-0.25,1.2) rectangle (0.25,1.6);
	\draw[->] (0,1.6) to (0,1.7);
	\draw (0,1.7) arc(180:0:5mm);
	\draw (1,1.7) to (1,0.3);
	\draw (1,0.3) arc(360:180:5mm);
	\node at (0,0.7){\small $f$};
	\node at (0,1.4){\small $g$};
	\end{tikzpicture}\hspace{6pt}
	=
	\hspace{6pt}
	\begin{tikzpicture}[baseline=(current bounding box.center),rounded corners]
	\filldraw[blue,dashed,fill=blue!5!white] (0.5,1) circle [radius=40pt];
	\filldraw[fill,blue,fill=white] (0.5,1) circle [radius=4pt];
	\draw (0,0.3) to (0,0.5);
	\draw (-0.25,0.5) rectangle (0.25,0.9);
	\draw (0,0.9) to (0,1.2);
	\draw (-0.25,1.2) rectangle (0.25,1.6);
	\draw[->] (0,1.6) to (0,1.7);
	\draw (0,1.7) arc(180:0:5mm);
	\draw (1,1.7) to (1,0.3);
	\draw (1,0.3) arc(360:180:5mm);
	\node at (0,0.7){\small $g$};
	\node at (0,1.4){\small $f$};
	\end{tikzpicture}
\end{equation*}

As described in Section \ref{twistedHeisenberg}, $\Heis$ has a $\mathbb{Z}/2\mathbb{Z}$-grading where \begin{tikzpicture}[baseline=(current bounding box.center),scale=0.75]
\draw[->](-2,-0.25) to (-2,0.75);
\draw (-2,0.25) circle [radius=3pt];
\end{tikzpicture} and 
\begin{tikzpicture}[baseline=(current bounding box.center),scale=0.75]
\draw[<-](-1,-0.25) to (-1,0.75);
\draw (-1,0.25) circle [radius=3pt];
\end{tikzpicture} have degree one, and other generating diagrams have degree zero. We also have supercommutativity relations (\ref{cicj=-cjci}) and (\ref{anticommute}) and supercyclicity relations (\ref{caps}) and (\ref{cups}). These relations have several interesting diagrammatic consequences.

\begin{example}
	Working with relation (\ref{anticommute}), we have the following compuation:
	\begin{equation*}
		\begin{tikzpicture}[baseline=(current bounding box.center),rounded corners]
		\filldraw[blue,dashed,fill=blue!5!white] (0.5,1) circle [radius=40pt];
		\filldraw[fill,blue,fill=white] (0.5,1) circle [radius=4pt];
		\draw[->] (0,0.3) to (0,1.7);
		\draw (0,1.7) arc(180:0:5mm);
		\draw (1,1.7) to (1,0.3);
		\draw (1,0.3) arc(360:180:5mm);
		\draw (0,0.7) circle [radius=2pt];
		\fill (0,1.4) circle [radius=2pt];
		\end{tikzpicture}\hspace{6pt}
		=
		\hspace{6pt}
		\begin{tikzpicture}[baseline=(current bounding box.center),rounded corners]
		\filldraw[blue,dashed,fill=blue!5!white] (0.5,1) circle [radius=40pt];
		\filldraw[fill,blue,fill=white] (0.5,1) circle [radius=4pt];
		\draw[->] (0,0.3) to (0,1.7);
		\draw (0,1.7) arc(180:0:5mm);
		\draw (1,1.7) to (1,0.3);
		\draw (1,0.3) arc(360:180:5mm);
		\fill (0,0.7) circle [radius=2pt];
		\draw (0,1.4) circle [radius=2pt];
		\end{tikzpicture}\hspace{6pt}
		=
		\hspace{6pt}-\hspace{4pt}
		\begin{tikzpicture}[baseline=(current bounding box.center),rounded corners]
		\filldraw[blue,dashed,fill=blue!5!white] (0.5,1) circle [radius=40pt];
		\filldraw[fill,blue,fill=white] (0.5,1) circle [radius=4pt];
		\draw[->] (0,0.3) to (0,1.7);
		\draw (0,1.7) arc(180:0:5mm);
		\draw (1,1.7) to (1,0.3);
		\draw (1,0.3) arc(360:180:5mm);
		\draw (0,0.7) circle [radius=2pt];
		\fill (0,1.4) circle [radius=2pt];
		\end{tikzpicture}
		=0.
	\end{equation*}
	Here the first equality is obtained by sending the solid dot around the annulus using trace relation, and the second equality is a consequence of relation (\ref{anticommute}). Therefore the above diagram is equal to zero in the trace.
\end{example}

\begin{example}\label{supertrace}
	To demonstrate the subtlety with supercyclicity relations (\ref{caps}) and (\ref{cups}), consider the following situation:
	\begin{equation*}
		\begin{tikzpicture}[baseline=(current bounding box.center),rounded corners]
		\filldraw[blue,dashed,fill=blue!5!white] (0.5,1) circle [radius=40pt];
		\filldraw[fill,blue,fill=white] (0.5,1) circle [radius=4pt];
		\draw[->] (0,0.3) to (0,1.7);
		\draw (0,1.7) arc(180:0:5mm);
		\draw (1,1.7) to (1,0.3);
		\draw (1,0.3) arc(360:180:5mm);
		\draw (0,1.4) circle [radius=2pt];
		\end{tikzpicture}\hspace{6pt}
		=\hspace{6pt}-\hspace{4pt}
		\begin{tikzpicture}[baseline=(current bounding box.center),rounded corners]
		\filldraw[blue,dashed,fill=blue!5!white] (0.5,1) circle [radius=40pt];
		\filldraw[fill,blue,fill=white] (0.5,1) circle [radius=4pt];
		\draw[->] (0,0.3) to (0,1.7);
		\draw (0,1.7) arc(180:0:5mm);
		\draw (1,1.7) to (1,0.3);
		\draw (1,0.3) arc(360:180:5mm);
		\draw (0,0.7) circle [radius=2pt];
		\end{tikzpicture}
	\end{equation*}
	If we denote \begin{tikzpicture}[baseline=(current bounding box.center),scale=0.75]
	\draw[->](-2,-0.25) to (-2,0.75);
	\draw (-2,0.25) circle [radius=3pt];
	\end{tikzpicture} by $f$, with the usual trace relation, we would get $f\circ id=id\circ f$. However, in this case, we gain an extra negative sign from the supercyclicity relations. So, we must replace the usual trace relation $fg=gf$ with the supertrace relation $fg=(-1)^{|f||g|}gf$ in the ideal $\mathcal{I}$, where $|f|,|g|$ are the degrees of $f$ and $g$ with respect to the $\mathbb{Z}/2\mathbb{Z}$ grading.  This example can be generalized to show that composition of an an odd morphism with a cycle of odd length is zero in the supertrace, since it will be equal to its negative when a hollow dot travels around the annulus and arrives to its original position.
\end{example}

We wish to restrict our study to the following subalgebra of the trace.

\begin{definition} The \emph{even trace} of $\Heis$ is defined by
	\begin{equation*}
		\TrHEv\simeq \bigg( \bigoplus_{x\in \mathcal{O}b(\Heis)} \End_{\overline{0}}(x) \bigg) \big/\mathcal{I}_{\overline{0}}
	\end{equation*}
	where $\End_{\overline{0}}(x)$ consists of even degree endomorphisms and $\mathcal{I}_{\overline{0}}$ is its ideal generated by $\operatorname{span}_{\mathbb{C}}\{fg-gf\}$ for all $f:x\rightarrow y$ and $g:y\rightarrow x$ , $x,y\in \mathcal{O}b(\Heis)$.
\end{definition}

This is the restriction of the trace to only the \emph{even} part (with respect to the $\mathbb{Z}/2\mathbb{Z}$ grading induced by $\deg(c_i) = 1$). The odd part of the trace is not zero (it contains, e.g., $\begin{tikzpicture}[baseline=(current bounding box.center),scale=0.75]
	\draw[->](-2,-0.25) to (-2,0.75);
	\draw (-2,0.25) circle [radius=3pt];
	\end{tikzpicture}$), but is not interesting from a representation theoretic viewpoint as explained above. The example of trace functions on the finite Hecke-Clifford algebra in \cite[Section 4.1]{WW} demonstrates the importance of the even trace.

Wan and Wang study the space of trace functions on the finite Hecke-Clifford algebra $\mathcal{H}_n$: linear functions $\phi: \mathcal{H}_n \rightarrow \mathbb{C}$ such that $\phi([h,h'])=0$ for all $h,h'\in \mathcal{H}_n$, and $\phi(h)=0$ for all $h\in (\mathcal{H}_n)_{\overline{1}}$. This latter requirement encodes the information that odd elements act with zero trace on any $\mathbb{Z}_2$-graded $\mathcal{H}_n$-module (because multiplication by an odd element results in a shift in degree). The space of such trace functions is clearly canonically isomorphic to the dual of the even cocenter, rather than of the full cocenter. The same observation holds for the trace of the affine Hecke-Clifford algebra, as studied in \cite{Mike}.

We will see in Section 4 that the structure of $\TrH$ is largely controlled by the even trace of the degenerate affine Hecke-Clifford algebra in type $A$; we therefore do not lose interesting representation-theoretic information by restricting to $\TrHEv$, and greatly simplify our calculations by doing so. For instance, the ambiguity in the supercyclicity relations identified in Example 3.\ref{supertrace} does not interfere with calculations in $\TrHEv$.

Since $\mathcal{I}_{\overline{0}}$ is an ideal of $\bigoplus_{x\in \mathcal{O}b(\Heis)}\End_{\overline{0}}(x)$, the compositions $fg$ and $gf$ must be even morphisms, even though individually $f$ and $g$ may be odd morphisms. This situation is analogous to the even cocenter of the degenerate affine Hecke-Clifford algebra studied in \cite{Mike}, where Clifford generators $c_i$ do no appear individually (as they are odd generators), but still have an impact on the cocenter via the relation $c_i^2=-1$.

Diagrammatically, the above definition means that we will have an even number of hollow dots on our diagrams. In a diagram with $2n$ hollow dots, sliding one around the annulus from top to the bottom will multiply the diagram by $(-1)^{2n-1}(-1)=1$ where $(-1)^{2n-1}$ is a result of changing relative height with the remaining $2n-1$ hollow dots using relation (\ref{cicj=-cjci}) and $(-1)$ is the result of sliding it through a clockwise cup using relation (\ref{cups}).

\begin{remark}
	For the sake of clarity, when working with diagrams in the even trace we will not draw them on an annulus, but will instead draw them inside square brackets, e.g.  
	$ \left[ \hackcenter{\begin{tikzpicture}[baseline=(current bounding box.center),scale=0.5]
		\hspace{1mm}
		\draw[->] (0,0) to (1,2);
		\draw[->] (1,0) to (0,2);
		\end{tikzpicture}}\;\: \right]$. This notation refers to the equivalence class of the diagram in $\TrHEv$.
\end{remark}

Our main theorem will relate $\TrHEv$ and $W^-$. In particular, we will establish that the correspondence in Table 1 gives an isomorphism between $\TrHEv$ and $W^-$. Recall that $w_{k,\ell} = t^{\ell} D^k \in W^-$. 
\setlength{\arrayrulewidth}{.2mm}
\setlength{\tabcolsep}{3pt}
\renewcommand{\arraystretch}{2.3}
\begin{table}[h!]
	\centering
\begin{tabular}{ |c|c|c|c|  }
	\hline
	$\TrHEv$    & bidegree ($k$-$l$,$k$) & values of ($l$,$k$) & $W^-$ \\[8pt]
	\hline
	\hspace{8pt}
	\begin{tikzpicture}[baseline=(current bounding box.center),scale=0.75]
	\draw[<-] (3,2) arc (-180:180:5mm);
	\fill (3.95,2.2) circle [radius=2pt];
	\node at (4.3,2.2) {\small $2a$};
	\end{tikzpicture} & ($2a$+$1$,$2a$+$1$) & (0,$2a$+$1$) & $-2w_{0,2a+1}$\\[8pt]
	
	\hline
	$\left[\;\; \begin{tikzpicture}[baseline=(current bounding box.center),scale=0.75]
	\draw[->] (0,0) to (0,1);
	\end{tikzpicture}\;\; \right]$ & ($1$,$0$) & ($-1$,$0$) & $\sqrt{2}w_{-1,0}$\\[8pt]
	
	\hline
	$\left[\;\; \begin{tikzpicture}[baseline=(current bounding box.center),scale=0.75]
	\draw[->] (0,0) to (0,1);
	\fill  (0,.5) circle[radius=2pt];
	\node at (0.2,0.5) {\small $2$};
	\end{tikzpicture} \right]$ & ($3$,$2$) & ($-1$,$2$) & $\sqrt{2}(w_{-1,2}-w_{-1,1})$\\[8pt]
	
	\hline
	$\left[\;\; \begin{tikzpicture}[baseline=(current bounding box.center),scale=0.75]
	\draw[<-] (0,0) to (0,1);
	\end{tikzpicture}\;\; \right]$ & ($-1$,$0$) & ($1$,$0$) & $\sqrt{2}w_{1,0}$\\[8pt]

	\hline
	\hspace{-.5cm}
	$\left[ \begin{tikzpicture}[baseline=(current bounding box.center),scale=0.5]
	\draw[->] (3.2,0) .. controls (3.2,1.25) and (0,.25) .. (0,2)
	node[pos=0.85, shape=coordinate](X){};
	\draw[->] (0,0) .. controls (0,1) and (.8,.8) .. (.8,2);
	\draw[->] (.8,0) .. controls (.8,1) and (1.6,.8) .. (1.6,2);
	\draw[->] (2.4,0) .. controls (2.4,1) and (3.2,.8) .. (3.2,2);
	\node at (1.6,.35) {$\dots$};
	\node at (2.4,1.65) {$\dots$};
	\end{tikzpicture}\;\; \right]$ & ($n$,0) & ($-n$,0) & $\sqrt{2}w_{-n,0}$\\

		\hline
	\hspace{-.5cm}
	$\left[ \begin{tikzpicture}[baseline=(current bounding box.center),scale=0.5]
	\draw[->] (3.2,0) .. controls (3.2,1.25) and (0,.25) .. (0,2)
	node[pos=0.85, shape=coordinate](X){};
	\draw[->] (0,0) .. controls (0,1) and (.8,.8) .. (.8,2);
	\draw[->] (.8,0) .. controls (.8,1) and (1.6,.8) .. (1.6,2);
	\draw[->] (2.4,0) .. controls (2.4,1) and (3.2,.8) .. (3.2,2);
	\node at (1.6,.35) {$\dots$};
	\node at (2.4,1.65) {$\dots$};
	\filldraw  (.05,1.6) circle (3pt);
	\node at (-0.48,1.7) {\small $2a$};
	\end{tikzpicture}\;\; \right]$ & ($n$+$a$,$a$) & ($-n$,$a$) & $\sqrt{2}(w_{-n,a}+O(w_{-n,a-1}))$\\ 
	
	\hline
	$\left[\;\; \begin{tikzpicture}[baseline=(current bounding box.center),scale=0.75]
	\draw[->] (0,-0.25) to (1,0.75);
	\draw[->] (1,-0.25) to (0,0.75);
	\filldraw (.19, .55) circle[radius=2pt];
	\end{tikzpicture}\;\; \right]$
	 + $\left[\;\; \begin{tikzpicture}[baseline=(current bounding box.center),scale=0.75]
	\draw[->] (0,-0.25) to (1,0.75);
	\draw[->] (1,-0.25) to (0,0.75);
	\filldraw (.8, .55) circle[radius=2pt];
	\end{tikzpicture}\;\; \right]$ & ($3$,$1$) & ($-2$,$1$) & $2\sqrt{2}(w_{-2,1}-w_{-2,0})$\\[8pt]

	\hline
\end{tabular}
\caption{Correspondence between $\TrHEv$ and $W^-$}
\label{table:1}
\end{table} \\

\subsection{Degenerate affine Hecke-Clifford algebra}
 
 We recall the definition of the degenerate affine Hecke-Clifford algebra of type $A_{n-1}$, denoted $\HC_n$, which was first studied in \cite{Naz}. Let $\mathcal{C}\ell_n$ be the Clifford algebra with generators $c_1, \ldots, c_n$, subject to the relations: $$c_i^2 = -1 \qquad \text{for } 1\leq i \leq n,$$ $$c_i c_j = - c_j c_i \qquad \text{if } i\not=j.$$ The symmetric group $S_n$ has a natural action on $\mathcal{C}\ell_n$ by permuting the generators.

Define the Sergeev algebra, or finite Hecke-Clifford algebra of type $A_{n-1}$, to be the semidirect product $$\mathbb{S} := \mathcal{C}\ell_n \rtimes \mathbb{C} S_n$$ corresponding to this action.

The degenerate affine Hecke algebra, $\HC_n$, is isomorphic as a vector space to $\mathbb{S} \otimes \mathbb{C}[x_1, \ldots, x_n]$. It is an associative unital algebra over $\mathbb{C}[u]$, where $u$ is a formal parameter usually set to 1, generated by $s_1,s_2,...,s_{n-1}$, $x_1,x_2,...,x_n$, and $c_1,c_2,...,c_n$ subject to relations making $\mathbb{C}[x_1, \ldots, x_n]$, $C\ell_V$, and $\mathbb{C} S_n$ subalgebras, along with the additional relations: 
  
\begin{align*}
x_i c_i &= -c_i x_i, \quad x_i c_j = c_j x_i \quad (i\not= j) ,\\
\sigma c_i & = c_{\sigma(i)} \sigma \quad (1\leq i \leq n, \sigma \in S_n),\\
x_{i+1}s_{i}-s_{i}x_{i} &  =u(1-c_{i+1}c_{i})  , \\
x_{j}s_{i} &  =s_{i}x_{j}\quad (j\neq i,i+1) .
\end{align*}

It also has a $\mathbb{Z}/{2\mathbb{Z}}$ grading via $\deg(s_i)=\deg(x_i)=0$ and $\deg(c_i)=1$. This algebra is also called the affine Sergeev algebra, and later on we will see that it controls the endomorphisms of up strands in $\mathcal{H}_{tw}$.

\subsection{Trace of the degenerate affine Hecke-Clifford algebra}

The second author computes the trace (or zeroth Hochschild homology) of the even part of $\HC_n$ as a vector space in \cite{Mike}, where he gives an explicit description of a vector space basis for types $A$, $B$ and $D$. Here we recall the result for type $A$.

Let $I$ be the standard root system of type $A_{n-1}$, and let $W= S_n$ be the Weyl group. For a partition $\lambda = (\lambda_1, \lambda_2, \ldots, \lambda_k) \vdash n$, let $J_\lambda$ be the unique minimal subset of $I$ (up to conjugation by $W$) such that $W_{J_\lambda}$ contains an element of cycle type $\lambda.$ Let $w_{\lambda} \in W$ be the element $(1,\ldots, \lambda_1)(\lambda_1 +1, \ldots, \lambda_1 + \lambda_2)\ldots (n-\lambda_n +1, \ldots, n)$. Then $w_\lambda \in W_{J_\lambda}$. 

Let $V$ be the standard representation of $\mathfrak{sl}_n$, with basis $\{x_1, \ldots, x_n\}$. Denote by $V^2$ the vector space with basis $\{x_1^2, \ldots, x_n^2\}$.

Finally, fix a basis $\{f_{J_{\lambda};i}\}$ of the vector space $S((V^2)^{W_{J_\lambda}})^{N_W(W_{J_\lambda})}$, where $S(U)$ denotes the symmetric algebra of the vector space $U$, and $N_W(W_{J_\lambda})$ denotes the normalizer of the parabolic subgroup $W_{J_\lambda}$ in $W$. We have the following description of a basis for $\Tr(\HC_n)_{\overline{0}}$ in type $A$.

\begin{proposition}\label{cocenter HC}\cite[Theorem 5.4]{Mike}
 The set $\{w_{\lambda}f_{J_{\lambda;i}}\}_{\lambda\in\mathcal{OP}_n}$ is a basis of $\Tr(\HC_n)_{\overline{0}}$, where $\mathcal{OP}_n$ is the set of partitions of $n$ with all odd parts.
\end{proposition}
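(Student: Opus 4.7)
The proposition is proved in \cite{Mike}, but I outline the natural strategy. The plan is to reduce to a linear-algebra computation on the PBW associated graded, carry out the cocenter calculation there using conjugation by the symmetric group, and then lift the result by a filtration argument.

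First I would set up the PBW structure. By the defining relations, $\HC_n$ has a $\mathbb{C}$-basis $\{w \cdot c_I \cdot x^{\alpha}\}$ with $w\in S_n$, $I\subseteq\{1,\dots,n\}$, $\alpha\in\mathbb{Z}_{\geq 0}^n$, so there is an isomorphism of filtered super-vector-spaces $\HC_n \cong \mathbb{C}[S_n]\otimes \Cl_n \otimes S(V)$, where $V = \mathrm{span}(x_1,\dots,x_n)$. Filter $\HC_n$ by polynomial degree in the $x_i$; the associated graded $\gr\HC_n$ is the smash product $(S(V)\otimes \Cl_n)\rtimes \mathbb{C}[S_n]$, with $s_i$ acting by the obvious permutation (the correction term $(1-c_{i+1}c_i)$ in $x_{i+1}s_i-s_i x_i$ drops to lower filtration). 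It suffices to describe $\Tr(\gr\HC_n)_{\overline{0}}$ and then check the stated basis lifts to $\HC_n$ without collapse.

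Next I would compute the cocenter of $\gr\HC_n$. Because conjugation by $S_n$ is available inside the algebra, every class in the cocenter is represented by a sum of $w\cdot \xi$ where $w$ ranges over a chosen set of conjugacy class representatives of $S_n$ and $\xi \in S(V)\otimes \Cl_n$ is $w$-invariant up to centralizer action. Choose $w_\lambda$ as stated. For an odd part of length $m = 2k+1$, the cyclic rotation $c_{i_1}\cdots c_{i_m}\mapsto c_{i_2}\cdots c_{i_m}c_{i_1}$ introduces a sign $(-1)^{m-1}=1$, so the Clifford monomial supported on that cycle is $w_\lambda$-invariant; however, such a monomial is odd in the $\mathbb{Z}/2$-grading and therefore drops out of the \emph{even} cocenter, and after the required evenness condition one is forced to take no Clifford letters (or pairs that cancel), leaving $x_i^2$-type invariants. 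For an even part $m=2k$, the same rotation produces a sign $-1$, which allows one to exhibit the putative generator as an explicit supercommutator: writing $w_\lambda = c_{i_1} c_{i_1}^{-1} w_\lambda$ and pushing $c_{i_1}$ around the cycle shows $w_\lambda \equiv -w_\lambda$ modulo supercommutators; combined with the vanishing of the companion odd Clifford decoration in the even part, this kills every contribution from a partition with even parts.

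Once $\lambda$ is restricted to $\mathcal{OP}_n$, the remaining freedom lives in the normalizer $N_W(W_{J_\lambda})$ (the centralizer of $w_\lambda$ up to $W_{J_\lambda}$, which acts trivially by conjugation on the $w_\lambda$-invariant part). The surviving polynomial piece must be $W_{J_\lambda}$-invariant (this is forced by conjugation inside each cycle) and in fact must lie in $S((V^2)^{W_{J_\lambda}})^{N_W(W_{J_\lambda})}$, because the $c_i$'s anticommute with $x_i$ so only even powers of each $x_i$ descend to the even cocenter; choosing the basis $\{f_{J_\lambda;i}\}$ then yields exactly the proposed spanning set. Linear independence follows because distinct $(\lambda,i)$ live in distinct $S_n$-conjugacy components of the PBW decomposition and cannot cancel against each other.

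Finally, I would lift from $\gr\HC_n$ back to $\HC_n$. The filtration by $x$-degree is compatible with the supercommutator ideal, and the proposed generators $w_\lambda f_{J_\lambda;i}$ have well-defined symbols in $\gr\HC_n$; spanning and independence therefore transfer by a standard filtered/graded argument. The main obstacle in this program is the delicate sign bookkeeping in the Clifford part that distinguishes odd from even cycles and, in the even cocenter, eliminates precisely the partitions outside $\mathcal{OP}_n$; everything else is a PBW-plus-conjugation manipulation.
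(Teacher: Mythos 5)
The paper offers no proof of this statement: it is quoted verbatim from \cite[Theorem 5.4]{Mike}, so your sketch can only be measured against what a complete argument must contain. Your overall strategy (PBW decomposition, passage to the associated graded smash product $(S(V)\otimes \Cl)\rtimes \mathbb{C}[S_n]$, conjugacy-class decomposition of its cocenter, sign analysis on Clifford monomials) is the right skeleton for the \emph{spanning} half, and it is essentially how such results are organized. But there is a genuine gap in your independence argument. The natural map $\Tr(\gr \HC_n)_{\overline{0}} \to \gr\big(\Tr(\HC_n)_{\overline{0}}\big)$ is surjective but not a priori injective: a commutator in $\HC_n$ has lower-order terms (precisely the $u(1-c_{i+1}c_i)$ corrections you set aside), and these produce relations in the cocenter of the filtered algebra that are invisible in the cocenter of the associated graded. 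So ``distinct conjugacy components cannot cancel'' proves independence in $\Tr(\gr\HC_n)$ only; it does not transfer to $\Tr(\HC_n)$ by a ``standard filtered/graded argument.'' A complete proof must separately exhibit enough even trace functions --- e.g.\ (super)characters of a suitable family of finite-dimensional $\HC_n$-modules, in the spirit of the Wan--Wang trace functions the paper invokes in Section 3 --- that separate the classes $w_\lambda f_{J_\lambda;i}$. This is the genuinely hard half of \cite[Theorem 5.4]{Mike} and it is missing from your outline.

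A second, smaller issue: your mechanism for killing even cycles is stated incorrectly. Pushing a single $c_{i_1}$ around an $m$-cycle $w$ does not return $-w$; one step gives $w \equiv -c_{i_1}c_{i_2}w$ and the process does not close up for $m>2$. The correct argument conjugates by the full Clifford monomial $z = c_{i_1}\cdots c_{i_m}$ supported on the cycle: $wzw^{-1} = (-1)^{m-1}z$, so for $m$ even $zw \equiv wz = -zw$ forces $zw \equiv 0$, and since $z$ is even and invertible one recovers $w \equiv 0$. This is fixable, but as written the step would fail. With these two repairs --- the $z$-conjugation for even parts, and a trace-function (or representation-theoretic) argument for linear independence --- your outline would match the actual structure of the cited proof.
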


\begin{example}
Let $n=3$. Then we have $\mathcal{OP}_3=\{(1,1,1),(3)\}$. 

For $\lambda=(1,1,1)$, we have $w_\lambda = 1$ and $J_{\lambda} = \emptyset$, since  ${W}_{J_\lambda} = \{1\}$. Thus $N_{W}({W}_{J_\lambda}) = {S_3}$. So, we choose a basis $\{f_{J_{\lambda};i}\}$ of the vector space $S((V^2)^{W_J})^{N_W(W_J)} = \mathbb{C}[x_1^2 , x_2^2, x_3^2]^{S_3}$, i.e. the symmetric polynomials in 3 variables. We can take $\{f_{J_{\lambda};i}\} = \{s_\nu\}$, the Schur polynomials in 3 variables.

For $\lambda=(3)$, we have $w_{\lambda}=(123)$, a $3$-cycle in $S_3$. Thus $J_\lambda = I$, ${W}_{J_\lambda}=W$ and $N_{W}(W_{J_\lambda})=W$. Therefore $f_{J_{\lambda},i}$ is a basis of $\mathbb{C}[x_1^2+x_2^2+x_3^2]$, polynomials in the variable $(x_1^2+x_2^2+x_3^2)$ (in this case, the $N_W(W_{J_\lambda})$-invariance is superfluous).

Therefore a basis of $Tr(\HC_3)_{\overline{0}}$ is given by $\{ s_\nu\}\cup \{(123)(x_1^2+ x_2^2+ x_3^2)^{n}\}_{n\in \mathbb{N}}$ where $\{s_\nu\}$ are the Schur polynomials in 3 variables.

Note that this bases does not contain any classes indexed by partitions with even parts. Correspondingly, we will see that degree zero diagrams in $\TrHEv$ containing even cycles are zero.
\end{example}

\subsection{Distinguished elements $h_n$}
Define the elements: 
\begin{align*}
\h{n}{(x_1^{j_1} \cdots x_n^{j_n})  (c_1^{\epsilon_1} \cdots c_n^{\epsilon_n})} &:= \left[ \;\hackcenter{
\begin{tikzpicture}[scale=0.8]
  \draw[->] (3.2,0) .. controls (3.2,1.25) and (0,.25) .. (0,2)
     node[pos=0.85, shape=coordinate](X){};
  \draw[->] (0,0) .. controls (0,1) and (.8,.8) .. (.8,2);
  \draw[->] (.8,0) .. controls (.8,1) and (1.6,.8) .. (1.6,2);
  \draw[->] (2.4,0) .. controls (2.4,1) and (3.2,.8) .. (3.2,2);
  \node at (1.6,.35) {$\dots$};
  \node at (2.4,1.65) {$\dots$};
  \filldraw  (.05,1.6) circle (2pt);
  \filldraw  (.78,1.6) circle (2pt);  
  \filldraw  (1.58,1.6) circle (2pt);
  \filldraw  (3.18,1.6) circle (2pt);
 \draw  (.35,1.2) circle (2.5pt);
  \draw  (.62,1.2) circle (2.5pt); 
  \draw  (1.425,1.2) circle (2.5pt);
  \draw  (3.025,1.2) circle (2.5pt);
  \node at (-0.18,1.7) {$\scs j_1$};
  \node at (.55,1.7) {$\scs j_2$};
  \node at (1.35,1.7) {$\scs j_3$};
   \node at (3.45,1.7) {$\scs j_n$};
 \node at  (.051,1.25) {$ \epsilon_1$};
  \node at  (.99,1.25) {$\epsilon_2$}; 
  \node at  (1.76,1.25) {$\epsilon_3$};
  \node at  (3.37,1.25) {$\epsilon_n$};
\end{tikzpicture}}
\; \right],
\\
\h{-n}{(x_1^{j_1} \cdots x_n^{j_n})  (c_1^{\epsilon_1} \cdots c_n^{\epsilon_n})} &:= \left[ \;\hackcenter{
\begin{tikzpicture}[scale=0.8]
  \draw[<-] (3.2,0) .. controls (3.2,1.25) and (0,.25) .. (0,2)
     node[pos=0.85, shape=coordinate](X){};
  \draw[<-] (0,0) .. controls (0,1) and (.8,.8) .. (.8,2);
  \draw[<-] (.8,0) .. controls (.8,1) and (1.6,.8) .. (1.6,2);
  \draw[<-] (2.4,0) .. controls (2.4,1) and (3.2,.8) .. (3.2,2);
  \node at (1.6,.35) {$\dots$};
  \node at (2.4,1.65) {$\dots$};
  \filldraw  (.05,1.6) circle (2pt);
  \filldraw  (.78,1.6) circle (2pt);  
  \filldraw  (1.58,1.6) circle (2pt);
  \filldraw  (3.18,1.6) circle (2pt);
 \draw  (.35,1.2) circle (2.5pt);
  \draw  (.62,1.2) circle (2.5pt); 
  \draw  (1.425,1.2) circle (2.5pt);
  \draw  (3.025,1.2) circle (2.5pt);
  \node at (-0.18,1.7) {$\scs j_1$};
  \node at (.55,1.7) {$\scs j_2$};
  \node at (1.35,1.7) {$\scs j_3$};
   \node at (3.45,1.7) {$\scs j_n$};
 \node at  (.051,1.25) {$ \epsilon_1$};
  \node at  (.99,1.25) {$\epsilon_2$}; 
  \node at  (1.76,1.25) {$\epsilon_3$};
  \node at  (3.37,1.25) {$\epsilon_n$};
\end{tikzpicture}}
\; \right],\nonumber
\end{align*}

where $\epsilon_i \in \{0,1\}$. In both of these elements, we consider the hollow dots to be descending in height from left to right, so that the dot labeled $\epsilon_1$ is the highest.\\

\begin{remark}
These elements are analogues to those denoted $h_{\pm n} \otimes (x_1^{j_1} \cdots x_n^{j_n}) $ in \cite{CLLS}. 
\end{remark}

Additionally, set $$\h{n}{\sum x_i^{j_i}} = \sum \h{n}{x_i^{j_i}}.$$

\begin{lemma} \label{basic hn facts} For $n\geq 1$ and $1\leq i \leq n-1$ we have \begin{enumerate} \item $\h{\pm n}{x_i}  =\h{\pm n}{x_{i+1}}   \pm \h{\pm i}{}  \h{\pm  (n-i)}{}  $. 
\item $\h{\pm n}{ x_i   c_j} = - \h{\pm n}{x_{i+1}c_{j+1}} $. \end{enumerate}
\end{lemma}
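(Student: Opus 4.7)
The plan is to prove both parts by local manipulations on the cycle diagram for $\h{\pm n}{}$ using the diagrammatic relations from Section 3, together with the vanishing principle for cycles carrying an odd number of hollow dots pointed out in Example 3.\ref{supertrace}.

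For part (1), I would apply the dot-slide relation \eqref{dotSlide: bottomLeft} (for $+n$, with upward strands) or its downward-strand analog (for $-n$) to push the solid dot on strand $i$ through the crossing that separates strand $i$ from strand $i+1$ in the cycle. The relation produces three terms: (a) the dot now sitting on strand $i+1$, which is exactly $\h{\pm n}{x_{i+1}}$; (b) an uncrossed identity term, which resolves the crossing into two parallel strands and thereby splits the single $n$-cycle into two disjoint cycles of lengths $i$ and $n-i$, contributing $\pm\h{\pm i}{}\,\h{\pm(n-i)}{}$ with a sign determined by strand orientation; and (c) a term carrying one hollow dot on each of the two newly parallel strands. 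The term (c) vanishes in $\TrHEv$: after the splitting, it is a product of two cycles each carrying exactly one hollow dot, and by Example 3.\ref{supertrace} any cycle with an odd number of hollow dots equals its own negative under supercyclicity \eqref{caps}--\eqref{cups}, hence is zero.

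For part (2), I would use the cyclic structure of $\h{\pm n}{}$ together with the supertrace relation on the annulus. Because $h_{\pm n}$ implements a length-$n$ cyclic permutation on its strands, the endomorphism is invariant, up to signs, under a one-step cyclic rotation that sends strand $k$ to strand $k+1$ and therefore shifts every dot-index uniformly by one. Under this rotation, the solid dot, being of $\mathbb{Z}/2\mathbb{Z}$-degree zero, contributes no sign, while the single hollow dot $c_j$ is transported once across a closure arc of the annulus and picks up an overall factor of $-1$ from supercyclicity \eqref{caps}--\eqref{cups}. This yields $\h{\pm n}{x_i c_j} = -\h{\pm n}{x_{i+1}c_{j+1}}$.

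The main obstacle I anticipate is careful sign-tracking. In part (1), the sign of the splitting contribution $\pm\h{\pm i}{}\,\h{\pm(n-i)}{}$ depends on which of \eqref{dotSlide: bottomLeft}--\eqref{dotSlide: topLeft} (or their downward-strand analogs) is used and on the orientation of the strands in $h_{\pm n}$; one has to check case-by-case that the sign matches the $\pm$ in the statement, and the downward-strand analogs are not written explicitly in the excerpt. In part (2), the argument reduces to confirming that exactly one closure-sign contributes under a single cyclic shift, rather than several signs combining differently. Beyond this sign bookkeeping, both arguments are direct applications of the relations of Section 3 together with the even-trace vanishing principle of Example 3.\ref{supertrace}.
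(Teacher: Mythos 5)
Your part (1) is essentially the paper's argument: the paper reduces to \cite[Lemma 14]{CLLS} and observes that the one new resolution term (the one carrying a pair of hollow dots) dies because it is a product of two cycles each carrying a single hollow dot, which is exactly your term (c). One small caution you share with the paper: the vanishing of a cycle with one hollow dot via ``slide it once around the annulus'' really gives $[\text{cycle with }c_k]=-[\text{cycle with }c_{k+1}]$, so iterating forces the class to be zero only when the cycle length is odd; in the splitting $n=i+(n-i)$ this is enough (for $n$ odd one factor has odd length, and for $n$ even the whole identity concerns classes that vanish anyway), but it is worth saying.

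Part (2) has a genuine gap. Your one-step rotation treats the transport of the solid dot as sign-free because it has $\mathbb{Z}/2\mathbb{Z}$-degree zero, but degree is not the issue: a solid dot does not slide through a crossing for free. Relations \eqref{dotSlide: bottomLeft}--\eqref{dotSlide: topLeft} produce resolution terms every time the solid dot crosses a strand, and these are precisely the terms responsible for the extra summand $\pm\h{\pm i}{}\h{\pm(n-i)}{}$ in part (1). Applied verbatim, your rotation argument would ``prove'' $\h{\pm n}{x_i}=\h{\pm n}{x_{i+1}}$ with no correction term, contradicting part (1). So for part (2) you must still resolve the crossings that the solid dot passes and argue that every resulting split-cycle term vanishes (here the lone hollow dot $c_j$, possibly together with the two hollow dots created by the resolution, leaves at least one factor that dies by the odd-hollow-dot argument). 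Moreover, the overall sign is not produced solely by one closure-arc passage of $c_j$: when $i=j$ the solid and hollow dots also pass each other twice (once on strand $j$ and once on strand $i+1=j+1$), each time contributing a $-1$ by \eqref{anticommute}; these happen to cancel, which is why the final sign is uniformly $-1$, but this is exactly the ``relative position of $i$ and $j$'' case analysis the paper flags, and your argument does not engage with it.
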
 
\begin{proof} Part (1) is just \cite[Lemma 14]{CLLS}, except our solid dot sliding relation through crossing involves an extra term with hollow bubbles. But cycles with single hollow dot are zero since sending the hollow dot around the annulus gives us the same diagram with a negative sign. For the above calculations, our $n$-cycles split into smaller cycles with single hollow dot at least on one of them. The proof of part 2 depends on the relative position of $i$ and $j$, but is a straightforward computation. \end{proof}

Let $w\in S_n$, and define the elements:
$$f_{w; j_1, \ldots, j_n; \epsilon_1, \ldots, \epsilon_n}= 
\hackcenter{\begin{tikzpicture}
    \draw[very thick][->] (-.55,0) -- (-.55,1.5);
    \draw[very thick][->] (.55,0) -- (.55,1.5);
    \draw[fill=white!20,] (-.8,.4) rectangle (.8,.8);
    \node () at (0,.55) {$w$};
    \node () at (0,1.25) {$\cdots$};
    \node () at (0,.25) {$\cdots$};
\filldraw  (-.55,1.25) circle (2pt);
\filldraw  (.55,1.25) circle (2pt);
\draw  (-.55,.9) circle (2pt);
\draw  (.55,.9) circle (2pt);
\draw (-.75,.9) node {$\epsilon_1$};
\draw (.8,.9) node {$\epsilon_n$};
\draw (-.75,1.25) node {$j_1$};
\draw (.8,1.25) node {$j_n$};
\end{tikzpicture}}
$$
and
$$f_{w; j_1, \ldots, j_n; \epsilon_1, \ldots, \epsilon_n}= 
\hackcenter{\begin{tikzpicture}
    \draw[very thick][<-] (-.55,0) -- (-.55,1.5);
    \draw[very thick][<-] (.55,0) -- (.55,1.5);
    \draw[fill=white!20,] (-.8,.4) rectangle (.8,.8);
    \node () at (0,.55) {$w$};
    \node () at (0,1.25) {$\cdots$};
    \node () at (0,.25) {$\cdots$};
\filldraw  (-.55,1.25) circle (2pt);
\filldraw  (.55,1.25) circle (2pt);
\draw  (-.55,.9) circle (2pt);
\draw  (.55,.9) circle (2pt);
\draw (-.75,.9) node {$\epsilon_1$};
\draw (.8,.9) node {$\epsilon_n$};
\draw (-.75,1.25) node {$j_1$};
\draw (.8,1.25) node {$j_n$};
\end{tikzpicture}}.
$$

\begin{lemma}\label{red to hn lm} Let $w\in S_n$ and $(n_1, \ldots, n_r)$ be a composition of $n$. Then $$[f_{\pm w; j_1, \ldots, j_n; \epsilon_1, \ldots, \epsilon_n}] = \sum d_{n_1, \ldots, n_r} \h{n_1}{  p_{n_1} c_{n_1}} \ldots \h{n_r}{ p_{n_r}  c_{n_r}}$$ for constants $d_{n_1, \ldots, n_r} \in \mathbb{C}$, polynomials $p_{n_i}$ in $i$ variables, and elements $c_{n_i}$ consisting of at most $i$ Clifford generators (e.g. $c_{n_3} = \{ c_1^{\epsilon_1} c_2^{\epsilon_2} c_3^{\epsilon_3} | \epsilon_i \in \{0,1\}\}$). \end{lemma}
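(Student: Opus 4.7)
The plan is induction on $\ell(w)$, the Coxeter length of the permutation $w \in S_n$. In the base case $w = \mathrm{id}$ the underlying diagram has no crossings, so closing up on the annulus yields $n$ disjoint loops, each of the form $h_{\pm 1}(x^{j_i} c^{\epsilon_i})$; their product realizes the claim with composition $(1, 1, \ldots, 1)$. Note that any individual factor carrying exactly one hollow dot is zero in $\TrHEv$ by supercyclicity (Example 3.\ref{supertrace}), but since we work in the even trace the total number of hollow dots is even, so surviving terms are products in which the hollow dots pair up within the cycles.

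For the inductive step I would proceed in two movements. First, use the annular structure of the trace to separate $w$ into its disjoint cycles: each cycle $C_i$ of length $n_i$ in $w$ gives, after cyclic rotation on the annulus, a single nested loop through the $n_i$ strands it involves, decorated with the solid dots $x^{j_\ast}$ and hollow dots $c^{\epsilon_\ast}$ originally attached to those strands. This gives the outline of a product $\prod_i \h{\pm n_i}{\cdots}$ with composition $(n_1, \ldots, n_r)$ equal to the cycle type of $w$. Second, within each cycle I would straighten the decoration into the canonical top-left form used to define $h_{\pm n_i}$: the solid-dot slide relations (\ref{dotSlide: bottomLeft}) and (\ref{dotSlide: topLeft}) are used to push each $x_k$ past crossings, and Lemma \ref{basic hn facts}(1) lets me migrate solid dots between strands of a cycle. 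For hollow dots, the anticommutation (\ref{cicj=-cjci}), the supercyclic cap/cup relations (\ref{caps})–(\ref{cups}), and Lemma \ref{basic hn facts}(2) allow me to collect them at the prescribed position up to an overall sign.

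Each use of the dot-slide relations (\ref{dotSlide: bottomLeft}), (\ref{dotSlide: topLeft}) produces two sorts of correction terms: a term of the same shape but with strictly fewer crossings (to which the induction hypothesis applies directly), and a term in which the two strands split off as separate bubbles/cycles carrying an extra pair of hollow dots. This latter term is exactly a product of two diagrams of lower strand count with shorter permutation, so again the inductive hypothesis applies after a further cycle decomposition. At every step, any cycle left carrying an odd number of hollow dots contributes zero in $\TrHEv$, so only even-hollow-dot configurations survive in the sum; their dot data is absorbed into polynomials $p_{n_i}$ and Clifford monomials $c_{n_i}$ in the claimed form.

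The main obstacle is the bookkeeping of these correction terms: pushing a single solid dot through a crossing can split a long cycle into two shorter ones, thereby altering the composition $(n_1, \ldots, n_r)$ that appears in the answer. I expect that this is handled cleanly by expanding the final sum over all compositions of $n$ and collecting contributions, with the induction on $\ell(w)$ guaranteeing that each correction term is strictly simpler and eventually terminates in a product of pure $h_{\pm n_i}$ factors. The signs introduced by Clifford anticommutation and by the supercyclic relations (\ref{caps})–(\ref{cups}) are tracked through Lemma \ref{basic hn facts}(2), and combine with the coefficients of the slide relations to produce the constants $d_{n_1, \ldots, n_r}$.
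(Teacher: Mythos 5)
Your overall diagrammatic picture is reasonable, and your base case and the use of the vanishing of odd-hollow-dot cycles are fine, but the induction on $\ell(w)$ has a genuine well-foundedness problem, located exactly at the step you describe as ``separate $w$ into its disjoint cycles \ldots after cyclic rotation on the annulus.'' Putting $w$ into standard cycle form is a conjugation $[pcw]=[g\,pcw\,g^{-1}]$, and commuting $g$ down past the solid dots via $s_i x_i = x_{i+1}s_i - (1-c_{i+1}c_i)$ produces correction terms whose symmetric-group part is $g_k w g^{-1}$ for various subwords $g_k$ of $g$. There is no control on $\ell(g_k w g^{-1})$ relative to $\ell(w)$: for instance with $w=e$ and $g_k=e$ the correction carries the permutation $g^{-1}$, which is strictly longer. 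The same issue arises if you phrase the rearrangement as R2/R3 moves dragging dots through crossings: after R2 moves the word presenting $w$ is no longer reduced, and deleting a crossing from a non-reduced word can increase Coxeter length. So the claim that ``each correction term is strictly simpler'' is not justified by $\ell(w)$, and your recursion need not terminate. (Note also that dot slides alone never change the permutation of the main term, so without conjugation you never reach the standard cycle form defining $\h{n}{\cdot}$.)

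The quantity that does strictly decrease in every solid-dot-slide correction term is the total polynomial degree $\sum_i j_i$ (one solid dot is consumed each time), while the Clifford degree $\sum_i \epsilon_i$ weakly decreases. This is the measure the paper uses: its proof inducts on $\sum\epsilon_i$, with base case $\sum\epsilon_i=0$ handled by \cite[Lemma 15]{CLLS}; it conjugates $pcw$ so that the main term is already a product of standard cycles, and iterates on the correction terms, with termination guaranteed by the strict descent of the polynomial degree to $0$, at which point one last conjugation of an element $c'\sigma$ is free because crossings are natural for the Clifford generators. If you replace your induction variable by the pair $(\sum\epsilon_i,\sum j_i)$ (Clifford degree weakly decreasing, polynomial degree strictly decreasing), your argument closes and becomes essentially the paper's proof written diagrammatically.
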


\begin{proof}We proceed by induction on $\sum {\epsilon_i}$. The base case is $\sum \epsilon_i = 0$; then $$[f_{\pm w; j_1, \ldots, j_n; \epsilon_1, \ldots, \epsilon_n}] = [f_{\pm w; j_1, \ldots, j_n}] $$ and we apply \cite[Lemma 15]{CLLS}. 

Now assume the statement is true for $\sum \epsilon_i = k$ for all $k<m\leq n$. Take $(\epsilon_1, \ldots, \epsilon_n)$ so that $\sum \epsilon_i = m$. Choose $g\in S_n$ such that $gwg^{-1} = w_\lambda$, where $\lambda$ is the cycle type of $w$ (so $gwg^{-1} = (s_1 \ldots s_{n_1 -1})\ldots (s_{n_1 + \ldots + n_{r-1}} \ldots s_{n_1 +\ldots +n_r -1})$).  Let $p= x_1^{j_1} \ldots x_n^{j_n}$ and $c= c_1^{\epsilon_1} \ldots c_n^{\epsilon_n}$.  Then we have $$f_{\pm w; j_1, \ldots, j_n; \epsilon_1, \ldots, \epsilon_n} = pcw = (-1)^\epsilon cpw$$ where $$\epsilon = \sum_{\epsilon_i = 1} j_i.$$ 

Thus conjugating by $g$ gives that \begin{align*} gpcwg^{-1}& = (-1)^{\epsilon} gcpwg^{-1} \\ &=(-1)^\epsilon (g.c) gpwg^{-1} \\&= (-1)^{\epsilon} \left[ (g.c)(g.p)gwg^{-1} + (g.c) p_L wg^{-1}\right], \end{align*} where $p_L$ is a polynomial of degree less than $j_1 + \ldots + j_n$. Note that $gwg^{-1}$ is a product of cycles, so the first term in the above expression has the correct form. In the second term, we have $\{i | \epsilon_{g(i)} =1\} \leq m$ (strict inequality can occur if $g$ has fixed points). If $\{i | \epsilon_{g(i)} =1\} < m$, we are done by induction, so assume that we have equality.

Now repeat the process on the second term, choosing a $g' \in S_n$ such that $g'(wg^{-1}) (g')^{-1}$ is a product of cycles, and conjugating $(g.c) p_L wg^{-1}$. Each application of this process results in one term in which the symmetric group element is a product of cycles (which has the desired form), and one term with the degree of the polynomial part strictly lesser and the degree of the Clifford part weakly lesser.

 If the degree of the Clifford part ever strictly decreases, we are done. If not, the conjugation will eventually reduce the degree of the polynomial part to 0, so we have an element of the form $c' \sigma$, $c' \in C\ell_n$ and $\sigma \in S_n$. Choose a $g''\in S_n$ such that $g'' \sigma (g'')^{-1}$ is a product of cycles; then $$ g'' c \sigma (g'')^{-1} =  (g''c) g'' \sigma (g'')^{-1}.$$ This now has the desired form.
\end{proof}

\begin{proposition} \label{red to hn} Let $w\in S_n$ and $(n_1, \ldots, n_r)$ a composition of $n$. Then $$[f_{\pm w; j_1, \ldots, j_n; \epsilon_1, \ldots, \epsilon_n}] = \sum d_{n_1, \ldots, n_r} \h{ \pm n_1}{x_1^{\ell_1}   c_1^{k_1}}\ldots \h{\pm n_r}{x_1^{\ell_r} c_{1}^{k_r}}$$ where $d_{n_1, \ldots, n_r} \in \mathbb{C}$ and $\ell_1, \ldots, \ell_r, k_1, \ldots, k_r \in \mathbb{N}$. \end{proposition}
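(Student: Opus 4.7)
The plan is to bootstrap off of Lemma red to hn lm, which already expresses $[f_{\pm w; \vec j; \vec \epsilon}]$ as a linear combination of products $\h{\pm n_1}{p_{n_1} c_{n_1}} \cdots \h{\pm n_r}{p_{n_r} c_{n_r}}$. It therefore suffices to show that each individual factor $\h{\pm n}{p \cdot c}$, where $p$ is a monomial in $x_1,\ldots,x_n$ and $c$ is a product of distinct generators among $c_1,\ldots,c_n$, can be rewritten as a $\mathbb{C}$-linear combination of products of factors of the desired form $\h{\pm m}{x_1^{\ell} c_1^{\kappa}}$. The argument proceeds by a double induction, with the outer induction on the cycle length $n$ and the inner induction on the total polynomial degree of $p$.

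First I would use part (2) of Lemma basic hn facts, which states $\h{\pm n}{x_i c_j} = -\h{\pm n}{x_{i+1} c_{j+1}}$, iteratively to shift every Clifford generator appearing in $c$ onto strand $1$. Each such shift costs only a sign and a simultaneous relabelling of one adjacent $x$-power, and after gathering the Cliffords on strand $1$ the relation $c_1^2=-1$ collapses repeated copies to a scalar, leaving a single factor $c_1^{\kappa}$ with $\kappa\in\{0,1\}$. Next, I would apply part (1) of Lemma basic hn facts iteratively to move the remaining $x$-dots onto strand $1$. A single application $\h{\pm n}{x_i} = \h{\pm n}{x_{i+1}} \pm \h{\pm i}{}\h{\pm(n-i)}{}$ yields a main term (with the dot slid one strand over) plus a correction term which is a product of two factors with strictly smaller cycles, and so falls under the outer induction on $n$. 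Repeating this process concentrates all $x$-dots onto strand $1$ modulo correction terms, at which point the factor is of the desired form $\h{\pm n}{x_1^{\ell} c_1^{\kappa}}$.

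The main technical obstacle is that part (1) of Lemma basic hn facts is stated only for a single $x$-dot with no Cliffords or additional $x$-powers present, so in our general setting I would need to derive a variant that allows for the presence of a Clifford factor $c$ and other $x$-dots on neighboring strands. Passing $x$-dots past Cliffords introduces anti-commutation signs via relation (\ref{cicj=-cjci}), and passing them past other $x$-dots through a crossing via the dot-slide relations (\ref{dotSlide: bottomLeft}) and (\ref{dotSlide: topLeft}) introduces extra terms involving hollow bubbles. One must check that every such correction term either decomposes as a product of strictly smaller $h$'s (handled by the outer induction on $n$) or has strictly smaller total polynomial degree (handled by the inner induction on $\deg p$). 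Once this bookkeeping is verified, the induction closes and the proposition follows.
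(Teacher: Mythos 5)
Your proposal is correct and follows essentially the same route as the paper, whose own proof of this proposition is simply the one-line remark that it ``follows immediately from the preceding lemmas'' (Lemma \ref{red to hn lm} together with Lemma \ref{basic hn facts}). You have merely spelled out, with more care than the paper does, the inductive bookkeeping needed to slide the solid dots and Clifford generators onto the first strand, including the correction terms that the paper leaves implicit.
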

\begin{proof} This follows immediately from the preceding lemmas. \end{proof} 

Proposition \ref{red to hn} allows us to write any element in $\TrHPl$ or $\TrHMi$ as a linear combination of the elements $\h{n}{}$. We will therefore direct our attention to these elements in future computations.

\subsection{Gradings in $\TrHEv$}

The next lemma follows from diagrammatic computations in the next section. We record it here for convenience of terminology.
\begin{lemma}\label{rank filtration} The algebra $\TrHEv$ is $\mathbb{Z}$-filtered where $\deg\left(\h{n}{x_1^{2a}}\right) \leq n$ for any $a\geq 0$. \end{lemma}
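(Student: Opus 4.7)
The plan is to produce the desired filtration as the \emph{rank filtration}: for each object $x = P^{a_1} Q^{b_1} P^{a_2} Q^{b_2} \cdots$ of $\Heis$, let $|x|$ denote the length of this word, and define
\[
(\TrHEv)_{\leq n} \;:=\; \operatorname{image}\left(\bigoplus_{|x| \leq n} \End_{\overline{0}}(x) \;\longrightarrow\; \TrHEv\right).
\]
The subspaces $(\TrHEv)_{\leq n}$ are evidently exhaustive and monotone in $n$, so the content is to verify multiplicativity and to inspect the rank of the distinguished elements.

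First I would verify that this is a filtration of algebras. The product in $\TrHEv$ is induced from the tensor product in $\Heis$: for $[f] \in (\TrHEv)_{\leq m}$ represented by $f \in \End_{\overline{0}}(x)$ with $|x| \leq m$, and $[g] \in (\TrHEv)_{\leq n}$ represented by $g \in \End_{\overline{0}}(y)$ with $|y| \leq n$, the product $[f] \cdot [g]$ is represented by $f \otimes g \in \End_{\overline{0}}(x \otimes y)$. Since $|x \otimes y| = |x| + |y| \leq m+n$, we get $[f][g] \in (\TrHEv)_{\leq m+n}$, establishing multiplicativity. The trace relations identify only morphisms $fg$ and $gf$ lying in a common $\End$-space, so they do not cause elements to spill into higher filtered pieces.

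Then the claimed bound is immediate from the construction of $\h{n}{x_1^{2a}}$: by definition it is the equivalence class of an explicit $n$-cycle diagram in $\End_{\overline{0}}(P^n)$, which lies in $(\TrHEv)_{\leq n}$. So $\deg(\h{n}{x_1^{2a}}) \leq n$, as required. The analogous argument works for the $Q^n$ version.

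The only genuinely non-trivial content — and where the deferral to the next section matters — is the compatibility of this filtration with the diagrammatic manipulations in $\Heis$. A priori one might worry that some sequence of local relations (R2)--(cups) or the supercyclicity/anticommutativity moves in the annulus picture could force $\h{n}{x_1^{2a}}$ to be rewritten in a form requiring strands of rank strictly greater than $n$; but every such relation is strand-preserving or strand-reducing (the HH-relation (R3) being the only interesting case, and even there both sides have at most two strands crossing a generic horizontal slice). The main obstacle in making this rigorous is thus to check that each diagrammatic move in the next section respects the filtration, which is the reason the author bundles this lemma with the forthcoming computations. I would therefore defer the routine case analysis and instead cite the subsequent relations (e.g.\ Lemma \ref{basic hn facts} and Proposition \ref{red to hn}) to confirm, once established, that every $h$-decomposition arising respects the additivity of rank under composition, making the filtration well-defined and consistent.
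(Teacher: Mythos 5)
There is a genuine gap: you have built the wrong filtration. You index your subspaces by the total word length $|x|=\#P+\#Q$, which produces a $\mathbb{Z}^{\geq 0}$-filtration in which \emph{both} $\h{n}{x_1^{2a}}$ and $\h{-n}{x_1^{2a}}$ sit in degree $\leq n$. The lemma asserts a $\mathbb{Z}$-filtration in which the downward cycles have \emph{negative} degree, $\deg\left(\h{-n}{x_1^{2a}}\right)\leq -n$; this is forced by the surrounding text (the bidegree table assigns $(-1,0)$ to a single downward strand, and the generating series for $\gr\TrHMi$ involves $t^{2r-1}$ with $r\leq 0$). The correct invariant is the signed count $\operatorname{rk}(x)=\#P-\#Q$, so that $\h{n}{\cdot}\in\End_{\overline{0}}(P^n)$ has rank $n$ while $\h{-n}{\cdot}\in\End_{\overline{0}}(Q^n)$ has rank $-n$. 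Your construction cannot be repaired by relabelling, since word length and signed rank genuinely disagree on $\TrHMi$.

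A second, related error: your claim that ``the trace relations identify only morphisms $fg$ and $gf$ lying in a common $\End$-space'' is false, and it matters. The trace quotient identifies $f\circ g\in\End(y)$ with $g\circ f\in\End(x)$ for $f\colon x\to y$ and $g\colon y\to x$ with $x\neq y$ in general (e.g.\ a cap composed with a cup versus a cup composed with a cap). The reason the rank filtration survives these identifications is not that $x=y$, but that $\operatorname{Hom}_{\Heis}(x,y)=0$ unless $\operatorname{rk}(x)=\operatorname{rk}(y)$: strands in a diagram cannot terminate in the interior, so the signed number of boundary points is preserved. With that observation, the signed rank is additive under $\otimes$ and preserved by every local relation (each relation in \eqref{R2}--\eqref{dotSlide: topLeft} equates morphisms between the same pair of objects, and the ideal $J_{m,n}$ factors through $P^{m-1}Q^{n-1}$, which has the same rank), so the $\mathbb{Z}$-filtration by $\operatorname{rk}$ is well defined and gives exactly the bound in the lemma. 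For what it is worth, the paper records this lemma without proof, deferring to the diagrammatic computations of the following sections; the intended content is precisely the signed-rank argument above.
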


This is called the rank filtration. Denote by $\TrHPl$ (resp. $\TrHMi$) the subalgebra of $\TrHEv$ generated by $\h{n}{x_1^{2a}}$, $n\geq 1$  (resp. $n\leq 1$).

\begin{lemma}\label{dot filtration} The algebra $\TrHEv$ is $\mathbb{Z}^{\geq 0}$-filtered where $\deg\left(\h{n}{ x_1^{2a}}\right)\leq a$ for any $a \geq 0$. \end{lemma}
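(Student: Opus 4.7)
The plan is to exhibit an explicit filtration by half the number of solid dots. For $k \geq 0$, let $F^k \TrHEv$ denote the $\mathbb{C}$-span of the classes $[D]$ where $D$ ranges over closed diagrams containing at most $2k$ solid dots. Then $F^0 \subseteq F^1 \subseteq \cdots$, $\bigcup_{k \geq 0} F^k = \TrHEv$ (since any diagram has only finitely many dots), and $\h{n}{x_1^{2a}} \in F^a$ by its very definition as the class of a specific diagram with $2a$ solid dots concentrated on one strand.

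The one nontrivial point is the multiplicative property $F^a \cdot F^b \subseteq F^{a+b}$. Multiplication in $\TrHEv$ is induced by the monoidal product of $\Heis$, which is realized diagrammatically as horizontal juxtaposition; the total number of solid dots is additive under juxtaposition. Consequently, if $\alpha = \sum_i c_i [D_i] \in F^a$ and $\beta = \sum_j d_j [E_j] \in F^b$ with every $D_i$ carrying at most $2a$ solid dots and every $E_j$ at most $2b$, then $\alpha \beta = \sum_{i,j} c_i d_j [D_i \otimes E_j]$ is a linear combination of classes of diagrams each having at most $2(a+b)$ solid dots, so $\alpha \beta \in F^{a+b}$.

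The one place that deserves a sanity check is whether the filtration is genuinely nontrivial: \emph{a priori} a trace class might admit only representatives with artificially inflated dot counts. To dispel this worry it is enough to observe that every defining relation of $\Heis$ either preserves the number of solid dots (isotopy \eqref{R2}--\eqref{R3}, Clifford anticommutation \eqref{cicj=-cjci}, supercyclicity \eqref{caps}--\eqref{cups}, the hollow/solid anticommutativity \eqref{anticommute}, and so on) or strictly decreases it (the dot-sliding relations \eqref{dotSlide: bottomLeft}--\eqref{dotSlide: topLeft}, which push a solid dot across a crossing while producing correction terms with fewer solid dots). No relation in $\Heis$ can therefore introduce extra solid dots, the subspaces $F^k$ form a well-defined increasing filtration compatible with the product, and the bound $\deg\bigl(\h{n}{x_1^{2a}}\bigr) \leq a$ follows immediately.
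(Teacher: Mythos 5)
Your proof is correct and is essentially the paper's argument made explicit: the paper's entire proof is the one-line observation that dots slide through crossings modulo correction terms with fewer dots, which is exactly the compatibility check in your final paragraph, and the rest of your write-up (the span $F^k$ of classes of diagrams with at most $2k$ solid dots, additivity of dot count under juxtaposition, membership of $\h{n}{x_1^{2a}}$ in $F^a$) is the routine formalization the paper leaves implicit. The only remark worth making is that your ``sanity check'' is not actually needed for the statement as given, since the lemma only asserts the upper bound $\deg\le a$ and says nothing about the filtration being separated or the degree being exactly $a$; still, it is the substantive content and matches what the paper's proof emphasizes.
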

\begin{proof} Dots can slide through crossings modulo a correction term containing fewer dots. \end{proof}

This is called the dot filtration, and corresponds to the differential filtration (given by $\deg(w_{\ell, k}) = k$) in $W^-$. 

These filtrations are compatible, so $\TrHEv$ is $(\mathbb{Z}\times \mathbb{Z}^{\geq 0})$-filtered with $\h{n}{ x_1^{2a}}$ in bidegree $(n, a)$. For $\omega \in \{>,<,0\}$ denote the associated graded object by $\gr \TrHOm$. Define a generating series for the graded dimension of $\gr \TrHOm$ by $$P_{\gr \TrHOm}(t,q) = \sum_{r\in \mathbb{Z}} \sum_{k\in \mathbb{Z}, k\geq 0 } \dim \gr \TrHOm[r,k] t^r q^k.$$

The following is an easy calculation using Proposition \ref{triangularDecomposition} and Proposition \ref{cocenter HC}. They are not used in the proof of the main result, but we record them here for convenience.
\begin{proposition} \label{grDim Tr} The graded dimensions of $\gr \TrHPl$ and $\gr \TrHMi$ are given by: 
	$$P_{\grD^>} = \prod_{r\geq 0} \prod_{k>0} \frac{1}{1-t^{2r+1} q^k};$$
	$$P_{\grD^<} = \prod_{r\leq 0} \prod_{k>0} \frac{1}{1-t^{2r-1} q^k}. $$
\end{proposition}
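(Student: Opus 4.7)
The plan is to reduce the computation to a Hilbert-series calculation on $\Tr(\HC_n)_{\overline{0}}$ via the triangular decomposition, use the explicit basis of Proposition~\ref{cocenter HC}, and then convert the resulting sum of Hilbert series into a product via a classical $q$-series identity.

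First, by Proposition~\ref{triangularDecomposition}, I would identify $\TrHPl$ with $\bigoplus_{n \geq 0} \Tr(\HC_n)_{\overline{0}}$ as bigraded vector spaces: the summand indexed by $n$ sits in rank $n$, and the dot filtration corresponds to polynomial degree in the variables $x_i^2$ (by Lemma~\ref{dot filtration}, $\h{n}{x_1^{2a}}$ is in dot filtration $a$, so each $x_i^2$ carries $q$-weight $1$). By Proposition~\ref{cocenter HC}, for each $\lambda \in \mathcal{OP}_n$ the corresponding summand contributes $\dim_q S((V^2)^{W_{J_\lambda}})^{N_W(W_{J_\lambda})}$ to the generating series, weighted by $t^n$.

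Next, I would compute those invariant rings block by block. If $\lambda$ has $m_j$ parts of odd size $j$, then $(V^2)^{W_{J_\lambda}}$ is spanned by the block-sums $\sum_{i \in B} x_i^2$, one for each block $B$, giving $\sum_j m_j$ generators each of $q$-weight $1$. The quotient $N_W(W_{J_\lambda})/W_{J_\lambda} \cong \prod_{j \text{ odd}} S_{m_j}$ permutes them within each group of equal-size blocks, so
\begin{equation*}
\operatorname{Hilb}_q\!\left(S((V^2)^{W_{J_\lambda}})^{N_W(W_{J_\lambda})}\right) = \prod_{j \text{ odd}} \prod_{a=1}^{m_j} \frac{1}{1-q^a}.
\end{equation*}
Summing over multiplicity sequences $(m_j)_{j\text{ odd}}$ and factoring gives
\begin{equation*}
P_{\gr \TrHPl}(t,q) = \prod_{j \text{ odd}} \sum_{m \geq 0} \frac{t^{jm}}{(q;q)_m},
\end{equation*}
where $(q;q)_m = \prod_{a=1}^m (1-q^a)$. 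Euler's $q$-exponential identity $\sum_{m \geq 0} z^m / (q;q)_m = \prod_{k \geq 0} (1-zq^k)^{-1}$, applied with $z = t^j$, collapses each factor to $\prod_k (1-t^j q^k)^{-1}$; relabeling $j = 2r+1$ yields the stated product. The formula for $\TrHMi$ follows from the mirror argument with down strands, using the rank-reversing symmetry of the triangular decomposition.

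The main obstacle is careful bookkeeping of bigradings across the triangular decomposition -- matching rank with $n$ and dot filtration with polynomial degree in the $x_i^2$'s. The wreath-product structure of the invariant ring is routine to unpack, and the final sum-to-product step is the classical $q$-exponential identity; no hard new inputs are needed beyond the two propositions cited at the outset.
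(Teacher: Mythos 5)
Your proposal is correct and follows exactly the route the paper intends: the paper gives no details beyond citing Propositions \ref{triangularDecomposition} and \ref{cocenter HC}, and your computation (wreath-product invariants of $S((V^2)^{W_{J_\lambda}})^{N_W(W_{J_\lambda})}$ for each $\lambda\in\mathcal{OP}_n$, followed by Euler's $q$-exponential identity) is precisely the ``easy calculation'' being gestured at. One caveat: your derivation actually yields $\prod_{r\geq 0}\prod_{k\geq 0}(1-t^{2r+1}q^k)^{-1}$ with the inner product starting at $k=0$ (the $k=0$ factors account for the dot-free classes $\h{2r+1}{}$ in differential degree zero, which are certainly nonzero), whereas the displayed formula restricts to $k>0$; this off-by-one is present in the paper's own statement (and already in Proposition \ref{grDim Walg}, where $w_{-1,0}$ likewise sits in bidegree $(1,0)$) and appears to be an indexing typo there rather than a flaw in your argument, but you should not claim the product ``yields the stated formula'' without remarking on the discrepancy.
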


Note that the rank grading and dot gradings are shifted by 1 for clockwise bubbles (so $d_2$ is in bidegree $(1,2)$ and $d_4$ is in bidegree $(1,3)$). This is a consequence of the decomposition formula in Lemma \ref{bubbleDecomp}. 


\section{Bubbles}

We investigate the endomorphisms of 1 in $\TrH$, known as bubbles. We prove that all bubbles can be written in terms of clockwise bubbles, and deduce formulas for moving bubbles past strands in the trace.

\subsection{Definition and basic properties}
 
 Elements of $\End_{\mathcal{H}_{tw}}(1)$ are $\mathbb{C}$-linear combinations of possibly intersecting or nested closed diagrams, which may have dots. We can always separate the nested pieces, and resolve any crossing that occur between different closed diagrams using the defining relations and end up with non intersecting, not nested closed oriented diagrams. Each one can be deformed into an oriented circle, possibly with dots, via an isotopy. A single closed, oriented, non self intersecting diagram is called a bubble. They are the building blocks of  endomorphisms of the identity object in $\mathcal{H}_{tw}$.
 
We define 
\begin{equation*}
    \bar{d}_{k,l}:=\hspace{6pt}
    \begin{tikzpicture}[baseline=(current bounding box.center)]
    \draw[->] (3,2) arc (-180:180:5mm);
\fill (3.95,2.2) circle [radius=2pt];
\draw (3.95,1.8) circle [radius=2pt];
\node at (4.2,1.8) {$l$};
\node at (4.2,2.2) {$k$};
    \end{tikzpicture}
    \hspace{0.5cm}
    \text{and}
    \hspace{0.5cm}
    d_{k,l}:=\hspace{6pt}
    \begin{tikzpicture}[baseline=(current bounding box.center)]
    \draw[<-] (3,2) arc (-180:180:5mm);
\fill (3.95,2.2) circle [radius=2pt];
\draw (3.95,1.8) circle [radius=2pt];
\node at (4.2,1.8) {$l$};
\node at (4.2,2.2) {$k$};
    \end{tikzpicture}
    \hspace{0.5cm}
    \text{for}
    \hspace{2mm} k,l\in\mathbb{Z}_{\geq 0}.
\end{equation*}

   Given any closed diagram with any configuration of dots, it is possible to collect the hollow dots and the solid dots together, possibly after multiplying the diagram by $-1$, by using relation \eqref{anticommute}. Solid dots move freely along caps and cups, and hollow dots may capture a negative sign while moving along caps or cups, depending on the orientation. After regrouping, we may assume that the dots are placed on the right middle side of the diagram as above.
   
   Moreover, using the left two equations in relation \eqref{d01=0}, we can erase a pair of hollow dots, possibly by changing the sign of the diagram.
   
   Therefore the set $\{d_{k,l},\bar{d}_{k,l}|k\in\mathbb{Z}_{\geq0}, l\in\{0,1\}\}$ is a spanning set for $\End_{\mathcal{H}_{tw}}(1)$.

In our defining relations, we have that 

\begin{equation*}
\bar{d}_{0,0}=\hspace{6pt}
\begin{tikzpicture}[baseline=(current bounding box.center)]
\draw[->] (3,2) arc (-180:180:5mm);
\end{tikzpicture}\hspace{6pt}
=1
\hspace{1cm} \text{and}
\hspace{1cm}
\bar{d}_{0,1}=\hspace{6pt}
\begin{tikzpicture}[baseline=(current bounding box.center)]
\draw[->] (3,2) arc (-180:180:5mm);
\draw (4,2) circle [radius=2pt];
\end{tikzpicture}\hspace{6pt}
=0.
\end{equation*}
Further, we have the following. 
\begin{lemma}\label{hollow dots}
We have that $\bar{d}_{k,1}=0$ and $d_{k,1}=0$ for all non-negative integers $k$.
\end{lemma}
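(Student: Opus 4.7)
The plan is to prove both identities by an insertion-and-removal trick: on each bubble I introduce two auxiliary hollow dots via \eqref{d01=0}, slide them across the top of the bubble using \eqref{caps}, and then remove them on the opposite side via \eqref{d01=0} again. The net accumulated sign is $-1$ in each case, which forces the bubble to equal its own negative and hence vanish. The essential ingredient is the sign asymmetry in \eqref{d01=0} between the upward and downward cases, combined with the orientation-dependent signs for hollow dots crossing caps.

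For $\bar{d}_{k,1}$, I would first use \eqref{anticommute} to arrange the hollow dot and all $k$ solid dots on the right (upward) side of the counterclockwise bubble. Invoking the upward case of \eqref{d01=0}, I insert two auxiliary hollow dots above all existing decorations at no sign cost. I then slide both new dots up and across the top cap of $\bar{d}$, which is oriented so that the first case of \eqref{caps} applies; each crossing contributes a factor of $-1$, so the two crossings together contribute $+1$. The auxiliary pair now sits on the left (downward) side, while the original hollow dot and the solid dots remain untouched on the right. Removing the pair via the downward case of \eqref{d01=0} gives a final factor of $-1$, and the resulting diagram is exactly $\bar{d}_{k,1}$ again. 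Thus $\bar{d}_{k,1} = -\bar{d}_{k,1}$, and hence $\bar{d}_{k,1} = 0$.

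The argument for $d_{k,1}$ is the mirror image. The clockwise bubble has its downward side on the right and its top cap oriented the other way, so insertion on the right costs $-1$ (downward \eqref{d01=0}), the two cap crossings are free (the other case of \eqref{caps}), and removal on the upward left is free (upward \eqref{d01=0}), again yielding a total sign of $-1$ and so $d_{k,1} = 0$.

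The step I expect to require the most care is the sign bookkeeping, in particular ensuring the auxiliary pair incurs no anticommutation signs from \eqref{cicj=-cjci} or \eqref{anticommute} while being slid over the top cap. Placing the new pair above all existing decorations before the slide avoids this entirely, which is why the proof is uniform in $k$: the solid dots and the original hollow dot are never encountered. As a sanity check, the naive attempt of sliding the original hollow dot once around $\bar{d}$ picks up a sign of $(-1)^{k+1}$ and only disposes of the even cases, so the insertion-removal trick is what genuinely rescues the odd $k$.
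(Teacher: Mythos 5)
Your insertion--removal trick has a sign error that kills the argument. A quick litmus test: your manipulation never interacts with the original hollow dot (or with the solid dots), so if the net sign really were $-1$ it would equally prove $\bar{d}_{k,0}=0$ for every $k$ --- contradicting the defining relation $\bar{d}_{0,0}=1$. The missing sign is exactly the one you claim to have avoided. When the first auxiliary hollow dot has crossed the top cap onto the downward side and the second is still on the upward side, the two dots sit on different strands of the planar diagram, and to bring the second one up to the cap you must exchange their relative heights; relation \eqref{cicj=-cjci} then contributes an extra $-1$. (Equivalently: the dot that was on top on the upward side ends up on the bottom on the downward side, so their order along the vertical direction reverses --- this is the super-interchange law $(1\otimes c)(c'\otimes 1)=-(c'\otimes c)$, and it is unavoidable no matter how you schedule the slides.) Writing the bubble as $\epsilon\circ(1\otimes c_P^2)\circ\eta$ and tracking all signs gives $(+1)\cdot(-1)\cdot(-1)\cdot(-1)\cdot(-1)=+1$: insertion on the upward strand is free, each cap crossing costs $-1$, the height exchange costs $-1$, and removal of $c_Q^2$ on the downward strand costs $-1$. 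The trick therefore yields the tautology $\bar{d}_{k,1}=\bar{d}_{k,1}$ and proves nothing; the same accounting breaks the mirrored argument for $d_{k,1}$.

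Your closing "sanity check" actually points at the correct proof, which is the one the paper gives. For $k$ even, slide the single hollow dot once around the bubble: it anticommutes past $k$ solid dots and picks up one sign from the cap/cup relations \eqref{caps}--\eqref{cups}, for a net $(-1)^{k+1}=-1$. For $k$ odd --- the case your trick was meant to rescue --- slide a \emph{solid} dot once around instead: solid dots pass freely through caps, cups, and other solid dots, but anticommute past the one hollow dot by \eqref{anticommute}, giving a clean net sign of $-1$ and hence $\bar{d}_{k,1}=-\bar{d}_{k,1}=0$. The same argument, independent of orientation, handles $d_{k,1}$.
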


\begin{proof}
An example computation shows that
\begin{equation*}
    \bar{d}_{1,1}=\hspace{6pt}\begin{tikzpicture}[baseline=(current bounding box.center)]
    \draw[->] (3,2) arc (-180:180:5mm);
\draw (3.95,2.2) circle [radius=2pt];
\fill (3.95,1.8) circle [radius=2pt];
    \end{tikzpicture}\hspace{6pt}
    =
    \hspace{6pt}-\hspace{2mm}
    \begin{tikzpicture}[baseline=(current bounding box.center)]
    \draw[->] (3,2) arc (-180:180:5mm);
\fill (3.95,2.2) circle [radius=2pt];
\draw (3.95,1.8) circle [radius=2pt];
\end{tikzpicture}\hspace{6pt}
   =
   \hspace{6pt}-\hspace{2mm}
   \begin{tikzpicture}[baseline=(current bounding box.center)]
   \draw[->] (3,2) arc (-180:180:5mm);
\fill (3.75,1.6) circle [radius=2pt];
\draw (3.95,1.8) circle [radius=2pt];
   \end{tikzpicture}\hspace{6pt}
   =
   \hspace{6pt}-\bar{d}_{1,1}=0,
\end{equation*}
where in the second equality, negative sign comes from relation (11), and the third equality comes from sliding the solid dot around.

More generally, if we have $k$ solid dots where $k$ is an even integer, then sliding the hollow dot around the circle and passing it through $k$ solid dots multiplies the diagram by $(-1)^{k+1}$, so the diagram is zero.
If $k$ is an odd number, sliding a solid dot around the circle and passing it through a hollow dot catches a minus sign, so these diagrams are zero as well.

These arguments do not depend on the orientation of the bubble, hence the result follows.
\end{proof}

From now on, we will assume that the second index in $\bar{d}_{k,l}$ and $d_{k,l}$ is always zero. We will omit it from our notation and write $d_k$ instead of $d_{k,0}$, and  $\bar{d}_k$ instead of $\bar{d}_{k,0}$.

\begin{lemma}\label{odd dots}
We have that $d_{2n+1}=\bar{d}_{2n+1}=0$ for all non-negative integers $n$.
\end{lemma}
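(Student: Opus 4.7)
The plan is to prove $d_{2n+1}=0$ (and analogously $\bar{d}_{2n+1}=0$) by exhibiting the class $d_{2n+1}$ as $-d_{2n+1}$ in $\TrHEv$, via trace cyclicity applied to an ordinary commutator identity on decorations of an up-strand portion of the bubble. The bubbles $d_{2n+1}$ and $\bar{d}_{2n+1}$ carry no hollow dots, hence are even morphisms of $1$ and live in $\TrHEv$. I focus on $d_{2n+1}$; the case $\bar{d}_{2n+1}$ is analogous after swapping the roles of up- and down-strand portions of the bubble.

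Place the $2n+1$ solid dots on an up-strand portion of the bubble. On an up-strand, \eqref{d01=0} yields $c^2=1$, and combined with anticommute \eqref{anticommute} ($cx=-xc$) one computes
\[
c\cdot c\, x^{2n+1}\;=\;c^2\, x^{2n+1}\;=\;x^{2n+1},\qquad c\, x^{2n+1}\cdot c\;=\;(-1)^{2n+1}c^2\, x^{2n+1}\;=\;-x^{2n+1},
\]
where $c$ and $x$ denote hollow and solid dot generators on the strand, and the second equality is obtained by sliding the rightmost $c$ past all $2n+1$ solid dots. Closing these two strand decorations into a bubble produces the classes $d_{2n+1}$ and $-d_{2n+1}$. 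Now take $f=c$ and $g=c\, x^{2n+1}$, both odd morphisms $P\to P$ whose products $fg$ and $gf$ are even; since the defining ideal $\mathcal{I}_{\overline{0}}$ of $\TrHEv$ is generated by the \emph{ordinary} commutators $fg-gf$, we have $[fg]=[gf]$, which diagrammatically reads $d_{2n+1}=-d_{2n+1}$ and forces $d_{2n+1}=0$.

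The main subtlety is the reliance on ordinary rather than super-cyclicity. In the full supertrace $\TrH$, the relation $[fg]=(-1)^{|f||g|}[gf]$ (cf.\ Example~\ref{supertrace}) for the two odd elements $f,g$ above produces an extra sign that cancels the $(-1)^{2n+1}$ from anticommute, reducing the identity to a tautology; so the vanishing genuinely exploits the definition of the \emph{even} trace. A parallel purely diagrammatic realization proceeds by inserting a pair of hollow dots on the up-strand portion (leaving the class unchanged since $c^2=1$), sliding one past the $2n+1$ solid dots via \eqref{anticommute} (picking up $(-1)^{2n+1}=-1$), and then invoking cyclicity in $\TrHEv$ to bring the two hollow dots back together to cancel via $c^2=1$. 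The delicate bookkeeping is verifying this cyclic identification against the sign contributions from traversing cap and cup pieces of the bubble governed by \eqref{caps} and \eqref{cups}; these signs must match up exactly, and it is precisely the difference between ordinary and super cyclicity that allows the argument to succeed in $\TrHEv$ but not in $\TrH$.
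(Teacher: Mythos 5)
There is a genuine gap. Your central step identifies the closure of the decorated up-strand $x_1^{2n+1}\in\End_{\mathcal{H}_{tw}}(P)$ with the bubble $d_{2n+1}$, but these are different elements of the trace: the bubble is a contractible closed circle in $\End_{\mathcal{H}_{tw}}(1)$ (rank filtration degree $0$), whereas the class of an endomorphism of $P$ is the element $\h{1}{x_1^{2n+1}}$ of rank degree $1$ --- on the annulus it is a strand wrapping once around the core, not a small circle, and the trace does not ``close'' it into a bubble. Taking $f=c$ and $g=cx_1^{2n+1}$ and using $[fg]=[gf]$ in $\TrHEv$ therefore proves $\h{1}{x_1^{2n+1}}=0$, which is (the $n=1$ case of) Proposition~\ref{odd with odd dots zero}, not the statement of this lemma. (A further slip: the dotted side of the clockwise bubble $d_{2n+1}$ is a \emph{downward} strand, where \eqref{d01=0} gives $c^2=-1$ rather than $+1$; this does not rescue the argument, it is just an untracked sign.)

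Even if the identification were repaired, a trace-level argument is too weak for what is needed. The lemma is an identity of morphisms in $\End_{\mathcal{H}_{tw}}(1)$: it is used to discard odd-dotted bubbles occurring locally inside larger diagrams (e.g.\ in Lemmas~\ref{bubbleDecomp} and~\ref{curl split}) and to establish $\End_{\mathcal{H}_{tw}}(1)\cong \mathbb{C}[d_0,d_2,\dots]$ in Proposition~\ref{bubbles}; vanishing only modulo the commutator ideal would not permit these uses. Correspondingly, the paper's proof never invokes the (super)trace relation: it inserts a pair of hollow dots via \eqref{d01=0}, transports one hollow dot around the closed circle using the cap and cup relations \eqref{caps} and \eqref{cups} (whose signs cancel over a full contractible loop), past the odd number of solid dots via \eqref{anticommute} (sign $(-1)^{2n+1}=-1$), and cancels the pair again, yielding $\bar d_{2n+1}=-\bar d_{2n+1}$ already in the category. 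In particular, your claim that the vanishing genuinely exploits the even trace and would fail in $\TrH$ is incorrect --- these bubbles are zero in $\mathcal{H}_{tw}$ itself. Your second, ``purely diagrammatic'' sketch is close to the paper's argument, but it defers exactly the cap/cup sign bookkeeping that constitutes the proof and replaces it with an appeal to even-trace cyclicity, which is neither available for a contractible loop nor sufficient for the category-level statement.
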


\begin{proof}

Note that
\begin{equation*}
\bar{d}_1=\hspace{6pt}
\begin{tikzpicture}[baseline=(current bounding box.center)]
\draw[->] (3,2) arc (-180:180:5mm);
\fill (4,2) circle [radius=2pt];
\end{tikzpicture}\hspace{6pt}
=
\hspace{6pt}
\begin{tikzpicture}[baseline=(current bounding box.center)]
\draw[->] (3,2) arc (-180:180:5mm);
\draw (4,2) circle [radius=2pt];
\draw (3.95,2.2) circle [radius=2pt];
\fill (3.95,1.8) circle [radius=2pt];
\end{tikzpicture}\hspace{6pt}
=
\hspace{6pt}-\hspace{6pt}
\begin{tikzpicture}[baseline=(current bounding box.center)]
\draw[->] (3,2) arc (-180:180:5mm);
\draw (4,2) circle [radius=2pt];
\draw (3.05,2.2) circle [radius=2pt];
\fill (3.95,1.8) circle [radius=2pt];
\end{tikzpicture}\hspace{6pt}
=
\hspace{6pt}
\begin{tikzpicture}[baseline=(current bounding box.center)]
\draw[->] (3,2) arc (-180:180:5mm);
\fill (4,2) circle [radius=2pt];
\draw (3.95,2.2) circle [radius=2pt];
\draw (3.05,1.8) circle [radius=2pt];
\end{tikzpicture}\hspace{6pt}
=
\hspace{6pt}
\begin{tikzpicture}[baseline=(current bounding box.center)]
\draw[->] (3,2) arc (-180:180:5mm);
\fill (4,2) circle [radius=2pt];
\draw (3.95,2.2) circle [radius=2pt];
\draw (3.95,1.8) circle [radius=2pt];
\end{tikzpicture}\hspace{6pt}\end{equation*} $$
=
\hspace{6pt}-\hspace{6pt}
\begin{tikzpicture}[baseline=(current bounding box.center)]
\draw[->] (3,2) arc (-180:180:5mm);
\draw (4,2) circle [radius=2pt];
\draw (3.95,2.2) circle [radius=2pt];
\fill (3.95,1.8) circle [radius=2pt];
\end{tikzpicture}\hspace{6pt}
=
\hspace{6pt}-\hspace{6pt}
\begin{tikzpicture}[baseline=(current bounding box.center)]
\draw[->] (3,2) arc (-180:180:5mm);
\fill (4,2) circle [radius=2pt];
\end{tikzpicture}\hspace{6pt}
=\hspace{6pt}0.
$$
The same arguments works for any odd number of solid dots and works for clockwise oriented bubbles.
\end{proof}

\begin{lemma} \label{bubbleDecomp}
We have that $$\ds \bar{d}_{2n}=\sum_{2a+2b=2n-2}\begin{tikzpicture}[baseline=(current bounding box.center)]
    \draw[->] (0,0) arc (0:180:5mm);
    \draw (0,0) arc (360:180:5mm);
    \draw[<-] (0.1,0) arc (-180:0:5mm);
    \draw (0.1,0) arc (180:0:5mm);
    \fill (-0.2,0.4) circle[radius=2pt];
    \fill (0.9,0.4) circle[radius=2pt];
    \node at (-0.04,0.6) {$2a$};
    \node at (1.05,0.6) {$2b$};
    \end{tikzpicture}
    =\sum_{2a+2b=2n-2}\bar{d}_{2a}d_{2b}$$
    for any integers $a,b$ and $n\geq1$.
\end{lemma}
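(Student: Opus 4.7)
The plan is to establish the identity by applying the crossing-resolution relation \eqref{R3} to a specific closed configuration carrying $2n$ solid dots. The idea is to realize $\bar{d}_{2n}$ as the closure of a $QP \to PQ$ crossing decorated with $2n$ solid dots: take the crossing on the LHS of \eqref{R3}, distribute the $2n$ solid dots on its two strands, and then close the diagram by joining the top pair and the bottom pair of endpoints with a cap and a cup. After an isotopy that straightens the crossing, the resulting closed diagram is exactly $\bar{d}_{2n}$.

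Applying \eqref{R3} to this same closed diagram expresses $\bar{d}_{2n}$ as the sum of the closures of the three terms on the right-hand side of \eqref{R3}. The parallel-strand term yields, after isotopy, two bubbles whose arcs carry the $2n$ solid dots; sliding solid dots between the two bubbles via \eqref{dotSlide: bottomLeft}, \eqref{dotSlide: topLeft}, \eqref{caps}, and \eqref{cups} redistributes the dots in all possible even-even splittings and produces precisely the sum $\sum_{2a+2b=2n-2}\bar{d}_{2a} d_{2b}$ appearing on the right-hand side. The pure cup-cap term of \eqref{R3} contributes a low-degree bubble product corresponding to an edge case of this sum. The cup-cap term with hollow dots, after closure, produces two bubbles each carrying a single hollow dot, which vanishes by Lemma \ref{hollow dots}.

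The main technical obstacle will be the careful bookkeeping of dot placements during the redistribution. Each slide of a solid dot past the crossing introduces correction terms carrying pairs of hollow dots or extra small bubbles, and the supercommutation rules \eqref{cicj=-cjci}, \eqref{anticommute} contribute signs. Using the parity vanishing of Lemmas \ref{hollow dots} and \ref{odd dots}, these corrections either vanish outright or recombine into additional $\bar{d}_{2a}d_{2b}$ products, each appearing with coefficient $+1$ and no extra constants. Once all three contributions have been collected and the vanishing correction terms discarded, rearrangement yields the claimed identity.
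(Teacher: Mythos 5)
There is a genuine gap, and it sits at the very first step. Your plan realizes $\bar{d}_{2n}$ as the closure of the crossing on the left-hand side of \eqref{R3} and then claims that ``an isotopy that straightens the crossing'' identifies this closure with $\bar{d}_{2n}$. But morphisms here are planar diagrams up to \emph{boundary-fixing isotopy}, which preserves the number of crossings; removing the double crossing of the closed-up left side of \eqref{R3} is not an isotopy but is exactly the content of relation \eqref{R3} itself (with its correction terms), so the identification is circular. If you instead close up the oppositely composed double crossing, relation \eqref{R2} applies with no correction terms and no decomposition can arise. There is also a misassignment of roles: after any closure, the parallel-strand (identity) term of \eqref{R3} has the same underlying closed curve as the term you started from --- a single circle --- so it cannot be the source of the two-bubble products $\bar{d}_{2a}d_{2b}$; only the cup--cap terms split into two circles. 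And since \eqref{R3} is a local relation on an \emph{undecorated} crossing, the $2n$ dots must first be pushed onto the closure arcs, so the cup--cap closures inherit the dots in one fixed splitting rather than producing the full sum over all $a+b=n-1$; generating that sum requires further dot slides through a crossing, which is where the actual work lies and which your outline defers to ``bookkeeping.''

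The mechanism the paper uses is different and is the ingredient your proposal never invokes: trade one of the $2n$ solid dots for a right-twist curl using the defining relation (a solid dot \emph{is} a right-twist curl), so that $\bar{d}_{2n}$ becomes a circle carrying $2n-1$ dots and one genuine self-crossing. Then slide the remaining dots one at a time through that crossing using \eqref{dotSlide: bottomLeft} and \eqref{dotSlide: topLeft}. Each slide produces resolution terms in which the crossing is replaced by parallel strands, detaching the curl into a second bubble and yielding a product $\bar{d}_{2a}d_{2b}$ (the hollow-dotted and odd-dotted resolution terms die by Lemmas \ref{hollow dots} and \ref{odd dots}), while the final term, with all dots slid past, contains a left-twist curl and vanishes by \eqref{d00=1}. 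It is this left-curl-equals-zero relation that kills the leading term and forces $\bar{d}_{2n}$ to equal the sum of resolution terms; without it, no argument of this shape can close.
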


\begin{proof}
For the $n=1$ case, we have the following computation:

\begin{align*}
    \bar{d}_2=\hspace{6pt}
    \begin{tikzpicture}[baseline=(current bounding box.center)]
        \draw[->] (3,2) arc (-180:180:5mm);
\fill (3.95,2.2) circle [radius=2pt];
\fill (3.95,1.8) circle [radius=2pt];
    \end{tikzpicture}\hspace{6pt}
    &=
    \hspace{6pt}
    \begin{tikzpicture}[baseline=(current bounding box.center)]
    \draw[->] (0,0) arc (10:180:5mm);
    \draw (0,-0.18) arc (350:180:5mm);
    \draw (0.1,0) arc (170:-170:5mm);
    \draw (-0.02,-0.2) to (0.1,0.01);
    \draw (-0.01,0.02) to (0.11,-0.2);
    \fill (-0.1,0.23) circle[radius=2pt];
    \end{tikzpicture}\hspace{6pt}
    \\&=
    \hspace{6pt}
    \begin{tikzpicture}[baseline=(current bounding box.center)]
    \draw[->] (0,0) arc (10:180:5mm);
    \draw (-1,-0.08) arc (180:350:5mm);
    \draw (0.1,0) arc (170:-170:5mm);
    \draw (-0.02,-0.2) to (0.1,0.01);
    \draw (-0.01,0.02) to (0.11,-0.2);
    \fill (0.2,0.23) circle[radius=2pt];
    \end{tikzpicture}
    \hspace{6pt}
    +\hspace{6pt}
    \begin{tikzpicture}[baseline=(current bounding box.center)]
    \draw[->] (0,0) arc (0:180:5mm);
    \draw (0,0) arc (360:180:5mm);
    \draw[<-] (0.1,0) arc (-180:0:5mm);
    \draw (0.1,0) arc (180:0:5mm);
    \end{tikzpicture}
    \hspace{6pt}
    +\hspace{6pt}
    \begin{tikzpicture}[baseline=(current bounding box.center)]
    \draw[->] (0,0) arc (0:180:5mm);
    \draw (0,0) arc (360:180:5mm);
    \draw[<-] (0.1,0) arc (-180:0:5mm);
    \draw (0.1,0) arc (180:0:5mm);
    \draw (-0.04,-0.15) circle[radius=2pt];
    \draw (0.13,0.1) circle[radius=2pt];
    \end{tikzpicture}\hspace{6pt}
    =\hspace{6pt}d_0
\end{align*}
where the first diagram on right hand side is zero since it contains a left curl, the second term is $\bar{d}_0d_0=d_0$ and the last term is zero by Lemma \ref{hollow dots}.

 For general $n$, if you replace one of the solid dots with a right-twist curl, and slide the remaining $2n-1$ dots through the crossings using relations \ref{dotSlide: bottomLeft} and \ref{dotSlide: topLeft} repeatedly, we will get many resolution terms, consisting of a sum of product of counterclockwise and clockwise bubbles, some with only solid dots, some with hollow dots as well. The terms with hollow dots are zero, and so are the terms with an odd number of solid dots. Also, the figure eight shape contains a left twist curl, so it is zero as well, which proves the statement. 
\end{proof}


\subsection{Algebraic independence of bubbles}

 A categorified Fock space action for $\mathcal{H}_{tw}$ is described in \cite[Section 6.3]{CS}.  $\mathcal{H}_{tw}$ acts on the category $\mathfrak{S}$, whose objects are induction and restriction functors between $\mathbb{Z}/2\mathbb{Z}$-graded finite dimensional $\mathbb{S}_n$-modules, for all $n\geq1$. Morphisms of $\mathfrak{S}$ are natural transformations between the induction and restriction functors. 

Following Khovanov's approach from \cite{Khovanov}, let $\mathfrak{S}_n$ be the subcategory of $\mathfrak{S}$, whose objects start with induction or restriction functors from $\mathbb{Z}/2\mathbb{Z}$-graded finite dimensional $\mathbb{S}_n$-modules. For every $n\in\mathbb{Z}_{\geq1}$, we have a functor $F_n:\mathcal{H}_{tw}\rightarrow\mathfrak{S}_n$ sending $P$ to $\Ind_n^{n+1}$ and sending $Q$ to $\Res_n^{n-1}$.

Note that $F_n$ sends $\End_{\mathcal{H}_{tw}}(1)$ to the center of $\mathbb{S}_n$, which is same as the center of $\mathbb{C} [S_n]$. 

Explicit descriptions of the actions of a crossing, a cup and a cap are provided in \cite{CS}. We would like to study the action of clockwise bubbles to show their algebraic independence. Note that $d_{2k}$  is obtained as composition of a cup, $k$ copies of $\h{1}{x_1}$ and a cap.
\begin{equation*}
\begin{tikzpicture}[baseline=(current bounding box.center)]
\draw[->] (3,2) arc (-180:180:5mm);
\fill (3.95,2.2) circle [radius=2pt];
\node at (4.2,2.2) {$k$};
\end{tikzpicture}\hspace{6pt}
=
\hspace{6pt}
\begin{tikzpicture}[baseline=(current bounding box.center)]
\draw[->] (1,-3.5) arc (0:180:5mm);
\fill (1,-3.8) circle[radius=2pt];
\draw[->] (1,-4) to (1,-3.5);
\draw[->] (0,-3.5) to (0,-4);
\node at (0,-4.15) {\vdots};
\node at (1,-4.15) {\vdots};
\fill (1,-4.8) circle[radius=2pt];
\draw[->] (1,-5) to (1,-4.5);
\draw[->] (0,-4.5) to (0,-5);
\draw[->] (0,-5) arc (-180:0:5mm);
\draw [decorate,decoration={brace,amplitude=2pt},xshift=-4pt,yshift=0pt] (1.3,-3.5) -- (1.3,-5) node [black,midway,xshift=15pt] {\footnotesize $k$ dots};
\end{tikzpicture}
\end{equation*}

Therefore to study the action of $d_{2k}$, we need to know the action of $\h{1}{x_1}$ in addition to actions of cups and caps. Now $\h{1}{x_1}$ is defined as a combination of caps, cups and crossings:
\begin{equation}
\begin{tikzpicture}[baseline=(current bounding box.center),scale=1.15]
\draw[->] (0,0) to (0,1);
\fill  (0,.5) circle[radius=2pt];
\end{tikzpicture}\hspace{6pt}
=
\hspace{6pt}
\begin{tikzpicture}[baseline=(current bounding box.center),scale=0.75]
\draw[->] (0,0) to (0,.5);
\draw[<-] (0.5,0.5) arc (-180:0:3mm);
\draw[->] (0,.5) to (.5,1);
\draw[->] (.5,.5) to (0,1);
\draw[->] (1.1,1) to (1.1,.5);
\draw[->] (0,1) to (0,1.5);
\draw[<-] (1.1,1) arc (0:180:3mm);
\end{tikzpicture}
\end{equation}

Using the explicit description of Fock space representation of $\mathcal{H}$ in \cite{CS}, we compute the required actions. These computations give that $\h{-1}{x_1}$ acts by sending $$1 \mapsto J_{n+1}=\ds\sum_{i=1}^{n}(1-c_{n+1}c_i)(i,n+1).$$ This is the $(n+1)$-st even Jucys-Murphy element. Therefore $\h{1}{x_1^2}$ acts by multiplication by $J_{n+1}^2$. This is analogous to the untwisted case where the same element acts as multiplication by a Jucys-Murphy element.

 Finally, the action of the bubble $d_{2k}$ is given by multiplication by $$\ds\sum_{i=1}^{n}(i\leftrightarrow n+1)J_{n+1}^{2k}(n+1\leftrightarrow i)-c_n(i\leftrightarrow n+1)J_{n+1}^{2k}(n+1\leftrightarrow i)c_1,$$ 
 where $(i\leftrightarrow n)$ denotes the $n$-cycle $s_is_{i+1}....s_{n-1}$.

Here we can apply the filtration argument on the number of disturbances of permutations as done in \cite[Section 4]{Khovanov} to obtain the following.

\begin{proposition}\label{bubbles}
The elements $\{d_{2k}\}_{k\geq0}$ are algebraically independent, i.e. there is an isomorphism $$\End_{\mathcal{H}_{tw}}(1)\cong k[d_0,d_2,d_4,...].$$
\end{proposition}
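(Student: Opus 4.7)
The plan is to combine the preceding results of this subsection with the Fock space action of $\Heis$. The earlier lemmas show that every closed diagram in $\End_{\Heis}(1)$ reduces to a linear combination of products of clockwise bubbles $d_{2k}$: hollow dots can be eliminated, odd numbers of solid dots give zero, and each $\bar d_{2n}$ decomposes as a polynomial in the $d_{2k}$'s. Hence $\End_{\Heis}(1)$ is generated as a $\mathbb{C}$-algebra by $\{d_{2k}\}_{k\geq 0}$, and it suffices to prove that the monomials $d_{2k_1}d_{2k_2}\cdots d_{2k_r}$ indexed by multisets $k_1\geq k_2\geq \cdots\geq k_r\geq 0$ are linearly independent in this algebra.

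For this I would pass to the Fock space: for each $n$, the functor $F_n$ sends $\End_{\Heis}(1)$ into the center $Z(\mathbb{S}_{n+1})$, with $d_{2k}$ acting by multiplication by the explicit element
$$D_{n,k} \;=\; \sum_{i=1}^{n}\left((i\leftrightarrow n+1)\,J_{n+1}^{2k}\,(n+1\leftrightarrow i) - c_n(i\leftrightarrow n+1)\,J_{n+1}^{2k}\,(n+1\leftrightarrow i)\,c_1\right)$$
recorded in the discussion above. Equip $\mathbb{S}_{n+1} = \mathcal{C}\ell_{n+1}\rtimes \mathbb{C}[S_{n+1}]$ with the filtration by the number of non-fixed points of the underlying permutation, following \cite[Section 4]{Khovanov}. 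Expanding $J_{n+1} = \sum_{j\leq n}(1 - c_{n+1}c_j)(j,n+1)$ and multiplying out, one checks that the top-filtration symbol of $D_{n,k}$ sits in disturbance degree $2k+2$ and is a nonzero sum of cycles with explicit Clifford decorations. For a product $d_{2k_1}\cdots d_{2k_r}$, choosing $n$ large enough (say $n \gg 2\sum k_i$) allows one to arrange that the cycles contributed by different factors act on disjoint index blocks; an induction on $r$ then shows that distinct multisets $(k_1,\ldots,k_r)$ yield linearly independent leading symbols in the associated graded of $Z(\mathbb{S}_{n+1})$. Consequently, the monomials themselves are linearly independent in $\End_{\Heis}(1)$, which is exactly the desired algebraic independence.

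The main obstacle is the careful bookkeeping of Clifford signs when computing the top symbol of $D_{n,k}$ and its iterates: one must verify that the twisted terms carrying the pair $c_n,c_1$ do not cancel against the untwisted terms in the leading filtration degree, and that anticommutativity of Clifford generators does not collapse the symbol when indices collide. The requirement that each bubble carry an \emph{even} number of solid dots, which was forced by the hollow-dot and odd-dot lemmas above, is precisely what makes these sign computations cooperate, in the same spirit as the ``even trace'' considerations appearing elsewhere in the paper.
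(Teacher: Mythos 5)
Your proposal is correct and follows essentially the same route as the paper: the paper likewise passes to the Fock space action, records the explicit central element of the Sergeev algebra by which $d_{2k}$ acts (built from the even Jucys--Murphy elements), and then invokes Khovanov's filtration-by-disturbances argument from \cite[Section 4]{Khovanov} to conclude algebraic independence. You have simply spelled out the leading-symbol bookkeeping that the paper leaves to that citation.
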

Therefore the bubbles are algebraically independent, and they form of a copy of a polynomial ring in infinitely many variables.

\subsection{Counter-clockwise bubble slide moves}

In order to describe $\TrH$ as a vector space, it would be convenient to have a standard form for our diagrams in the trace. In particular, we want to collect all the bubbles appearing in a diagram on the rightmost part of the diagram. In order to do so, we must describe how bubbles slide through upward and downward strands. Note that since we can work with local relations, the bubbles don't have to interact with solid dots or crossings, they can simply slide through under a crossing or under a solid dot.

All calculations in this section take place in the trace, though we omit the brackets in some situations for readability.

\begin{lemma}\label{bubbleslide to curl}
We have that $\ds[\bar{d}_{2n},\h{1}{}]=2\sum_{k=1}^n \left[\; \begin{tikzpicture}[baseline=(current bounding box.center),scale=1.75]
\draw (1,0) to [out=90, in=-75](0.95,0.5);
\draw (0.95,0.5) arc (5:355:2mm);
\draw[->] (0.95,0.46) to [out=75, in=270] (1,1);
\fill (0.8,0.67) circle (1pt);
\node at (0.8,0.8) {\small 2k-1};
\end{tikzpicture}\;\right]$ in $\TrHEv$ for any positive integer $n$.
\end{lemma}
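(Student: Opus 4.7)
The strategy is to compute the commutator directly via a diagrammatic calculation in the even trace, exploiting the annular representation. The product $\bar{d}_{2n}\cdot h_1$ and $h_1\cdot \bar{d}_{2n}$ correspond to placing the bubble $\bar{d}_{2n}$ on opposite sides of the loop $h_1$ within the annulus, so the commutator is precisely the defect picked up when $\bar{d}_{2n}$ is slid across $h_1$.

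I would begin by deforming $\bar{d}_{2n}$ via annular isotopy so that $h_1$ passes through the region bounded by the bubble, producing two down--up crossings where $h_1$ meets the boundary of the bubble---one at the top and one at the bottom. At each of these crossings I would apply relation \eqref{R3}, which expresses a down--up crossing as the parallel-strand configuration minus a cup--cap pair minus a cup--cap pair decorated with hollow dots.

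Combining the two resolutions, the ``parallel at both crossings'' contribution reproduces the configuration with the bubble on the opposite side of $h_1$, cancelling the $-h_1\bar{d}_{2n}$ in the commutator. The remaining mixed and cup--cap terms absorb a portion of the bubble into the strand $h_1$, producing diagrams consisting of a (left-twist) curl on $h_1$ together with a residual closed bubble. I would then redistribute the $2n$ solid dots between the curl and the residual bubble using the dot-slide relations \eqref{dotSlide: bottomLeft} and \eqref{dotSlide: topLeft}, and invoke Lemmas \ref{hollow dots} and \ref{odd dots} together with the bubble decomposition Lemma \ref{bubbleDecomp} to discard residual bubbles carrying hollow dots or an odd number of solid dots. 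What remains are precisely the curls carrying $2k-1$ solid dots for $k=1,\dots,n$, and the coefficient $2$ emerges from the symmetric contributions of the top and bottom crossings.

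The main obstacle is the careful bookkeeping of dot redistribution and signs, in particular verifying that the numerical coefficient of each $k$-th curl is exactly $2$ and that the residual-bubble contributions vanish in exactly the expected pattern. The hollow-dot terms from \eqref{R3} would a priori complicate the calculation, but they are killed in the even trace by Lemma \ref{hollow dots} once the hollow dots are slid onto the residual bubble and (cf.\ Example~\ref{supertrace}) the supercyclicity signs are tracked consistently.
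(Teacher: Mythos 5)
Your overall strategy --- computing the commutator as the defect picked up when $\bar{d}_{2n}$ is slid across $\h{1}{}$, and killing unwanted terms with Lemmas \ref{hollow dots} and \ref{odd dots} --- matches the paper's, but the mechanism you identify for the defect is wrong, and this is a genuine gap rather than a difference of presentation. When an upward strand is isotoped through a \emph{counterclockwise} bubble, the two crossings it makes with the boundary circle are either both crossings of two upward strands (if the strand meets the right half of the circle, where the orientation points up) or else form the up--down bigon of type $PQ\to QP\to PQ$ (if it meets the left half). In either case the applicable relation is \eqref{R2}, which has no correction terms: an undotted counterclockwise bubble passes through an upward strand for free. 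Relation \eqref{R3} governs the other bigon, $QP\to PQ\to QP$, which simply does not occur in this configuration; it is the relation responsible for the extra contributions in the \emph{clockwise} bubble slide (compare the $2\,\h{1}{x_1^{2n}}$ term in Lemma \ref{clockwise bubbles 1}, absent from the present formula precisely because \eqref{R3} plays no role here). Note also that \eqref{R3} resolves a double crossing, not a single one, so ``applying it at each of the two crossings'' separately is not a legal move.

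Consequently the curls in the statement do not come from cup--cap terms at all. In the paper's proof the entire defect comes from the $2n$ solid dots: the undotted bubble is passed through the strand for free, and then each dot must be slid, one at a time, through an up--up crossing between the strand and the circle. It is the parallel-strand resolution terms of \eqref{dotSlide: bottomLeft} and \eqref{dotSlide: topLeft} that reconnect the bubble and the strand into a single strand carrying a dotted left-twist curl; there is no ``residual closed bubble'' factor in these terms. The remaining work, which your sketch also underestimates, is the sign bookkeeping showing that the plain and hollow-dotted resolution terms add to give the factor $2$ exactly when the curl carries an odd number $2k-1$ of solid dots and cancel otherwise --- this is what restricts the sum to $k=1,\dots,n$ rather than producing a term for each of the $2n$ slides. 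As written, your argument would either introduce spurious cup--cap contributions or, once you observed that \eqref{R3} does not apply, leave you with no source for the curl terms at all.
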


\begin{proof}
The proof is a direct computation, given below:
\begin{align*}
\begin{tikzpicture}[baseline=(current bounding box.center),scale=0.75]
\draw[->] (2.5,1) arc (-180:180:5mm);
\draw[->] (4,0) to (4,2);
\fill (3,1.5) circle (2pt);
\node at (3,1.8) {\small $2n$};
\end{tikzpicture}\hspace{6pt}
 =
 \hspace{6pt}
\begin{tikzpicture}[baseline=(current bounding box.center),scale=0.75]
\draw[->] (2.5,1) arc (-180:180:5mm);
\draw[->] (3.3,0) to (3.3,2);
\fill (3,1.5) circle (2pt);
\node at (2.5,1.8) {\small $2n$};
\end{tikzpicture}\hspace{6pt}
&=
\hspace{6pt}
\begin{tikzpicture}[baseline=(current bounding box.center),scale=0.75]
\draw[->] (2.5,1) arc (-180:180:5mm);
\draw[->] (3.3,0) to (3.3,2);
\fill (3,1.5) circle (2pt);
\fill (3.5,1) circle (2pt);
\node at (2.7,1.8) {\small $2n$-$1$};
\node at (3.6,1) {};
\end{tikzpicture}\hspace{6pt}
+
\hspace{6pt}
\begin{tikzpicture}[baseline=(current bounding box.center),scale=1.60]
\draw (1,0) to [out=90, in=-75](0.95,0.5);
\draw (0.95,0.5) arc (5:355:2mm);
\draw[->] (0.95,0.46) to [out=75, in=270] (1,1);
\fill (0.8,0.67) circle (1pt);
\node at (0.75,0.8) {\small $2n$-$1$};
\end{tikzpicture}\hspace{6pt}
-
\hspace{6pt}
\begin{tikzpicture}[baseline=(current bounding box.center),scale=1.60]
\draw (1,0) to [out=90, in=-75](0.95,0.5);
\draw (0.95,0.5) arc (5:355:2mm);
\draw[->] (0.95,0.46) to [out=75, in=270] (1,1);
\fill (0.7,0.67) circle (1pt);
\draw (0.85,0.65) circle [radius=1pt];
\draw (0.98,0.57) circle (1pt);
\node at (0.65,0.8) {\small $2n$-$1$};
\end{tikzpicture}\hspace{6pt}
\\ &=
\hspace{6pt}
\begin{tikzpicture}[baseline=(current bounding box.center),scale=0.75]
\draw[->] (2.5,1) arc (-180:180:5mm);
\draw[->] (3.3,0) to (3.3,2);
\fill (3,1.5) circle (2pt);
\fill (3.5,1) circle (2pt);
\node at (2.7,1.8) {\small $2n$-$1$};
\node at (3.6,1) {};
\end{tikzpicture}\hspace{6pt}
+
\hspace{6pt}2
\begin{tikzpicture}[baseline=(current bounding box.center),scale=1.60]
\draw (1,0) to [out=90, in=-75](0.95,0.5);
\draw (0.95,0.5) arc (5:355:2mm);
\draw[->] (0.95,0.46) to [out=75, in=270] (1,1);
\fill (0.8,0.67) circle (1pt);
\node at (0.75,0.8) {\small $2n$-$1$};
\end{tikzpicture}\hspace{6pt}
\\ &=
\hspace{6pt}
\begin{tikzpicture}[baseline=(current bounding box.center),scale=0.75]
\draw[->] (2.5,1) arc (-180:180:5mm);
\draw[->] (3.3,0) to (3.3,2);
\fill (3,1.5) circle (2pt);
\fill (3.5,1) circle (2pt);
\node at (2.7,1.8) {\small $2n$-$2$};
\node at (3.7,1) {\small $2$};
\end{tikzpicture}\hspace{6pt}
+
\hspace{6pt}
\begin{tikzpicture}[baseline=(current bounding box.center),scale=1.60]
\draw (1,0) to [out=90, in=-75](0.95,0.5);
\draw (0.95,0.5) arc (5:355:2mm);
\draw[->] (0.95,0.46) to [out=75, in=270] (1,1);
\fill (0.8,0.67) circle (1pt);
\node at (0.65,0.8) {\small $2n$-$2$};
\fill (1,0.75) circle (1pt);
\node at (1.1,0.75) {};
\end{tikzpicture}\hspace{6pt}
-
\hspace{6pt}
\begin{tikzpicture}[baseline=(current bounding box.center),scale=1.60]
\draw (1,0) to [out=90, in=-75](0.95,0.5);
\draw (0.95,0.5) arc (5:355:2mm);
\draw[->] (0.95,0.46) to [out=75, in=270] (1,1);
\fill (0.7,0.67) circle (1pt);
\draw (0.85,0.65) circle (1pt);
\draw (0.98,0.57) circle (1pt);
\node at (0.65,0.8) {\small $2n$-$2$};
\fill (1,0.75) circle (1pt);
\node at (1.1,0.75) {};
\end{tikzpicture}\hspace{6pt}
+
\hspace{6pt}2
\begin{tikzpicture}[baseline=(current bounding box.center),scale=1.60]
\draw (1,0) to [out=90, in=-75](0.95,0.5);
\draw (0.95,0.5) arc (5:355:2mm);
\draw[->] (0.95,0.46) to [out=75, in=270] (1,1);
\fill (0.8,0.67) circle (1pt);
\node at (0.75,0.8) {\small $2n$-$1$};
\end{tikzpicture}\hspace{6pt}
\\&=
\hspace{6pt}
\begin{tikzpicture}[baseline=(current bounding box.center),scale=0.75]
\draw[->] (2.5,1) arc (-180:180:5mm);
\draw[->] (3.3,0) to (3.3,2);
\fill (3,1.5) circle (2pt);
\fill (3.5,1) circle (2pt);
\node at (2.7,1.8) {\small $2n$-$2$};
\node at (3.7,1) {\small $2$};
\end{tikzpicture}\hspace{6pt}
+
\hspace{6pt}2
\begin{tikzpicture}[baseline=(current bounding box.center),scale=1.60]
\draw (1,0) to [out=90, in=-75](0.95,0.5);
\draw (0.95,0.5) arc (5:355:2mm);
\draw[->] (0.95,0.46) to [out=75, in=270] (1,1);
\fill (0.8,0.67) circle (1pt);
\node at (0.75,0.8) {\small $2n$-$1$};
\end{tikzpicture}\hspace{6pt}
\\&=
\hspace{6pt}
\begin{tikzpicture}[baseline=(current bounding box.center),scale=0.75]
\draw[->] (2.5,1) arc (-180:180:5mm);
\draw[->] (3.3,0) to (3.3,2);
\fill (3,1.5) circle (2pt);
\fill (3.5,1) circle (2pt);
\node at (2.7,1.8) {\small $2n$-$4$};
\node at (3.7,1) {\small $4$};
\end{tikzpicture}\hspace{6pt}
+
\hspace{6pt}2
\begin{tikzpicture}[baseline=(current bounding box.center),scale=1.60]
\draw (1,0) to [out=90, in=-75](0.95,0.5);
\draw (0.95,0.5) arc (5:355:2mm);
\draw[->] (0.95,0.46) to [out=75, in=270] (1,1);
\fill (0.8,0.67) circle (1pt);
\node at (0.75,0.8) {\small $2n$-$3$};
\end{tikzpicture}\hspace{6pt}
+
\hspace{6pt}2
\begin{tikzpicture}[baseline=(current bounding box.center),scale=1.60]
\draw (1,0) to [out=90, in=-75](0.95,0.5);
\draw (0.95,0.5) arc (5:355:2mm);
\draw[->] (0.95,0.46) to [out=75, in=270] (1,1);
\fill (0.8,0.67) circle (1pt);
\node at (0.75,0.8) {\small $2n$-$1$};
\end{tikzpicture}
\end{align*}
Continuing to slide dots in the first term in this way, we obtain:

\begin{align*}
\begin{tikzpicture}[baseline=(current bounding box.center),scale=0.75]
\draw[->] (2.5,1) arc (-180:180:5mm);
\draw[->] (4,0) to (4,2);
\fill (3,1.5) circle (2pt);
\node at (3,1.8) {\small $2n$};
\end{tikzpicture}\hspace{6pt}
&=
\hspace{6pt}
\begin{tikzpicture}[baseline=(current bounding box.center),scale=0.75]
\draw[->] (2.5,1) arc (-180:180:5mm);
\draw[->] (3.3,0) to (3.3,2);
\fill (3.5,1) circle (2pt);
\node at (3.8,1) {\small $2n$};
\end{tikzpicture}\hspace{6pt}
+
\hspace{6pt}2\sum_{k=1}^n\begin{tikzpicture}[baseline=(current bounding box.center),scale=1.75]
\draw (1,0) to [out=90, in=-75](0.95,0.5);
\draw (0.95,0.5) arc (5:355:2mm);
\draw[->] (0.95,0.46) to [out=75, in=270] (1,1);
\fill (0.8,0.67) circle (1pt);
\node at (0.8,0.8) {\small $2k$-$1$};
\end{tikzpicture}.
\end{align*}
\end{proof}

\begin{lemma} \label{curl split}
We have that $$\ds \left[\; \begin{tikzpicture}[baseline=(current bounding box.center),scale=1.75]
\draw (1,0) to [out=90, in=-75](0.95,0.5);
\draw (0.95,0.5) arc (5:355:2mm);
\draw[->] (0.95,0.46) to [out=75, in=270] (1,1);
\fill (0.8,0.67) circle (1pt);
\node at (0.75,0.8) {2n+1};
\end{tikzpicture}\;\right]
=
\sum_{a+b=n}
\left[\; \begin{tikzpicture}[baseline=(current bounding box.center),scale=0.75]
\draw[->] (2.5,1) arc (-180:180:5mm);
\draw[->] (4,0) to (4,2);
\fill (3,1.5) circle (2pt);
\fill (4,1) circle (2pt);
\node at (3,1.8) {2a};
\node at (4.3,1) {2b};
\end{tikzpicture}\;\right]$$
in $\TrHEv$ for any non-negative integer $n$.
\end{lemma}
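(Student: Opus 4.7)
The plan is to prove the lemma by induction on $n$, applying the dot-sliding relation \eqref{dotSlide: topLeft} at the self-intersection of the left curl and using Lemma \ref{hollow dots} and Lemma \ref{odd dots} to discard extraneous terms. The underlying idea is simple: at the self-crossing of the curl, the ``crossing-resolved'' summand in \eqref{dotSlide: topLeft} detaches the loop from the main strand, producing a counterclockwise bubble beside an undotted strand, while the ``crossing-preserved'' summand shifts one dot from the loop portion onto the strand portion below the crossing.

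For the base case $n = 0$, I begin with the curl bearing a single solid dot and slide that dot through the self-crossing using \eqref{dotSlide: topLeft}. Three terms appear: the curl with the dot relocated (which, after a second slide and using the vanishing of the undotted left curl from \eqref{d00=1}, absorbs back into the left-hand side up to scalar), a resolved-crossing contribution producing a counterclockwise bubble with no solid dots attached to an undotted upward strand (yielding $[\bar{d}_0][\h{1}{}]$), and a hollow-dot correction term whose closed-loop factor is killed in $\TrHEv$ by a supercyclicity argument combining Lemma \ref{hollow dots} with the trace relations \eqref{caps}--\eqref{cups}. Rearranging gives the base case identity.

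For the inductive step, starting from the curl with $2n+1$ dots on the loop, I slide the topmost solid dot through the self-crossing. The resolved-crossing term immediately yields the summand $[\bar{d}_{2n}][\h{1}{}]$ corresponding to $a = n$, $b = 0$. The preserved-crossing term is a left curl with $2n$ dots remaining on the loop and one solid dot on the strand just below the crossing; by iterating the same sliding process on this residual curl and invoking the inductive hypothesis at each stage, I obtain the remaining summands $[\bar{d}_{2a}][\h{1}{x_1^{2b}}]$ for $a + b = n$, $a < n$. Every hollow-dot correction produced by the iterated use of \eqref{dotSlide: topLeft} can be closed into a bubble factor that vanishes in $\TrHEv$ by the argument of Lemma \ref{hollow dots}.

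The main obstacle will be the combinatorial bookkeeping: verifying that the iteration produces exactly one copy of each $[\bar{d}_{2a}][\h{1}{x_1^{2b}}]$ with coefficient $1$, with no spurious contributions and no missing summands. A secondary subtlety is ensuring that only even-dotted bubbles appear in the final sum, consistent with the vanishing in Lemma \ref{odd dots}; this is automatic because each slide preserves the parity of the number of solid dots deposited on the bubble factor, and the starting dot count $2n+1$ together with the single dot peeled off at each slide forces an even residue on every surviving bubble.
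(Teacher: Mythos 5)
Your proposal is correct and follows essentially the same route as the paper: induction on $n$, sliding solid dots through the self-crossing of the curl so that the resolved term detaches a counterclockwise bubble next to a dotted strand, with the hollow-dot corrections killed because a bubble carrying a single hollow dot vanishes (Lemma \ref{hollow dots}) and the odd-dotted bubbles killed by Lemma \ref{odd dots}, then applying the inductive hypothesis to the residual curl. The only cosmetic difference is your phrasing in the base case — the relocated-dot term is simply zero because the undotted left curl vanishes by \eqref{d00=1}, rather than ``absorbing into the left-hand side up to scalar'' — but you cite the right relation and the argument goes through as in the paper.
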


\begin{proof}
This is an easy computation using induction on $n$. The base case is 
\begin{equation*}
\begin{tikzpicture}[baseline=(current bounding box.center),scale=1.75]
\draw (1,0) to [out=90, in=-75](0.95,0.5);
\draw (0.95,0.5) arc (5:355:2mm);
\draw[->] (0.95,0.46) to [out=75, in=270] (1,1);
\fill (0.8,0.67) circle (1pt);
\end{tikzpicture}\hspace{6pt}
=
\hspace{6pt}
\begin{tikzpicture}[baseline=(current bounding box.center),scale=1.75]
\draw (1,0) to [out=90, in=-75](0.95,0.5);
\draw (0.95,0.5) arc (5:355:2mm);
\draw[->] (0.95,0.46) to [out=75, in=270] (1,1);
\fill (1,0.27) circle (1pt);
\end{tikzpicture}\hspace{6pt}
+
\hspace{6pt}
\begin{tikzpicture}[baseline=(current bounding box.center),scale=0.75]
\draw[->] (2.5,1) arc (-180:180:5mm);
\draw[->] (4,0) to (4,2);
\end{tikzpicture}\hspace{6pt}
-
\hspace{6pt}
\begin{tikzpicture}[baseline=(current bounding box.center),scale=0.75]
\draw[->] (2.5,1) arc (-180:180:5mm);
\draw[->] (4,0) to (4,2);
\draw (3.5,1) circle (2pt);
\draw (4,0.8) circle (2pt);
\end{tikzpicture}\hspace{6pt}
=
\hspace{6pt}
\begin{tikzpicture}[baseline=(current bounding box.center),scale=0.75]
\draw[->] (2.5,1) arc (-180:180:5mm);
\draw[->] (4,0) to (4,2);
\end{tikzpicture}
\end{equation*}
where the first term after the first equality contains a left twist curl, and the last term is zero since a bubble with a hollow dot is zero.

For the induction step, suppose the statement holds for $n\geq 1$. Then
\begin{align*}
\begin{tikzpicture}[baseline=(current bounding box.center),scale=1.75]
\draw (1,0) to [out=90, in=-75](0.95,0.5);
\draw (0.95,0.5) arc (5:355:2mm);
\draw[->] (0.95,0.46) to [out=75, in=270] (1,1);
\fill (0.8,0.67) circle (1pt);
\node at (0.7,0.78) {\small $2n$+$3$};
\end{tikzpicture}\hspace{6pt}
&=
\hspace{6pt}
\begin{tikzpicture}[baseline=(current bounding box.center),scale=1.75]
\draw (1,0) to [out=90, in=-75](0.95,0.5);
\draw (0.95,0.5) arc (5:355:2mm);
\draw[->] (0.95,0.46) to [out=75, in=270] (1,1);
\fill (0.8,0.67) circle (1pt);
\node at (0.7,0.78) {\small $2n$+$2$};
\fill (1,0.27) circle (1pt);
\node at (1.1,0.27) {};
\end{tikzpicture}\hspace{6pt}
+
\hspace{6pt}
\begin{tikzpicture}[baseline=(current bounding box.center),scale=0.75]
\draw[->] (2.5,1) arc (-180:180:5mm);
\draw[->] (4,0) to (4,2);
\fill (3,1.5) circle (2pt);
\node at (3,1.8) {\small $2n$+$2$};
\node at (4.3,1) {};
\end{tikzpicture}\hspace{6pt}
-
\hspace{6pt}
\begin{tikzpicture}[baseline=(current bounding box.center),scale=0.75]
\draw[->] (2.5,1) arc (-180:180:5mm);
\draw[->] (4,0) to (4,2);
\fill (3,1.5) circle (2pt);
\draw (3.5,1) circle (2pt);
\draw (4,0.8) circle (2pt);
\node at (3,1.8) {\small $2n$+$2$};
\node at (4.3,1) {};
\end{tikzpicture}\hspace{6pt}
=
\hspace{6pt}
\begin{tikzpicture}[baseline=(current bounding box.center),scale=1.75]
\draw (1,0) to [out=90, in=-75](0.95,0.5);
\draw (0.95,0.5) arc (5:355:2mm);
\draw[->] (0.95,0.46) to [out=75, in=270] (1,1);
\fill (0.8,0.67) circle (1pt);
\node at (0.7,0.78) {\small $2n$+$2$};
\fill (1,0.27) circle (1pt);
\node at (1.1,0.27) {};
\end{tikzpicture}\hspace{6pt}
+
\hspace{6pt}
\begin{tikzpicture}[baseline=(current bounding box.center),scale=0.75]
\draw[->] (2.5,1) arc (-180:180:5mm);
\draw[->] (4,0) to (4,2);
\fill (3,1.5) circle (2pt);
\node at (3,1.8) {\small $2n$+$2$};
\node at (4.3,1) {};
\end{tikzpicture}\hspace{6pt}
\\&=
\hspace{6pt}
\begin{tikzpicture}[baseline=(current bounding box.center),scale=1.75]
\draw (1,0) to [out=90, in=-75](0.95,0.5);
\draw (0.95,0.5) arc (5:355:2mm);
\draw[->] (0.95,0.46) to [out=75, in=270] (1,1);
\fill (0.8,0.67) circle (1pt);
\node at (0.7,0.78) {\small $2n$+$1$};
\fill (1,0.27) circle (1pt);
\node at (1.1,0.27) {\small $2$};
\end{tikzpicture}\hspace{6pt}
+
\hspace{6pt}
\begin{tikzpicture}[baseline=(current bounding box.center),scale=0.75]
\draw[->] (2.5,1) arc (-180:180:5mm);
\draw[->] (4,0) to (4,2);
\fill (3,1.5) circle (2pt);
\fill (4,1) circle (2pt);
\node at (3,1.8) {\small $2n$+$1$};
\node at (4.3,1) {};
\end{tikzpicture}\hspace{6pt}
-
\hspace{6pt}
\begin{tikzpicture}[baseline=(current bounding box.center),scale=0.75]
\draw[->] (2.5,1) arc (-180:180:5mm);
\draw[->] (4,0) to (4,2);
\fill (3,1.5) circle (2pt);
\fill (4,1) circle (2pt);
\draw (3.5,1) circle (2pt);
\draw (4,0.8) circle (2pt);
\node at (3,1.8) {\small $2n$+$1$};
\node at (4.3,1) {};
\end{tikzpicture}\hspace{6pt}
+
\hspace{6pt}
\begin{tikzpicture}[baseline=(current bounding box.center),scale=0.75]
\draw[->] (2.5,1) arc (-180:180:5mm);
\draw[->] (4,0) to (4,2);
\fill (3,1.5) circle (2pt);
\node at (3,1.8) {\small $2n$+$2$};
\node at (4.3,1) {};
\end{tikzpicture}\hspace{6pt}
\\&=
\hspace{6pt}
\begin{tikzpicture}[baseline=(current bounding box.center),scale=1.75]
\draw (1,0) to [out=90, in=-75](0.95,0.5);
\draw (0.95,0.5) arc (5:355:2mm);
\draw[->] (0.95,0.46) to [out=75, in=270] (1,1);
\fill (0.8,0.67) circle (1pt);
\node at (0.7,0.78) {\small $2n$+$1$};
\fill (1,0.27) circle (1pt);
\node at (1.1,0.27) {\small $2$};
\end{tikzpicture}\hspace{6pt}
+
\hspace{6pt}
\begin{tikzpicture}[baseline=(current bounding box.center),scale=0.75]
\draw[->] (2.5,1) arc (-180:180:5mm);
\draw[->] (4,0) to (4,2);
\fill (3,1.5) circle (2pt);
\node at (3,1.8) {\small $2n$+$2$};
\node at (4.3,1) {};
\end{tikzpicture},
\end{align*}
where on the second line, we know that counter-clockwise bubbles with odd number of hollow dots are zero by Proposition \ref{odd dots}, and the terms with hollow dots are zero by Proposition \ref{hollow dots}.

Now we can apply our induction hypothesis to the upper part of $\begin{tikzpicture}[baseline=(current bounding box.center),scale=1.75]
\draw (1,0) to [out=90, in=-75](0.95,0.5);
\draw (0.95,0.5) arc (5:355:2mm);
\draw[->] (0.95,0.46) to [out=75, in=270] (1,1);
\fill (0.8,0.67) circle (1pt);
\node at (0.7,0.78) {\small $2n$+$1$};
\fill (1,0.27) circle (1pt);
\node at (1.1,0.27) {\small $2$};
\end{tikzpicture}$ to get that

\begin{align*}
\begin{tikzpicture}[baseline=(current bounding box.center),scale=1.75]
\draw (1,0) to [out=90, in=-75](0.95,0.5);
\draw (0.95,0.5) arc (5:355:2mm);
\draw[->] (0.95,0.46) to [out=75, in=270] (1,1);
\fill (0.8,0.67) circle (1pt);
\node at (0.7,0.78) {\small $2n$+$3$};
\end{tikzpicture}\hspace{6pt}
=
\hspace{6pt}\sum_{a+b=n}
\begin{tikzpicture}[baseline=(current bounding box.center),scale=0.75]
\draw[->] (2.5,1) arc (-180:180:5mm);
\draw[->] (4,0) to (4,2);
\fill (3,1.5) circle (2pt);
\fill (4,1) circle (2pt);
\node at (3,1.8) {2a};
\node at (4.3,1) {2b};
\fill (4,0.5) circle (2pt);
\node at (4.2,0.5) {\small $2$};
\end{tikzpicture}\hspace{6pt}
+
\hspace{6pt}
\begin{tikzpicture}[baseline=(current bounding box.center),scale=0.75]
\draw[->] (2.5,1) arc (-180:180:5mm);
\draw[->] (4,0) to (4,2);
\fill (3,1.5) circle (2pt);
\node at (3,1.8) {\small $2n$+$2$};
\node at (4.3,1) {};
\end{tikzpicture}\hspace{6pt}
=
\hspace{6pt}\sum_{a+b=n+1}
\begin{tikzpicture}[baseline=(current bounding box.center),scale=0.75]
\draw[->] (2.5,1) arc (-180:180:5mm);
\draw[->] (4,0) to (4,2);
\fill (3,1.5) circle (2pt);
\fill (4,1) circle (2pt);
\node at (3,1.8) {2a};
\node at (4.3,1) {2b};
\end{tikzpicture},
\end{align*}
 as desired.
\end{proof}

Obtaining an explicit formula for sliding counter-clockwise bubbles is difficult since we express their commutators in terms of left twist curls with some dots on the curl, whose resolution terms still leave us with counter-clockwise bubbles on the left side of $\h{1}{x_i^a}$. However, the situation is better with clockwise oriented bubbles.

\subsection{Clockwise bubble slide moves}

We can compute an explicit formula for clockwise bubble slides.

\begin{lemma}{\label{clockwise bubbles 1}}
We have $$\ds[d_{2n},\h{1}{}]=2\hspace{2mm}
\left[\; \begin{tikzpicture}[baseline=(current bounding box.center),scale=1.5]
\draw[->] (0,0) to (0,1);
\fill (0,0.75) circle (1pt);
\node at (0.2,0.75) {2n};
\end{tikzpicture} \;\right]
+2\sum_{a+b=2n-1}
\left[\; \begin{tikzpicture}[baseline=(current bounding box.center),scale=1.5]
\draw (1,0) to [out=90, in=85](1.05,0.5);
\draw (1.05,0.5) arc (-175:175:2mm);
\draw[->] (1.05,0.46) to [out=95, in=270] (1,1);
\fill (1.02,0.75) circle (1pt);
\node at (0.8,0.75) {a};
\fill (1.45,0.5) circle (1pt);
\node at (1.55,0.5) {b};
\end{tikzpicture} \;\right]$$
in $\TrHEv$ for all $n\geq 0$.
\end{lemma}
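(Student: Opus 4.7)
The strategy is to adapt the proof of Lemma \ref{bubbleslide to curl} to the clockwise-oriented bubble. The essential new feature is that the rightmost arc of $d_{2n}$ is oriented \emph{downward}, so when a solid dot is slid past an upward strand the relevant crossing is mixed-orientation; this forces me to invoke R3 (equation \eqref{R3}) rather than one of the R2 relations. R3 has an additional ``uncrossed'' cup-cap contribution that is absent in the counter-clockwise case, and this is precisely the source of the new $2\bigl[h_1(x_1^{2n})\bigr]$ term.

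First, I represent $d_{2n}\cdot h_1$ as a clockwise bubble bearing $2n$ solid dots placed next to an upward strand, and likewise for $h_1\cdot d_{2n}$. To carry a solid dot from the bubble onto the strand, I apply the dot-slide relations \eqref{dotSlide: bottomLeft} and \eqref{dotSlide: topLeft} to the mixed-orientation crossing between the bubble's right arc and the upward strand, and then resolve via R3. R3 produces three contributions: (i) an uncrossed term in which the bubble and strand are disjoint, (ii) a cup-cap term, and (iii) a cup-cap term decorated with hollow dots.

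Next, I account for each kind of term. The uncrossed contributions, after all $2n$ solid dots have been transferred, assemble into $h_1(x_1^{2n})$. The hollow-dot correction from (iii) does not simply vanish here: although any closed cycle with a single hollow dot is zero by Lemmas \ref{hollow dots} and \ref{odd dots}, the paired hollow dots on the cap and cup can be slid to the strand using \eqref{caps}, \eqref{cups}, and \eqref{d01=0}, where they simplify to produce a second copy of the uncrossed term. This is the doubling that yields the coefficient $2$ in front of $h_1(x_1^{2n})$ and exactly mirrors the doubling mechanism in the proof of Lemma \ref{bubbleslide to curl}. The cup-cap term (ii), together with dots still resident on the bubble's arc, reconstitutes as a right-twist curl with some dots placed above the curl and some on the curl itself; summing over the iterated dot-slides, and applying the same doubling argument, yields the sum $2\sum_{a+b=2n-1}\bigl[\text{right-twist curl}(a,b)\bigr]$.

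The main obstacle is the sign and coefficient bookkeeping. I must consistently track: the signs from super-commutativity of hollow dots \eqref{cicj=-cjci}, the sign flips when hollow dots cross caps or cups \eqref{caps} and \eqref{cups}, and the intermediate configurations in which solid dots are split between the bubble and the strand. The factor of $+2$ in both summands depends on the R3 cup-cap terms combining \emph{constructively} with the hollow-dot cup-cap terms rather than cancelling; this in turn relies on the parity conditions that hold once we pass to $\TrHEv$, where all closed cycles with odd dot counts are zero. Verifying this constructive combination at each stage of the iteration is the most delicate part of the argument, but the resulting formula is dictated precisely by the template already established in Lemma \ref{bubbleslide to curl}.
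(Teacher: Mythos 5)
Your overall template is the right one---the proof is indeed a dotted-bubble-slide computation in the style of Lemma \ref{bubbleslide to curl}, with relation \eqref{R3} entering because the right arc of a clockwise bubble is oriented downward, and with the factor of $2$ in each summand coming from a plain correction term combining constructively with its hollow-dotted partner after the hollow dots cancel. But your accounting of which term of \eqref{R3} produces what is wrong in a way that breaks the argument. You assign the $2\,\h{1}{x_1^{2n}}$ contribution to the ``uncrossed'' term (i), in which the bubble and strand remain disjoint. That term cannot produce $\h{1}{x_1^{2n}}$ (the bubble is still intact in it), and, more importantly, it is precisely the term that must survive untouched to the end of the computation to become $\h{1}{}\,d_{2n}$, the other half of the commutator; if it is consumed to build $\h{1}{x_1^{2n}}$, nothing remains to subtract and the bracket never closes. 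In the actual computation the $2\,\h{1}{x_1^{2n}}$ term arises from a \emph{single} application of \eqref{R3}, at the moment the upward strand first passes through the downward-oriented right arc while all $2n$ solid dots still sit on the far side of the bubble: the two cup--cap corrections (ii) and (iii) cut the bubble open and splice it, dots and all, onto the strand, and after cancelling the hollow dots the two copies add to give the coefficient $2$.

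The right-twist curls then come from a second, separate mechanism that you have folded into the same resolution: once the strand sits inside the bubble, each of the $2n$ solid dots must be carried individually around the arc past the one remaining mixed crossing, and it is the correction terms of the (mixed-crossing analogue of the) dot-slide relations \eqref{dotSlide: bottomLeft}--\eqref{dotSlide: topLeft}---not further cup--cap terms of \eqref{R3}---that produce, at each of the $2n$ slides, twice a right-twist curl with the dots split as $(a,b)$, $a+b=2n-1$; the strand then exits through the upward-oriented left arc for free by \eqref{R2}. Note also that \eqref{dotSlide: bottomLeft} and \eqref{dotSlide: topLeft} are stated for crossings of two upward strands and cannot be applied verbatim to a mixed crossing, and that solid dots are never ``transferred from the bubble onto the strand'' by any of these relations---they only move along their own strand past a crossing, at the cost of resolution terms. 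As written, your proposal conflates the one-time absorption of the bubble with the iterated dot slides, attributes both outputs to the cup--cap terms of \eqref{R3}, and leaves no disjoint term to realize $\h{1}{}\,d_{2n}$, so the claimed formula does not follow from the steps you describe.
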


\begin{proof}
This is a direct computation, given below:
\begin{align*}
\begin{tikzpicture}[baseline=(current bounding box.center),scale=0.75]
\draw[<-] (2.5,1) arc (-180:180:5mm);
\draw[->] (4,0) to (4,2);
\fill (3,1.5) circle (2pt);
\node at (3,1.8) {\small $2n$};
\end{tikzpicture}\hspace{6pt}
&=
\hspace{6pt}
\begin{tikzpicture}[baseline=(current bounding box.center),scale=0.75]
\draw[<-] (2.5,1) arc (-180:180:5mm);
\draw[->] (3.3,0) to (3.3,2);
\fill (3,1.5) circle (2pt);
\node at (2.5,1.8) {\small $2n$};
\end{tikzpicture}\hspace{6pt}
+2\hspace{2mm}
\begin{tikzpicture}[baseline=(current bounding box.center),scale=0.75]
\draw[->] (0,0) to (0,2);
\fill (0,1.6) circle (2pt);
\node at (0.34,1.6) {\small $2n$};
\end{tikzpicture}\hspace{6pt}
\\&=
\hspace{6pt}
\begin{tikzpicture}[baseline=(current bounding box.center),scale=0.75]
\draw[<-] (2.5,1) arc (-180:180:5mm);
\draw[->] (3.3,0) to (3.3,2);
\fill (3,1.5) circle (2pt);
\node at (2.7,1.8) {\small $2n$-$1$};
\fill (3.5, 1) circle (2pt);
\node at (3.8,1) {};
\end{tikzpicture}\hspace{6pt}
+
\hspace{6pt}
\begin{tikzpicture}[baseline=(current bounding box.center),scale=1.5]
\draw (1,0) to [out=90, in=85](1.05,0.5);
\draw (1.05,0.5) arc (-175:175:2mm);
\draw[->] (1.05,0.46) to [out=95, in=270] (1,1);
\fill (1.01,0.8) circle (1pt);
\node at (0.7,0.8) {\small $2n$-$1$};
\end{tikzpicture}\hspace{6pt}
+
\hspace{6pt}
\begin{tikzpicture}[baseline=(current bounding box.center),scale=1.5]
\draw (1,0) to [out=90, in=85](1.05,0.5);
\draw (1.05,0.5) arc (-175:175:2mm);
\draw[->] (1.05,0.46) to [out=95, in=270] (1,1);
\fill (1.01,0.8) circle (1pt);
\node at (0.7,0.8) {\small $2n$-$1$};
\draw (1,0.9) circle (1pt);
\draw (1.12,0.67) circle (1pt);
\end{tikzpicture}\hspace{6pt}
+2\hspace{2mm}
\begin{tikzpicture}[baseline=(current bounding box.center),scale=0.75]
\draw[->] (0,0) to (0,2);
\fill (0,1.6) circle (2pt);
\node at (0.34,1.6) {\small $2n$};
\end{tikzpicture}\hspace{6pt}
\\&=
\hspace{6pt}
\begin{tikzpicture}[baseline=(current bounding box.center),scale=0.75]
\draw[<-] (2.5,1) arc (-180:180:5mm);
\draw[->] (3.3,0) to (3.3,2);
\fill (3,1.5) circle (2pt);
\node at (2.7,1.8) {\small $2n$-$1$};
\fill (3.5, 1) circle (2pt);
\node at (3.8,1) {};
\end{tikzpicture}\hspace{6pt}
+2
\hspace{6pt}
\begin{tikzpicture}[baseline=(current bounding box.center),scale=1.5]
\draw (1,0) to [out=90, in=85](1.05,0.5);
\draw (1.05,0.5) arc (-175:175:2mm);
\draw[->] (1.05,0.46) to [out=95, in=270] (1,1);
\fill (1.01,0.8) circle (1pt);
\node at (0.7,0.8) {\small $2n$-$1$};
\end{tikzpicture}\hspace{6pt}
+2\hspace{2mm}
\begin{tikzpicture}[baseline=(current bounding box.center),scale=0.75]
\draw[->] (0,0) to (0,2);
\fill (0,1.6) circle (2pt);
\node at (0.34,1.6) {\small $2n$};
\end{tikzpicture}\hspace{6pt}
\\&=
\hspace{6pt}
\begin{tikzpicture}[baseline=(current bounding box.center),scale=0.75]
\draw[<-] (2.5,1) arc (-180:180:5mm);
\draw[->] (3.3,0) to (3.3,2);
\fill (3,1.5) circle (2pt);
\node at (2.7,1.8) {\small $2n$-$2$};
\fill (3.5, 1) circle (2pt);
\node at (3.8,1) {\small $2$};
\end{tikzpicture}\hspace{6pt}
+
\hspace{6pt}
\begin{tikzpicture}[baseline=(current bounding box.center),scale=1.5]
\draw (1,0) to [out=90, in=85](1.05,0.5);
\draw (1.05,0.5) arc (-175:175:2mm);
\draw[->] (1.05,0.46) to [out=95, in=270] (1,1);
\fill (1.01,0.8) circle (1pt);
\node at (0.7,0.8) {\small $2n$-$2$};
\fill (1.24,0.72) circle (1pt);
\end{tikzpicture}\hspace{6pt}
+
\hspace{6pt}
\begin{tikzpicture}[baseline=(current bounding box.center),scale=1.5]
\draw (1,0) to [out=90, in=85](1.05,0.5);
\draw (1.05,0.5) arc (-175:175:2mm);
\draw[->] (1.05,0.46) to [out=95, in=270] (1,1);
\fill (1.01,0.8) circle (1pt);
\node at (0.7,0.8) {\small $2n$-$2$};
\fill (1.24,0.72) circle (1pt);
\draw (1,0.9) circle (1pt);
\draw (1.12,0.67) circle (1pt);
\end{tikzpicture}\hspace{6pt}
+
\hspace{6pt}2
\begin{tikzpicture}[baseline=(current bounding box.center),scale=1.5]
\draw (1,0) to [out=90, in=85](1.05,0.5);
\draw (1.05,0.5) arc (-175:175:2mm);
\draw[->] (1.05,0.46) to [out=95, in=270] (1,1);
\fill (1.01,0.8) circle (1pt);
\node at (0.7,0.8) {\small $2n$-$1$};
\end{tikzpicture}\hspace{6pt}
+2\hspace{2mm}
\begin{tikzpicture}[baseline=(current bounding box.center),scale=0.75]
\draw[->] (0,0) to (0,2);
\fill (0,1.6) circle (2pt);
\node at (0.34,1.6) {\small $2n$};
\end{tikzpicture}\hspace{6pt}
\\&=
\hspace{6pt}
\begin{tikzpicture}[baseline=(current bounding box.center),scale=0.75]
\draw[<-] (2.5,1) arc (-180:180:5mm);
\draw[->] (3.3,0) to (3.3,2);
\fill (3,1.5) circle (2pt);
\node at (2.7,1.8) {\small $2n$-$2$};
\fill (3.5, 1) circle (2pt);
\node at (3.8,1) {\small $2$};
\end{tikzpicture}\hspace{6pt}
+
2\hspace{6pt}
\begin{tikzpicture}[baseline=(current bounding box.center),scale=1.5]
\draw (1,0) to [out=90, in=85](1.05,0.5);
\draw (1.05,0.5) arc (-175:175:2mm);
\draw[->] (1.05,0.46) to [out=95, in=270] (1,1);
\fill (1.01,0.8) circle (1pt);
\node at (0.7,0.8) {\small $2n$-$2$};
\fill (1.24,0.72) circle (1pt);
\end{tikzpicture}\hspace{6pt}
+
\hspace{6pt}2
\begin{tikzpicture}[baseline=(current bounding box.center),scale=1.5]
\draw (1,0) to [out=90, in=85](1.05,0.5);
\draw (1.05,0.5) arc (-175:175:2mm);
\draw[->] (1.05,0.46) to [out=95, in=270] (1,1);
\fill (1.01,0.8) circle (1pt);
\node at (0.7,0.8) {\small $2n$-$1$};
\end{tikzpicture}\hspace{6pt}
+
2\hspace{2mm}
\begin{tikzpicture}[baseline=(current bounding box.center),scale=0.75]
\draw[->] (0,0) to (0,2);
\fill (0,1.6) circle (2pt);
\node at (0.34,1.6) {\small $2n$};
\end{tikzpicture}
\end{align*}
Continuing to slide dots in the first term in this way, we obtain:
\begin{align*}&
\begin{tikzpicture}[baseline=(current bounding box.center),scale=0.75]
\draw[<-] (2.5,1) arc (-180:180:5mm);
\draw[->] (4,0) to (4,2);
\fill (3,1.5) circle (2pt);
\node at (3,1.8) {\small $2n$};
\end{tikzpicture}\hspace{6pt}
=
\hspace{6pt}
\begin{tikzpicture}[baseline=(current bounding box.center),scale=0.75]
\draw[<-] (2.5,1) arc (-180:180:5mm);
\draw[->] (2,0) to (2,2);
\fill (3,1.5) circle (2pt);
\node at (3,1.8) {\small $2n$};
\end{tikzpicture}\hspace{6pt}
+2\hspace{6pt}
\begin{tikzpicture}[baseline=(current bounding box.center),scale=0.75]
\draw[->] (0,0) to (0,2);
\fill (0,1.6) circle (2pt);
\node at (0.34,1.6) {\small $2n$};
\end{tikzpicture}\hspace{6pt}
+
\hspace{6pt}2\sum_{a+b=2n-1}
\begin{tikzpicture}[baseline=(current bounding box.center),scale=1.5]
\draw (1,0) to [out=90, in=85](1.05,0.5);
\draw (1.05,0.5) arc (-175:175:2mm);
\draw[->] (1.05,0.46) to [out=95, in=270] (1,1);
\fill (1.02,0.75) circle (1pt);
\node at (0.8,0.75) {\small $a$};
\fill (1.45,0.5) circle (1pt);
\node at (1.55,0.5) {\small $b$};
\end{tikzpicture}.\end{align*}
\end{proof}

In particular, we can refine this statement to obtain the following recursive formula for computing $[d_{2n},\h{1}{}]$.
\begin{lemma}{\label{clockwise bubbles recursive}}
We have$$[d_{2n},\h{1}{}]=[d_{2n-2},\h{1}{}]\circ  x_1^2 + 4\hspace{2mm}
\left[\; \begin{tikzpicture}[baseline=(current bounding box.center),scale=1.5]
\draw (1,0) to [out=90, in=85](1.05,0.5);
\draw (1.05,0.5) arc (-175:175:2mm);
\draw[->] (1.05,0.46) to [out=95, in=270] (1,1);
\fill (1.01,0.85) circle (1pt);
\node at (1.1,0.85) {\small $2$};
\fill (1.45,0.5) circle (1pt);
\node at (1.7,0.5) {\small $2n$-$3$};
\end{tikzpicture} \;\right]
-2\hspace{2mm}
\left[\; \begin{tikzpicture}[baseline=(current bounding box.center),scale=0.75]
\draw[<-] (0.5,1) arc (-180:180:5mm);
\draw[->] (0,0) to (0,2);
\fill (1.5,1) circle (2pt);
\node at (1.8,1) {\small $2n-2$};
\end{tikzpicture} \;\right]$$ in $\TrHEv$ for all $n\geq 0$.
\end{lemma}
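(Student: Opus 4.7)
The plan is to apply Lemma~\ref{clockwise bubbles 1} to both $[d_{2n},\h{1}{}]$ and $[d_{2n-2},\h{1}{}]$ and then exploit the structure of the resulting sums. Denote by $\mathrm{curl}(a,b)$ the right-twist curl appearing in Lemma~\ref{clockwise bubbles 1}, with $a$ solid dots on the outgoing strand above the self-crossing and $b$ solid dots on the loop. By Lemma~\ref{clockwise bubbles 1},
\begin{equation*}
[d_{2n},\h{1}{}] = 2[\h{1}{x_1^{2n}}] + 2\sum_{a+b=2n-1}[\mathrm{curl}(a,b)],
\end{equation*}
and the analogous identity holds for $[d_{2n-2},\h{1}{}]$ in one lower differential degree. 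Composing with $x_1^2$ corresponds diagrammatically to adding two solid dots on the upward strand of each summand; under the natural convention of placing these dots above the curl we have $\mathrm{curl}(a,b)\circ x_1^2 = \mathrm{curl}(a+2,b)$ and $\h{1}{x_1^{2n-2}}\circ x_1^2 = \h{1}{x_1^{2n}}$.

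Splitting the $a+b=2n-1$ sum in the expansion of $[d_{2n},\h{1}{}]$ according to whether $a\geq 2$ or $a\in\{0,1\}$, and reindexing the $a\geq 2$ portion as $a'=a-2$, the former part reassembles precisely into $[d_{2n-2},\h{1}{}]\circ x_1^2$. This gives
\begin{equation*}
[d_{2n},\h{1}{}] - [d_{2n-2},\h{1}{}]\circ x_1^2 = 2[\mathrm{curl}(0,2n-1)] + 2[\mathrm{curl}(1,2n-2)].
\end{equation*}

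The remaining step is to rewrite this as $4[\mathrm{curl}(2,2n-3)] - 2[d_{2n-2}\h{1}{}]$. For this I would use the dot-slide relations \eqref{dotSlide: bottomLeft} and \eqref{dotSlide: topLeft} to move dots on the loop successively across the self-crossing of the curl onto the outgoing strand. Each slide produces the moved-dot term, sending $\mathrm{curl}(a,b)$ to $\mathrm{curl}(a+1,b-1)$, together with two correction terms: an identity-resolution term, which resolves the self-crossing into an upward strand disjoint from a clockwise bubble, yielding an expression of the form $d_{*}\h{1}{*}$; and a hollow-dot correction term which vanishes in $\TrHEv$ because it places a hollow dot on the resulting bubble component, making that factor zero by Lemma~\ref{hollow dots}. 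The first slide on each of $\mathrm{curl}(0,2n-1)$ and $\mathrm{curl}(1,2n-2)$ contributes the identity-resolution factor $d_{2n-2}\h{1}{}$ (with an appropriate sign), and a subsequent slide converts $\mathrm{curl}(1,2n-2)$ into $\mathrm{curl}(2,2n-3)$; at this later stage the resolution produces a bubble with an odd number of solid dots, which vanishes by Lemma~\ref{odd dots}. Combining these contributions, the two low-top curl terms reorganize into exactly $4[\mathrm{curl}(2,2n-3)] - 2[d_{2n-2}\h{1}{}]$.

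The main obstacle is the careful bookkeeping of the successive dot slides: verifying the coefficients $4$ and $-2$, tracking signs coming from \eqref{dotSlide: bottomLeft} versus \eqref{dotSlide: topLeft}, and confirming which intermediate terms either collapse to one of the two canonical shapes appearing on the right-hand side or vanish in the even trace via Lemmas~\ref{hollow dots} and \ref{odd dots}. The interpretation of the composition $\circ x_1^2$ at the level of trace classes, in particular the identification $\mathrm{curl}(a,b)\circ x_1^2 = \mathrm{curl}(a+2,b)$, is the key observation that makes the combinatorial reduction to two curl terms possible.
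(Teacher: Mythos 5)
Your proposal follows essentially the same route as the paper's proof, which likewise consists of applying Lemma \ref{clockwise bubbles 1} together with the observation that a solid dot may be slid from the loop of a right-twist curl through its self-crossing, the resolution terms vanishing precisely when the loop carries an even number of dots (the resolved clockwise bubble then has an odd number of solid dots and dies by Lemma \ref{odd dots}, while the hollow-dot resolution dies by Lemma \ref{hollow dots}). Your explicit subtraction of $[d_{2n-2},\h{1}{}]\circ x_1^2$, using the identification of $\circ\, x_1^2$ with adding two dots to the strand (hence to the top of each curl, by the trace relation), makes transparent what the paper leaves implicit, and your reduction to the two leftover curl classes with dot distributions $(0,2n-1)$ and $(1,2n-2)$, each with coefficient $2$, is correct.

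The final step of your bookkeeping is where the slip is. You assert that the first slide on \emph{each} of the two leftover curls contributes the resolution term $d_{2n-2}\,\h{1}{}$; taken literally this yields a total coefficient of $-4$ (or $0$, if the two ``appropriate signs'' are opposite), not the required $-2$. The correct accounting is asymmetric. For the curl with $(0,2n-1)$ dots the loop carries an odd number of dots, so sliding one dot up through the crossing produces the curl with $(1,2n-2)$ dots minus $d_{2n-2}\,\h{1}{}$: here the resolved bubble carries the even number $2n-2$ of solid dots and survives. For the curl with $(1,2n-2)$ dots the loop is even, so the slide to $(2,2n-3)$ is free: its resolution term is $d_{2n-3}$ next to $\h{1}{x_1}$, and a clockwise bubble with an odd number of solid dots vanishes by Lemma \ref{odd dots}. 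Combining, twice the $(0,2n-1)$ curl plus twice the $(1,2n-2)$ curl equals four times the $(2,2n-3)$ curl minus $2\,d_{2n-2}\,\h{1}{}$, which is exactly the claimed right-hand side. With this correction your argument is complete and coincides with the paper's.
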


\begin{proof}
This lemma follows from the observation that 
\begin{equation*}
\begin{tikzpicture}[baseline=(current bounding box.center),scale=1.5]
\draw (1,0) to [out=90, in=85](1.05,0.5);
\draw (1.05,0.5) arc (-175:175:2mm);
\draw[->] (1.05,0.46) to [out=95, in=270] (1,1);
\fill (1.02,0.75) circle (1pt);
\node at (0.8,0.75) {\small $a$};
\fill (1.45,0.5) circle (1pt);
\node at (1.65,0.5) {\small $2k$};
\end{tikzpicture}\hspace{6pt}
=
\hspace{6pt}
\begin{tikzpicture}[baseline=(current bounding box.center),scale=1.5]
\draw (1,0) to [out=90, in=85](1.05,0.5);
\draw (1.05,0.5) arc (-175:175:2mm);
\draw[->] (1.05,0.46) to [out=95, in=270] (1,1);
\fill (1.02,0.75) circle (1pt);
\node at (0.8,0.75) {\small $a$+$1$};
\fill (1.45,0.5) circle (1pt);
\node at (1.70,0.5) {\small $2k$-$1$};
\end{tikzpicture}\hspace{6pt}
-
\hspace{6pt}
\begin{tikzpicture}[baseline=(current bounding box.center),scale=0.75]
\draw[<-] (2.5,1) arc (-180:180:5mm);
\draw[->] (2,0) to (2,2);
\fill (3,1.5) circle (2pt);
\node at (3,1.8) {\small $2k$-$1$};
\fill (2,1.4) circle (2pt);
\node at (1.8,1.4) {\small $a$};
\end{tikzpicture}\hspace{6pt}
+
\hspace{6pt}
\begin{tikzpicture}[baseline=(current bounding box.center),scale=0.75]
\draw[<-] (2.5,1) arc (-180:180:5mm);
\draw[->] (2,0) to (2,2);
\fill (3,1.5) circle (2pt);
\node at (3,1.8) {\small $2k$-$1$};
\fill (2,1.4) circle (2pt);
\node at (1.8,1.4) {\small $a$};
\draw (2,1.2) circle (2pt);
\draw (2.5,1.1) circle (2pt);
\end{tikzpicture}\hspace{6pt}
=
\hspace{6pt}
\begin{tikzpicture}[baseline=(current bounding box.center),scale=1.5]
\draw (1,0) to [out=90, in=85](1.05,0.5);
\draw (1.05,0.5) arc (-175:175:2mm);
\draw[->] (1.05,0.46) to [out=95, in=270] (1,1);
\fill (1.02,0.75) circle (1pt);
\node at (0.8,0.75) {\small $a$+$1$};
\fill (1.45,0.5) circle (1pt);
\node at (1.70,0.5) {\small $2k$-$1$};
\end{tikzpicture},
\end{equation*}
where the second term after the first equality is zero by Lemma \ref{odd dots}, and the third term is zero by Lemma \ref{hollow dots}. Applying this result to the summands in the statement of Lemma \ref{clockwise bubbles 1} yields the result.
\end{proof}

Finally, we obtain an explicit formula for computing $[d_{2n},\h{1}{}]$.
\begin{proposition}\label{clockwise bubbles explicit}
We have $$\ds[d_{2n},\h{1}{}]=(2+4n)\hspace{2mm}
\left[\; \begin{tikzpicture}[baseline=(current bounding box.center),scale=1.5]
\draw[->] (0,0) to (0,1);
\fill (0,0.75) circle (1pt);
\node at (0.2,0.75) {2n};
\end{tikzpicture} \;\right]
-\sum_{a+b=n-1}(2+4a)
\left[\; \begin{tikzpicture}[baseline=(current bounding box.center),scale=0.75]
\draw[<-] (0.5,1) arc (-180:180:5mm);
\draw[->] (0,0) to (0,2);
\fill (1.5,1) circle (2pt);
\fill (0,1.5) circle (2pt);
\node at (0.3,1.5) {2a};
\node at (1.8,1) {2b};
\end{tikzpicture} \;\right]$$ in $\TrHEv$ for all $n\geq 0$.
\end{proposition}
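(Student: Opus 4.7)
The plan is to prove Proposition~\ref{clockwise bubbles explicit} by induction on $n$, using Lemma~\ref{clockwise bubbles recursive} as the inductive engine. Throughout, we write ``$(a,b)$-curl'' as shorthand for the right-twist curl diagram carrying $a$ solid dots on the strand above the loop and $b$ solid dots on the loop itself, as appearing in Lemmas~\ref{clockwise bubbles 1} and~\ref{clockwise bubbles recursive}.

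\textbf{Base cases.} For $n=0$, the sum $\sum_{a+b=-1}$ in Lemma~\ref{clockwise bubbles 1} is empty, so $[d_0,h_1]=2[h_1]$, matching the right-hand side $(2+4\cdot 0)[h_1]-0$ of the proposition. For $n=1$, the $(2,-1)$-curl appearing in Lemma~\ref{clockwise bubbles recursive} is not meaningful, so we verify $n=1$ directly from Lemma~\ref{clockwise bubbles 1}:
\[
[d_2,h_1]=2[h_1^{x_1^2}]+2[(0,1)\text{-curl}]+2[(1,0)\text{-curl}].
\]
The $(1,0)$-curl equals $[h_1^{x_1^2}]$ by the definitional identity that a right-twist curl equals a dotted strand, and the auxiliary identity $[(0,1)\text{-curl}]=[h_1^{x_1^2}]-[d_0\cdot h_1]$ is established by sliding the loop-dot through the self-crossing using the dot-slide relations (\ref{dotSlide: bottomLeft})--(\ref{dotSlide: topLeft}) and discarding correction bubbles killed by Lemma~\ref{hollow dots}. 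Combining yields $[d_2,h_1]=6[h_1^{x_1^2}]-2[d_0\cdot h_1]$, as required.

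\textbf{Inductive step ($n\geq 2$).} Assume the proposition for $n-1$ and apply Lemma~\ref{clockwise bubbles recursive}. Substituting the inductive hypothesis into $[d_{2n-2},h_1]\circ x_1^2$ (each factor $h_1^{x_1^{2k}}$ becomes $h_1^{x_1^{2k+2}}$) and reindexing the resulting bubble-strand sum via $a\mapsto a+1$, one verifies by direct algebraic rearrangement that the proposition for $n$ is equivalent to the curl identity
\[
[(2,2n-3)\text{-curl}]=[h_1^{x_1^{2n}}]-\sum_{a=1}^{n-1}[d_{2(n-1-a)}\cdot h_1^{x_1^{2a}}].
\]

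The main obstacle is establishing this curl identity. The plan is to first apply the sub-identity $[(a,2k)\text{-curl}]=[(a+1,2k-1)\text{-curl}]$ obtained in the proof of Lemma~\ref{clockwise bubbles recursive} (its correction terms being bubbles with an odd number of solid dots or with hollow dots, hence zero by Lemmas~\ref{hollow dots} and~\ref{odd dots}) in reverse, converting $[(2,2n-3)\text{-curl}]$ into $[(1,2n-2)\text{-curl}]$. Next, we resolve the self-crossing of this curl by iteratively sliding loop-dots through the crossing via the dot-slide relations (\ref{dotSlide: bottomLeft})--(\ref{dotSlide: topLeft}), combined with the right-twist curl resolution into a dotted strand. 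The cascade of resolution terms produces the leading $[h_1^{x_1^{2n}}]$ contribution along with a sum of clockwise bubbles times dotted strands; after the vanishings of odd-dot and hollow-dot bubbles recorded in Lemmas~\ref{hollow dots} and~\ref{odd dots}, this output matches the right-hand side of the curl identity. Careful bookkeeping of coefficients throughout finishes the induction.
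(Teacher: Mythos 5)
Your proposal is correct and follows essentially the same route as the paper: both reduce the statement to the recursion of Lemma~\ref{clockwise bubbles recursive} together with the identity expressing the $(2,2n-3)$-curl as $\h{1}{x_1^{2n}}$ minus the sum $\sum_{a+b=n-1,\,a\geq 1} d_{2b}\,\h{1}{x_1^{2a}}$, and your base cases and the algebraic reindexing in the inductive step check out. The only divergence is in how that curl identity is established — the paper runs a second induction on $n$, peeling off two top dots at a time via $(2,2n-3)\to(3,2n-4)\to(4,2n-5)$, whereas you cascade all loop dots through the self-crossing at once and collect the surviving $d_{2b}\,\h{1}{x_1^{2a}}$ correction terms directly — but this is the same computation organized differently and both are valid.
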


\begin{proof}
We claim that $$\begin{tikzpicture}[baseline=(current bounding box.center),scale=1.5]
\draw (1,0) to [out=90, in=85](1.05,0.5);
\draw (1.05,0.5) arc (-175:175:2mm);
\draw[->] (1.05,0.46) to [out=95, in=270] (1,1);
\fill (1.01,0.85) circle (1pt);
\node at (1.1,0.85) {\small $2$};
\fill (1.45,0.5) circle (1pt);
\node at (1.7,0.5) {\small $2n$-$3$};
\end{tikzpicture}\hspace{6pt}
=
\hspace{6pt}
\begin{tikzpicture}[baseline=(current bounding box.center),scale=1.5]
\draw[->] (0,0) to (0,1);
\fill (0,0.75) circle (1pt);
\node at (0.2,0.75) {$2n$};
\end{tikzpicture}\hspace{6pt}
-
\hspace{6pt}
\ds \sum_{\substack{a+b=n-1\\a\neq0}}\begin{tikzpicture}[baseline=(current bounding box.center),scale=0.75]
\draw[<-] (2.5,1) arc (-180:180:5mm);
\draw[->] (2,0) to (2,2);
\fill (3,1.5) circle (2pt);
\node at (3,1.8) {\small $2b$};
\fill (2,1.4) circle (2pt);
\node at (1.7,1.4) {\small $2a$};
\end{tikzpicture}\hspace{6pt}
$$ for $n\geq2$.  We proceed via induction on $n$. The base case $n=2$ is a direct computation. Now suppose the formula holds for some $n\geq 2$. Then 
\begin{align*}
    \begin{tikzpicture}[baseline=(current bounding box.center),scale=1.5]
\draw (1,0) to [out=90, in=85](1.05,0.5);
\draw (1.05,0.5) arc (-175:175:2mm);
\draw[->] (1.05,0.46) to [out=95, in=270] (1,1);
\fill (1.01,0.85) circle (1pt);
\node at (1.1,0.85) {\small $2$};
\fill (1.45,0.5) circle (1pt);
\node at (1.8,0.5) {\small $2n-3$};
\end{tikzpicture}\hspace{6pt}
=
\hspace{6pt}
\begin{tikzpicture}[baseline=(current bounding box.center),scale=1.5]
\draw (1,0) to [out=90, in=85](1.05,0.5);
\draw (1.05,0.5) arc (-175:175:2mm);
\draw[->] (1.05,0.46) to [out=95, in=270] (1,1);
\fill (1.01,0.85) circle (1pt);
\node at (1.1,0.85) {\small $3$};
\fill (1.45,0.5) circle (1pt);
\node at (1.8,0.5) {\small $2n-4$};
\end{tikzpicture}\hspace{6pt}
-
\hspace{6pt}
\begin{tikzpicture}[baseline=(current bounding box.center),scale=0.75]
\draw[<-] (2.5,1) arc (-180:180:5mm);
\draw[->] (2,0) to (2,2);
\fill (3,1.5) circle (2pt);
\node at (3,1.8) {\small $2n$-$4$};
\fill (2,1.4) circle (2pt);
\node at (1.7,1.4) {\small $2$};
\end{tikzpicture}\hspace{6pt}
&=
\hspace{6pt}
\begin{tikzpicture}[baseline=(current bounding box.center),scale=1.5]
\draw (1,0) to [out=90, in=85](1.05,0.5);
\draw (1.05,0.5) arc (-175:175:2mm);
\draw[->] (1.05,0.46) to [out=95, in=270] (1,1);
\fill (1.01,0.85) circle (1pt);
\node at (1.1,0.85) {\small $4$};
\fill (1.45,0.5) circle (1pt);
\node at (1.8,0.5) {\small $2n-5$};
\end{tikzpicture}\hspace{6pt}
-
\hspace{6pt}
\begin{tikzpicture}[baseline=(current bounding box.center),scale=0.75]
\draw[<-] (2.5,1) arc (-180:180:5mm);
\draw[->] (2,0) to (2,2);
\fill (3,1.5) circle (2pt);
\node at (3,1.8) {\small $2n$-$4$};
\fill (2,1.4) circle (2pt);
\node at (1.7,1.4) {\small $2$};
\end{tikzpicture}\hspace{6pt}
\\&=
\hspace{6pt}
\begin{tikzpicture}[baseline=(current bounding box.center),scale=1.5]
\draw (1,0) to [out=90, in=85](1.05,0.5);
\draw (1.05,0.5) arc (-175:175:2mm);
\draw[->] (1.05,0.46) to [out=95, in=270] (1,1);
\fill (1.02,0.75) circle (1pt);
\node at (1.12,0.75) {\small $2$};
\fill (1,0.92) circle (1pt);
\node at (1.1,0.92) {\small $2$};
\fill (1.45,0.5) circle (1pt);
\node at (1.8,0.5) {\small $2n-5$};
\end{tikzpicture}\hspace{6pt}
-
\hspace{6pt}
\begin{tikzpicture}[baseline=(current bounding box.center),scale=0.75]
\draw[<-] (2.5,1) arc (-180:180:5mm);
\draw[->] (2,0) to (2,2);
\fill (3,1.5) circle (2pt);
\node at (3,1.8) {\small $2n$-$4$};
\fill (2,1.4) circle (2pt);
\node at (1.7,1.4) {\small $2$};
\end{tikzpicture}.
\end{align*}
 Now we can apply the induction hypothesis to the lower part of the first term in the last expression. This gives us:

\begin{align*}
    \begin{tikzpicture}[baseline=(current bounding box.center),scale=1.5]
\draw (1,0) to [out=90, in=85](1.05,0.5);
\draw (1.05,0.5) arc (-175:175:2mm);
\draw[->] (1.05,0.46) to [out=95, in=270] (1,1);
\fill (1.01,0.85) circle (1pt);
\node at (1.1,0.85) {\small $2$};
\fill (1.45,0.5) circle (1pt);
\node at (1.8,0.5) {\small $2n$-$3$};
\end{tikzpicture}\hspace{6pt}
&=
\hspace{6pt}
\Bigg(\;\begin{tikzpicture}[baseline=(current bounding box.center),scale=1.5]
\draw[->] (0,0) to (0,1);
\fill (0,0.65) circle (1pt);
\node at (0.3,0.65) {\small $2n$-$2$};
\fill (0,0.85) circle (1pt);
\node at (0.15,0.85) {\small $2$};
\end{tikzpicture}\hspace{6pt}
-
\hspace{6pt}
\ds \sum_{\substack{a+b=n-2\\a\neq0}}\begin{tikzpicture}[baseline=(current bounding box.center),scale=0.75]
\draw[<-] (2.5,1) arc (-180:180:5mm);
\draw[->] (2,0) to (2,2);
\fill (3,1.5) circle (2pt);
\node at (3,1.8) {\small $2b$};
\fill (2,1.4) circle (2pt);
\node at (1.7,1.4) {\small $2a$};
\fill (2,1.7) circle (2pt);
\node at (1.7,1.7) {\small $2$};
\end{tikzpicture}\hspace{6pt}
\Bigg)-
\hspace{6pt}
\begin{tikzpicture}[baseline=(current bounding box.center),scale=0.75]
\draw[<-] (2.5,1) arc (-180:180:5mm);
\draw[->] (2,0) to (2,2);
\fill (3,1.5) circle (2pt);
\node at (3,1.8) {\small $2n$-$4$};
\fill (2,1.4) circle (2pt);
\node at (1.7,1.4) {\small $2$};
\end{tikzpicture}\hspace{6pt}
\\&=
\hspace{6pt}
\Bigg(\;\begin{tikzpicture}[baseline=(current bounding box.center),scale=1.5]
\draw[->] (0,0) to (0,1);
\fill (0,0.65) circle (1pt);
\node at (0.2,0.65) {\small $2n$};
\end{tikzpicture}\hspace{6pt}
-
\hspace{6pt}
\ds \sum_{\substack{a+b=n-2\\a\neq0}}\begin{tikzpicture}[baseline=(current bounding box.center),scale=0.75]
\draw[<-] (2.5,1) arc (-180:180:5mm);
\draw[->] (2,0) to (2,2);
\fill (3,1.5) circle (2pt);
\node at (3,1.8) {\small $2b$};
\fill (2,1.4) circle (2pt);
\node at (1.4,1.4) {\small $2a$+$2$};
\end{tikzpicture}\hspace{6pt}
\Bigg)-
\hspace{6pt}
\begin{tikzpicture}[baseline=(current bounding box.center),scale=0.75]
\draw[<-] (2.5,1) arc (-180:180:5mm);
\draw[->] (2,0) to (2,2);
\fill (3,1.5) circle (2pt);
\node at (3,1.8) {\small $2n$-$4$};
\fill (2,1.4) circle (2pt);
\node at (1.7,1.4) {\small $2$};
\end{tikzpicture}\hspace{6pt}
\\&=
\hspace{6pt}
\begin{tikzpicture}[baseline=(current bounding box.center),scale=1.5]
\draw[->] (0,0) to (0,1);
\fill (0,0.65) circle (1pt);
\node at (0.2,0.65) {\small $2n$};
\end{tikzpicture}\hspace{6pt}
-
\hspace{6pt}
\ds \sum_{\substack{a+b=n-1\\a\neq0}}\begin{tikzpicture}[baseline=(current bounding box.center),scale=0.75]
\draw[<-] (2.5,1) arc (-180:180:5mm);
\draw[->] (2,0) to (2,2);
\fill (3,1.5) circle (2pt);
\node at (3,1.8) {\small $2b$};
\fill (2,1.4) circle (2pt);
\node at (1.4,1.4) {\small $2a$};
\end{tikzpicture}
\end{align*}

Applying this result to the recursive formula in Lemma \ref{clockwise bubbles recursive} proves the statement.

\end{proof}

Commutators of bubbles with downward strands are similar to those of bubbles with upward strands. 

\begin{lemma}\label{clockwise bubbles with down recursive} We have
$$\ds[d_{2n},\h{-1}{}]=-2\hspace{2mm}
\left[\; \begin{tikzpicture}[baseline=(current bounding box.center),scale=1.5]
\draw[<-] (0,0) to (0,1);
\fill (0,0.75) circle (1pt);
\node at (0.2,0.75) {\small $2n$};
\end{tikzpicture} \;\right]
-2\sum_{a+b=2n-1}
\left[\; \begin{tikzpicture}[baseline=(current bounding box.center),scale=1.5]
\draw[<-] (1,0) to [out=90, in=-75](0.95,0.5);
\draw (0.95,0.5) arc (5:355:2mm);
\draw (0.95,0.46) to [out=75, in=270] (1,1);
\fill (1.00,0.75) circle (1pt);
\node at (0.85,0.75) {\small $a$};
\fill (0.55,0.5) circle (1pt);
\node at (0.40,0.5) {\small $b$};
\end{tikzpicture} \;\right]$$
in $\TrHEv$ for all $n\geq 0$.
\end{lemma}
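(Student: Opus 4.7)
The plan is to mirror the computation used in Lemma~\ref{clockwise bubbles 1}, but with the strand reversed. More precisely, I will slide a clockwise bubble past a downward strand in the trace, resolve the resulting configuration via the downward analogue of relation~\eqref{R3}, and then iteratively push dots from the bubble onto a right-twist curl until nothing remains on the bubble.

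First, I would write the commutator $[d_{2n}, \h{-1}{}]$ diagrammatically as the difference between a clockwise bubble placed to the right of a downward strand and the same bubble placed to the left (this is the standard annulus interpretation of the commutator). Resolving the downward strand with the adjacent bubble using the relation dual to \eqref{R3}, one obtains three contributions: a bubble placed on the opposite side of the strand (which cancels one of the two terms in the commutator), a right-twist curl on the downward strand carrying $2n{-}1$ solid dots, and a term containing hollow dots on the curl. Using Lemma~\ref{hollow dots} and Lemma~\ref{odd dots} together with the Clifford sign rules \eqref{cicj=-cjci}, \eqref{anticommute}, the hollow-dot term vanishes in $\TrHEv$. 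What survives after this first step should match the right-hand side formula but with all dots still concentrated on one side.

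Next, I would iterate the dot-slide relations \eqref{dotSlide: bottomLeft} and \eqref{dotSlide: topLeft} (in their downward-strand versions) to migrate solid dots off the bubble and onto the downward strand adjacent to the curl, tracking the hollow-dot error terms at each step and discarding those which vanish by Lemmas~\ref{hollow dots}--\ref{odd dots}. After $2n$ iterations, the dots on the clockwise bubble are exhausted, and we are left with exactly the sum of curl diagrams indexed by $a+b = 2n-1$, plus the bubble-passing-through-strand term (which will produce the first summand $-2\left[\hackcenter{\begin{tikzpicture}[baseline=(current bounding box.center),scale=1.2]\draw[<-] (0,0) to (0,0.8); \fill (0,0.5) circle (1pt);\end{tikzpicture}}\right]$ after closing via the cap).

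The principal obstacle, and the reason the signs flip from $+2$ in Lemma~\ref{clockwise bubbles 1} to $-2$ here, lies in the orientation-dependent supercyclicity relations \eqref{caps} and \eqref{cups}: whenever a cap or cup on a downward strand is traversed in the course of resolving the crossing or sliding a dot, one picks up a sign that the upward case did not carry. I expect the bookkeeping of these signs, together with confirming that each hollow-dot correction term vanishes (rather than merely contributing to a lower-order bubble), to be where most of the care is needed. Once the signs are tracked faithfully, the combinatorics of the iteration is identical to that of Lemma~\ref{clockwise bubbles 1}, and the stated identity follows.
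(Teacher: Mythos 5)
Your proposal is correct and takes essentially the same route as the paper: the paper's own proof of this lemma is a single sentence asserting that it follows from a computation similar to Lemmas \ref{clockwise bubbles 1} and \ref{clockwise bubbles recursive}, which is exactly the mirrored resolve-then-iterate-dot-slides argument you lay out. One small caveat on your sign diagnosis: the flip from $+2$ to $-2$ is driven by the correction terms of relation \eqref{R3} entering with the opposite sign once the strand being passed is downward, rather than by the supercyclicity relations \eqref{caps} and \eqref{cups} --- those govern only hollow dots (solid dots slide freely over caps and cups), so they control whether the two hollow-dot correction terms reinforce to give the factor of $2$ or cancel to give $0$, not the overall sign in front of it.
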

\begin{proof} This follows from a computation similar to those in the proofs of Lemmas \ref{clockwise bubbles 1} and \ref{clockwise bubbles recursive}. \end{proof}

Finally we have an explicit formula for commutators of clockwise oriented bubbles and a single downward strand.
\begin{proposition} \label{clockwise bubbles with down explicit} We have
$$\ds[d_{2n},\h{-1}{}]=-(2+4n)\hspace{2mm}
\left[\; \begin{tikzpicture}[baseline=(current bounding box.center),scale=1.5]
\draw[<-] (0,0) to (0,1);
\fill (0,0.75) circle (1pt);
\node at (0.2,0.75) {\small $2n$};
\end{tikzpicture} \;\right]
+\sum_{a+b=n-1}(2+4a)\hspace{2mm}
\left[\; \begin{tikzpicture}[baseline=(current bounding box.center),scale=0.75]
\draw[<-] (0.5,1) arc (-180:180:5mm);
\draw[<-] (2.3,0) to (2.3,2);
\fill (1.5,1) circle (2pt);
\fill (2.3,1.5) circle (2pt);
\node at (2.6,1.5) {\small $2a$};
\node at (1.8,1) {\small $2b$};
\end{tikzpicture}\;\right]$$ in $\TrHEv$ for $n\geq0$.
\end{proposition}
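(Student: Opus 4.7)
The plan is to mirror the proof of Proposition \ref{clockwise bubbles explicit} step by step, adapting each calculation to the downward-strand setting. The starting point is Lemma \ref{clockwise bubbles with down recursive}, which decomposes $[d_{2n},\h{-1}{}]$ as $-2\h{-1}{x_1^{2n}}$ plus a sum over $a+b=2n-1$ of terms depicting a counter-clockwise curl on a downward strand with $a$ dots on the curl and $b$ dots on the strand below.

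First I would derive a downward analog of Lemma \ref{clockwise bubbles recursive}. The key observation is the identity
\[
\begin{tikzpicture}[baseline=(current bounding box.center),scale=1.5]
\draw[<-] (1,0) to [out=90, in=-75](0.95,0.5);
\draw (0.95,0.5) arc (5:355:2mm);
\draw (0.95,0.46) to [out=75, in=270] (1,1);
\fill (1.00,0.75) circle (1pt);
\node at (0.85,0.75) {\scs $a$};
\fill (0.55,0.5) circle (1pt);
\node at (0.30,0.5) {\scs $2k$};
\end{tikzpicture}
\;=\;
\begin{tikzpicture}[baseline=(current bounding box.center),scale=1.5]
\draw[<-] (1,0) to [out=90, in=-75](0.95,0.5);
\draw (0.95,0.5) arc (5:355:2mm);
\draw (0.95,0.46) to [out=75, in=270] (1,1);
\fill (1.00,0.75) circle (1pt);
\node at (0.8,0.75) {\scs $a{+}1$};
\fill (0.55,0.5) circle (1pt);
\node at (0.25,0.5) {\scs $2k{-}1$};
\end{tikzpicture},
\]
which follows by sliding a dot through the crossing inside the downward curl, using the downward versions of \eqref{dotSlide: bottomLeft}--\eqref{dotSlide: topLeft}, and then discarding two resolution terms: one vanishes by Lemma \ref{odd dots} (an odd total number of solid dots on a bubble), the other by Lemma \ref{hollow dots} (hollow dots on a bubble). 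Collapsing all $2n-1$ summands in Lemma \ref{clockwise bubbles with down recursive} in this way concentrates the dots on the curl, and the surviving telescoping pieces assemble into a recursion of the form
\[
[d_{2n},\h{-1}{}] \;=\; [d_{2n-2},\h{-1}{}]\circ x_1^2 \;-\; 4\left[\;
\begin{tikzpicture}[baseline=(current bounding box.center),scale=1.5]
\draw[<-] (1,0) to [out=90, in=-75](0.95,0.5);
\draw (0.95,0.5) arc (5:355:2mm);
\draw (0.95,0.46) to [out=75, in=270] (1,1);
\fill (1.00,0.85) circle (1pt);
\node at (1.12,0.85) {\scs $2$};
\fill (0.55,0.5) circle (1pt);
\node at (0.25,0.5) {\scs $2n{-}3$};
\end{tikzpicture}\;\right]
\;+\; 2\left[\;
\begin{tikzpicture}[baseline=(current bounding box.center),scale=0.75]
\draw[<-] (0.5,1) arc (-180:180:5mm);
\draw[<-] (2.3,0) to (2.3,2);
\fill (1.5,1) circle (2pt);
\node at (1.9,1) {\scs $2n{-}2$};
\end{tikzpicture}\;\right],
\]
where the overall sign pattern differs from Lemma \ref{clockwise bubbles recursive} precisely because the cap/cup relations \eqref{caps}--\eqref{cups} for a downward orientation on the curl contribute extra signs.

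Next, by induction on $n$ (with base case $n=2$ verified directly from the dot-slide and bubble relations), I would prove the auxiliary identity
\[
\begin{tikzpicture}[baseline=(current bounding box.center),scale=1.5]
\draw[<-] (1,0) to [out=90, in=-75](0.95,0.5);
\draw (0.95,0.5) arc (5:355:2mm);
\draw (0.95,0.46) to [out=75, in=270] (1,1);
\fill (1.00,0.85) circle (1pt);
\node at (1.12,0.85) {\scs $2$};
\fill (0.55,0.5) circle (1pt);
\node at (0.25,0.5) {\scs $2n{-}3$};
\end{tikzpicture}
\;=\;-\;
\begin{tikzpicture}[baseline=(current bounding box.center),scale=1.5]
\draw[<-] (0,0) to (0,1);
\fill (0,0.75) circle (1pt);
\node at (0.2,0.75) {\scs $2n$};
\end{tikzpicture}
\;+\;
\sum_{\substack{a+b=n-1\\ a\neq 0}}
\begin{tikzpicture}[baseline=(current bounding box.center),scale=0.75]
\draw[<-] (0.5,1) arc (-180:180:5mm);
\draw[<-] (2.3,0) to (2.3,2);
\fill (1.5,1) circle (2pt);
\fill (2.3,1.5) circle (2pt);
\node at (2.6,1.5) {\scs $2a$};
\node at (1.8,1) {\scs $2b$};
\end{tikzpicture},
\]
with the inductive step carried out exactly as in the proof of Proposition \ref{clockwise bubbles explicit}: move a pair of dots from the lower part of the curl to the upper part via the dot-slide relation, invoke the induction hypothesis on the remaining lower curl, and combine the telescoping terms. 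The signs on the right-hand side are again the reverse of the upward case, reflecting the downward orientation.

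Finally, substituting this identity into the recursion established in the first step, together with a routine induction on $n$ to sum the coefficients, yields the closed formula. The main obstacle will be the careful sign bookkeeping: the interaction of hollow dots with caps via \eqref{caps}--\eqref{cups} flips sign under orientation reversal, and the dot-slide relations for downward strands produce correction terms of opposite sign compared to the upward case. Each of these sign contributions must be tracked through both the recursion and the inductive step, and together they account for the sign flips in both the $-(2+4n)$ coefficient and the $+(2+4a)$ summand, relative to Proposition \ref{clockwise bubbles explicit}.
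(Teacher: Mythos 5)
Your proposal is correct and follows essentially the same route as the paper: the paper's own proof of this proposition is simply the remark that it ``follows from Lemma \ref{clockwise bubbles with down recursive}, using a similar argument as in the proof of Proposition \ref{clockwise bubbles explicit},'' and your write-up is exactly that argument spelled out --- the downward dot-concentration identity, the resulting recursion, the auxiliary curl identity proved by induction, and the final substitution. The only caveat is that the sign bookkeeping you defer (orientation-reversed cap/cup and dot-slide corrections) is precisely where the work lies, but you correctly identify those sources and the signs you state are consistent with Lemma \ref{clockwise bubbles with down recursive} and the target formula.
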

\begin{proof} This follows from Lemma \ref{clockwise bubbles with down recursive}, using a similar argument as in the proof of Proposition \ref{clockwise bubbles explicit}.\end{proof}

Note that in this formula, we are still left with clockwise bubbles on the left side of a downward strand, but with fewer dots on it. Hence the formula may be applied inductively in order to move all the bubbles to the rightmost part of the diagram.

\section{Diagrammatic lemmas}

This section contains some technical computations to derive relations between diagrams consisting of up and down strands. These relations allow us to find a generating set of $\TrH$ in Section 6.
\subsection{Differential degree zero part of $\TrHEv$}

The differential degree zero part of $\TrHEv$ consists of elements $\{\h{n}{}\}_{n\in \mathbb{Z}}$. First, we have the following basic fact.

\begin{proposition} \cite[Proposition 3.9]{Mike} \label{evenCyclesZero} We have $$\h{2n}{} \cong 0$$ for any $n\in \mathbb{Z}$. \end{proposition}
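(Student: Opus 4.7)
The plan is to reduce the claim to a computation in the even cocenter $\Tr(\HC_{2n})_{\overline{0}}$ of the degenerate affine Hecke-Clifford algebra. Under the identification of upward $n$-cycles with classes in $\Tr(\HC_{n})_{\overline{0}}$ (Section 3.5, together with the description of the elements $\h{\pm n}{\cdots}$ in Section 3.6), the element $\h{2n}{}$ corresponds to the class of the full $2n$-cycle $w = (1,2,\ldots,2n) \in S_{2n} \subset \HC_{2n}$. So it suffices to show $[w] = 0$ in this even cocenter.

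The key device will be the auxiliary even element
\[
A := (c_1 c_2 \cdots c_{2n})\cdot w \cdot (c_{2n} c_{2n-1} \cdots c_1),
\]
whose class I evaluate in two ways. On the algebraic side, I will use the Hecke-Clifford commutation $w c_i = c_{w(i)} w$ to push the rightmost Clifford block past $w$, after which a direct simplification via $c_i c_j = -c_j c_i$ (for $i\neq j$) together with $c_i^2 = -1$ should yield $A = -w$ in $\HC_{2n}$, hence $[A] = -[w]$. On the cocenter side, I will factor $A = f\cdot g$ with $f := c_1 \cdots c_{2n}$ and $g := w\cdot c_{2n}\cdots c_1$, both of even degree, so that the defining cyclicity $[fg] = [gf]$ of the even trace applies without a sign, giving
\[
[A] \;=\; \bigl[\, w\cdot (c_{2n}\cdots c_1)(c_1\cdots c_{2n})\,\bigr].
\]
Iteratively collapsing the $2n$ innermost pairs $c_k c_k = -1$ shows $(c_{2n}\cdots c_1)(c_1\cdots c_{2n}) = (-1)^{2n} = 1$, so $[A] = [w]$.

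Comparing the two evaluations yields $-[w] = [w]$, and since we work over $\mathbb{C}$ this forces $[w] = 0$, establishing $\h{2n}{} = 0$. The case of negative index is handled by the analogous argument applied to downward strands.

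The main obstacle will be verifying the identity $A = -w$, but this is only sign bookkeeping: it requires moving a central $c_1$ across $2n-1$ Clifford generators, followed by $2n-1$ collapses of adjacent pairs $c_k^2 = -1$. The essential point is that this algebraic side produces $2n-1$ collapses, whereas the cyclicity side collapses $2n$ pairs in $C_R C_L$; this parity mismatch is exactly the source of the sign discrepancy that forces $[w] = 0$.
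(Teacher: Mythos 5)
Your proof is correct, and it takes a more self-contained route than the paper does. The paper's entire proof of this proposition is a two-step citation: by the triangular decomposition (Proposition \ref{triangularDecomposition}, equivalently the identification $\End_{\Heis}(P^{2n})\cong(\HC_{2n})^{op}$), the claim reduces to the statement that even cycles vanish in the even cocenter of $\HC_{2n}$, which is then quoted from \cite[Proposition 3.9]{Mike}. You perform the same reduction but then actually prove the Hecke--Clifford statement, via the conjugation element $A=(c_1\cdots c_{2n})\,w\,(c_{2n}\cdots c_1)$: the algebraic evaluation gives $A=-w$ (moving the extra $c_1$ past an \emph{odd} number $2n-1$ of Clifford generators produces the crucial sign), while even-trace cyclicity applied to the two even factors $f=c_1\cdots c_{2n}$ and $g=w\,c_{2n}\cdots c_1$ gives $[A]=[w]$, forcing $2[w]=0$. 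I checked the sign bookkeeping and it is right; note also that the computation is insensitive to the normalization $c_i^2=\pm 1$ (the hollow dot on an upward strand squares to $+1$ in the diagrammatic relations, versus $c_i^2=-1$ in $\HC_n$), since the $\epsilon^{2n-1}$ from the algebraic side and the $\epsilon^{2n}$ from the cyclic side contribute $\epsilon^{2n}$ and $\epsilon^{2n}$ respectively after accounting for the extra collapse, so the discrepancy is exactly the $(-1)^{2n-1}$ from anticommutation. Your argument is essentially the mechanism underlying the $\mathcal{OP}_n$-indexing in Proposition \ref{cocenter HC}, so what your approach buys is independence from the external reference; what the paper's approach buys is brevity and consistency with its general strategy of importing facts about $\Tr(\HC_n)_{\overline{0}}$ wholesale. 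One small point worth making explicit if you write this up: the cyclicity step only uses commutators of endomorphisms of the single object $P^{2n}$, with no caps or cups involved, so none of the supercyclicity subtleties of relations \eqref{caps}--\eqref{cups} enter.
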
 \begin{proof} By Proposition \ref{triangularDecomposition}, the proof in the Hecke-Clifford algebra applies here, as well. \end{proof} 

The elements of $\TrHEv$ satisfy the following relations. 
\begin{lemma}\label{basic hn commutators} The following commutators are zero for all non-negative integers $n,m$:
\begin{enumerate}
    \item $[\h{n}{},\h{m}{}]=0$,
    \item  $[\h{-n}{},\h{-m}{}]=0$,
    \item  $[\h{2n}{},\h{-2n}{}]=0$.
\end{enumerate}
\end{lemma}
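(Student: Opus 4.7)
My plan is to handle parts (1) and (2) by a single conjugation argument in the trace, and to dispatch part (3) as an immediate consequence of Proposition~\ref{evenCyclesZero}.

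For part (1), I would represent $[\h{n}{}]\cdot[\h{m}{}]$ in $\TrHEv$ by the tensor diagram $\h{n}{}\otimes\h{m}{}\colon P^{n+m}\to P^{n+m}$: the $n$-cycle on the leftmost $n$ strands placed beside the $m$-cycle on the rightmost $m$ strands. Let $\tau$ be the full braid carrying the first $n$ strands past the last $m$ strands. Since the strands are all of the same orientation and undotted, the upward R2 relation in (\ref{R2}) gives $\tau$ a well-defined inverse $\tau^{-1}$ (the reverse braid), and planar isotopy using only same-orientation Reidemeister moves yields
\[
\tau\,(\h{n}{}\otimes\h{m}{})\,\tau^{-1} \;=\; \h{m}{}\otimes\h{n}{}.
\]
Setting $f := \tau$ and $g := (\h{n}{}\otimes\h{m}{})\circ\tau^{-1}$, both are $\mathbb{Z}/2\mathbb{Z}$-even morphisms, so the trace relation $[fg]=[gf]$ in $\TrHEv$ gives $[\h{m}{}\otimes\h{n}{}]=[\h{n}{}\otimes\h{m}{}]$, which is exactly $[\h{n}{},\h{m}{}]=0$.

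The argument for (2) is identical after reflecting all strand orientations; only the same-orientation R2 for downward strands is used, and no dots or mixed-orientation crossings appear, so no correction terms from (\ref{R3}) or the (co)unit relations intervene. Part (3) follows at once from Proposition~\ref{evenCyclesZero}: since $\h{2n}{}=0$ in $\TrHEv$ for every $n\in\mathbb{Z}$ (covering both $\h{2n}{}$ and $\h{-2n}{}$), the commutator is trivially zero.

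The only genuine step to verify is the conjugation identity $\tau\,(\h{n}{}\otimes\h{m}{})\,\tau^{-1} = \h{m}{}\otimes\h{n}{}$. I expect this to reduce to a short diagrammatic exercise: the cycles realize block-permutations on $\{1,\dots,n+m\}$, and conjugating a block-permutation by the swap $(1,\dots,n\mid n+1,\dots,n+m)\mapsto(n+1,\dots,n+m\mid 1,\dots,n)$ simply interchanges the two blocks. Because nothing in the diagrams generates a correction term under the available relations, this is the routine part of the argument rather than the main obstacle; the real content is that the even trace is abelian on these purely-up and purely-down subalgebras, which is exactly what the trace relation contributes.
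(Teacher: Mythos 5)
Your proposal is correct and is essentially the paper's argument in expanded form: the paper justifies (1) and (2) by the observation that like-oriented strands split apart when they cross twice, which is precisely the invertibility of your block-swap braid $\tau$ via relation \eqref{R2}, and it handles (3) by the same appeal to Proposition \ref{evenCyclesZero}. The only difference is that you have written out the conjugation/trace-relation step that the paper leaves implicit.
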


\begin{proof}
Parts (1) and (2) follow from the fact that similarly oriented strands can be split apart when they cross twice.
Part (3) follows immediately from Proposition \ref{evenCyclesZero}.
\end{proof}

To obtain a copy of the twisted Heisenberg algebra in the $\TrHEv$, we need to look at commutators between elements with odd numbers of oppositely oriented strands.

\begin{lemma}\label{twistedHeisRels}
We have, for any $n,m \in \mathbb{Z}^{\geq 0}$, $$[\h{2n+1}{},\h{-2m+1}{}]=(\delta_{n,-m})(-2(2n+1)).$$
\end{lemma}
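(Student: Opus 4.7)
The plan is to compute the commutator diagrammatically by applying relation (\ref{R3}) to resolve the crossings that arise between up-strands (from $\h{2n+1}{}$) and down-strands (from $\h{-(2m+1)}{}$). In the annular trace picture, interchanging the order of $\h{2n+1}{}$ and $\h{-(2m+1)}{}$ corresponds to moving one cycle around the annulus past the other, so the commutator equals the sum of the extra terms produced when each up-down crossing is resolved via (\ref{R3}) into the uncrossed pair minus a cap-cup pair minus a hollow-dotted cap-cup pair.

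First I would handle the base case $n=m=0$. A direct computation using (\ref{R3}) on $[\h{1}{}, \h{-1}{}]$ gives two correction terms coming from the two R3 resolutions when one strand is moved around the annulus past the other: a bubble $\bar{d}_0$ evaluated via relation (\ref{d00=1}), and a hollow-dotted bubble $\bar{d}_{0,1}$ which vanishes by Lemma \ref{hollow dots}. Combined with the supercyclicity sign picked up when the strand wraps the annulus, this yields the value $-2$, matching the claim for $n=m=0$.

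Next, for general $n,m$, I would induct on $n+m$, again using (\ref{R3}) for each up-down crossing. Each application yields three terms: an uncrossed pair (which eventually commutes one cycle past the other and yields zero in the commutator), a cap-cup term that ``cuts'' an up-strand and a down-strand and reconnects them into a smaller structure, and a hollow-dotted variant. Using Lemma \ref{basic hn facts} to rewrite the resulting products $\h{i}{}\h{n-i}{}$ in terms of lower-order $\h{}$-type elements, the cap-cup resolutions express the commutator as a combination of commutators of strictly shorter cycles (together with surviving bubble evaluations and scalars). By induction, these lower-order commutators vanish unless a complete matching between the $2n+1$ up-strands and $2m+1$ down-strands occurs; that requires $2n+1=2m+1$. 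When the cycle lengths match, summing over the $(2n+1)$ cyclic positions at which the matching can begin multiplies the base-case constant $-2$ to produce the overall scalar $-2(2n+1)$; when they do not match, every resolution leaves a residual cycle and vanishes by induction.

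The main obstacle will be bookkeeping the hollow-dot terms from (\ref{R3}) and the supercyclic signs acquired as strands traverse the annulus. Unlike the untwisted setting of \cite{CLLS}, each R3 resolution carries a hollow-dotted cap-cup correction; these must be shown to cancel pairwise via the supercommutativity relation (\ref{cicj=-cjci}), to vanish via Lemmas \ref{hollow dots} and \ref{odd dots}, or to aggregate into the factor of $2$ (as opposed to $1$ in the untwisted case) appearing in the final constant. Careful tracking of signs, in particular when a strand slides around the annulus past an odd number of hollow dots (cf.\ Example \ref{supertrace}), is the most delicate part of the argument.
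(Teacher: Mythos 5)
There is a genuine gap in the mechanism you propose for killing the correction terms. When a single up--down crossing is resolved via \eqref{R3}, the cap--cup term does not produce ``a combination of commutators of strictly shorter cycles'': it splices the $(2n+1)$-up-cycle and the $(2m+1)$-down-cycle into \emph{one} connected diagram with a turnback, so there is no shorter commutator to which an inductive hypothesis on $n+m$ could apply (and Lemma \ref{basic hn facts}, which splits \emph{dotted up-cycles} into products of up-cycles, is not the relevant tool here). The actual reason these partial contractions vanish --- both for $n\neq m$ and for incomplete matchings when $n=m$ --- is that each one contains a left twist curl, which is zero by the second relation in \eqref{d00=1}; this is precisely the content of \cite[Lemmas 19, 20]{CLLS} that the paper imports, together with the observation that adding the two hollow dots from the extra \eqref{R3} term to a diagram containing a left curl still gives zero. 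Without invoking the left-curl relation your argument has no way to rule out, say, a residual odd cycle, which is \emph{not} zero in the trace.

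Your base case also miscounts the hollow-dot contribution, and this is exactly where the twisted factor of $2$ lives. The second correction term in \eqref{R3} carries \emph{two} hollow dots (one on the cap, one on the cup), not one; in the fully contracted configuration these land on a single counterclockwise bubble and cancel as a pair via \eqref{d01=0} without changing the sign, yielding a second copy of $\bar{d}_0=1$. Hence each complete matching contributes $-\bar{d}_0-\bar{d}_0=-2$, and summing over the $2n+1$ cyclic positions gives $-2(2n+1)$. Identifying this term as $\bar{d}_{0,1}$ and killing it with Lemma \ref{hollow dots}, as you do, is inconsistent with your own claimed answer of $-2$: it would leave only $-(2n+1)$, i.e.\ the untwisted constant.
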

\begin{proof}
First note that  \cite[Lemma 19]{CLLS} and \cite[Lemma 20]{CLLS} holds in our twisted case with a small modification, since all the arguments in their proofs use the fact that the resolution terms contain left twist curls, hence are zero. There are extra resolution terms with hollow dots due to relation \eqref{R3}, but two hollow dots on a diagram containing a left twist curl still gives zero. The only modification comes in the case $m=n$ where we get two copies of counter-clockwise bubbles instead of one, since a two hollow dots on a counter-clockwise bubble end up canceling each other without changing the sign of the diagram. We immediately get that when $m\neq n$, our commutator is zero since we have no solid dots. Therefore we have 

\begin{align*}
    h_{2n+1}h_{-2m+1}&=\left[ \;\begin{tikzpicture}[baseline=(current bounding box.center),scale=0.8]
       \draw[->] (3.2,0) .. controls (3.2,1.25) and (0,.25) .. (0,2)
     node[pos=0.85, shape=coordinate](X){};
  \draw[->] (0,0) .. controls (0,1) and (.8,.8) .. (.8,2);
  \draw[->] (.8,0) .. controls (.8,1) and (1.6,.8) .. (1.6,2);
  \draw[->] (2.4,0) .. controls (2.4,1) and (3.2,.8) .. (3.2,2);
  \node at (1.6,.35) {$\dots$};
  \node at (2.4,1.65) {$\dots$};
\end{tikzpicture}\hspace{2mm}
\begin{tikzpicture}[baseline=(current bounding box.center),scale=0.8]
 \draw[<-] (3.2,0) .. controls (3.2,1.25) and (0,.25) .. (0,2)
     node[pos=0.85, shape=coordinate](X){};
  \draw[<-] (0,0) .. controls (0,1) and (.8,.8) .. (.8,2);
  \draw[<-] (.8,0) .. controls (.8,1) and (1.6,.8) .. (1.6,2);
  \draw[<-] (2.4,0) .. controls (2.4,1) and (3.2,.8) .. (3.2,2);
  \node at (1.6,.35) {$\dots$};
  \node at (2.4,1.65) {$\dots$};
\end{tikzpicture}\; \right]
\\&=
    \left[ \;\begin{tikzpicture}[baseline=(current bounding box.center),scale=0.8]
  \draw (3,0) .. controls ++(0,1.25) and ++(0,-1.75) .. (0,1.5);
  \draw (0,0) .. controls ++(0,1) and ++(0,-.7) .. (.6,1.5);
  \draw (.6,0) .. controls ++(0,1) and ++(0,-.7) .. (1.2,1.5);
  \draw (1.8,0) .. controls ++(0,1) and ++(0,-.7) .. (2.4,1.5);
  \draw (2.4,0) .. controls ++(0,1) and ++(0,-.7) .. (3,1.5);
  \node at (1.2,.35) {$\dots$};
  \node at (1.8,1.15) {$\dots$};
  \draw (6,0) .. controls ++(0,1.25) and ++(0,-1.75) .. (3.6,1.5) ;
  \draw (3.6,0) .. controls ++(0,1) and ++(0,-.7) .. (4.2,1.5);
  \draw (4.2,0) .. controls ++(0,1) and ++(0,-.7) .. (4.8,1.5);
  \draw (5.4,0) .. controls ++(0,1) and ++(0,-.7) .. (6,1.5);
  \node at (4.8,-.65) {$\dots$};
  \node at (5.4,1.85) {$\dots$};
    \draw[blue, dotted] (-0.4,0) -- (6.4,0);
    \draw[blue, dotted] (-0.4,1.5) -- (6.4,1.5);
   \draw[->]  (3,1.5) .. controls ++(0,0.5) and ++(0,-.5) .. (3.6,2.5);
   \draw (3.6,1.5) .. controls ++(0,1) and ++(0,-1) .. (0,2.5);
   \draw[->]  (2.4,1.5) .. controls ++(0,0.5) and ++(0,-.5) .. (3,2.5);
   \draw[->]  (1.2,1.5) .. controls ++(0,0.5) and ++(0,-.5) .. (1.8,2.5);
   \draw[->]  (.6,1.5) .. controls ++(0,0.5) and ++(0,-.5) .. (1.2,2.5);
   \draw[->]  (0,1.5) .. controls ++(0,0.5) and ++(0,-.5) .. (.6,2.5);
   \draw (4.2,1.5) -- (4.2,2.5);
   \draw (4.8,1.5) -- (4.8,2.5);
   \draw (6,1.5) -- (6,2.5);
   \draw  (2.4,0) .. controls ++(0,-.5) and ++(0,.5) .. (3,-1);
   \draw  (1.8,0) .. controls ++(0,-.5) and ++(0,.5) .. (2.4,-1);
   \draw  (.6,0) .. controls ++(0,-.5) and ++(0,.5) .. (1.2,-1);
   \draw  (0,0) .. controls ++(0,-.5) and ++(0,.5) .. (.6,-1);
   \draw  (3,0) .. controls ++(0,-.5) and ++(0,.5) .. (3.6,-1);
  \draw[,->] (3.6,0) .. controls ++(0,-1) and ++(0,+1) .. (0,-1);
   \draw[->] (4.2,0) -- (4.2,-1);
   \draw[->] (5.4,0) -- (5.4,-1);
   \draw[->] (6,0) -- (6,-1);
\end{tikzpicture}\;\right]
\\&=h_{-(2m+1)}h_{2n+1}(-2\bar{d}_0(2n+1)).
\end{align*}

Hence $[\h{(2n+1)}{},\h{-(2m+1)}{}]=\delta_{n,-m}(-2(2n+1))$.
 \end{proof}

Therefore the subset $A=\{\h{(2n+1)}{}\}_{n\in\mathbb{Z}}$ of the filtration degree zero part of $\TrHEv$ is isomorphic to the twisted Heisenberg algebra via

\begin{eqnarray*}
\phi:&\mathfrak{h}_{tw}&\stackrel{\sim}{\longrightarrow} A \\
&\h{\frac{2n+1}{2}}{}&\mapsto \frac{1}{2}\h{-(2n+1)}{}.\end{eqnarray*}

In the $W$-algebra $W^-$, we have an isomophic copy of the twisted Heisenberg algebra as well, given by $B=\{\omega_{2n+1,0}\}_{n\in\mathbb{Z}}$, with the isomorphism given by

\begin{eqnarray*}
\psi:&\mathfrak{h}_{tw}&\stackrel{\sim}{\longrightarrow} B \\
&\h{\frac{2n+1}{2}}{}&\mapsto \frac{1}{\sqrt{2}}\ \omega_{2n+1,0}.
\end{eqnarray*}

Therefore we have an isomorphism between the degree zero part of $\TrHEv$ and the degree zero part of $W^-$:

\begin{eqnarray*}
\psi\circ\phi^{-1}:&A&\stackrel{\sim}{\longrightarrow} B \\
&\h{-(2n+1)}{}&\mapsto \sqrt{2}\ w_{2n+1,0}.
\end{eqnarray*}

\subsection{Nonzero differential degree part of $\TrHEv$}

We have the following basic facts about diagrams in $\TrHEv^{>}$, which we may copy from the corresponding facts in the trace of the affine Hecke-Clifford algebra because of the triangular decomposition of $\TrHEv$ described in Proposition \ref{triangularDecomposition}.

\begin{proposition} \label{odd with odd dots zero}\cite[Propositions 3.9, 4.2]{Mike} In $Tr(\HC)$ for any $m,n \in \mathbb{Z}$, we have $$\h{2n+1}{x_1^{2m+1}} = 0,$$
$$\h{2n}{x_1^{2m}} =0.$$ \end{proposition}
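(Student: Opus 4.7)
My plan is to deduce both vanishings from the corresponding statements in the even cocenter of the degenerate affine Hecke-Clifford algebra, which are exactly \cite[Propositions 3.9, 4.2]{Mike}, and which transport to $\TrH$ via the triangular decomposition of Proposition~\ref{triangularDecomposition}. I would unify the two cases by showing that if $k \geq 1$ and $j \geq 0$ have the same parity, then $[s_1 s_2 \cdots s_{k-1} x_1^j] = 0$ in $\Tr(\HC_k)_{\overline{0}}$.

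The key move is to conjugate by $C := c_1 c_2 \cdots c_k$. Using $\sigma c_i = c_{\sigma(i)} \sigma$ one checks that $s_i C = -C s_i$: pushing $s_i$ to the right through $C$ swaps $c_i$ and $c_{i+1}$ and commutes trivially past every other $c_\ell$, picking up a single sign. Hence $C h_k C^{-1} = (-1)^{k-1} h_k$. Similarly, $c_1 x_1 = -x_1 c_1$ while $c_\ell x_1 = x_1 c_\ell$ for $\ell \geq 2$ gives $C x_1^j C^{-1} = (-1)^j x_1^j$. Multiplying these together,
\[
C (h_k x_1^j) C^{-1} = (-1)^{k-1+j} \, h_k x_1^j.
\]
Since conjugation acts trivially on classes in the trace, $[h_k x_1^j] = (-1)^{k-1+j} [h_k x_1^j]$. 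When $k$ and $j$ share parity, $k-1+j$ is odd and the class vanishes, covering both $(k, j) = (2n, 2m)$ and $(k, j) = (2n+1, 2m+1)$.

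The one subtle point to verify carefully is that when $k = 2n+1$ the conjugating element $C$ is itself odd, so triviality of conjugation in the \emph{even} trace is not automatic. I would handle this by writing $C(h_k x_1^j)C^{-1} = C \cdot (h_k x_1^j C^{-1})$ and noting that both factors are odd with even product, so the defining relation $[fg] = [gf]$ of the even trace applies and yields $[C \cdot (h_k x_1^j C^{-1})] = [(h_k x_1^j C^{-1}) \cdot C] = [h_k x_1^j]$, as required. The case $n < 0$ is handled symmetrically via the identification of $\TrHMi$ with the Hecke-Clifford trace in Proposition~\ref{triangularDecomposition}.
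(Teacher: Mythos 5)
Your argument is correct, but it is doing more work than the paper does: the paper offers no proof of this proposition at all beyond the citation to \cite[Propositions 3.9, 4.2]{Mike}, relying (as in Proposition \ref{evenCyclesZero}) on the triangular decomposition of Proposition \ref{triangularDecomposition} to transport the vanishing from $\Tr(\HC_n)_{\overline{0}}$ into $\TrHEv$. What you have written is a self-contained re-proof of the cited input. The conjugation computation is right: $s_i(c_1\cdots c_k) = -(c_1\cdots c_k)s_i$ gives $C h_k C^{-1} = (-1)^{k-1}h_k$ for a $k$-cycle $h_k$, and $C x_1^j C^{-1} = (-1)^j x_1^j$, so the class picks up $(-1)^{k-1+j}$ and dies exactly when $k$ and $j$ share parity. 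Your care with the case of odd $C$ is also well placed and correctly resolved: the paper's even trace (and the even cocenter in \cite{Mike}, defined via trace functions vanishing on ordinary commutators and on odd elements) imposes $[fg]=[gf]$ for \emph{ordinary} commutators whenever both compositions are even, which is precisely what you invoke with $f = C$ and $g = h_k x_1^j C^{-1}$; had the even trace instead been built from the supercommutator $fg - (-1)^{|f||g|}gf$, your sign would flip and the parity condition would reverse, so flagging and checking this convention was the essential step. In short: same reduction to the Hecke--Clifford trace as the paper, but where the paper outsources the vanishing to \cite{Mike}, you supply a short direct argument; the only thing your route costs is the need to verify the commutator convention, and what it buys is independence from the external reference.
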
 

Hence any diagram containing an odd cycle with an odd number of dots or an even cycle with an even number of dots is zero. Therefore, the difference of the number of strands and number of solid dots must be odd. This agrees with the fact that in the $W$-algebra $W^-$, $l-k$ has to be an odd number for $w_{l,k}$.

The generators of $\TrHEv^{>}$ satisfy the following relations.

\begin{lemma}\label{commutators} For $m,n\in \mathbb{Z}$ with $mn>0$, we have \begin{enumerate} \item $[\h{2m }{x_1}, \h{2n}{ x_1}] = 2(n-m) \h{2n+2m}{x_1}.$
\item$ [\h{m}{c_1}, \h{n}{c_1}] = -2\h{n}{c_1}.$
\end{enumerate}
\end{lemma}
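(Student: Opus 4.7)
The plan is to prove both identities via direct diagrammatic manipulation in the annular trace, in the spirit of \cite[Lemma 22]{CLLS} for the untwisted Heisenberg category, carefully accounting for the supercommutative hollow-dot data specific to $\mathcal{H}_{tw}$. Throughout I treat the case $m,n>0$ (two cycles of upward strands); the case $m,n<0$ is symmetric by the horizontal symmetry of $\mathcal{H}_{tw}$.

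For part (1), the product $\h{2m}{x_1}\,\h{2n}{x_1}$ is represented by two disjoint upward cycles of lengths $2m$ and $2n$, each carrying a single solid dot. To extract the commutator, I cyclically rearrange the two cycles along the annulus so as to swap their order. This braids $2m$ like-oriented strands past $2n$ like-oriented strands; the resulting same-orientation crossings resolve trivially by relation \ref{R2}, but whenever the solid dot traverses a crossing, the dot-slide relations \ref{dotSlide: bottomLeft}--\ref{dotSlide: topLeft} contribute two correction terms: one clean resolution into a pair of parallel strands, and one carrying a pair of hollow dots on these strands. Each clean correction fuses the two cycles into a single $(2m+2n)$-cycle with one solid dot, i.e.\ a multiple of $\h{2m+2n}{x_1}$. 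A signed count of these contributions from the two sides of the commutator yields the coefficient $2(n-m)$ --- morally, the dot on the $2m$-cycle crosses $2n$ strands with one sign while the dot on the $2n$-cycle crosses $2m$ strands with the opposite sign. The subsidiary correction terms carrying extra pairs of hollow dots vanish in $\TrHEv$ by Lemmas \ref{hollow dots}--\ref{odd dots}, since any cycle carrying a single hollow dot, or a closed bubble with a single hollow dot, is zero modulo the supertrace ideal.

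For part (2), the same overall strategy applies, but with one hollow dot in place of each solid dot on the two cycles. The hollow dots pass through same-orientation crossings without producing polynomial correction terms, but pick up signs via the supercommutativity relation \ref{cicj=-cjci} and the supercyclicity relations \ref{caps}--\ref{cups}. The coefficient $-2$ is expected to come from the signed bookkeeping as the two hollow dots swap their annular positions during the rearrangement, in exactly the manner foreshadowed in Example \ref{supertrace}. In both parts the main obstacle is precisely this sign-and-parity bookkeeping: one must verify that every correction term produced by the rearrangement either collapses to the claimed form or vanishes in $\TrHEv$ by one of the parity/vanishing results from Section~4 and the supertrace subtleties of Example \ref{supertrace}. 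With these vanishings in hand, what remains in each case is an elementary count of crossings yielding the stated coefficient.
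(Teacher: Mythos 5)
Your overall strategy for part (1) --- slide one cycle past the other around the annulus, cancel the like-oriented double crossings via \eqref{R2}, and collect the correction terms produced each time the solid dot passes a crossing via \eqref{dotSlide: bottomLeft}--\eqref{dotSlide: topLeft} --- is the same as the paper's, which runs the argument of \cite[Lemma 23]{CLLS} with the twisted dot-slide relations. The genuine gap is in how you dispose of the second correction term. After smoothing one crossing of a double-crossing pair between the two cycles, the remaining crossing merges them, so that term is a single $(2m+2n)$-cycle carrying one solid dot and \emph{two} hollow dots, one on each of the two resolved strands. None of the vanishing results you invoke applies to this configuration: Lemmas \ref{hollow dots} and \ref{odd dots} concern closed bubbles, and the ``a cycle with a single hollow dot is zero'' argument requires an \emph{odd} number of hollow dots on the cycle. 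With two hollow dots one slides them together along the merged cycle (hollow dots pass through like-oriented crossings freely, anticommuting past the solid dot and past each other) and cancels them by \eqref{d01=0}, yielding $\pm\,\h{2m+2n}{x_1}$; the paper's proof states that the sign works out to $+$, so this term \emph{equals} the clean resolution term rather than vanishing. Indeed, the paper identifies exactly this doubling --- ``canceling the two empty dots in our resolution terms give rise to the same sign as the other resolution term'' --- as the source of the coefficient $2$ relative to the untwisted computation. Your count keeps only the clean resolutions and still asserts the coefficient $2(n-m)$, so either the hollow-dot terms must be shown to vanish (they do not) or your crossing count is off by the corresponding factor; as written the two halves of your argument are inconsistent with each other and with the paper's derivation.

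For part (2) your outline (hollow dots slide through crossings without polynomial correction terms; the sign comes from reordering the hollow dots, as in Example \ref{supertrace}) agrees with the paper's brief argument, though both are only sketches. The actionable repair is in part (1): redo the signed count in $\TrHEv$ keeping \emph{both} resolution terms of \eqref{dotSlide: bottomLeft}--\eqref{dotSlide: topLeft}, and verify explicitly that each two-hollow-dot term reduces to $+\h{2m+2n}{x_1}$ rather than to $0$.
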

\begin{proof} Part (1) is a slight modification of \cite[Lemma 23]{CLLS}. By Proposition \ref{odd with odd dots zero}, if at least one of the indices inside the commutator is odd, the commutator will be zero. Hence we will work with the case where both indices are even numbers. The modification we need in \cite[Lemma 23]{CLLS} is a result of us having two resolution terms in our relations \eqref{dotSlide: bottomLeft} and \eqref{dotSlide: topLeft}. As a consequence of having even number of strands in both of our elements, canceling the two empty dots in our resolution terms give rise to the same sign as the other resolution term, hence we have a coefficient of two in front of our result.

Part (2) follows easily the proof of \cite[Lemma 23]{CLLS} since moving an empty dot through a crossing is for free in $\TrHEv$, and we get a negative sign from changing relative heights of hollow dots.
\end{proof}

\begin{lemma}\label{VirasorowithUp}  For $n\geq0$, we have $$[\h{\pm 2n}{(x_1 + \ldots + x_{2n})}, \h{ 1}{}] = \pm 4n \h{\pm(2n+1)}{}.$$
\end{lemma}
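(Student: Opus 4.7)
The plan is to reduce the claimed identity to the single-dot commutator $[\h{\pm 2n}{x_1}, \h{\pm 1}{}]$ and evaluate that diagrammatically. First, I iterate the identity
$$\h{\pm n}{x_{i+1}} = \h{\pm n}{x_i} \mp \h{\pm i}{}\,\h{\pm(n-i)}{}$$
from Lemma \ref{basic hn facts}(1); summing over $k=1,\ldots,2n$ yields
$$\h{\pm 2n}{(x_1+\ldots+x_{2n})} = 2n\,\h{\pm 2n}{x_1} \;\mp\; \sum_{i=1}^{2n-1} (2n-i)\,\h{\pm i}{}\,\h{\pm(2n-i)}{}.$$
By Proposition \ref{evenCyclesZero} only the summands with $i$ odd survive, so the correction is a linear combination of products of two odd cycles without dots.

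Next I observe that each such product commutes with $\h{\pm 1}{}$: by Lemma \ref{basic hn commutators}(1)-(2), cycles $\h{\pm k}{}$ mutually commute for $k\geq 1$, so $[\h{\pm i}{}\,\h{\pm(2n-i)}{}, \h{\pm 1}{}] = 0$ for every surviving $i$. The commutator in the lemma therefore collapses to
$$[\h{\pm 2n}{(x_1+\ldots+x_{2n})}, \h{\pm 1}{}] = 2n\,[\h{\pm 2n}{x_1}, \h{\pm 1}{}],$$
and the statement reduces to verifying the single-dot identity $[\h{\pm 2n}{x_1}, \h{\pm 1}{}] = \pm 2\,\h{\pm(2n+1)}{}$.

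The remaining single-dot commutator is a direct diagrammatic computation in the even trace, analogous to (but subtly different from) the computation in the proof of \cite[Lemma 23]{CLLS}. Drawing the two products as side-by-side diagrams on the annulus, one resolves the crossings between the $\h{\pm 1}{}$ strand and the $(2n)$-cycle (which carries a single solid dot at position~$1$) via the dot-sliding relations \eqref{dotSlide: bottomLeft}--\eqref{dotSlide: topLeft} together with the bigon relation \eqref{R3}. The only surviving resolution reassembles the single strand with the cycle into a $(2n+1)$-cycle; the remaining resolution terms vanish because they either contain left-twist curls (killed by \eqref{d00=1}), bubbles with a single hollow dot (killed by Lemma \ref{hollow dots}), or produce odd cycles carrying an odd number of dots (killed by Proposition \ref{odd with odd dots zero}).

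The main obstacle is coefficient bookkeeping in the last step. The untwisted analogue in \cite[Lemma 23]{CLLS} would give coefficient $1$ in the corresponding single-dot identity, but in our twisted setting the extra hollow-dot resolution appearing in \eqref{dotSlide: bottomLeft}--\eqref{dotSlide: topLeft} contributes additively in the even trace: the two placements of the hollow-dot pair produced by sliding through the annulus are identified (up to the sign flips coming from relations \eqref{caps}--\eqref{cups} and from the supercommutativity \eqref{cicj=-cjci}), and this effectively doubles the surviving coefficient to $\pm 2$. Multiplying by the factor $2n$ from the reduction step then yields the final coefficient $\pm 4n$ claimed in the lemma.
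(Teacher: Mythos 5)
Your argument is correct and lands on the same key computation as the paper, but it organizes the reduction differently. The paper evaluates each single-dot commutator $[\h{2n}{x_a},\h{1}{}]=2\h{2n+1}{}$ separately for $a=1,\dots,2n$ by diagrammatic manipulation and then sums; you instead use Lemma \ref{basic hn facts}(1) to write $\h{\pm 2n}{(x_1+\cdots+x_{2n})}=2n\,\h{\pm 2n}{x_1}\mp\sum_{i=1}^{2n-1}(2n-i)\,\h{\pm i}{}\,\h{\pm(2n-i)}{}$ and note that the correction products commute with $\h{\pm 1}{}$ by Lemma \ref{basic hn commutators}, so only the single computation $[\h{\pm 2n}{x_1},\h{\pm 1}{}]=\pm 2\h{\pm(2n+1)}{}$ remains. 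This is cleaner bookkeeping of the same content: the paper's implicit claim that $[\h{2n}{x_a},\h{1}{}]$ is independent of $a$ is exactly what your reduction establishes, and your version makes the source of the factor $2n$ transparent.

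Two cautions on the remaining step. First, your description of the diagrammatic computation cites the wrong mechanisms: for like-oriented strands the merge uses \eqref{R2}, not \eqref{R3}, and no left-twist curls, single-hollow-dot bubbles, or odd-dotted odd cycles arise here. In the actual computation nothing vanishes: sliding the dot past one crossing via \eqref{dotSlide: topLeft} produces two correction terms, a plain $(2n+1)$-cycle and a $(2n+1)$-cycle carrying a pair of hollow dots, and the latter becomes $+\h{2n+1}{}$ after the hollow dots are brought together and cancelled using \eqref{cicj=-cjci} and \eqref{d01=0}. That cancellation, not a sign picked up from \eqref{caps}--\eqref{cups} or from circling the annulus, is the source of the coefficient $2$. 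Second, you have silently read the second argument of the commutator as $\h{\pm 1}{}$ with orientation matching the cycle; the statement as printed says $\h{1}{}$, and under that literal reading your commutativity step fails in the minus case (e.g.\ $[\h{-1}{}\h{-(2n-1)}{},\h{1}{}]\neq 0$ by Lemma \ref{twistedHeisRels}). The matching-orientation reading is certainly the intended one, as the paper's own proof and the use of this lemma in Lemma \ref{upVirasorowithDown} show, but it should be stated explicitly.
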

\begin{proof} First note that we have: 

\begin{align*}
\left[\hspace{2mm}\begin{tikzpicture}[baseline=(current bounding box.center),scale=0.75]
    \draw[->](0,0) to (1,2);
    \draw[->](1,0) to (0,2);
    \draw[fill](0.25,1.5) circle[radius=3pt];
    \draw [->](1.5,0) to (1.5,2);
\end{tikzpicture}\hspace{2mm}\right]\hspace{6pt}
&=
\hspace{6pt}
\left[\hspace{2mm}\begin{tikzpicture}[baseline=(current bounding box.center),scale=0.75]
\draw [->](0,0) to (1.5,2);
\draw [->](1.5,0) to (0,2);
\draw [fill](.25,1.63) circle[radius=3pt];
\draw[->](0.75,0) to [out=135,in=225] (0.75,2);
\end{tikzpicture}\hspace{2mm}\right] \hspace{6pt}
\\&= 
\hspace{6pt}
\left[\hspace{2mm}\begin{tikzpicture}[baseline=(current bounding box.center),scale=0.75]
\draw [->](0,0) to (1.5,2);
\draw [->](1.5,0) to (0,2);
\draw [fill](.65,1.2) circle[radius=3pt];
\draw[->](0.75,0) to [out=135,in=225] (0.75,2);
\end{tikzpicture}\hspace{2mm}\right]\hspace{6pt}
 +
 \hspace{6pt}
\left[\hspace{2mm}\begin{tikzpicture}[baseline=(current bounding box.center),scale=0.75]
\draw[->] (2,0) .. controls (2,1.25) and (0,.25) .. (0,2);
\draw[->] (0,0) .. controls (0,1) and (.8,.8) .. (1,2);
\draw[->] (1,0) .. controls (1,1) and (1.8,.8) .. (2,2);
\end{tikzpicture}\hspace{2mm}\right]\hspace{6pt}
-
\hspace{6pt}
\left[\hspace{2mm}\begin{tikzpicture}[baseline=(current bounding box.center),scale=0.75]
\draw[->] (2,0) .. controls (2,1.25) and (0,.25) .. (0,2);
\draw[->] (0,0) .. controls (0,1) and (.8,.8) .. (1,2);
\draw[->] (1,0) .. controls (1,1) and (1.8,.8) .. (2,2);
\draw (.08,1.5) circle[radius=3pt];
\draw (.67, 1.2) circle[radius=3pt];
\end{tikzpicture}\hspace{2mm}\right]\hspace{6pt}
\\&\numberthis \label{zeroDotMove} 
\hspace{6pt}=
\hspace{6pt}
\left[\hspace{2mm}\begin{tikzpicture}[baseline=(current bounding box.center),scale=0.75]
    \draw[->](0,0) to (1,2);
    \draw[->](1,0) to (0,2);
    \draw[fill](0.25,1.5) circle[radius=3pt];
    \draw [->](-.5,0) to (-.5,2);
\end{tikzpicture}\hspace{2mm}\right]\hspace{6pt}
+
\hspace{6pt}2
\left[\hspace{2mm}\begin{tikzpicture}[baseline=(current bounding box.center),scale=0.75]
\draw[->] (2,0) .. controls (2,1.25) and (0,.25) .. (0,2);
\draw[->] (0,0) .. controls (0,1) and (.8,.8) .. (1,2);
\draw[->] (1,0) .. controls (1,1) and (1.8,.8) .. (2,2);
\end{tikzpicture}\hspace{2mm}\right].
\end{align*}

Hence $[\h{2}{x_1}, \h{1}{}] = 2 \h{3}{}$. 

Next, moving the solid dot in $\h{2}{x_2}$ around to the bottom of the crossing using the trace relation gives:

\begin{align*}
\left[\hspace{2mm}\begin{tikzpicture}[baseline=(current bounding box.center),scale=0.75]
    \draw[->](0,0) to (1,2);
    \draw[->](1,0) to (0,2);
    \draw[fill](.75,.5) circle[radius=3pt];
    \draw [->](1.5,0) to (1.5,2);
\end{tikzpicture}\hspace{2mm}\right]\hspace{6pt}
&=
\hspace{6pt}
\left[\hspace{2mm}\begin{tikzpicture}[baseline=(current bounding box.center),scale=0.75]
\draw [->](0,0) to (1.5,2);
\draw [->](1.5,0) to (0,2);
\draw[->](0.75,0) to [out=45,in=-45] (0.75,2);
\draw [fill](.9,.75) circle[radius=3pt];
\end{tikzpicture}\hspace{2mm}\right] \hspace{6pt}
\\&= 
\hspace{6pt}
\left[\hspace{2mm}\begin{tikzpicture}[baseline=(current bounding box.center),scale=0.75]
\draw [->](0,0) to (1.5,2);
\draw [->](1.5,0) to (0,2);
\draw[->](0.75,0) to [out=45,in=-45] (0.75,2);
\draw [fill](1.3,.25) circle[radius=3pt];
\end{tikzpicture}\hspace{2mm}\right]\hspace{6pt}
 +
 \hspace{6pt}
\left[\hspace{2mm}\begin{tikzpicture}[baseline=(current bounding box.center),scale=0.75]
\draw[->] (2,0) .. controls (2,1.25) and (0,.25) .. (0,2);
\draw[->] (0,0) .. controls (0,1) and (.8,.8) .. (1,2);
\draw[->] (1,0) .. controls (1,1) and (1.8,.8) .. (2,2);
\end{tikzpicture}\hspace{2mm}\right]\hspace{6pt}
-
\hspace{6pt}
\left[\hspace{2mm}\begin{tikzpicture}[baseline=(current bounding box.center),scale=0.75]
\draw[->] (2,0) .. controls (2,1.25) and (0,.25) .. (0,2);
\draw[->] (0,0) .. controls (0,1) and (.8,.8) .. (1,2);
\draw[->] (1,0) .. controls (1,1) and (1.8,.8) .. (2,2);
\draw (1.77,.5) circle[radius=3pt];
\draw (1.05, .3) circle[radius=3pt];
\end{tikzpicture}\hspace{2mm}\right]\hspace{6pt}
\\&=
\hspace{6pt}
\left[\hspace{2mm}\begin{tikzpicture}[baseline=(current bounding box.center),scale=0.75]
    \draw[->](0,0) to (1,2);
    \draw[->](1,0) to (0,2);
    \draw[fill](.75,.5) circle[radius=3pt];
    \draw [->](-.5,0) to (-.5,2);
\end{tikzpicture}\hspace{2mm}\right]\hspace{6pt}
+ 
\hspace{6pt}2\left[\hspace{2mm}\begin{tikzpicture}[baseline=(current bounding box.center),scale=0.75]
\draw[->] (2,0) .. controls (2,1.25) and (0,.25) .. (0,2);
\draw[->] (0,0) .. controls (0,1) and (.8,.8) .. (1,2);
\draw[->] (1,0) .. controls (1,1) and (1.8,.8) .. (2,2);
\end{tikzpicture}\hspace{2mm}\right].
\end{align*}
So, $[\h{2}{(x_1 + x_2)}, \h{1}{} ] = 4\h{3}{}.$ 

Next, we claim that $[\h{2n}{x_{2n}}, \h{1}{}] = 2 \h{2n+1}{}$ for any $n$. Indeed, we have:
\begin{align*}
\left[\hspace{2mm}\begin{tikzpicture}[baseline=(current bounding box.center),scale=0.75]
  \draw[->] (3.2,0) .. controls (3.2,1.25) and (0,.25) .. (0,2)
     node[pos=0.85, shape=coordinate](X){};
  \draw[->] (0,0) .. controls (0,1) and (.8,.8) .. (.8,2);
  \draw[->] (.8,0) .. controls (.8,1) and (1.6,.8) .. (1.6,2);
  \draw[->] (2.4,0) .. controls (2.4,1) and (3.2,.8) .. (3.2,2);
  \node at (1.6,.35) {$\dots$};
  \node at (2.4,1.65) {$\dots$};
  \filldraw  (3.18,.2) circle (3pt);
\draw[->](3.5,0) to (3.5,2);
\end{tikzpicture}\hspace{2mm}\right]\hspace{6pt}
&=
\hspace{6pt}
\left[\hspace{2mm}\begin{tikzpicture}[baseline=(current bounding box.center),scale=0.75]
  \draw[->] (3.2,0) .. controls (3.2,1.25) and (0,.25) .. (0,2)
     node[pos=0.85, shape=coordinate](X){};
  \draw[->] (0,0) .. controls (0,1) and (.8,.8) .. (.8,2);
  \draw[->] (.8,0) .. controls (.8,1) and (1.6,.8) .. (1.6,2);
  \draw[->] (2.4,0) .. controls (2.4,1) and (3.2,.8) .. (3.2,2);
  \node at (1.6,.35) {$\dots$};
  \node at (2.4,1.65) {$\dots$};
  \filldraw  (2.8,.55) circle (3pt);
\draw[->](3,0) ..controls (3.2,1) and (3,.8)..(2.8,2);
\end{tikzpicture}\hspace{2mm}\right] \hspace{6pt}
\\&=
\hspace{6pt}
\left[\hspace{2mm}\begin{tikzpicture}[baseline=(current bounding box.center),scale=0.75]
  \draw[->] (3.2,0) .. controls (3.2,1.25) and (0,.25) .. (0,2)
     node[pos=0.85, shape=coordinate](X){};
  \draw[->] (0,0) .. controls (0,1) and (.8,.8) .. (.8,2);
  \draw[->] (.8,0) .. controls (.8,1) and (1.6,.8) .. (1.6,2);
  \draw[->] (2.4,0) .. controls (2.4,1) and (3.2,.8) .. (3.2,2);
  \node at (1.6,.35) {$\dots$};
  \node at (2.4,1.65) {$\dots$};
  \filldraw  (3.18,.2) circle (3pt);
\draw[->](2.8,0) ..controls (3,1) and (2.8,.8)..(2.6,2);
\end{tikzpicture}\hspace{2mm}\right]\hspace{6pt}
+
\hspace{6pt}
\left[\hspace{2mm}\begin{tikzpicture}[baseline=(current bounding box.center),scale=0.75]
  \draw[->] (3,0) .. controls (3,1.25) and (0,.25) .. (0,2)
     node[pos=0.85, shape=coordinate](X){};
  \draw[->] (0,0) .. controls (0,1) and (.8,.8) .. (.8,2);
  \draw[->] (.8,0) .. controls (.8,1) and (1.6,.8) .. (1.6,2);
  \draw[->] (2.4,0) .. controls (2.4,1) and (3.2,.8) .. (3.2,2);
  \node at (1.6,.35) {$\dots$};
  \node at (2.4,1.65) {$\dots$};
\draw[->](3.4,0) ..controls (3.6,1) and (3,.8)..(2.8,2);
\end{tikzpicture}\hspace{2mm}\right]\hspace{6pt}
\\&-
\hspace{6pt}
\left[\hspace{2mm}\begin{tikzpicture}[baseline=(current bounding box.center),scale=0.75]
  \draw[->] (3,0) .. controls (3,1.25) and (0,.25) .. (0,2)
     node[pos=0.85, shape=coordinate](X){};
  \draw[->] (0,0) .. controls (0,1) and (.8,.8) .. (.8,2);
  \draw[->] (.8,0) .. controls (.8,1) and (1.6,.8) .. (1.6,2);
  \draw[->] (2.4,0) .. controls (2.4,1) and (3.2,.8) .. (3.2,2);
  \node at (1.6,.35) {$\dots$};
  \node at (2.4,1.65) {$\dots$};
 \draw  (2.85,.4) circle (3pt);
\draw (3.43, .2) circle (3pt);
\draw[->](3.4,0) ..controls (3.6,1) and (3,.8)..(2.8,2);
\end{tikzpicture}\hspace{2mm}\right]\hspace{6pt}
\\&=
\hspace{6pt}
\left[\hspace{2mm}\begin{tikzpicture}[baseline=(current bounding box.center),scale=0.75]
  \draw[->] (3.2,0) .. controls (3.2,1.25) and (0,.25) .. (0,2)
     node[pos=0.85, shape=coordinate](X){};
  \draw[->] (0,0) .. controls (0,1) and (.8,.8) .. (.8,2);
  \draw[->] (.8,0) .. controls (.8,1) and (1.6,.8) .. (1.6,2);
  \draw[->] (2.4,0) .. controls (2.4,1) and (3.2,.8) .. (3.2,2);
  \node at (1.6,.35) {$\dots$};
  \node at (2.4,1.65) {$\dots$};
  \filldraw  (3.18,.2) circle (3pt);
\draw[->](-.3,0) to (-.3,2);
\end{tikzpicture}\hspace{2mm}\right]\hspace{6pt}
+
\hspace{6pt}2
\left[\hspace{2mm}\begin{tikzpicture}[baseline=(current bounding box.center),scale=0.75]
  \draw[->] (3.2,0) .. controls (3.2,1.25) and (0,.25) .. (0,2)
     node[pos=0.85, shape=coordinate](X){};
  \draw[->] (0,0) .. controls (0,1) and (.8,.8) .. (.8,2);
  \draw[->] (.8,0) .. controls (.8,1) and (1.6,.8) .. (1.6,2);
  \draw[->] (2.4,0) .. controls (2.4,1) and (3.2,.8) .. (3.2,2);
  \node at (1.6,.35) {$\dots$};
  \node at (2.4,1.65) {$\dots$};
  \draw[->] (2.8,0) .. controls (2.8,1) and (3.6,.8) .. (3.6,2);
\end{tikzpicture}\hspace{2mm}\right],
\end{align*}
where the last equality is obtained by pushing the crossings at the bottom of the diagrams without dots to the top. Indeed, diagrammatic calculations similar to the above give that $$[\h{2n}{ x_{a}}, \h{1}{}] = 2\h{2n+1}{} $$ for any $1 < a \leq 2n$.

Finally, note that $$\h{2n}{x_1} \h{1}{}\hspace{6pt} = \hspace{6pt}
\left[\hspace{2mm}\begin{tikzpicture}[baseline=(current bounding box.center),scale=0.75]
  \draw[->] (3.2,0) .. controls (3.2,1.25) and (0,.25) .. (0,2)
     node[pos=0.85, shape=coordinate](X){};
  \draw[->] (0,0) .. controls (0,1) and (.8,.8) .. (.8,2);
  \draw[->] (.8,0) .. controls (.8,1) and (1.6,.8) .. (1.6,2);
  \draw[->] (2.4,0) .. controls (2.4,1) and (3.2,.8) .. (3.2,2);
  \node at (1.6,.35) {$\dots$};
  \node at (2.4,1.65) {$\dots$};
  \filldraw  (.1,1.55) circle (3pt);
\draw[->](.4,0) ..controls (.4,1) and (.2,.8)..(.4,2);
\end{tikzpicture}\hspace{2mm}\right].
$$
The dot will slide over the top-leftmost crossing in the same manner as in Equation \eqref{zeroDotMove}, meaning the correction terms will cancel out. Hence, we have the desired result.
 \end{proof}

\begin{lemma}\label{upVirasorowithUp} Let $m$ be an odd integer. We have $$[ \h{2}{(x_1+ x_2)}, \h{m}{}] = 4m\h{m+2}{}.$$ \end{lemma}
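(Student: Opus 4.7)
The strategy is to first reduce the claim algebraically, then establish the reduced version by a direct diagrammatic computation patterned on the proof of Lemma~\ref{VirasorowithUp}. For the reduction, apply Lemma~\ref{basic hn facts}(1) with $n=2$, $i=1$ to obtain $\h{2}{x_1} - \h{2}{x_2} = \h{1}{}\h{1}{}$, so $\h{2}{(x_1+x_2)} = 2\h{2}{x_2} + \h{1}{}^2$. Since $[\h{1}{},\h{m}{}]=0$ by Lemma~\ref{basic hn commutators}(1), also $[\h{1}{}^2,\h{m}{}]=0$, and therefore $[\h{2}{(x_1+x_2)}, \h{m}{}] = 2[\h{2}{x_2}, \h{m}{}]$. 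The claim reduces to showing $[\h{2}{x_2}, \h{m}{}] = 2m\h{m+2}{}$. The case $m=1$ of this reduced statement is already contained in the proof of Lemma~\ref{VirasorowithUp} (the computation with $2n=2$, $a=2$), which gives $[\h{2}{x_2}, \h{1}{}] = 2\h{3}{}$.

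For general odd $m$, I would mirror that proof, this time sliding a dotted 2-cycle past a longer cycle rather than a single strand past a longer dotted cycle. Represent $\h{m}{}\otimes\h{2}{x_2}$ on the annulus as an $m$-cycle of upward strands on the left $m$ positions followed by a two-strand crossing carrying a solid dot on the right. Apply trace cyclicity to slide the dotted 2-cycle past each strand of $\h{m}{}$. Crossings between the undotted strand of the 2-cycle and strands of $\h{m}{}$ collapse trivially via $s^2=\text{id}$ (relation~\eqref{R2}). Crossings involving the dotted strand are resolved via the dot-slide relations \eqref{dotSlide: bottomLeft} and \eqref{dotSlide: topLeft}, each producing three contributions: a pass-through term with the dot transferred, a bypass term, and a hollow-dot bypass term.

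The hollow-dot bypass contributions vanish: after simplification using Lemmas~\ref{hollow dots} and~\ref{odd dots} together with the supercyclicity relations~\eqref{caps} and~\eqref{cups}, the resulting diagrams contain odd-length cycles with unpaired hollow dots. The accumulated pass-through terms reassemble into $\h{2}{x_2}\otimes\h{m}{}$ at the opposite position on the annulus, supplying the $-\h{m}{}\h{2}{x_2}$ piece of the commutator. Each bypass term replaces a crossing by parallel strands, splicing the dotted 2-cycle and a single strand of the $m$-cycle into an $(m+2)$-cycle, i.e., $\h{m+2}{}$. Mirroring the base-case count (where each dot placement contributed $2\h{2n+1}{}$), each of the $m$ strands of $\h{m}{}$ contributes two bypass corrections, one coming from each of the two dot-slide relations, for a total of $2m\h{m+2}{}$.

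The main obstacle is the combinatorial bookkeeping: carefully tracking signs from the supercyclicity relations as hollow dots travel around caps, cups, and through other dots on the annulus; verifying that the hollow-dot bypass contributions indeed cancel; and confirming that the surviving bypass terms accumulate to exactly $2m\h{m+2}{}$ rather than some other multiple, once the dot has propagated all the way around the $m$-cycle.
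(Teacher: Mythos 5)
Your opening reduction is a genuinely nice simplification that the paper does not use: writing $\h{2}{(x_1+x_2)} = 2\h{2}{x_2} + \h{1}{}\h{1}{}$ via Lemma~\ref{basic hn facts}(1) and discarding $[\h{1}{}\h{1}{},\h{m}{}]$ by Lemma~\ref{basic hn commutators}(1) correctly reduces the claim to $[\h{2}{x_2},\h{m}{}]=2m\h{m+2}{}$, which is exactly the identity the paper establishes (it proves the same statement separately for $\h{2}{x_1}$ and for $\h{2}{x_2}$ and adds the two). Note only that this step, like the paper's own diagrammatic proof, tacitly assumes $m>0$: Lemma~\ref{basic hn commutators}(1) is stated for non-negative indices, and for $m=-1$ one has $[\h{1}{}\h{1}{},\h{-1}{}]=-4\h{1}{}\neq 0$ by Lemma~\ref{twistedHeisRels}, so the term cannot be dropped there.

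The genuine gap is in your accounting of the correction terms in the diagrammatic computation. You assert that the hollow-dot bypass contributions vanish because the resulting diagrams "contain odd-length cycles with unpaired hollow dots," and you recover the factor of $2$ per strand by claiming that both dot-slide relations \eqref{dotSlide: bottomLeft} and \eqref{dotSlide: topLeft} fire on each strand. Both claims are incorrect, and they happen to compensate. Each crossing the solid dot passes is resolved by exactly \emph{one} of the two dot-slide relations, which produces two correction terms: the plain bypass diagram and the same diagram carrying a \emph{pair} of hollow dots. Since the hollow dots come in a pair on the merged $(m+2)$-cycle, that term does not vanish; using \eqref{cicj=-cjci} and \eqref{d01=0} it equals $\pm$ the plain bypass diagram, and the signs conspire so that it \emph{adds to}, rather than cancels, the first correction term. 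This doubling is precisely the source of the $2$ in $2m\h{m+2}{}$: compare the explicit computation in the proof of Lemma~\ref{VirasorowithUp}, where $+\left[\h{3}{}\right]-\left[\h{3}{}\ \text{with two hollow dots}\right]$ collapses to $+2\h{3}{}$, and the corresponding step in the paper's proof of this lemma, where "cancelling the empty dots in the last term results in a change in sign" and the two latter diagrams combine to $-2\h{m+2}{}$. With your accounting (hollow-dot terms zero, but two resolutions per strand) you land on the correct constant by an incorrect mechanism, and since the entire content of the lemma is that constant, the argument as written does not establish it. The correct count is: the dot passes $m$ crossings, each resolution contributes $2\h{m+2}{}$ (plain term plus the surviving hollow-dot term), for a total of $2m\h{m+2}{}$.
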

\begin{proof}

We compute directly: 

\begin{align*}\h{m}{}\h{2}{x_1}
\hspace{6pt} &= \hspace{6pt} \left[ \;\;\;
\hackcenter{
\begin{tikzpicture}[scale=0.8]
  \draw[->] (3,0) .. controls ++(0,1.25) and ++(0,-1.75) .. (0,2);
  \draw[->] (0,0) .. controls ++(0,1) and ++(0,-1.2) .. (.6,2);
  \draw[->] (.6,0) .. controls ++(0,1) and ++(0,-1.2) .. (1.2,2);
  \draw[->] (1.8,0) .. controls ++(0,1) and ++(0,-1.2) .. (2.4,2);
  \draw[->] (2.4,0) .. controls ++(0,1) and ++(0,-1.2) .. (3,2);
  \node at (1.2,.35) {$\dots$};
  \node at (1.8,1.65) {$\dots$};
  \draw[->] (3.4,0)--(4.5,2);
  \draw[->] (4.5,0)--(3.4,2);
   \filldraw  (3.7,1.4) circle (2pt);
\end{tikzpicture}}
\; \;\;\right]\hspace{6pt}
\\&=
\hspace{6pt}
\left[ \;\;\;
\hackcenter{
\begin{tikzpicture}[scale=0.8]
  \draw (3,0) .. controls (3,1.25) and (0,.25) .. (0,1.5)
     node[pos=0.85, shape=coordinate](X){};
  \draw (0,0) .. controls (0,1) and (.6,.8) .. (.6,1.5);
  \draw (.6,0) .. controls (.6,1) and (1.2,.8) .. (1.2,1.5);
  \draw (1.8,0) .. controls (1.8,1) and (2.4,.8) .. (2.4,1.5);
  \draw (2.4,0) .. controls (2.4,1) and (3,.8) .. (3,1.5);
  \node at (1.2,.35) {$\dots$};
  \node at (1.8,1.15) {$\dots$};
  \draw (4.2,0) .. controls (4.2,1.25) and (3.6,.25) .. (3.6,1.5)
     node[pos=0.85, shape=coordinate](X){};
  \draw (3.6,0) .. controls (3.6,1) and (4.2,.8) .. (4.2,1.5);
    \draw[blue, dotted] (-0.4,0) -- (4.4,0);
    \draw[blue, dotted] (-0.4,1.5) -- (4.4,1.5);
   \draw[->]  (3,1.5) .. controls (3,2) and (3.6,2) .. (3.6,2.5);
   \draw[->] (3.6,1.5) .. controls (3.6,2.5) and (0,1.5) .. (0,2.5);
   \draw[->]  (2.4,1.5) .. controls (2.4,2) and (3,2) .. (3,2.5);
   \draw[->]  (1.2,1.5) .. controls (1.2,2) and (1.8,2) .. (1.8,2.5);
   \draw[->]  (.6,1.5) .. controls (.6,2) and (1.2,2) .. (1.2,2.5);
   \draw[->]  (0,1.5) .. controls (0,2) and (.6,2) .. (.6,2.5);
   \draw[->] (4.2,1.5) -- (4.2,2.5);
     \filldraw  (3.6,1.5) circle (2pt);
   \draw  (2.4,0) .. controls (2.4,-.5) and (3,-.5) .. (3,-1);
   \draw  (1.8,0) .. controls (1.8,-.5) and (2.4,-.5) .. (2.4,-1);
   \draw  (.6,0) .. controls (.6,-.5) and (1.2,-.5) .. (1.2,-1);
   \draw  (0,0) .. controls (0,-.5) and (.6,-.5) .. (.6,-1);
   \draw  (3,0) .. controls (3,-.5) and (3.6,-.5) .. (3.6,-1);
  \draw (3.6,0) .. controls (3.6,-1) and (0,0) .. (0,-1);
   \draw (4.2,0) -- (4.2,-1);
\end{tikzpicture}}
\right]
\hspace{6pt}
\\&=
\hspace{6pt}
\left[ \;\;\;
\hackcenter{
\begin{tikzpicture}[scale=0.8]
  \draw (3,0) .. controls (3,1.25) and (0,.25) .. (0,1.5)
     node[pos=0.85, shape=coordinate](X){};
  \draw (0,0) .. controls (0,1) and (.6,.8) .. (.6,1.5);
  \draw (.6,0) .. controls (.6,1) and (1.2,.8) .. (1.2,1.5);
  \draw (1.8,0) .. controls (1.8,1) and (2.4,.8) .. (2.4,1.5);
  \draw (2.4,0) .. controls (2.4,1) and (3,.8) .. (3,1.5);
  \node at (1.2,.35) {$\dots$};
  \node at (1.8,1.15) {$\dots$};
  \draw (4.2,0) .. controls (4.2,1.25) and (3.6,.25) .. (3.6,1.5)
     node[pos=0.85, shape=coordinate](X){};
  \draw (3.6,0) .. controls (3.6,1) and (4.2,.8) .. (4.2,1.5);
    \draw[blue, dotted] (-0.4,0) -- (4.4,0);
    \draw[blue, dotted] (-0.4,1.5) -- (4.4,1.5);
   \draw[->]  (3,1.5) .. controls (3,2) and (3.6,2) .. (3.6,2.5);
   \draw[->] (3.6,1.5) .. controls (3.6,2.5) and (0,1.5) .. (0,2.5);
   \draw[->]  (2.4,1.5) .. controls (2.4,2) and (3,2) .. (3,2.5);
   \draw[->]  (1.2,1.5) .. controls (1.2,2) and (1.8,2) .. (1.8,2.5);
   \draw[->]  (.6,1.5) .. controls (.6,2) and (1.2,2) .. (1.2,2.5);
   \draw[->]  (0,1.5) .. controls (0,2) and (.6,2) .. (.6,2.5);
   \draw[->] (4.2,1.5) -- (4.2,2.5);
     \filldraw  (3.0,1.95) circle (2pt);
   \draw  (2.4,0) .. controls (2.4,-.5) and (3,-.5) .. (3,-1);
   \draw  (1.8,0) .. controls (1.8,-.5) and (2.4,-.5) .. (2.4,-1);
   \draw  (.6,0) .. controls (.6,-.5) and (1.2,-.5) .. (1.2,-1);
   \draw  (0,0) .. controls (0,-.5) and (.6,-.5) .. (.6,-1);
   \draw  (3,0) .. controls (3,-.5) and (3.6,-.5) .. (3.6,-1);
  \draw (3.6,0) .. controls (3.6,-1) and (0,0) .. (0,-1);
   \draw (4.2,0) -- (4.2,-1);
\end{tikzpicture}}
\right]
\hspace{6pt} 
-
\hspace{6pt}
 \left[ \;\;\;\hackcenter{
\begin{tikzpicture}[scale=0.8]
  \draw (3,0) .. controls (3,1.25) and (0,.25) .. (0,1.5)
     node[pos=0.85, shape=coordinate](X){};
  \draw (0,0) .. controls (0,1) and (.6,.8) .. (.6,1.5);
  \draw (.6,0) .. controls (.6,1) and (1.2,.8) .. (1.2,1.5);
  \draw (1.8,0) .. controls (1.8,1) and (2.4,.8) .. (2.4,1.5);
  \draw (2.4,0) .. controls (2.4,1) and (3,.8) .. (3,1.5);
  \node at (1.2,.35) {$\dots$};
  \node at (1.8,1.15) {$\dots$};
  \draw (4.2,0) .. controls (4.2,1.25) and (3.6,.25) .. (3.6,1.5)
     node[pos=0.85, shape=coordinate](X){};
  \draw (3.6,0) .. controls (3.6,1) and (4.2,.8) .. (4.2,1.5);
    \draw[blue, dotted] (-0.4,0) -- (4.4,0);
    \draw[blue, dotted] (-0.4,1.5) -- (4.4,1.5);
   \draw[->]  (3,1.5) .. controls (3,2) and (0,2) .. (0,2.5);
   \draw[->] (3.6,1.5)--(3.6,2.5);
   \draw[->]  (2.4,1.5) .. controls (2.4,2) and (3,2) .. (3,2.5);
   \draw[->]  (1.2,1.5) .. controls (1.2,2) and (1.8,2) .. (1.8,2.5);
   \draw[->]  (.6,1.5) .. controls (.6,2) and (1.2,2) .. (1.2,2.5);
   \draw[->]  (0,1.5) .. controls (0,2) and (.6,2) .. (.6,2.5);
   \draw[->] (4.2,1.5) -- (4.2,2.5);
   \draw  (2.4,0) .. controls (2.4,-.5) and (3,-.5) .. (3,-1);
   \draw  (1.8,0) .. controls (1.8,-.5) and (2.4,-.5) .. (2.4,-1);
   \draw  (.6,0) .. controls (.6,-.5) and (1.2,-.5) .. (1.2,-1);
   \draw  (0,0) .. controls (0,-.5) and (.6,-.5) .. (.6,-1);
   \draw  (3,0) .. controls (3,-.5) and (3.6,-.5) .. (3.6,-1);
  \draw (3.6,0) .. controls (3.6,-1) and (0,0) .. (0,-1);
   \draw (4.2,0) -- (4.2,-1);
\end{tikzpicture}}
\right]\hspace{6pt}
 \\&+
 \hspace{6pt}
\left[ \;\;\;\hackcenter{
\begin{tikzpicture}[scale=0.8]
  \draw (3,0) .. controls (3,1.25) and (0,.25) .. (0,1.5)
     node[pos=0.85, shape=coordinate](X){};
  \draw (0,0) .. controls (0,1) and (.6,.8) .. (.6,1.5);
  \draw (.6,0) .. controls (.6,1) and (1.2,.8) .. (1.2,1.5);
  \draw (1.8,0) .. controls (1.8,1) and (2.4,.8) .. (2.4,1.5);
  \draw (2.4,0) .. controls (2.4,1) and (3,.8) .. (3,1.5);
  \node at (1.2,.35) {$\dots$};
  \node at (1.8,1.15) {$\dots$};
  \draw (4.2,0) .. controls (4.2,1.25) and (3.6,.25) .. (3.6,1.5)
     node[pos=0.85, shape=coordinate](X){};
  \draw (3.6,0) .. controls (3.6,1) and (4.2,.8) .. (4.2,1.5);
\draw (3.6, 1.2) circle (3pt);
    \draw[blue, dotted] (-0.4,0) -- (4.4,0);
    \draw[blue, dotted] (-0.4,1.5) -- (4.4,1.5);
   \draw[->]  (3,1.5) .. controls (3,2) and (0,2) .. (0,2.5);
   \draw[->] (3.6,1.5)--(3.6,2.5);
   \draw[->]  (2.4,1.5) .. controls (2.4,2) and (3,2) .. (3,2.5);
   \draw[->]  (1.2,1.5) .. controls (1.2,2) and (1.8,2) .. (1.8,2.5);
   \draw[->]  (.6,1.5) .. controls (.6,2) and (1.2,2) .. (1.2,2.5);
   \draw[->]  (0,1.5) .. controls (0,2) and (.6,2) .. (.6,2.5);
   \draw[->] (4.2,1.5) -- (4.2,2.5);
\draw (2.8,1.7) circle (3pt);
   \draw  (2.4,0) .. controls (2.4,-.5) and (3,-.5) .. (3,-1);
   \draw  (1.8,0) .. controls (1.8,-.5) and (2.4,-.5) .. (2.4,-1);
   \draw  (.6,0) .. controls (.6,-.5) and (1.2,-.5) .. (1.2,-1);
   \draw  (0,0) .. controls (0,-.5) and (.6,-.5) .. (.6,-1);
   \draw  (3,0) .. controls (3,-.5) and (3.6,-.5) .. (3.6,-1);
  \draw (3.6,0) .. controls (3.6,-1) and (0,0) .. (0,-1);
   \draw (4.2,0) -- (4.2,-1);
\end{tikzpicture}}
\right].
\end{align*}

Cancelling the empty dots in the last term results in a change in sign, and both of the latter diagrams are $(m+2)$-cycles. Hence we have:
\[=\hspace{6pt}\left[ \;\;\;
\hackcenter{
\begin{tikzpicture}[scale=0.8]
  \draw (3,0) .. controls (3,1.25) and (0,.25) .. (0,1.5)
     node[pos=0.85, shape=coordinate](X){};
  \draw (0,0) .. controls (0,1) and (.6,.8) .. (.6,1.5);
  \draw (.6,0) .. controls (.6,1) and (1.2,.8) .. (1.2,1.5);
  \draw (1.8,0) .. controls (1.8,1) and (2.4,.8) .. (2.4,1.5);
  \draw (2.4,0) .. controls (2.4,1) and (3,.8) .. (3,1.5);
  \node at (1.2,.35) {$\dots$};
  \node at (1.8,1.15) {$\dots$};
  \draw (4.2,0) .. controls (4.2,1.25) and (3.6,.25) .. (3.6,1.5)
     node[pos=0.85, shape=coordinate](X){};
  \draw (3.6,0) .. controls (3.6,1) and (4.2,.8) .. (4.2,1.5);
    \draw[blue, dotted] (-0.4,0) -- (4.4,0);
    \draw[blue, dotted] (-0.4,1.5) -- (4.4,1.5);
   \draw[->]  (3,1.5) .. controls (3,2) and (3.6,2) .. (3.6,2.5);
   \draw[->] (3.6,1.5) .. controls (3.6,2.5) and (0,1.5) .. (0,2.5);
   \draw[->]  (2.4,1.5) .. controls (2.4,2) and (3,2) .. (3,2.5);
   \draw[->]  (1.2,1.5) .. controls (1.2,2) and (1.8,2) .. (1.8,2.5);
   \draw[->]  (.6,1.5) .. controls (.6,2) and (1.2,2) .. (1.2,2.5);
   \draw[->]  (0,1.5) .. controls (0,2) and (.6,2) .. (.6,2.5);
   \draw[->] (4.2,1.5) -- (4.2,2.5);
     \filldraw  (3.0,1.95) circle (2pt);
   \draw  (2.4,0) .. controls (2.4,-.5) and (3,-.5) .. (3,-1);
   \draw  (1.8,0) .. controls (1.8,-.5) and (2.4,-.5) .. (2.4,-1);
   \draw  (.6,0) .. controls (.6,-.5) and (1.2,-.5) .. (1.2,-1);
   \draw  (0,0) .. controls (0,-.5) and (.6,-.5) .. (.6,-1);
   \draw  (3,0) .. controls (3,-.5) and (3.6,-.5) .. (3.6,-1);
  \draw (3.6,0) .. controls (3.6,-1) and (0,0) .. (0,-1);
   \draw (4.2,0) -- (4.2,-1);
\end{tikzpicture}}
\right]
\hspace{6pt} -\hspace{6pt} 2\h{m+2}{}
\]

Sliding the solid dot in the first diagram all the way to the left results in $m$ total crossing resolutions, each of which yieds a term of $-2 \h{m+2}{}$. So,
\begin{align*}&=\hspace{6pt}\left[ \;\;\;
\hackcenter{
\begin{tikzpicture}[scale=0.8]
  \draw (3,0) .. controls (3,1.25) and (0,.25) .. (0,1.5)
     node[pos=0.85, shape=coordinate](X){};
  \draw (0,0) .. controls (0,1) and (.6,.8) .. (.6,1.5);
  \draw (.6,0) .. controls (.6,1) and (1.2,.8) .. (1.2,1.5);
  \draw (1.8,0) .. controls (1.8,1) and (2.4,.8) .. (2.4,1.5);
  \draw (2.4,0) .. controls (2.4,1) and (3,.8) .. (3,1.5);
  \node at (1.2,.35) {$\dots$};
  \node at (1.8,1.15) {$\dots$};
  \draw (4.2,0) .. controls (4.2,1.25) and (3.6,.25) .. (3.6,1.5)
     node[pos=0.85, shape=coordinate](X){};
  \draw (3.6,0) .. controls (3.6,1) and (4.2,.8) .. (4.2,1.5);
    \draw[blue, dotted] (-0.4,0) -- (4.4,0);
    \draw[blue, dotted] (-0.4,1.5) -- (4.4,1.5);
   \draw[->]  (3,1.5) .. controls (3,2) and (3.6,2) .. (3.6,2.5);
   \draw[->] (3.6,1.5) .. controls (3.6,2.5) and (0,1.5) .. (0,2.5);
   \draw[->]  (2.4,1.5) .. controls (2.4,2) and (3,2) .. (3,2.5);
   \draw[->]  (1.2,1.5) .. controls (1.2,2) and (1.8,2) .. (1.8,2.5);
   \draw[->]  (.6,1.5) .. controls (.6,2) and (1.2,2) .. (1.2,2.5);
   \draw[->]  (0,1.5) .. controls (0,2) and (.6,2) .. (.6,2.5);
   \draw[->] (4.2,1.5) -- (4.2,2.5);
     \filldraw  (.15,2.2) circle (2pt);
   \draw  (2.4,0) .. controls (2.4,-.5) and (3,-.5) .. (3,-1);
   \draw  (1.8,0) .. controls (1.8,-.5) and (2.4,-.5) .. (2.4,-1);
   \draw  (.6,0) .. controls (.6,-.5) and (1.2,-.5) .. (1.2,-1);
   \draw  (0,0) .. controls (0,-.5) and (.6,-.5) .. (.6,-1);
   \draw  (3,0) .. controls (3,-.5) and (3.6,-.5) .. (3.6,-1);
  \draw (3.6,0) .. controls (3.6,-1) and (0,0) .. (0,-1);
   \draw (4.2,0) -- (4.2,-1);
\end{tikzpicture}}
\right]
\;\; \;\; - \hspace{6pt}2m\h{m+2}{}
\\& 
=\;\; \left[ \;\;\;
\hackcenter{
\begin{tikzpicture}[baseline=(current bounding box.center),scale=.8]
  \draw[->] (0,0)--(1.2,2);
  \draw[->] (1.2,0)--(0,2);
   \filldraw  (.35,1.4) circle (2pt);
\end{tikzpicture}
\;\;\;
\begin{tikzpicture}[baseline=(current bounding box.center),scale=0.8]
  \draw[->] (3,0) .. controls ++(0,1.25) and ++(0,-1.75) .. (0,2);
  \draw[->] (0,0) .. controls ++(0,1) and ++(0,-1.2) .. (.6,2);
  \draw[->] (.6,0) .. controls ++(0,1) and ++(0,-1.2) .. (1.2,2);
  \draw[->] (1.8,0) .. controls ++(0,1) and ++(0,-1.2) .. (2.4,2);
  \draw[->] (2.4,0) .. controls ++(0,1) and ++(0,-1.2) .. (3,2);
  \node at (1.2,.35) {$\dots$};
  \node at (1.8,1.65) {$\dots$};
\end{tikzpicture}}\;\;\;
\right]
 \;\; - \hspace{6pt}2m\h{m+2}{}\end{align*}
Hence we have $$[\h{2}{x_1}, \h{m}{}] = 2m\h{m+2}{}.$$ A similar computation gives that $$[\h{2}{x_2}, \h{m}{}] = 2m\h{m+2}{},$$ giving the desired result.
\end{proof}

\begin{lemma}\label{upVirasorowithDown} We have $$[\h{2n}{(x_1+x_2 +\ldots+ x_{2n})}, \h{-(2m+1)}{}] = \left\{\begin{array}{lr} -4(2m+1) \h{2n-2m-1}{} & \qquad \text{if } n>m\geq 1 \\ 0 & \qquad \text{if } n=m\geq 1 \\ -2(2m+1) \h{2n-2m-1}{}& \qquad \text{if } 1\leq n < m.\end{array}\right.$$\end{lemma}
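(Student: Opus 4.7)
The plan is to compute the commutator by direct diagrammatic manipulation, adapting the approach in the proofs of Lemmas \ref{upVirasorowithUp} and \ref{VirasorowithUp} to the setting of oppositely oriented cycles. I would write the product $\h{2n}{(x_1+\ldots+x_{2n})}\cdot\h{-(2m+1)}{}$ as a single annular diagram with the upward $2n$-cycle placed above the downward $(2m+1)$-cycle, then slide each downward strand through every upward strand, resolving each up-down crossing via relation \eqref{R3}. Each resolution produces three terms: a split (parallel strands), a cup-cap contraction, and a cup-cap contraction decorated with two hollow dots.

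The hollow-dot terms collapse using Lemma \ref{hollow dots} together with the supercyclicity relations \eqref{caps} and \eqref{cups}. The split terms reassemble into $\h{-(2m+1)}{}\cdot\h{2n}{(x_1+\ldots+x_{2n})}$, cancelling the other term of the commutator, so the entire contribution comes from cup-cap contractions. A single cup-cap pinch merges the two cycles into a combined cycle of length $2n+(2m+1)-2$, and iterating all such pinches reduces the diagram to a cycle of length $|2n-2m-1|$, upward if $n>m$ and downward if $n<m$. The distinguished dot migrates through the pinchings via \eqref{dotSlide: bottomLeft} and \eqref{dotSlide: topLeft}, producing sign corrections but eventually either settling on a surviving strand of the residual cycle or disappearing into a vanishing configuration.

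The overall coefficient arises from summing over which of the $2m+1$ downward strands is contracted against the strand bearing the dot, producing the factor $2m+1$. The factor-of-two asymmetry between the $n>m$ and $n<m$ cases comes from an auxiliary contribution present only when the residual cycle is upward: the dot can either remain on its original upward strand or transfer to an adjacent upward strand via the secondary term of \eqref{dotSlide: bottomLeft}/\eqref{dotSlide: topLeft}, and both options contribute to give coefficient $-4(2m+1)$. When $n<m$, the residual cycle is downward, all upward strands are consumed in the contractions, the secondary transfer option is absent, and only $-2(2m+1)$ survives.

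The main obstacle is the boundary case $n=m$: after all $2n$ upward strands pinch with $2n$ of the $2m+1$ downward strands, the residual diagram is a single downward strand bearing the distinguished dot, namely $\h{-1}{x_1}$, which vanishes in the trace by Proposition \ref{odd with odd dots zero}. Verifying that the various sign and coefficient contributions conspire to leave exactly this vanishing residue, and that the factor-of-two asymmetry for $n\neq m$ emerges precisely as stated, will require careful bookkeeping of signs from supercommutation of hollow dots, supercyclicity, and dot-sliding throughout the iterated application of \eqref{R3}; this sign-tracking, rather than any conceptual subtlety, is the technical core of the proof.
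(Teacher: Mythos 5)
Your proposal takes a genuinely different route from the paper: you attempt to resolve, in one pass, all crossings between the upward $2n$-cycle and the downward $(2m+1)$-cycle, whereas the paper only does a direct diagrammatic computation for the base case of a single downward strand and then inducts on $m$ via the Jacobi identity, using Lemma \ref{VirasorowithUp} in place of \cite[Lemma 24]{CLLS} exactly as in \cite[Lemma 26]{CLLS}. The direct route is not impossible in principle, but as written there is a genuine gap in your account of where the answer comes from. The residual cycle has odd length $|2n-2m-1|$, so by Proposition \ref{odd with odd dots zero} (and Lemma \ref{odd dots} for bubbles) every term in which the distinguished solid dot ``settles on a surviving strand of the residual cycle'' is zero --- for $n>m$ and $n<m$ just as much as for $n=m$. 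The nonzero contributions must therefore come entirely from terms in which the dot has been \emph{annihilated}, namely the dot-free correction terms of the dot-slide relations \eqref{dotSlide: bottomLeft} and \eqref{dotSlide: topLeft} together with the dot-free cup--cap terms of \eqref{R3}; this is visible in the paper's base case, where the $4\h{2n-1}{}$ arises as $2\h{2n-1}{}$ from each of these two mechanisms. Your proposal instead treats the dot as a passive passenger and locates the answer in pinched terms that still carry it. This inversion is precisely why your treatment of $n=m$ is incomplete: showing that the dot-carrying residue $\h{-1}{x_1}$ vanishes kills only the terms that are zero in every case, while the dot-free terms --- the ones responsible for the coefficients $-4(2m+1)$ and $-2(2m+1)$ when $n\neq m$ --- are multiples of $\h{-1}{}$, which is \emph{not} zero, and you give no argument that they cancel among themselves at $n=m$.

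Relatedly, the coefficient bookkeeping is asserted rather than derived: the factor $(2m+1)$ from ``which downward strand meets the dotted strand'' and the $4$-versus-$2$ dichotomy from an ``auxiliary transfer option'' are guesses that would have to survive a full enumeration of the multi-pinch terms, including the extra closed components, curls, and products of smaller cycles (as in Lemma \ref{basic hn facts}) that iterated cup--cap contractions create; some of these, such as an even cycle carrying the dot times a dotless odd cycle, are not killed by Proposition \ref{odd with odd dots zero} and must be shown to cancel separately. The paper's induction avoids all of this: once the single-downward-strand case is done by hand, expressing $\h{-(2m+1)}{}$ as a commutator via Lemma \ref{VirasorowithUp} and applying the Jacobi identity propagates the coefficients mechanically, and the asymmetry between the three cases, including the vanishing at $n=m$, falls out of that recursion. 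I would either restructure your argument along those lines or commit to the full direct enumeration with the dot-annihilation mechanism made explicit.
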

\begin{proof} We follow the methods of \cite[Lemma 26]{CLLS}, substituting our new relations as necessary.

As in that case, let $\beta_n = \h{2n}{x_1}$ and $\alpha_m = \h{2m+1}{x_1}$, and proceed by induction on $m$. When $m=1$, we can compute directly:
\begin{align}\label{Lm26 setup}
 \left[ \;\;\;
\hackcenter{
\begin{tikzpicture}[scale=0.8]
  \draw[->] (3,0) .. controls ++(0,1.25) and (0,.25) .. (0,2);
  \draw[->] (0.0,0) .. controls ++(0,1) and ++(0,-1.2) .. (0.6,2);
  \draw[->] (0.6,0) .. controls ++(0,1) and ++(0,-1.2) .. (1.2,2);
  \draw[->] (1.2,0) .. controls ++(0,1) and ++(0,-1.2) .. (1.8,2);
  \draw[->] (2.4,0) .. controls ++(0,1) and ++(0,-1.2) .. (3,2);
  \node at (1.8,.35) {$\dots$};
  \node at (2.4,1.65) {$\dots$};
  \draw[<-] (-0.6,0) -- (-0.6,2);
  \filldraw  (.05,1.6) circle (2pt);
\end{tikzpicture}}
 \;\;\right]
\;\;  &\refequal{\eqref{R2}} \;\;
 \left[ \;\;\;
\hackcenter{
\begin{tikzpicture}[scale=0.8]
  \draw (3.0,0) .. controls ++(0,1.25) and ++(0,-1.1) .. (0,1.5);
  \draw (0.0,0) .. controls ++(0,.5) and ++(0,-.6) .. (0.6,1.5);
  \draw (0.6,0) .. controls ++(0,.5) and ++(0,-.6) .. (1.2,1.5);
  \draw (1.2,0) .. controls ++(0,.5) and ++(0,-.6) .. (1.8,1.5);
  \draw (2.4,0) .. controls ++(0,.5) and ++(0,-.6) .. (3,1.5);
  \node at (1.8,.35) {$\dots$};
  \node at (2.4,1.35) {$\dots$};
  \draw[<-] (-0.6,0) -- (-0.6,1.5);
  \filldraw  (.1,1.2) circle (2pt);
    \draw[blue, dotted] (-1,0) -- (3.4,0);
    \draw[blue, dotted] (-1,1.5) -- (3.4,1.5);
   \draw[->] (0.0,1.5) .. controls ++(0,.45) and ++(0,-.6) .. (-0.6,2.5);
   \draw (-0.6,1.5) .. controls ++(0,.45) and ++(0,-.6) .. (0.0,2.5);
   \draw[->] (0.6,1.5) --  (0.6,2.5);
   \draw[->] (1.2,1.5) --  (1.2,2.5);
   \draw[->] (1.8,1.5) --  (1.8,2.5);
   \draw[->] (3,1.5) --  (3,2.5);
   \draw[<-] (0.0,0) .. controls ++(0,-.45) and ++(0,.6) .. (-0.6,-1);
   \draw[->] (-0.6,0) .. controls ++(0,-.45) and ++(0,.6) .. (0.0,-1);
   \draw[<-] (0.6,0) --  (0.6,-1);
   \draw[<-] (1.2,0) --  (1.2,-1);
   \draw[<-] (2.4,0) --  (2.4,-1);
   \draw[<-] (3.0,0) --  (3,-1);
\end{tikzpicture}}
\;\;\right]
\\ &+\hspace{6pt}2 \;\;
 \left[ \;\;\;
\hackcenter{
\begin{tikzpicture}[scale=0.8]
  \draw (3.0,0) .. controls ++(0,1.25) and ++(0,-1.1) .. (0,1.5);
  \draw (0.0,0) .. controls ++(0,.5) and ++(0,-.6) .. (0.6,1.5);
  \draw (0.6,0) .. controls ++(0,.5) and ++(0,-.6) .. (1.2,1.5);
  \draw (1.2,0) .. controls ++(0,.5) and ++(0,-.6) .. (1.8,1.5);
  \draw (2.4,0) .. controls ++(0,.5) and ++(0,-.6) .. (3,1.5);
  \node at (1.8,.35) {$\dots$};
  \node at (2.4,1.35) {$\dots$};
  \draw (-0.6,0) -- (-0.6,1.5);
  \filldraw  (.1,1.2) circle (2pt);
    \draw[blue, dotted] (-1,0) -- (3.4,0);
    \draw[blue, dotted] (-1,1.5) -- (3.4,1.5);
   \draw (-0.6,1.5) .. controls ++(0,.35) and ++(0,.35) .. (0.0,1.5);
   \draw[->] (0.6,1.5) --  (0.6,2.5);
   \draw[->] (1.2,1.5) --  (1.2,2.5);
   \draw[->] (1.8,1.5) --  (1.8,2.5);
   \draw[->] (3,1.5) --  (3,2.5);
   \draw[->] (-0.6,0) .. controls ++(0,-.35) and ++(0,-.35) .. (0.0,0);
   \draw[<-] (0.6,0) --  (0.6,-1);
   \draw[<-] (1.2,0) --  (1.2,-1);
   \draw[<-] (2.4,0) --  (2.4,-1);
   \draw[<-] (3.0,0) --  (3,-1);
\end{tikzpicture}}
\;\;\right]
\end{align}
where the trailing terms arising from relation \eqref{R2} have the same sign after cancelling the empty dots, and thus add together. We claim that the diagram in the second term is $\h{2n-1}{}$. Indeed, sliding the dot gives:
\begin{align*}
 \;\;
 \left[ \;\;\;
\hackcenter{
\begin{tikzpicture}[scale=0.8]
  \draw (3.0,0) .. controls ++(0,1.25) and ++(0,-1.1) .. (0,1.5);
  \draw (0.0,0) .. controls ++(0,.5) and ++(0,-.6) .. (0.6,1.5);
  \draw (0.6,0) .. controls ++(0,.5) and ++(0,-.6) .. (1.2,1.5);
  \draw (1.2,0) .. controls ++(0,.5) and ++(0,-.6) .. (1.8,1.5);
  \draw (2.4,0) .. controls ++(0,.5) and ++(0,-.6) .. (3,1.5);
  \node at (1.8,.35) {$\dots$};
  \node at (2.4,1.35) {$\dots$};
  \draw (-0.6,0) -- (-0.6,1.5);
  \filldraw  (.3,.65) circle (2pt);
    \draw[blue, dotted] (-1,0) -- (3.4,0);
    \draw[blue, dotted] (-1,1.5) -- (3.4,1.5);
   \draw(-0.6,1.5) .. controls ++(0,.35) and ++(0,.35) .. (0.0,1.5);
   \draw[->] (0.6,1.5) --  (0.6,2.5);
   \draw[->] (1.2,1.5) --  (1.2,2.5);
   \draw[->] (1.8,1.5) --  (1.8,2.5);
   \draw[->] (3,1.5) --  (3,2.5);
   \draw[->] (-0.6,0) .. controls ++(0,-.35) and ++(0,-.35) .. (0.0,0);
   \draw[<-] (0.6,0) --  (0.6,-1);
   \draw[<-] (1.2,0) --  (1.2,-1);
   \draw[<-] (2.4,0) --  (2.4,-1);
   \draw[<-] (3.0,0) --  (3,-1);
\end{tikzpicture}}
\;\;\right] \;\; &\refequal{\eqref{dotSlide: bottomLeft}} \;\;
\overline{d}_{0,0}\h{2n-1}{} + \overline{d}_{0,1}\h{2n-1}{}
= \h{2n-1}{}
\end{align*}
by relations \eqref{d00=1} and \eqref{d01=0}.

 Now, sliding the solid dot over the crossing on the right hand side of Equation \eqref{Lm26 setup} gives:

\[
\;\; \refequal{\eqref{dotSlide: bottomLeft}} \;\;
 \left[ \;\;\;
\hackcenter{
\begin{tikzpicture}[scale=0.8]
  \draw (3.0,0) .. controls ++(0,1.25) and ++(0,-1.1) .. (0,1.5);
  \draw (0.0,0) .. controls ++(0,.5) and ++(0,-.6) .. (0.6,1.5);
  \draw (0.6,0) .. controls ++(0,.5) and ++(0,-.6) .. (1.2,1.5);
  \draw (1.2,0) .. controls ++(0,.5) and ++(0,-.6) .. (1.8,1.5);
  \draw (2.4,0) .. controls ++(0,.5) and ++(0,-.6) .. (3,1.5);
  \node at (1.8,.35) {$\dots$};
  \node at (2.4,1.35) {$\dots$};
  \draw[<-] (-0.6,0) -- (-0.6,1.5);
  \filldraw  (-0.55,2.2) circle (2pt);
    \draw[blue, dotted] (-1,0) -- (3.4,0);
    \draw[blue, dotted] (-1,1.5) -- (3.4,1.5);
   \draw[->] (0.0,1.5) .. controls ++(0,.45) and ++(0,-.6) .. (-0.6,2.5);
   \draw (-0.6,1.5) .. controls ++(0,.45) and ++(0,-.6) .. (0.0,2.5);
   \draw[->] (0.6,1.5) --  (0.6,2.5);
   \draw[->] (1.2,1.5) --  (1.2,2.5);
   \draw[->] (1.8,1.5) --  (1.8,2.5);
   \draw[->] (3,1.5) --  (3,2.5);
   \draw[<-] (0.0,0) .. controls ++(0,-.45) and ++(0,.6) .. (-0.6,-1);
   \draw[->] (-0.6,0) .. controls ++(0,-.45) and ++(0,.6) .. (0.0,-1);
   \draw[<-] (0.6,0) --  (0.6,-1);
   \draw[<-] (1.2,0) --  (1.2,-1);
   \draw[<-] (2.4,0) --  (2.4,-1);
   \draw[<-] (3.0,0) --  (3,-1);
\end{tikzpicture}}
\;\;\right]
+\hspace{6pt}2 \;\;
 \left[ \;\;\;
\hackcenter{
\begin{tikzpicture}[scale=0.8]
  \draw (3.0,0) .. controls ++(0,1.25) and ++(0,-1.1) .. (0,1.5);
  \draw (0.0,0) .. controls ++(0,.5) and ++(0,-.6) .. (0.6,1.5);
  \draw (0.6,0) .. controls ++(0,.5) and ++(0,-.6) .. (1.2,1.5);
  \draw (1.2,0) .. controls ++(0,.5) and ++(0,-.6) .. (1.8,1.5);
  \draw (2.4,0) .. controls ++(0,.5) and ++(0,-.6) .. (3,1.5);
  \node at (1.8,.35) {$\dots$};
  \node at (2.4,1.35) {$\dots$};
  \draw[<-] (-0.6,0) -- (-0.6,1.5);
    \draw[blue, dotted] (-1,0) -- (3.4,0);
    \draw[blue, dotted] (-1,1.5) -- (3.4,1.5);
   \draw[->] (0.0,2.5) arc (360:180:3mm);
   \draw[->] (0.0,1.5) arc (0:180:3mm);
   \draw[->] (0.6,1.5) --  (0.6,2.5);
   \draw[->] (1.2,1.5) --  (1.2,2.5);
   \draw[->] (1.8,1.5) --  (1.8,2.5);
   \draw[->] (3,1.5) --  (3,2.5);
   \draw[<-] (0.0,0) .. controls ++(0,-.45) and ++(0,.6) .. (-0.6,-1);
   \draw[->] (-0.6,0) .. controls ++(0,-.45) and ++(0,.6) .. (0.0,-1);
   \draw[<-] (0.6,0) --  (0.6,-1);
   \draw[<-] (1.2,0) --  (1.2,-1);
   \draw[<-] (2.4,0) --  (2.4,-1);
   \draw[<-] (3.0,0) --  (3,-1);
\end{tikzpicture}}
\;\;\right]
\;\; 
\]
where the trailing terms arising from relation \eqref{dotSlide: bottomLeft} have the same sign after canceling the empty dots, and thus add together.  We can use the trace relation to slide the top cup in the second term to the bottom; after simplication, this term is therefore equal to $\h{n-1}{}$.  The first term is equal to $\beta_n\alpha_{-1}$ as in \cite[Lemma 26]{CLLS}. Thus, 

\begin{center}
$ \left[ \;\;\;
\hackcenter{
\begin{tikzpicture}[scale=0.8]
  \draw[->] (3,0) .. controls ++(0,1.25) and (0,.25) .. (0,2);
  \draw[->] (0.0,0) .. controls ++(0,1) and ++(0,-1.2) .. (0.6,2);
  \draw[->] (0.6,0) .. controls ++(0,1) and ++(0,-1.2) .. (1.2,2);
  \draw[->] (1.2,0) .. controls ++(0,1) and ++(0,-1.2) .. (1.8,2);
  \draw[->] (2.4,0) .. controls ++(0,1) and ++(0,-1.2) .. (3,2);
  \node at (1.8,.35) {$\dots$};
  \node at (2.4,1.65) {$\dots$};
  \draw[<-] (-0.6,0) -- (-0.6,2);
  \filldraw  (.05,1.6) circle (2pt);
\end{tikzpicture}}
 \;\;\right]
\;\;
=
 \;\;
 \left[ \;\;\;
\hackcenter{
\begin{tikzpicture}[scale=0.8]
  \draw[->] (3,0) .. controls ++(0,1.25) and (0,.25) .. (0,2);
  \draw[->] (0.0,0) .. controls ++(0,1) and ++(0,-1.2) .. (0.6,2);
  \draw[->] (0.6,0) .. controls ++(0,1) and ++(0,-1.2) .. (1.2,2);
  \draw[->] (1.2,0) .. controls ++(0,1) and ++(0,-1.2) .. (1.8,2);
  \draw[->] (2.4,0) .. controls ++(0,1) and ++(0,-1.2) .. (3,2);
  \node at (1.8,.35) {$\dots$};
  \node at (2.4,1.65) {$\dots$};
  \draw[<-] (3.6,0) -- (3.6,2);
  \filldraw  (.05,1.6) circle (2pt);
\end{tikzpicture}}
 \;\;\;\right] +\hspace{6pt}4 \h{2n-1}{}$
\end{center}
as desired. The base case of the induction is proved. The induction step follows from examination of the Jacobi identity, exactly as in \cite[Lemma 26]{CLLS}, using our Lemma \ref{VirasorowithUp} in place of \cite[Lemma 24]{CLLS}.

\end{proof}

\begin{lemma}\label{VirasoroGen} Let $n \in \mathbb{Z}$. We have $$[\h{1}{x_1^2}, \h{2n-1}{}] = 2\h{2n}{x_1 + \ldots + x_{2n}} +2\h{2n}{x_2+\ldots+x_{2n-1}}.$$\end{lemma}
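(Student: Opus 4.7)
The plan is to prove this identity by direct diagrammatic computation in the trace, following the strategy used in the proof of Lemma~\ref{upVirasorowithUp}. Write $\h{1}{x_1^2}\,\h{2n-1}{}$ as the horizontal composition of a single upward strand carrying two stacked solid dots on the left, with the $(2n-1)$-cycle on the right. The commutator $[\h{1}{x_1^2},\h{2n-1}{}]$ is then the difference between this diagram and the analogous diagram with the single dotted strand on the right of the cycle. In the trace, I can relate these two orderings by sliding the single dotted strand through the cycle using the cyclic (annular) relation.

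As the dotted strand is slid across, its two solid dots must pass through each of the crossings making up the $(2n-1)$-cycle. Each such passage is governed by the dot-slide relations \eqref{dotSlide: bottomLeft} and \eqref{dotSlide: topLeft}, producing three terms: a main term (dot moved across the crossing), a correction term in which the crossing is resolved into two parallel strands with no hollow dots, and a correction term with two hollow dots on the parallel strands. The hollow-dot correction terms vanish in $\TrHEv$ after resolution, using Lemma~\ref{hollow dots} and relation~\eqref{d01=0}, combined with the annulus-sliding of the hollow dots as in Example~3.\ref{supertrace}. The solid-dot correction terms, on the other hand, resolve a crossing and thus merge the single strand into the cycle, yielding $(2n)$-cycles with a single solid dot placed at a specific position.

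After all crossings have been traversed, the main-term contribution reassembles as $\h{2n-1}{}\,\h{1}{x_1^2}$ and cancels with the second term in the commutator. What remains is the sum of correction $(2n)$-cycles. The coefficient at each position in the $(2n)$-cycle comes from counting which dot (the top or the bottom one of the original $x_1^2$) generated that correction and from which side it was produced. The two outermost positions in the $(2n)$-cycle are reached only once during the sliding (each dot can only contribute there from one side of the crossing), giving coefficient $2$, while each interior position $i = 2,\dots,2n-1$ is reached twice (once per dot), giving coefficient $4$. Assembling these contributions yields $2\h{2n}{x_1+\cdots+x_{2n}} + 2\h{2n}{x_2+\cdots+x_{2n-1}}$, matching the claimed right-hand side.

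The main obstacle is the careful bookkeeping of signs and multiplicities during the dot-sliding process. With two solid dots stacked on the single strand, sliding one dot through a crossing leaves the other dot still on the strand; the second dot must then slide through the resulting diagrams as well, and one must verify that this secondary sliding produces only hollow-dot (hence vanishing) corrections rather than additional solid-dot terms that would alter the coefficients. Equally delicate is confirming that the parallel-strand corrections, once re-assembled via the trace closure, genuinely realize $(2n)$-cycles with the claimed single-dot placements, rather than partial cycles or extra bubble contributions.
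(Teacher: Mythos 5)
Your overall strategy is the same as the paper's: close up the product diagram in the trace, slide the two solid dots of $\h{1}{x_1^2}$ through the $2n-1$ crossings of the cycle, and collect the crossing-resolution terms, each of which is a $2n$-cycle carrying a single solid dot. The gap is in your treatment of the hollow-dot correction terms. You assert that the third term in relations \eqref{dotSlide: bottomLeft} and \eqref{dotSlide: topLeft} --- two parallel strands each carrying a hollow dot --- vanishes in $\TrHEv$ after closure. It does not: once the crossing is resolved, the two hollow dots sit on strands of one and the same $2n$-cycle, so they can be brought together along the cycle and cancelled via \eqref{d01=0}, leaving $\pm 1$ times the \emph{plain} correction term. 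Lemma \ref{hollow dots} only kills closed bubbles carrying a single hollow dot, and a cycle carrying \emph{two} hollow dots is not forced to vanish (only an odd number of hollow dots is killed by the annulus argument of Example 3.\ref{supertrace}). In fact the signs conspire so that each hollow-dot term equals $+1$ times its plain partner; this is exactly the mechanism producing the factor of $2$ in Equation \eqref{zeroDotMove} and in Lemmas \ref{upVirasorowithUp} and \ref{upVirasorowithDown}, and the paper's proof of the present lemma depends on it (``all empty dots cancel in such a way that no resolution terms cancel with each other'').

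This is not cosmetic, because your coefficient count collapses without it. If the hollow-dot terms really vanished, each of the two dots passing each of the $2n-1$ crossings would contribute exactly one correction term, for a total of $2(2n-1)$ terms of coefficient $1$; the most you could then obtain is $\h{2n}{x_1+\cdots+x_{2n}}+\h{2n}{x_2+\cdots+x_{2n-1}}$, half the claimed answer. The correct count is $4(2n-1)=2+2+4(2n-2)$ terms, achieved only because every plain correction is doubled by its surviving hollow-dot partner. Your assertion that one ``reach'' of an outer position gives coefficient $2$ while the inner positions get $4$ silently uses this doubling while simultaneously denying its source. To repair the argument, replace the vanishing claim by the verification that the two hollow dots on the resulting $2n$-cycle cancel with a sign matching that of the plain resolution term, as in the displayed computation in the proof of Lemma \ref{upVirasorowithUp}.
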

\begin{proof} This is a straightforward diagrammatic calculation similar to Lemmas \ref{upVirasorowithUp} and \ref{upVirasorowithDown}. We have 
$$ \left[\hspace{2mm}\begin{tikzpicture}[baseline=(current bounding box.center),scale=0.75]
  \draw[->] (3.2,0) .. controls (3.2,1.25) and (0,.25) .. (0,2)
     node[pos=0.85, shape=coordinate](X){};
  \draw[->] (0,0) .. controls (0,1) and (.8,.8) .. (.8,2);
  \draw[->] (.8,0) .. controls (.8,1) and (1.6,.8) .. (1.6,2);
  \draw[->] (2.4,0) .. controls (2.4,1) and (3.2,.8) .. (3.2,2);
  \node at (1.6,.35) {$\dots$};
  \node at (2.4,1.65) {$\dots$};
  \filldraw  (-.3,1.5) circle (3pt);
\node at (-.7,1.5) {$2n$};
\draw[->](-.3,0) to (-.3,2);
\end{tikzpicture}\hspace{2mm}\right] 
\hspace{6pt}
=
\hspace{6pt} \left[ \;\;\;
\hackcenter{
\begin{tikzpicture}[scale=0.8]
  \draw (2.4,0) .. controls (2.4,1.25) and (0,.25) .. (0,1.5)
     node[pos=0.85, shape=coordinate](X){};
  \draw (0,0) .. controls (0,1) and (0.6,.8) .. (0.6,1.5);
  \draw (0.6,0) .. controls (0.6,1) and (1.2,.8) .. (1.2,1.5);
  \draw (1.8,0) .. controls (1.8,1) and (2.4,.8) .. (2.4,1.5);
  \node at (1.2,.35) {$\dots$};
  \node at (1.8,1.65) {$\dots$};
  \draw (3,0) -- (3,1.5);
  \filldraw  (0.05,2.25) circle (2pt);
\node at (-.4,2.25) {$2n$};
    \draw[blue, dotted] (-0.4,0) -- (3.4,0);
    \draw[blue, dotted] (-0.4,1.5) -- (3.4,1.5);
   \draw[->]  (3,1.5) .. controls ++(0,0.95) and ++(0,-1.15) .. (0,2.5);
   \draw[->]  (2.4,1.5) .. controls ++(0,0.5) and ++(0,-0.5) .. (3,2.5);
   \draw[->]  (0,1.5) .. controls ++(0,0.5) and ++(0,-0.5) .. (0.6,2.5);
   \draw[->]  (0.6,1.5) .. controls ++(0,0.5) and ++(0,-0.5) .. (1.2,2.5);
   \draw[->]  (1.2,1.5) .. controls ++(0,0.5) and ++(0,-0.5) .. (1.8,2.5);
   \draw  (3,0) .. controls ++(0,-0.95) and ++(0,+0.95) .. (0,-1);
   \draw  (0,0) .. controls ++(0,-0.5) and ++(0,+0.5) .. (0.6,-1);
   \draw  (0.6,0) .. controls ++(0,-0.5) and ++(0,+0.5) .. (1.2,-1);
    \draw (1.8,0) .. controls ++(0,-0.5) and ++(0,+0.5) .. (2.4,-1);
   \draw (2.4,0) .. controls (2.4,-.5) and (3,-.5) .. (3,-1);
\end{tikzpicture}}
\; \;\;\right]
$$

Sliding the dots all the way to the right side of the diagram results in $2(2n-1)$ resolution terms. Each of these resolution terms contains a $2n$-cycle and a single solid dot - there are 2 resolution terms containing a solid dot on the first strand and 2 containing a solid dot on the last strand, and 4 resolution terms with a dot on each other strand. All empty dots cancel in such a way that no resolution terms cancel with each other. The result follows.
\end{proof}

The following lemmas will allow us to generate bubbles with arbitrary numbers of dots using just $\h{\pm 1}{x_1^2}$.
\begin{lemma}\label{figure 8 split}
We have $$\ds \sum_{a+b=2n-1}
\begin{tikzpicture}[baseline=(current bounding box.center),rotate=90]
\raisebox{3mm}{
 \draw[->] (-0.6,0) .. controls ++(0,.4) and ++(0,-.5) .. (-0.0,1);
  \draw[<-] (0.0,0) .. controls ++(0,.4) and ++(0,-.5) .. (-0.6,1);
   \draw[<-] (-0.6,1) .. controls ++(0,.4) and ++(0, .4) .. (-0.0,1);
  \draw[->] (0.0,0) .. controls ++(0,-.5) and ++(0,-.5) .. (-0.6,0);
  \filldraw  (-0.05,1.2) circle (2pt);
  \filldraw   (-0.1,-.3) circle (2pt);
  \node at (0.1,1.3) {\small $a$};
  \node at (0.1,-.4) {\small $b$};}
\end{tikzpicture}\hspace{6pt}
=
\hspace{6pt}
\sum_{i+j=n-1}(1+2j)\hspace{1mm}
\begin{tikzpicture}[baseline=(current bounding box.center)]
    \draw[->] (3,2) arc (-180:180:5mm);
\fill (3.95,2.2) circle [radius=2pt];
\node at (4.2,2.2) {\small $2i$};
    \end{tikzpicture}
\begin{tikzpicture}[baseline=(current bounding box.center)]
    \draw[<-] (3,2) arc (-180:180:5mm);
\fill (3.95,2.2) circle [radius=2pt];
\node at (4.2,2.2) {\small $2j$};
    \end{tikzpicture}$$
\end{lemma}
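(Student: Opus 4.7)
The plan is to prove the identity by resolving both crossings of the figure-8 using relation~\eqref{R3} and carefully tracking the resulting diagrams. Fix $a,b \geq 0$ with $a+b = 2n-1$. First I would apply relation~\eqref{R3} to the top crossing of the figure-8 diagram. This produces three terms: a ``parallel strand'' term (which combined with the bottom crossing gives a diagram with a left curl, handled via relations \eqref{dotSlide: bottomLeft}--\eqref{dotSlide: topLeft}), a cup-cap term splitting the figure-8 into two disjoint bubbles (the desired $\bar d \cdot d$ contribution), and a cup-cap term decorated with hollow dots, which will vanish in the sum because by Lemma~\ref{hollow dots} bubbles cannot carry hollow dots. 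I would then resolve the bottom crossing analogously.

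After both resolutions, the only surviving terms are products of a counter-clockwise bubble $\bar d_{2i}$ and a clockwise bubble $d_{2j}$ with $i+j = n-1$, since any product with an odd-dotted bubble vanishes by Lemma~\ref{odd dots} and any hollow-dotted bubble vanishes by Lemma~\ref{hollow dots}. The remaining task is to compute, for each pair $(i,j)$, how many of the terms in the sum $\sum_{a+b = 2n-1}$ contribute to $\bar d_{2i} d_{2j}$ and with what coefficient. I expect the coefficient $(1+2j)$ to arise as follows: when resolving the crossings, the $a+b$ solid dots may distribute freely between the two resulting bubbles, but only even-dotted bubbles survive, and the dots from the parallel-strand term (obtained after pushing dots past crossings using \eqref{dotSlide: bottomLeft} and \eqref{dotSlide: topLeft}) provide additional contributions to the clockwise loop. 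Counting dot positions produces the asymmetric factor $1+2j$ rather than, say, $1+2i$, reflecting the orientation asymmetry in relation~\eqref{R3} (compare Proposition~\ref{clockwise bubbles explicit}).

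As a sanity check I would verify the base case $n=1$: the LHS becomes $[\text{figure-8 with }(1,0)] + [\text{figure-8 with }(0,1)]$ and the RHS is just $\bar d_0 d_0$. Both crossings resolve directly and the hollow-dot terms vanish, matching. A further cross-check is to compare the structure with Lemma~\ref{bubbleDecomp}, which handles the even-total-dot case and shows $\bar d_{2n}$ decomposes as figure-8s with total $2n-2$ dots; the current lemma is its odd-total-dot counterpart.

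The main obstacle is the bookkeeping of signs and coefficients: every time a hollow dot passes a cup or cap one picks up a sign via \eqref{caps}--\eqref{cups}, every dot-slide through a crossing produces two resolution terms, and the trace (annulus) relations allow dots to migrate between loops in ways that must be tracked. To manage this, I would organize the calculation by first carrying out the computation symbolically on a generic figure-8 (with variable $a,b$), collecting all surviving bubble-product terms as explicit functions of $a,b$, and only at the end summing over $a+b = 2n-1$ to extract the coefficient $(1+2j)$. Alternatively, one could proceed by induction on $n$, using the bubble slide formulas of Propositions~\ref{clockwise bubbles explicit} and~\ref{clockwise bubbles with down explicit} to reduce the $n$-case to the $(n-1)$-case.
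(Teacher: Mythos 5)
There is a genuine gap here, on two counts. First, the mechanism you propose for splitting the figure-8 into two disjoint bubbles is not available: the figure-8 in this lemma has a \emph{single} crossing, closed off by one cap and one cup, whereas relation~\eqref{R3} resolves a \emph{double} crossing of oppositely oriented strands. You cannot apply \eqref{R3} to ``the top crossing'' and ``the bottom crossing'' because there is only one crossing and it is not in the form required by \eqref{R3}. In the paper the disjoint-bubble terms arise instead from the solid-dot slide relations \eqref{dotSlide: bottomLeft} and \eqref{dotSlide: topLeft}: each time a solid dot passes through the single crossing, the two correction terms are a pair of parallel strands that close up into a product $\bar d_{2i}d_{2j}$ (or vanish by Lemma~\ref{hollow dots}), and the undotted figure-8 left at the end contains a left-twist curl and is zero. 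You do mention the dot-slide relations, but only in a supporting role; they are in fact the engine of the whole decomposition.

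Second, and more importantly, the coefficient $1+2j$ --- which is the actual content of the lemma --- is never derived; you write that you ``expect'' it to arise from counting dot positions and defer the computation to later bookkeeping. The paper obtains it by two concrete steps you would need to supply: (a) a pairing identity moving one dot across the crossing, $\text{fig8}(2n-2i,\,2i-1)=\text{fig8}(2n-2i-1,\,2i)$, which holds because the dot-slide correction terms are products of bubbles each carrying an odd number of solid dots and hence vanish by Lemma~\ref{odd dots}; this collapses the sum over $a+b=2n-1$ to $\text{fig8}(2n-1,0)$ plus twice the figure-8s with an even number of dots on the clockwise loop; and (b) the decomposition $\text{fig8}(2n-2a-1,\,2a)=\sum_{i+j=n-1,\ j\geq a}\bar d_{2i}d_{2j}$, again by repeated dot slides. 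Counting then gives that $\bar d_{2i}d_{2j}$ occurs once from the $a=0$ term and twice from each $a\in\{1,\dots,j\}$, yielding $1+2j$. Your base-case check for $n=1$ is fine, and your identification of which products survive is correct, but without (a) and (b) the proposal is a plan rather than a proof, and the plan's first step as written would not get off the ground.
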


\begin{proof}
We compute:
$$
\ds \sum_{a+b=2n-1}
\begin{tikzpicture}[baseline=(current bounding box.center),rotate=90]
\raisebox{3mm}{
 \draw[->] (-0.6,0) .. controls ++(0,.4) and ++(0,-.5) .. (-0.0,1);
  \draw[<-] (0.0,0) .. controls ++(0,.4) and ++(0,-.5) .. (-0.6,1);
   \draw[<-] (-0.6,1) .. controls ++(0,.4) and ++(0, .4) .. (-0.0,1);
  \draw[->] (0.0,0) .. controls ++(0,-.5) and ++(0,-.5) .. (-0.6,0);
  \filldraw  (-0.05,1.2) circle (2pt);
  \filldraw   (-0.1,-.3) circle (2pt);
  \node at (0.1,1.3) {\small $a$};
  \node at (0.1,-.4) {\small $b$};}
\end{tikzpicture}\hspace{6pt}
$$ $$=
\hspace{4pt}
\begin{tikzpicture}[baseline=(current bounding box.center),rotate=90]
\raisebox{3mm}{
 \draw[->] (-0.6,0) .. controls ++(0,.4) and ++(0,-.5) .. (-0.0,1);
  \draw[<-] (0.0,0) .. controls ++(0,.4) and ++(0,-.5) .. (-0.6,1);
   \draw[<-] (-0.6,1) .. controls ++(0,.4) and ++(0, .4) .. (-0.0,1);
  \draw[->] (0.0,0) .. controls ++(0,-.5) and ++(0,-.5) .. (-0.6,0);
  \filldraw  (-0.05,1.2) circle (2pt);
  \node at (0.2,1.3) {\small $2n$-$1$};}
\end{tikzpicture}\hspace{4pt}
+
\hspace{4pt}
\begin{tikzpicture}[baseline=(current bounding box.center),rotate=90]
\raisebox{3mm}{
 \draw[->] (-0.6,0) .. controls ++(0,.4) and ++(0,-.5) .. (-0.0,1);
  \draw[<-] (0.0,0) .. controls ++(0,.4) and ++(0,-.5) .. (-0.6,1);
   \draw[<-] (-0.6,1) .. controls ++(0,.4) and ++(0, .4) .. (-0.0,1);
  \draw[->] (0.0,0) .. controls ++(0,-.5) and ++(0,-.5) .. (-0.6,0);
  \filldraw  (-0.05,1.2) circle (2pt);
  \filldraw   (-0.1,-.3) circle (2pt);
  \node at (0.2,1.3) {\small $2n$-$2$};}
\end{tikzpicture}\hspace{4pt}
+
\hspace{4pt}
\begin{tikzpicture}[baseline=(current bounding box.center),rotate=90]
\raisebox{3mm}{
 \draw[->] (-0.6,0) .. controls ++(0,.4) and ++(0,-.5) .. (-0.0,1);
  \draw[<-] (0.0,0) .. controls ++(0,.4) and ++(0,-.5) .. (-0.6,1);
   \draw[<-] (-0.6,1) .. controls ++(0,.4) and ++(0, .4) .. (-0.0,1);
  \draw[->] (0.0,0) .. controls ++(0,-.5) and ++(0,-.5) .. (-0.6,0);
  \filldraw  (-0.05,1.2) circle (2pt);
  \filldraw   (-0.1,-.3) circle (2pt);
  \node at (0.2,1.3) {\small $2n$-$3$};
  \node at (0.2,-.4) {\small $2$};}
\end{tikzpicture}\hspace{4pt}
+
\hspace{4pt}
\dots
\hspace{4pt}
+
\hspace{4pt}
\begin{tikzpicture}[baseline=(current bounding box.center),rotate=90]
\raisebox{3mm}{
 \draw[->] (-0.6,0) .. controls ++(0,.4) and ++(0,-.5) .. (-0.0,1);
  \draw[<-] (0.0,0) .. controls ++(0,.4) and ++(0,-.5) .. (-0.6,1);
   \draw[<-] (-0.6,1) .. controls ++(0,.4) and ++(0, .4) .. (-0.0,1);
  \draw[->] (0.0,0) .. controls ++(0,-.5) and ++(0,-.5) .. (-0.6,0);
  \filldraw  (-0.05,1.2) circle (2pt);
  \filldraw   (-0.1,-.3) circle (2pt);
  \node at (0.2,-.4) {\small $2n$-$2$};}
\end{tikzpicture}
$$ $$=
\hspace{4pt}
\begin{tikzpicture}[baseline=(current bounding box.center),rotate=90]
\raisebox{3mm}{
 \draw[->] (-0.6,0) .. controls ++(0,.4) and ++(0,-.5) .. (-0.0,1);
  \draw[<-] (0.0,0) .. controls ++(0,.4) and ++(0,-.5) .. (-0.6,1);
   \draw[<-] (-0.6,1) .. controls ++(0,.4) and ++(0, .4) .. (-0.0,1);
  \draw[->] (0.0,0) .. controls ++(0,-.5) and ++(0,-.5) .. (-0.6,0);
  \filldraw  (-0.05,1.2) circle (2pt);
  \node at (0.2,1.3) {\small $2n$-$1$};}
\end{tikzpicture}\hspace{4pt}
+
\hspace{4pt}2
\begin{tikzpicture}[baseline=(current bounding box.center),rotate=90]
\raisebox{3mm}{
 \draw[->] (-0.6,0) .. controls ++(0,.4) and ++(0,-.5) .. (-0.0,1);
  \draw[<-] (0.0,0) .. controls ++(0,.4) and ++(0,-.5) .. (-0.6,1);
   \draw[<-] (-0.6,1) .. controls ++(0,.4) and ++(0, .4) .. (-0.0,1);
  \draw[->] (0.0,0) .. controls ++(0,-.5) and ++(0,-.5) .. (-0.6,0);
  \filldraw  (-0.05,1.2) circle (2pt);
  \filldraw   (-0.1,-.3) circle (2pt);
  \node at (0.2,1.3) {\small $2n$-$3$};
  \node at (0.2,-.4) {\small $2$};}
\end{tikzpicture}\hspace{4pt}
+
\hspace{4pt}2
\begin{tikzpicture}[baseline=(current bounding box.center),rotate=90]
\raisebox{3mm}{
 \draw[->] (-0.6,0) .. controls ++(0,.4) and ++(0,-.5) .. (-0.0,1);
  \draw[<-] (0.0,0) .. controls ++(0,.4) and ++(0,-.5) .. (-0.6,1);
   \draw[<-] (-0.6,1) .. controls ++(0,.4) and ++(0, .4) .. (-0.0,1);
  \draw[->] (0.0,0) .. controls ++(0,-.5) and ++(0,-.5) .. (-0.6,0);
  \filldraw  (-0.05,1.2) circle (2pt);
  \filldraw   (-0.1,-.3) circle (2pt);
  \node at (0.2,1.3) {\small $2n$-$5$};
  \node at (0.2,-.4) {\small $4$};}
\end{tikzpicture}\hspace{4pt}
+
\hspace{4pt}
\dots
\hspace{4pt}
+
\hspace{4pt}2
\begin{tikzpicture}[baseline=(current bounding box.center),rotate=90]
\raisebox{3mm}{
 \draw[->] (-0.6,0) .. controls ++(0,.4) and ++(0,-.5) .. (-0.0,1);
  \draw[<-] (0.0,0) .. controls ++(0,.4) and ++(0,-.5) .. (-0.6,1);
   \draw[<-] (-0.6,1) .. controls ++(0,.4) and ++(0, .4) .. (-0.0,1);
  \draw[->] (0.0,0) .. controls ++(0,-.5) and ++(0,-.5) .. (-0.6,0);
  \filldraw  (-0.05,1.2) circle (2pt);
  \filldraw   (-0.1,-.3) circle (2pt);
  \node at (0.2,-.4) {\small $2n$-$2$};}
\end{tikzpicture}
$$
because we have \begin{equation*}
\begin{tikzpicture}[baseline=(current bounding box.center),rotate=90]
\raisebox{3mm}{
 \draw[->] (-0.6,0) .. controls ++(0,.4) and ++(0,-.5) .. (-0.0,1);
  \draw[<-] (0.0,0) .. controls ++(0,.4) and ++(0,-.5) .. (-0.6,1);
   \draw[<-] (-0.6,1) .. controls ++(0,.4) and ++(0, .4) .. (-0.0,1);
  \draw[->] (0.0,0) .. controls ++(0,-.5) and ++(0,-.5) .. (-0.6,0);
  \filldraw  (-0.05,1.2) circle (2pt);
  \filldraw   (-0.1,-.3) circle (2pt);
  \node at (0.2,1.3) {\small $2n$-$2i$};
  \node at (0.2,-.4) {\small $2i$-$1$};}
\end{tikzpicture}\hspace{6pt}
=
\hspace{6pt}
\begin{tikzpicture}[baseline=(current bounding box.center),rotate=90]
\raisebox{3mm}{
 \draw[->] (-0.6,0) .. controls ++(0,.4) and ++(0,-.5) .. (-0.0,1);
  \draw[<-] (0.0,0) .. controls ++(0,.4) and ++(0,-.5) .. (-0.6,1);
   \draw[<-] (-0.6,1) .. controls ++(0,.4) and ++(0, .4) .. (-0.0,1);
  \draw[->] (0.0,0) .. controls ++(0,-.5) and ++(0,-.5) .. (-0.6,0);
  \filldraw  (-0.05,1.2) circle (2pt);
  \filldraw   (-0.1,-.3) circle (2pt);
  \node at (0.2,1.3) {\small $2n$-$2i$-$1$};
  \node at (0.2,-.4) {\small $2i$};}
\end{tikzpicture}.
\end{equation*}
Moreover, we can decompose these figure eights into a linear combination of products of two bubbles using dot slide relations $\ref{dotSlide: bottomLeft}$ and $\ref{dotSlide: topLeft}$ as follows:
\begin{equation*}
\begin{tikzpicture}[baseline=(current bounding box.center),rotate=90]
\raisebox{3mm}{
 \draw[->] (-0.6,0) .. controls ++(0,.4) and ++(0,-.5) .. (-0.0,1);
  \draw[<-] (0.0,0) .. controls ++(0,.4) and ++(0,-.5) .. (-0.6,1);
   \draw[<-] (-0.6,1) .. controls ++(0,.4) and ++(0, .4) .. (-0.0,1);
  \draw[->] (0.0,0) .. controls ++(0,-.5) and ++(0,-.5) .. (-0.6,0);
  \filldraw  (-0.05,1.2) circle (2pt);
  \filldraw   (-0.1,-.3) circle (2pt);
  \node at (0.2,1.3) {\small $2n$-$2a$-$1$};
  \node at (0.2,-.4) {\small $2a$};}
\end{tikzpicture}\hspace{6pt}
=
\hspace{6pt}
\ds \sum_{\substack{i+j=n-1\\j\geq a}}\begin{tikzpicture}[baseline=(current bounding box.center)]
    \draw[->] (3,2) arc (-180:180:5mm);
\fill (3.95,2.2) circle [radius=2pt];
\node at (4.2,2.2) {\small $2i$};
    \end{tikzpicture}
\begin{tikzpicture}[baseline=(current bounding box.center)]
    \draw[<-] (3,2) arc (-180:180:5mm);
\fill (3.95,2.2) circle [radius=2pt];
\node at (4.2,2.2) {\small $2j$};
    \end{tikzpicture}.
\end{equation*}
Combining these results, we get that 
\begin{equation*}
\ds \sum_{a+b=2n-1}
\begin{tikzpicture}[baseline=(current bounding box.center),rotate=90]
\raisebox{3mm}{
 \draw[->] (-0.6,0) .. controls ++(0,.4) and ++(0,-.5) .. (-0.0,1);
  \draw[<-] (0.0,0) .. controls ++(0,.4) and ++(0,-.5) .. (-0.6,1);
   \draw[<-] (-0.6,1) .. controls ++(0,.4) and ++(0, .4) .. (-0.0,1);
  \draw[->] (0.0,0) .. controls ++(0,-.5) and ++(0,-.5) .. (-0.6,0);
  \filldraw  (-0.05,1.2) circle (2pt);
  \filldraw   (-0.1,-.3) circle (2pt);
\node at (0.1,1.3) {\small $a$};
  \node at (0.1,-.4) {\small $b$};}
\end{tikzpicture}\hspace{6pt}
=
\hspace{6pt}
\sum_{i+j=n-1}(1+2j)\hspace{1mm}
\begin{tikzpicture}[baseline=(current bounding box.center)]
    \draw[->] (3,2) arc (-180:180:5mm);
\fill (3.95,2.2) circle [radius=2pt];
\node at (4.2,2.2) {\small $2i$};
    \end{tikzpicture}
\begin{tikzpicture}[baseline=(current bounding box.center)]
    \draw[<-] (3,2) arc (-180:180:5mm);
\fill (3.95,2.2) circle [radius=2pt];
\node at (4.2,2.2) {\small $2j$};
    \end{tikzpicture}.
\end{equation*}
\end{proof}

\begin{lemma}\label{updots with downdots}
 We have $$\ds [\h{1}{x_1^{2a}},\h{-1}{x_1^{2b}}]=-2\bar{d}_{2(a+b)}-\sum_{i+j=2(a+b)-1}(2+4j)\bar{d}_{2i}d_{2j}$$ for $a,b\in \mathbb{Z}_{\geq0}.$
\end{lemma}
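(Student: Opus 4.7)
The plan is to carry out a direct diagrammatic computation, in the spirit of the proofs of Lemma \ref{twistedHeisRels} and Lemma \ref{upVirasorowithDown}. I would represent $\h{1}{x_1^{2a}}\h{-1}{x_1^{2b}}$ in the trace as an up-strand with $2a$ solid dots sitting to the left of a down-strand with $2b$ solid dots, and then apply relation (\ref{R3}) at the natural up-down interface to express the product as a sum of three terms: (i) the same product with the two strands in the opposite horizontal order, (ii) a cup-cap correction, and (iii) a cup-cap correction decorated with two hollow dots. After using the trace cyclicity relation to rotate strands around the annulus, term (i) is identified with $\h{-1}{x_1^{2b}}\h{1}{x_1^{2a}}$, so the commutator equals the sum of the contributions from terms (ii) and (iii).

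Next I would evaluate each correction term in the trace. In term (ii), the cup and cap pinch the two strands together, producing a single closed bubble with all $2a+2b$ solid dots distributed on it. Sliding all the solid dots onto one side using (\ref{caps}), (\ref{cups}), and (\ref{anticommute}) produces exactly the leading term $-2\bar{d}_{2(a+b)}$, where the factor of $2$ reflects the two equivalent ways the solid dots can collapse onto the resulting bubble. In term (iii), the two hollow dots separate the pinched region into two closed bubbles of opposite orientation, and the solid dots from the two original strands must be distributed between them. Sliding the $2a$ top dots downward and the $2b$ bottom dots upward through the cup and cap using (\ref{dotSlide: bottomLeft}) and (\ref{dotSlide: topLeft}), most resolution terms vanish by Lemmas \ref{hollow dots} and \ref{odd dots}, and the surviving terms assemble into a sum of figure-eight diagrams of exactly the type treated in Lemma \ref{figure 8 split}.

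Applying Lemma \ref{figure 8 split} then converts the figure-eight sum into the weighted sum $\sum_{i+j=2(a+b)-1}(2+4j)\bar{d}_{2i}d_{2j}$; the coefficient doubles from the $(1+2j)$ of that lemma because two symmetric contributions appear (one from each of the two hollow dots bounding the pinch, or equivalently from the two independent positions at which a given dot can be slid past the hollow-dot barrier). The main obstacle is precisely this sign and coefficient bookkeeping across the multiple crossing resolutions, since each solid-dot slide generates additional terms with extra hollow dots or with odd dot-counts on bubbles. Fortunately, the vanishing results of Section 4 eliminate most of these auxiliary terms, so the remaining combinatorics collapses cleanly into the compact expression claimed.
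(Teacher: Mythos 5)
Your overall skeleton --- reduce the commutator to a sum of closed-diagram correction terms and then invoke Lemma \ref{figure 8 split} --- matches the paper's, but the mechanism you describe for producing those correction terms is wrong, and the computation would not go through as written. The central problem is that you attribute the figure-eight sum to your term (iii), the hollow-dotted cup--cap correction in relation \eqref{R3}. That correction term contains no crossings: composed with the $2a+2b$ solid dots and closed up in the trace it is a \emph{single} closed loop, and after cancelling the two hollow dots it contributes $\pm\bar{d}_{2(a+b)}$, exactly like your term (ii). It cannot yield a product of two bubbles $\bar{d}_{2i}d_{2j}$, and the dot-slide relations \eqref{dotSlide: bottomLeft}--\eqref{dotSlide: topLeft} are of no use there, since they resolve dots through \emph{crossings} while solid dots pass through cups and caps freely. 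In the paper the two cup--cap corrections of \eqref{R3} together account for the single term $-2\bar{d}_{2(a+b)}$: the factor $2$ is simply ``two correction terms whose hollow dots cancel,'' not ``two ways the solid dots collapse.''

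The figure-eight sum arises earlier and from a different source. One first rewrites the side-by-side $PQ$ diagram as the double crossing $PQ\to QP\to PQ$ using \eqref{R2}; this is the step that introduces crossings, and your plan omits it. Sliding each of the $2(a+b)$ solid dots through a crossing via \eqref{dotSlide: bottomLeft} or \eqref{dotSlide: topLeft} produces, for the $j$-th dot, a figure-eight carrying $j$ and $2(a+b)-1-j$ dots, with coefficient $-2$ because the two resolution terms of each dot-slide relation (the plain one and the one with two hollow dots) combine with the same sign. Only after all dots have reached the $QP$ level does one apply \eqref{R3} to remove the remaining double crossing, producing $\h{-1}{x_1^{2b}}\h{1}{x_1^{2a}}$ together with $-2\bar{d}_{2(a+b)}$. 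So both factors of $2$ in the final formula have origins different from the ones you propose, and without the initial passage through \eqref{R2} there are no crossings anywhere in your diagrams for the dots to resolve against --- hence no figure-eights at all.
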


\begin{proof}
We compute: \begin{align*}
\ds
\begin{tikzpicture}[baseline=(current bounding box.center),scale=0.75]
\draw[->] (0,0) to (0,2);
\draw[<-] (0.75,0) to (0.75,2);
\fill (0,1.2) circle (2pt);
\fill (0.75,1.2) circle (2pt);
\node at (-0.3,1.3) {\small $2a$};
\node at (1.1,1.3) {\small $2b$};
\end{tikzpicture}\hspace{6pt}
&=
\hspace{6pt}
\begin{tikzpicture}[baseline=(current bounding box.center),scale=0.75]
\draw[->] (-0.6,0) .. controls ++(0,.4) and ++(0,-.5) .. (-0.0,1);
\draw[<-] (0.0,0) .. controls ++(0,.4) and ++(0,-.5) .. (-0.6,1);
\draw[<-] (-0.6,1) .. controls ++(0,.4) and ++(0,-.5) .. (-0.0,2);
\draw[->] (0.0,1) .. controls ++(0,.4) and ++(0,-.5) .. (-0.6,2);
\fill  (-0.55,1.7) circle (2pt);
\fill  (-0.1,1.7) circle (2pt);
\node at (-.95,1.8) {\small $2a$};
\node at (0.35,1.8) {\small $2b$};
\end{tikzpicture}
=\hspace{6pt}
\begin{tikzpicture}[baseline=(current bounding box.center),scale=0.75]
 \draw[->] (-0.6,0) .. controls ++(0,.4) and ++(0,-.5) .. (-0.0,1);
  \draw[<-] (0.0,0) .. controls ++(0,.4) and ++(0,-.5) .. (-0.6,1);
   \draw[<-] (-0.6,1) .. controls ++(0,.4) and ++(0,-.5) .. (-0.0,2);
  \draw[->] (0.0,1) .. controls ++(0,.4) and ++(0,-.5) .. (-0.6,2);
  \filldraw  (-0.05,1.2) circle (2pt);
  \filldraw   (-0.1,1.7) circle (2pt);
  \node at (0.3,1.3) {\small $2a$};
  \node at (0.35,1.8) {\small $2b$};
\end{tikzpicture}\hspace{6pt}
-\hspace{6pt}2
\sum_{j=0}^{2a-1}\hspace{2mm}
\begin{tikzpicture}[baseline=(current bounding box.center),scale=0.75]
 \draw[->] (-0.6,0) .. controls ++(0,.4) and ++(0,-.5) .. (-0.0,1);
  \draw[<-] (0.0,0) .. controls ++(0,.4) and ++(0,-.5) .. (-0.6,1);
   \draw[<-] (-0.6,1) .. controls ++(0,.4) and ++(0, .4) .. (-0.0,1);
  \draw[->] (0.0,2) .. controls ++(0,-.5) and ++(0,-.5) .. (-0.6,2);
  \filldraw  (-0.05,1.2) circle (2pt);
  \filldraw   (0,1.8) circle (2pt);
  \node at (0.2,1.2) {\small $j$};
  \node at (1.2,1.8) {\small $2(a$+$b)$-$1$-$j$};
\end{tikzpicture}
\\&=
\hspace{6pt}
\begin{tikzpicture}[baseline=(current bounding box.center),scale=0.75]
 \draw[->] (-0.6,0) .. controls ++(0,.4) and ++(0,-.5) .. (-0.0,1);
  \draw[<-] (0.0,0) .. controls ++(0,.4) and ++(0,-.5) .. (-0.6,1);
   \draw[<-] (-0.6,1) .. controls ++(0,.4) and ++(0,-.5) .. (-0.0,2);
  \draw[->] (0.0,1) .. controls ++(0,.4) and ++(0,-.5) .. (-0.6,2);
  \filldraw  (-0.05,1.2) circle (2pt);
  \filldraw   (-0.55,1.2) circle (2pt);
  \node at (0.3,1.3) {\small $2a$};
  \node at (-0.9,1.3) {\small $2b$};
  \end{tikzpicture}\hspace{6pt}
-
\hspace{6pt}2
\sum_{i=0}^{2b-1}\hspace{2mm}
\begin{tikzpicture}[baseline=(current bounding box.center),scale=0.75]
 \draw[->] (-0.6,0) .. controls ++(0,.4) and ++(0,-.5) .. (-0.0,1);
  \draw[<-] (0.0,0) .. controls ++(0,.4) and ++(0,-.5) .. (-0.6,1);
   \draw[<-] (-0.6,1) .. controls ++(0,.4) and ++(0, .4) .. (-0.0,1);
  \draw[->] (0.0,2) .. controls ++(0,-.5) and ++(0,-.5) .. (-0.6,2);
  \filldraw  (-0.05,1.2) circle (2pt);
  \filldraw   (0,1.8) circle (2pt);
  \node at (0.66,1.2) {\small $2a$+$i$};
  \node at (0.66,1.8) {\small $2b$-$1$-$i$};
\end{tikzpicture}\hspace{6pt}
-
\hspace{6pt}2
\sum_{j=0}^{2a-1}\hspace{2mm}
\begin{tikzpicture}[baseline=(current bounding box.center),scale=0.75]
 \draw[->] (-0.6,0) .. controls ++(0,.4) and ++(0,-.5) .. (-0.0,1);
  \draw[<-] (0.0,0) .. controls ++(0,.4) and ++(0,-.5) .. (-0.6,1);
   \draw[<-] (-0.6,1) .. controls ++(0,.4) and ++(0, .4) .. (-0.0,1);
  \draw[->] (0.0,2) .. controls ++(0,-.5) and ++(0,-.5) .. (-0.6,2);
  \filldraw  (-0.05,1.2) circle (2pt);
  \filldraw   (0,1.8) circle (2pt);
  \node at (0.2,1.2) {\small $j$};
  \node at (1.2,1.8) {\small $2(a$+$b)$-$1$-$j$};
\end{tikzpicture}
\\&=
\hspace{6pt}
\begin{tikzpicture}[baseline=(current bounding box.center),scale=0.75]
\draw[<-] (-0.6,0) .. controls ++(0,.4) and ++(0,-.5) .. (-0.0,1);
\draw[->] (0.0,0) .. controls ++(0,.4) and ++(0,-.5) .. (-0.6,1);
\draw[->] (-0.6,1) .. controls ++(0,.4) and ++(0,-.5) .. (-0.0,2);
\draw[<-] (0.0,1) .. controls ++(0,.4) and ++(0,-.5) .. (-0.6,2);
\fill  (-0.55,1.7) circle (2pt);
\fill  (-0.1,1.7) circle (2pt);
\node at (-.95,1.8) {\small $2b$};
\node at (0.35,1.8) {\small $2a$};
\end{tikzpicture}\hspace{6pt}
-
\hspace{6pt}2
\sum_{j=0}^{2(a+b)-1}\hspace{2mm}
\begin{tikzpicture}[baseline=(current bounding box.center),scale=0.75]
 \draw[->] (-0.6,0) .. controls ++(0,.4) and ++(0,-.5) .. (-0.0,1);
  \draw[<-] (0.0,0) .. controls ++(0,.4) and ++(0,-.5) .. (-0.6,1);
   \draw[<-] (-0.6,1) .. controls ++(0,.4) and ++(0, .4) .. (-0.0,1);
  \draw[->] (0.0,2) .. controls ++(0,-.5) and ++(0,-.5) .. (-0.6,2);
  \filldraw  (-0.05,1.2) circle (2pt);
  \filldraw   (0,1.8) circle (2pt);
  \node at (0.2,1.2) {\small $j$};
  \node at (1.2,1.8) {\small $2(a$+$b)$-$1$-$j$};
  \end{tikzpicture}
\\&=
\hspace{6pt}
\begin{tikzpicture}[baseline=(current bounding box.center),scale=0.75]
\draw[<-] (-0.6,0) .. controls ++(0,.4) and ++(0,-.5) .. (-0.0,1);
\draw[->] (0.0,0) .. controls ++(0,.4) and ++(0,-.5) .. (-0.6,1);
\draw[->] (-0.6,1) .. controls ++(0,.4) and ++(0,-.5) .. (-0.0,2);
\draw[<-] (0.0,1) .. controls ++(0,.4) and ++(0,-.5) .. (-0.6,2);
\fill  (-0.55,1.7) circle (2pt);
\fill  (-0.1,1.7) circle (2pt);
\node at (-.95,1.8) {\small $2b$};
\node at (0.35,1.8) {\small $2a$};
\end{tikzpicture}\hspace{6pt}
-
\hspace{6pt}2
\sum_{j=0}^{2(a+b)-1}\hspace{2mm}
\begin{tikzpicture}[baseline=(current bounding box.center),scale=0.75]
 \draw[->] (-0.6,0) .. controls ++(0,.4) and ++(0,-.5) .. (-0.0,1);
  \draw[<-] (0.0,0) .. controls ++(0,.4) and ++(0,-.5) .. (-0.6,1);
   \draw[<-] (-0.6,1) .. controls ++(0,.4) and ++(0, .4) .. (-0.0,1);
  \draw[->] (0.0,0) .. controls ++(0,-.5) and ++(0,-.5) .. (-0.6,0);
  \filldraw  (0,1.2) circle (2pt);
  \filldraw   (-0.1,-.3) circle (2pt);
  \node at (0.3,1.3) {$j$};
  \node at (1.2,-.2) {$2(a$+$b)$-$1$-$j$};
\end{tikzpicture}
\\&=
\begin{tikzpicture}[baseline=(current bounding box.center),scale=0.75]
\draw[<-] (0,0) to (0,2);
\draw[->] (0.75,0) to (0.75,2);
\fill (0,1.2) circle (2pt);
\fill (0.75,1.2) circle (2pt);
\node at (-0.3,1.3) {\small $2b$};
\node at (1.1,1.3) {\small $2a$};
\end{tikzpicture}\hspace{6pt}
-
\hspace{6pt}2\hspace{2mm}
\begin{tikzpicture}[baseline=(current bounding box.center)]
    \draw[->] (3,2) arc (-180:180:5mm);
\fill (3.95,2.2) circle [radius=2pt];
\node at (4.5,2.2) {\small $2(a$+$b)$};
    \end{tikzpicture}\hspace{6pt}
-
\hspace{6pt}2
\sum_{j=0}^{2(a+b)-1}\hspace{2mm}
\begin{tikzpicture}[baseline=(current bounding box.center),scale=0.75]
 \draw[->] (-0.6,0) .. controls ++(0,.4) and ++(0,-.5) .. (-0.0,1);
  \draw[<-] (0.0,0) .. controls ++(0,.4) and ++(0,-.5) .. (-0.6,1);
   \draw[<-] (-0.6,1) .. controls ++(0,.4) and ++(0, .4) .. (-0.0,1);
  \draw[->] (0.0,0) .. controls ++(0,-.5) and ++(0,-.5) .. (-0.6,0);
  \filldraw  (0,1.2) circle (2pt);
  \filldraw   (-0.1,-.3) circle (2pt);
  \node at (0.3,1.3) {$j$};
  \node at (1.2,-.2) {$2(a$+$b)$-$1$-$j$};
\end{tikzpicture}.
\end{align*}

Therefore $\ds [\h{1}{x_1^{2a}},\h{-1}{x_1^{2b}}]=-2\bar{d}_{2(a+b)}-\sum_{i+j=2(a+b)-1}(2+4j)\bar{d}_{2i}d_{2j}$.
\end{proof}


\section{Algebra isomorphism}
In this section, we will study the structure of $\TrHEv$, first as a vector space and then as an algebra. We show that $\TrHEv$ has a triangular decomposition into two copies of the trace of $\HC_n$ and a polynomial algebra. We then describe a generating set for $\TrHEv$, which allows us to define the algebra homomorphism to $W^-$. Finally, we prove that this homomorphism is an isomorphism.

\subsection{Trace of $\mathcal{H}_{tw}$ as a vector space}
Let $m,n \geq 0$ and define $J_{m,n}$ to be the 2-sided ideal in $\End_{\mathcal{H}_{tw}}(P^mQ^n)$ generated by diagrams which contain at least one arc connecting a pair of upper points. 
\begin{lemma}\label{ses}
There exists a split short exact sequence 
\begin{equation*}
0 \rightarrow J_{m,n} \rightarrow \End_{\mathcal{H}_{tw}}(P^mQ^n) \rightarrow (\HC)^{op}_m \otimes \HC_n \otimes \mathbb{C}[d_0,d_2,d_4....] \rightarrow 0.
\end{equation*}
\end{lemma}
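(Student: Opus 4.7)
The plan is to mimic Khovanov's argument for the untwisted Heisenberg category (see \cite{Khovanov}, which is also adapted in \cite{CS} for $K_0$-computations), with the key modification being that we must track the Clifford (hollow dot) data throughout. The natural strategy is to explicitly exhibit the splitting $\sigma : (\HC)^{op}_m \otimes \HC_n \otimes \mathbb{C}[d_0, d_2, d_4, \ldots] \to \End_{\mathcal{H}_{tw}}(P^m Q^n)$ and then show every diagram reduces modulo $J_{m,n}$ to an element of its image.

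First I would define $\sigma$ on generators: send elements of $\HC_n$ to diagrams with $n$ downward strands decorated with crossings, solid dots, and hollow dots in the obvious way; send elements of $(\HC)^{op}_m$ to diagrams with $m$ upward strands placed to the left of the $Q^n$ block (the ``op'' arises because reading a strand from bottom to top in an upward direction reverses the order of composition); and send $d_{2k}$ to the clockwise bubble placed to the right of all strands. To check that $\sigma$ is a well-defined algebra homomorphism one must verify that the defining relations of $\HC$ hold in $\End_{\mathcal{H}_{tw}}(P^m Q^n)$ on the nose—this follows from the dot-slide relations \eqref{dotSlide: bottomLeft}, \eqref{dotSlide: topLeft}, the hollow-dot anticommutativity \eqref{cicj=-cjci}, and the Clifford square relation built into \eqref{d01=0}. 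The bubble factor is handled by Proposition \ref{bubbles}, which says the $d_{2k}$'s are algebraically independent; centrality of bubbles modulo $J_{m,n}$ follows from the bubble-slide formulas of Propositions \ref{clockwise bubbles explicit} and \ref{clockwise bubbles with down explicit}, all of whose correction terms either sit in $J_{m,n}$ or are again products of strand-diagrams with bubbles.

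Next I would prove that the composition $\pi \circ \sigma$ is surjective, where $\pi$ is the quotient by $J_{m,n}$. Given any diagram $D \in \End_{\mathcal{H}_{tw}}(P^m Q^n)$, I would argue by induction on the number of crossings and cups/caps that $D$ is equivalent modulo $J_{m,n}$ to a sum of diagrams in the image of $\sigma$: use \eqref{R2} and \eqref{R3} to remove any double crossings and to push upward-oriented arcs against downward ones (the resulting bubble terms from \eqref{R3} become part of the bubble factor, after using Lemmas \ref{hollow dots} and \ref{odd dots} to discard vanishing bubbles); use the dot-slide relations to move solid and hollow dots between strands; and use the bubble-slide formulas to collect all bubbles at the right. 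Any cup/cap that cannot be removed this way connects two upper endpoints and thus lands the diagram in $J_{m,n}$. This gives surjectivity of $\pi \circ \sigma$ and, combined with the inclusion $J_{m,n} \hookrightarrow \End_{\mathcal{H}_{tw}}(P^m Q^n)$, immediately shows that $\sigma$ is a section of $\pi$ onto its image.

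The main obstacle is injectivity of $\pi \circ \sigma$—i.e., showing that no nontrivial relation arises among the normal form diagrams modulo $J_{m,n}$. For this I would use the Fock space categorification of \cite[Section 6.3]{CS}: applying the functor $F_{m+n} : \mathcal{H}_{tw} \to \mathfrak{S}_{m+n}$ sends $\End_{\mathcal{H}_{tw}}(P^m Q^n)$ to natural transformations between compositions of induction and restriction functors on $\mathbb{S}_{m+n}$-modules. The calculation preceding Proposition \ref{bubbles} shows that distinct products of bubbles act on this category as linearly independent operators involving Jucys–Murphy elements, while the arguments of \cite{Khovanov} adapted to the Sergeev setting show that distinct normal-form strand diagrams act by linearly independent natural transformations. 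This faithfulness forces injectivity of $\sigma$ on a basis complementary to $J_{m,n}$, completing the proof of the lemma and providing the splitting.
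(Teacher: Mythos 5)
Your overall strategy is the same as the paper's: reduce every diagram in $\End_{\mathcal{H}_{tw}}(P^mQ^n)$ modulo $J_{m,n}$ to a juxtaposition of an upward block, a downward block, and bubbles collected on the far right, then identify the three factors. The difference is one of economy: the paper disposes of the strand blocks by citing \cite[Proposition 7.1]{CS} (which supplies $\End_{\mathcal{H}_{tw}}(P^m)\cong (\HC)^{op}_m$ and $\End_{\mathcal{H}_{tw}}(Q^n)\cong \HC_n$) and of the bubble factor by Proposition \ref{bubbles}, whereas you re-derive the needed injectivity from the Fock space functor. That is legitimate — it is essentially how the cited result is proved — but it means your write-up must carry an argument the paper deliberately outsources, and as stated that argument has a gap.

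The gap is in the last step. Faithfulness of the Fock space action shows that distinct normal-form diagrams are linearly independent in $\End_{\mathcal{H}_{tw}}(P^mQ^n)$; it does not by itself show that they remain independent modulo $J_{m,n}$, because the functor $F_N$ does not annihilate $J_{m,n}$, so ``linearly independent natural transformations'' does not yet give injectivity of $\pi\circ\sigma$. What is needed is the filtration argument of \cite[Section 4]{Khovanov} (which you invoke only for the bubbles): elements of $J_{m,n}$ act by natural transformations that factor through shorter induction--restriction words and hence lower the filtration by the number of through-strands, while each normal-form diagram has a nonzero leading term in top filtration degree. With that observation your injectivity step closes. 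Two smaller points: the correction terms in relation \eqref{R3} are cup--cap pairs, not bubbles — they lie in $J_{m,n}$ when created against the upper boundary and only become bubbles when they close off against other cups and caps — and the bubbles are \emph{not} central in $\End_{\mathcal{H}_{tw}}(P^mQ^n)$ modulo $J_{m,n}$ in the sense of sliding past strands (the commutators in Propositions \ref{clockwise bubbles explicit} and \ref{clockwise bubbles with down explicit} are nonzero and do not lie in $J_{m,n}$); what saves $\sigma$ as an algebra map is simply the interchange law for horizontally juxtaposed diagrams in disjoint vertical strips, with $\End_{\mathcal{H}_{tw}}(1)$ commutative.
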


\begin{proof}
In $\End_{\mathcal{H}_{tw}}(P^mQ^n)$, due to the middle diagram in relation \eqref{R2}, we can assume our diagrams have no crossing between opposite oriented strands. Taking the quotient $\End_{\mathcal{H}_{tw}}(P^mQ^n)/J_{m,n}$ kills diagrams with cups connecting two upper points, and those with caps connecting two lower points. Therefore we are left with diagrams, possibly with bubbles, which have no caps or cups and have crossings only among like-oriented strands.
   Note that in the quotient $\End_{\mathcal{H}_{tw}}(P^mQ^n)/J_{m,n}$, the diagram in relation \eqref{R3} simplifies to 
\begin{equation*}
\begin{tikzpicture}[baseline=(current bounding box.center),scale=0.75]
\draw [<-](0,0) to [out=45,in=-45] (0,2);
\draw [->](0.5,0) to [out=135,in=-135] (0.5,2);
\end{tikzpicture}\hspace{6pt}
=
\hspace{6pt}
\begin{tikzpicture}[baseline=(current bounding box.center),scale=0.75]
\draw[<-] (1.5,0) to (1.5,2);
\draw[->] (2,0) to (2,2);
\end{tikzpicture}
\end{equation*}
and therefore we can move the bubbles to the rightmost part of our diagrams for free. This gives us a short exact sequence 
\begin{equation*}
0 \rightarrow J_{m,n} \rightarrow \End_{\mathcal{H}_{tw}}(P^mQ^n) \rightarrow \End_{\mathcal{H}_{tw}}(P^m) \otimes \End_{\mathcal{H}_{tw}}(Q^n) \otimes  \End_{\mathcal{H}_{tw}}(1) \rightarrow 0.
\end{equation*}
By \cite[Proposition 7.1]{CS}, we have that $\End_{\mathcal{H}_{tw}}(P^m)$ is isomorphic to $(\HC)^{op}_m$ and that $\End_{\mathcal{H}_{tw}}(Q^n)$ is isomoprhic to $\HC_n$. By Proposition \ref{bubbles}, it follows that $\End_{\mathcal{H}_{tw}}(1)$ is isomorphic to $\mathbb{C}[d_0,d_2,d_4....]$. Hence the result follows.
\end{proof}

\begin{lemma}\label{fg}
If $f,g\in \HC_n$ such that $fg=1$,  then $f,g\in \Cl \rtimes \mathbb{C}[S_n]\subset \HC_n$.
\end{lemma}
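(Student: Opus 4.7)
The natural strategy is a leading-term argument using the $x$-degree filtration on $\HC_n$: define $F_k \HC_n := \operatorname{span}_{\mathbb{C}}\{x^\alpha c^\epsilon w : |\alpha| \leq k,\, \epsilon \in \{0,1\}^n,\, w \in S_n\}$. This is a multiplicative filtration, since the only defining relation mixing $x$-degrees, $x_{i+1}s_i - s_i x_i = u(1 - c_{i+1}c_i)$, has its correction term $u(1 - c_{i+1}c_i)$ lying in $F_0$. The filtration-zero piece is precisely $F_0 \HC_n = \Cl \rtimes \mathbb{C}[S_n]$, so the lemma is equivalent to showing that both $f$ and $g$ have $x$-degree zero.

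Let $d_f, d_g \ge 0$ denote the $x$-degrees of $f$ and $g$, and suppose for contradiction that $d_f + d_g \geq 1$. Passing to the associated graded algebra $\gr \HC_n$---a PBW-type algebra with basis $\{x^\alpha c^\epsilon w\}$ in which the only nontrivial commutation relations are $x_i c_i = -c_i x_i$, $x_i c_j = c_j x_i$ ($i \neq j$), $s_i x_{i+1} = x_i s_i$, and the $S_n$-action on Cliffords---the leading symbols $\overline{f} \in \gr^{d_f} \HC_n$ and $\overline{g} \in \gr^{d_g}\HC_n$ are both nonzero, yet their product $\overline{f}\cdot \overline{g}$ equals the degree-$(d_f+d_g)$ component of $fg = 1$, which vanishes.

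The main task, and the principal obstacle, is to deduce from $\overline{f}\cdot \overline{g} = 0$ that one of $\overline{f}, \overline{g}$ was already zero. Because $\gr \HC_n$ contains the Sergeev algebra $\Cl \rtimes \mathbb{C}[S_n]$ as its degree-zero subalgebra, and this algebra has zero divisors, a direct integrality argument fails. The plan is to decompose $\overline{f}$ and $\overline{g}$ in the PBW basis as $\overline{f} = \sum_{w,\epsilon} p^f_{w,\epsilon}(x)\, c^\epsilon w$ and $\overline{g} = \sum_{w,\epsilon} p^g_{w,\epsilon}(x)\, c^\epsilon w$ with $p^f_{w,\epsilon}$ and $p^g_{w,\epsilon}$ homogeneous polynomials of degrees $d_f$ and $d_g$ respectively, then expand $\overline{f}\cdot\overline{g}$ and collect its $(w,\epsilon)$-components. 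Each coefficient is a $\mathbb{C}$-linear combination of products $p^f_{w_1, \epsilon_1}\cdot w_1\!\left(p^g_{w_2,\epsilon_2}\right)$ in the domain $\mathbb{C}[x_1, \ldots, x_n]$; a leading-monomial analysis on the exponents $\alpha$, using the integrality of the polynomial ring together with the non-degeneracy of the $S_n$-action by permutation of variables, forces either every $p^f_{w,\epsilon}$ or every $p^g_{w,\epsilon}$ to vanish. This contradicts the choice of $d_f, d_g$ as the actual top $x$-degrees, completing the proof.
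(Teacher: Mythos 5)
Your strategy coincides with the paper's own: both introduce the filtration with $\deg(x_i)=1$ and all other generators in degree zero, observe that the filtration-zero piece is $\Cl\rtimes\mathbb{C}[S_n]$, and try to conclude by a leading-symbol argument in $\gr \HC_n$ (the paper simply asserts $\deg(\gr(f)\gr(g))=\deg(\gr(f))+\deg(\gr(g))$ without further comment). You have correctly isolated the crux: that additivity requires the product of the two nonzero leading symbols to be nonzero, and $\gr\HC_n$ is not a domain. The problem is that the repair you propose rests on a false statement. It is \emph{not} true that $\overline{f}\cdot\overline{g}=0$ forces $\overline{f}=0$ or $\overline{g}=0$ for nonzero homogeneous elements of $\gr\HC_n$: already in $\gr\HC_1$, setting $e_{\pm}=\tfrac{1}{2}(1\pm ic_1)$, the element $x_1e_+$ is nonzero and homogeneous of degree one, yet $(x_1e_+)^2=x_1^2\,e_-e_+=0$ because $e_+x_1=x_1e_-$. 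Your proposed leading-monomial analysis breaks down precisely on this example: expanding $(x_1e_+)^2=\tfrac14(x_1+ix_1c_1)(x_1+ix_1c_1)$, the two contributions to each PBW component carry the \emph{same} leading monomial $x_1^2$ and cancel against each other because of the sign in $x_1c_1=-c_1x_1$. Integrality of $\mathbb{C}[x_1,\dots,x_n]$ and the permutation action of $S_n$ on the variables are blind to these Clifford signs, so they cannot force one family of coefficients $p^f_{w,\epsilon}$ or $p^g_{w,\epsilon}$ to vanish.

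Consequently the final step of your argument cannot be carried out as described, and the proof is incomplete: from the single equation $\overline{f}\cdot\overline{g}=0$ nothing about $\overline f$ or $\overline g$ follows. Any genuine proof must use more of the hypothesis $fg=1$ than its top-degree component. One possible route is to first upgrade the one-sided identity to two-sided invertibility (left multiplication by $f$ is a surjective endomorphism of $\HC_n$ viewed as a finitely generated module over a central polynomial subalgebra such as the symmetric polynomials in $x_1^2,\dots,x_n^2$, hence bijective) and then run a norm or determinant argument over that central subalgebra, or to act on a faithful graded module on which no element of positive filtration degree can act invertibly. The paper's own one-line proof is silent on this point as well, so reproducing it verbatim would not rescue the argument; but as written, the key claim in your proposal is contradicted by the example above.
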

\begin{proof}
There is an $\mathbb{N}$-filtration on $\HC_n$ given by $\deg(x_i)=1$ for $i\in \{1,...,n\}$ and other generators have degree zero. Under this filtration, the degree zero part of $\HC_n$ is the semidirect product $Cl_n\rtimes \mathbb{C}[S_n]$. Therefore, in the associated graded object, we see that if $fg=1$, $\deg(gr(f)gr(g))=\deg(gr(f))+\deg(gr(g))=\deg(1)=0$, hence $gr(f),gr(g)$ are in degree zero part. Therefore $f,g\in \Cl \rtimes \mathbb{C}[S_n]$.
\end{proof}

\begin{lemma}\label{indec objects}
The indecomposable objects of $\mathcal{H}_{tw}$ are of the form $P^mQ^n$ for $m,n\in \mathbb{Z}_{\geq 0}$.
\end{lemma}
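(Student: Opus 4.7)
The plan is to proceed in two stages: first, show that every object of $\Heis$ is isomorphic to a direct sum of objects of the form $P^m Q^n$; then, show that each such $P^m Q^n$ is itself indecomposable as an object of $\Heis$.

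For the first stage, the key tool is relation \eqref{R3}, which I would interpret as a decomposition of $\mathrm{id}_{QP}$ into three mutually orthogonal idempotents in $\End(QP)$: a ``double-crossing'' idempotent factoring through $PQ$, a cap--cup idempotent factoring through $\mathbf{1}$, and a dotted cap--cup idempotent factoring through $\mathbf{1}\{1\}$. Verification that these are mutually orthogonal and sum to $\mathrm{id}_{QP}$ uses relation \eqref{R2} (for the interaction of two crossings of a $PQ$ pair) together with \eqref{d00=1}, \eqref{d01=0}, \eqref{caps}, and \eqref{cups} (for the vanishing of spurious bubble and cup--cap--cup--cap compositions). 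Since each idempotent's image is an object already present in $\Heis$ (namely $PQ$, $\mathbf{1}$, or $\mathbf{1}\{1\}$), the decomposition promotes to a genuine isomorphism
\[
QP \;\cong\; PQ \,\oplus\, \mathbf{1} \,\oplus\, \mathbf{1}\{1\}
\]
inside $\Heis$. Applying this local decomposition at each ``$QP$'' adjacency in an arbitrary word, and inducting on word length and on the number of such adjacencies, reduces every word to a direct sum of objects $P^m Q^n$. Since general objects of $\Heis$ are formal direct sums of words, this completes the first stage.

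For the second stage, suppose $P^m Q^n \cong A \oplus B$ with $A, B \neq 0$. By the first stage, after further decomposition we may write $A \oplus B \cong \bigoplus_k P^{m_k''} Q^{n_k''}$ with at least two nonzero summands. Pick mutually inverse morphisms $f$ and $g$ realizing this isomorphism. Conservation of $m - n$ under diagrammatic composition forces $m_k'' - n_k'' = m - n$ for each $k$. Pass to the quotient $\End(P^m Q^n)/J_{m,n} \cong \HC_m^{\mathrm{op}} \otimes \HC_n \otimes \mathbb{C}[d_0, d_2, \ldots]$ from Lemma \ref{ses}; the identity $gf = \mathrm{id}_{P^m Q^n}$ persists here, and an application of Lemma \ref{fg} to the Hecke--Clifford components of $gf$ shows that the images of $f$ and $g$ are controlled by the Sergeev subalgebra $\mathcal{C}\ell \rtimes \mathbb{C}[S]$. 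But elements of the Sergeev subalgebra only permute strands of given orientation within a single word; they cannot transport strands between summands with distinct $(m_k'', n_k'')$. Hence the isomorphism can only have $(m_k'', n_k'') = (m, n)$ for a single $k$, contradicting the assumption of multiple nonzero summands.

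The main obstacle will be in the second stage, specifically in extending Lemma \ref{fg} (which is stated inside a single $\HC_n$) to the pair $(f, g)$ that lives across multiple Hom-spaces $\operatorname{Hom}(P^m Q^n, P^{m_k''} Q^{n_k''})$. One resolution is to assemble all such Hom-spaces into the endomorphism ring of the larger object $P^m Q^n \oplus \bigoplus_k P^{m_k''} Q^{n_k''}$, apply a version of Lemma \ref{ses} in this enlarged setting, and then invoke Lemma \ref{fg} (appropriately generalized) to force the ``mixing'' off-diagonal blocks to vanish modulo the cup/cap ideal. Handling this bookkeeping carefully, and verifying that the Sergeev-subalgebra reduction really does rule out all mixing of strands between distinct words, is where the technical difficulty lies.
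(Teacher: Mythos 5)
Your first stage --- reading relation \eqref{R3} as a decomposition of $\mathrm{id}_{QP}$ into orthogonal idempotents splitting through $PQ$, $\mathbf{1}$ and $\mathbf{1}\{1\}$, and inducting on $QP$-adjacencies --- is exactly the paper's (one-line) first step, and the tools you name for the second stage (Lemma~\ref{ses} and Lemma~\ref{fg}) are the ones the paper uses as well. The genuine gap is in the final deduction of your second stage. Granting that morphisms between $P^mQ^n$ and $P^{m_k''}Q^{n_k''}$ with $(m_k'',n_k'')\neq(m,n)$ are controlled by the cup/cap ideal, what you actually obtain is that the idempotents $g_kf_k$ attached to those summands die in the quotient $\End(P^mQ^n)/J_{m,n}$ --- not that those summands are absent, and certainly not that only a single summand survives. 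In particular nothing in your argument excludes $P^mQ^n\cong (P^mQ^n)^{\oplus r}$ with $r\geq 2$: there all summands have the same $(m_k'',n_k'')$, so ``no transport of strands between summands of distinct size'' says nothing, and since $\End(P^mQ^n)$ is infinite dimensional this cannot be ruled out by counting.

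The missing ingredient --- and the actual content of the paper's proof --- is the implication $fg=1\Rightarrow gf=1$ for $f,g\in\End(P^mQ^n)$. The paper gets this by using Lemma~\ref{fg} (the polynomial-degree filtration) to push $f$ and $g$ down into the Sergeev algebra $\mathcal{C}\ell\rtimes\mathbb{C}[S]$, and then arguing diagrammatically there: in a composite equal to the identity the hollow dots collect in even numbers on each strand and cancel, so the underlying permutations are mutually inverse, hence commute, and therefore $gf=1$ as well. With this in hand one finishes: if $g_0f_0$ is the idempotent of a summand with $(m_0'',n_0'')=(m,n)$, then $g_0f_0=1$, so $\sum_{k\neq 0}g_kf_k=0$, and the orthogonality $f_jg_k=\delta_{jk}$ gives $1_{A_j}=f_j\bigl(\sum_{k\neq0}g_kf_k\bigr)g_j=0$ for every other summand $A_j$. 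Without the one-sided-inverse-is-two-sided step the orthogonality relations cannot be leveraged, so the proposal as written does not close. (You correctly flagged the difficulty of applying Lemma~\ref{fg} across distinct Hom-spaces, but your proposed fix only addresses the off-diagonal blocks; the diagonal case is where the real work lies.)
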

\begin{proof}
First, note that if $QP$ appears in an object, that object can be decomposed into more components using the diagram in relation \eqref{R3}. Hence all indecomposable objects must be of the form $P^mQ^n$. 

On the other hand, to see that every sequence of the form $P^mQ^n$ is an indecomposable object, we will show that any idempotent in $\End(P^mQ^n)$ has to be the identity.

Let $f,g$ be two maps as mentioned in Lemma \ref{fg}. Note that $gf$ is an idempotent since $(gf)(gf)=g(fg)f=gf$. Since we had the splitting short exact sequence $0 \rightarrow J_{m,n} \rightarrow \End_{\mathcal{H}_{tw}}(P^mQ^n) \rightarrow \End(P^m) \otimes \End (Q^n) \otimes \End(id) \rightarrow 0$ in Lemma \ref{ses}, we know that the maps $f$ and $g$ will decompose into $(f_1,f_2)$ and $(g_1,g_2)$ where $f_1,g_1:P^m\rightarrow P^m$ and $(f_2,g_2):Q^n\rightarrow Q^n$. Now $g_1f_1$ is the identity map in $\End(P^m)$, and by the above lemma $g_1,f_1\in \mathcal{C}\ell_n\rtimes \mathbb{C}[S_n]$. Similarly, $f_2, g_2\in \mathcal{C}\ell_n\rtimes \mathbb{C}[S_n]$.

 But in $\mathcal{C}\ell_n\rtimes \mathbb{C}[S_n]$, $g_1f_1=1$ implies that $f_1g_1=1$ as well. To see this, consider the diagrams corresponding to $g_1$ and $f_1$ which consist of a permutation and some hollow dots on top. After composing these diagrams, we can collect all the hollow dots on the top since hollow dots can pass through crossing for free, possibly gaining a sign. Furthermore, each strand has an even number of hollow dots, since this composition is the identity map. So, the hollow dots cancel with each other. This shows that the corresponding permutations of $f_1$ and $g_1$ are inverses of each other, and in particular they commute. Therefore $f_1g_1=1$. Similarly, $f_2g_2=1$. Thus we have that $fg=1$.
\end{proof}

\begin{proposition}\label{triangularDecomposition}
We have the triangular decomposition of $\TrH$:
\begin{equation*}
\TrH \cong \bigoplus_{m,n\in \mathbb{Z}_{\geq0}} \Tr((\HC_m)^{op} \otimes \HC_n \otimes \mathbb{C}[d_0,d_2,d_4....]).
\end{equation*}
\end{proposition}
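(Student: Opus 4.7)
The plan is to combine three ingredients: the Karoubi-invariance of the trace, the decomposition of the trace of an additive Krull--Schmidt category over its indecomposable objects, and the split short exact sequence of Lemma \ref{ses}. First I would invoke \cite[Proposition 3.2]{BGHL} to replace $\mathcal{H}_{tw}$ by its Karoubi envelope $\mathcal{H}^t$ without changing the trace, and then use Lemma \ref{indec objects} to identify the indecomposables as $\{P^m Q^n\}_{m,n\geq 0}$. Applying the standard result that for an additive category with local endomorphism rings on indecomposables the trace decomposes as a direct sum over isomorphism classes of indecomposables (cf.\ \cite{BGHL, BHLZ}) then produces an isomorphism
$$\TrH \;\cong\; \bigoplus_{m,n \geq 0} \End_{\mathcal{H}_{tw}}(P^m Q^n) \big/ [\End_{\mathcal{H}_{tw}}(P^m Q^n), \End_{\mathcal{H}_{tw}}(P^m Q^n)].$$

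Next I would apply Lemma \ref{ses} for each $(m,n)$, whose splitting gives a direct sum decomposition of vector spaces
$$\End_{\mathcal{H}_{tw}}(P^m Q^n) \;\cong\; J_{m,n} \oplus \bigl( \HC_m^{op} \otimes \HC_n \otimes \mathbb{C}[d_0, d_2, d_4, \ldots] \bigr).$$
Since the cocenter is an additive functor, it distributes over this decomposition, so the desired triangular decomposition reduces to showing $\Tr(J_{m,n})=0$ for each $(m,n)$, i.e.\ that every element of $J_{m,n}$ is a sum of commutators in $\End_{\mathcal{H}_{tw}}(P^m Q^n)$.

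The core step is this vanishing of $\Tr(J_{m,n})$. Given a generating diagram $D\in J_{m,n}$ containing an arc connecting two upper boundary points, an orientation/counting argument (the top of $P^mQ^n$ has $m$ up-strands and $n$ down-strands, so any top cap joins one $P$-endpoint with one $Q$-endpoint, forcing a matching cup on the bottom) yields a factorization $D = A\circ B$ with $B\colon P^m Q^n \to z$ including the cap and $A\colon z\to P^m Q^n$ including the cup, where $z$ is an object with strictly fewer total strands. The trace relation $AB \sim BA$ then identifies the class of $D$ with the class of the endomorphism $BA\in \End(z)$. But the decomposition of Step 1 places $[AB]$ and $[BA]$ into different direct summands indexed by distinct $(m,n)$, so both classes must vanish in their respective components, giving $[D]=0$ in $\TrH$.

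The main obstacle is this last step. One has to handle the sign conventions coming from the supercommutativity and supercyclicity relations \eqref{cicj=-cjci}, \eqref{caps}, \eqref{cups} carefully when sliding the cap/cup around the annulus, and one must give a clean inductive argument on $m+n$ to deal with diagrams in $J_{m,n}$ that carry more than one cap/cup. Verifying that the factorization can be arranged uniformly for every generator of the two-sided ideal $J_{m,n}$, rather than just for a diagram with a single upper arc at the very top, is the principal bookkeeping difficulty; the compatibility of the splitting of Lemma \ref{ses} with the commutator subspace is automatic because it is already a direct sum of vector subspaces.
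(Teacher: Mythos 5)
Your first two steps coincide with the paper's: reduce to the indecomposables $P^mQ^n$ via Lemma \ref{indec objects} and invoke the split sequence of Lemma \ref{ses}. The problem is the final step. First, the decomposition you assert in Step 1, namely $\TrH \cong \bigoplus_{m,n}\End_{\Heis}(P^mQ^n)\big/[\End_{\Heis}(P^mQ^n),\End_{\Heis}(P^mQ^n)]$, is not what the general theory provides: the trace of the category is the quotient of $\bigoplus_{m,n}\End_{\Heis}(P^mQ^n)$ by the span of \emph{all} differences $fg-gf$ with $f\colon x\to y$ and $g\colon y\to x$, including the cross-terms between distinct objects $P^mQ^n$ and $P^{m'}Q^{n'}$. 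Writing the trace as a direct sum of cocenters of the individual endomorphism algebras presupposes exactly the identifications that the ideals $J_{m,n}$ create, so the argument is circular at this point.

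Second, and more seriously, the conclusion that the image of $J_{m,n}$ in the trace vanishes is false. The relation $AB-BA\in\mathcal{I}$ does not kill both classes; it \emph{identifies} the class of $AB\in\End_{\Heis}(P^mQ^n)$ with the class of $BA\in\End_{\Heis}(z)$ in the quotient. Concretely, for $m=n=1$ the composite $\mathrm{cup}\circ\mathrm{cap}\in\End_{\Heis}(PQ)$ lies in $J_{1,1}$ (its cup joins the two upper points), yet its trace class equals that of the closed bubble $\mathrm{cap}\circ\mathrm{cup}\in\End_{\Heis}(1)$, which is either $\bar{d}_{0,0}=1$ or the algebraically independent generator $d_0$ of Proposition \ref{bubbles} --- nonzero in either case. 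The correct argument, which is the one the paper gives, is that the trace relation rewrites each class coming from $J_{m,n}$ as the class of an endomorphism of some $P^{m'}Q^{n'}$ with $m'\le m$, $n'\le n$ and strictly fewer strands, so the contributions of the $J_{m,n}$ are \emph{absorbed into the lower summands} rather than vanishing; inducting on $m+n$, every class is then represented in some $(\HC_m)^{op}\otimes\HC_n\otimes\mathbb{C}[d_0,d_2,d_4,\dots]$, which is what yields the stated decomposition.
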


\begin{proof}
As shown in \cite{BGHL}, to find $\TrH$, it is enough to consider the direct sum over indecomposable objects of endomorphism spaces of objects of $H_{tw}$. Let $I=\operatorname{span}_\mathbb{C}\{fg-gf\}$ where $f:x\rightarrow y$ and $g:y\rightarrow x$ for $x,y$ objects of a $\mathbb{C}$-linear category. Therefore by Lemma \ref{indec objects} we have
\begin{equation*}
    \TrH\cong \bigg(\bigoplus_{m,n\in \mathbb{Z}_{\geq0}} \End_{\Heis}(P^mQ^n)\bigg)\big/I.
\end{equation*}

By Lemma \ref{ses}, this gives us 
\begin{equation*}
    \TrH\cong \bigg(\bigoplus_{m,n\in \mathbb{Z}_{\geq0}} ((\HC_m)^{op} \otimes \HC_n \otimes \mathbb{C}[d_0,d_2,d_4....])\oplus J_{m,n}\bigg)\big/I.
\end{equation*}
Recall that the ideal $J_{m,n}$ is generated by diagrams containing at least one cup connecting two upper points. Therefore, the diagrams in $J_{m,n}$ must also contain caps, since they are dealing with endomorphisms. Using the trace relation and the relations in $\mathcal{H}_{tw}$, we can express the elements of $J_{m,n}$ as direct sum of endomorphisms of $P^{m'}Q^{n'}$ for $m'\leq m$ and $n'\leq n$. Hence we have 
\begin{align*}
    \TrH &\cong \bigoplus_{m,n\in \mathbb{Z}_{\geq0}} \Tr((\HC_m)^{op} \otimes \HC_n \otimes \mathbb{C}[d_0,d_2,d_4....])
    \\&\cong \bigg(\bigoplus_{m,n\in \mathbb{Z}_{\geq0}} \Tr((\HC_m)^{op} \otimes \HC_n)\bigg) \otimes \mathbb{C}[d_0,d_2,d_4....].
\end{align*}

\end{proof}


\subsection{Generators of the algebra $\TrHEv$}

The following gives a generating set for $\TrHEv$ as an algebra.

\begin{lemma} \label{GenSetTr} The algebra $\TrHEv$ is generated by $\h{-1}{}$, $\h{\pm 2}{(x_1 + x_2)}$, and $d_0 + d_2$. \end{lemma}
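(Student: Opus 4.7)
The plan is to mirror the proof of Lemma \ref{GenSetW} via the correspondence between $\TrHEv$ and $W^-$ indicated in Table \ref{table:1}. By Proposition \ref{triangularDecomposition}, $\TrHEv$ decomposes as a vector space as $\TrHMi \otimes \mathbb{C}[d_0,d_2,d_4,\ldots] \otimes \TrHPl$, and Proposition \ref{red to hn} further reduces the task of generating $\TrHPl$ and $\TrHMi$ to producing the basic elements $\h{\pm n}{x_1^{\ell} c_1^{k}}$ compatible with the parity constraints of Proposition \ref{odd with odd dots zero}. The argument is organized as an induction on the bifiltration of Lemmas \ref{rank filtration} and \ref{dot filtration}, at each step extracting a new generator modulo lower-filtration corrections that already lie in the subalgebra by the inductive hypothesis.

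The key bootstrapping step is to obtain $\h{1}{}$, since the initial generating list contains $\h{-1}{}$ but no explicit upward $1$-cycle. I would compute $[\h{2}{(x_1+x_2)}, \h{-1}{}]$ by a direct diagrammatic calculation in the boundary case $n=1$, $m=0$ not covered by Lemma \ref{upVirasorowithDown}; this corresponds on the $W^-$ side to $[w_{-2,1}-w_{-2,0}, w_{1,0}] = w_{-1,0}$ from the proof of Lemma \ref{GenSetW}, and should yield a nonzero scalar multiple of $\h{1}{}$. Once $\h{1}{}$ is in hand, iterating Lemma \ref{upVirasorowithUp} with $\h{2}{(x_1+x_2)}$ produces $\h{2k+1}{}$ for every $k \geq 0$; the symmetric downward argument using $\h{-2}{(x_1+x_2)}$ and $\h{-1}{}$ generates $\h{-(2k+1)}{}$.

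For the bubbles, Proposition \ref{clockwise bubbles with down explicit} expresses $[d_{2n}, \h{-1}{}]$ as a linear combination of $\h{-1}{x_1^{2n}}$ and products of bubbles of strictly lower dot filtration with $\h{-1}{x_1^{2j}}$. Starting from $d_0+d_2$ and iteratively commuting with $\h{-1}{}$, one can solve inductively on the dot filtration to extract both the individual $d_{2n}$ and the elements $\h{-1}{x_1^{2k}}$. Proposition \ref{clockwise bubbles explicit} then analogously yields $\h{1}{x_1^{2k}}$ from the already-generated $\h{1}{}$ and bubbles. With $\h{\pm 1}{x_1^{2k}}$ and the undotted $\h{\pm n}{}$ in hand, Lemma \ref{VirasoroGen} produces $\h{2n}{(x_1+\ldots+x_{2n})}+\h{2n}{(x_2+\ldots+x_{2n-1})}$; combined with Lemma \ref{commutators} and the dot-sliding identities of Section 5, this reduces every remaining basic element of $\TrHPl$ and $\TrHMi$ to the generated set. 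Clifford-decorated variants appear automatically as resolution terms of relation \eqref{R3} during these mixed-orientation commutator computations.

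The main obstacle is the coupling between bubble generation and strand generation: Propositions \ref{clockwise bubbles explicit} and \ref{clockwise bubbles with down explicit} entangle each $d_{2n}$ with $\h{\pm 1}{x_1^{2n}}$, so neither can be isolated without also extracting the other. The remedy is a careful simultaneous induction on the bidegree filtration, mirroring the interleaved generation of $w_{1,2b}$, $w_{\pm(2a+1),0}$, and $w_{0,2b-1}$ via Equations \eqref{1,2n}--\eqref{0,2n} in the proof of Lemma \ref{GenSetW}. A secondary technical issue is the systematic tracking of the Clifford-decorated elements, but these are controlled by the even-trace condition and follow by parallel calculation once their polynomial-decorated analogues have been produced.
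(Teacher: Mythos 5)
Your treatment of the strand part of the algebra tracks the paper's own proof closely: the paper likewise obtains $\h{1}{}$ from $[\h{2}{(x_1+x_2)},\h{-1}{}]$ (the $m=-1$ instance of Lemma \ref{upVirasorowithUp}, which you rederive directly since Lemma \ref{upVirasorowithDown} excludes that boundary case), iterates that lemma for the positive odd cycles, uses Lemma \ref{upVirasorowithDown} for the negative ones (your downward mirror of Lemma \ref{upVirasorowithUp} is an immaterial variation), obtains $\h{\pm 1}{x_1^{2n}}$ as leading terms of $\operatorname{ad}(d_0+d_2)^n\h{\pm 1}{}$ via the clockwise bubble-slide formulas, produces the dotted even cycles with Lemma \ref{VirasoroGen}, and finishes the reduction with Proposition \ref{red to hn} together with the vanishing results of Propositions \ref{evenCyclesZero} and \ref{odd with odd dots zero}. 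Up to that point your outline and the paper's proof are essentially identical.

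The genuine gap is in your bubble step. You propose to ``extract the individual $d_{2n}$'' by iterating commutators of $d_0+d_2$ with $\h{-1}{}$ and solving inductively on the dot filtration. This cannot work: every element of the form $[d_{2m},\h{-1}{}]$, and every further commutator of such elements with $d_0+d_2$ or with $\h{-1}{}$, carries a dangling downward strand and hence lives in rank degree $\leq -1$, whereas the $d_{2n}$ lie in $\End_{\Heis}(1)$ in rank degree $0$. No induction on those commutators alone can land in rank degree zero, and one cannot ``cancel'' the factor $\h{-1}{}$ from a product such as $d_0\h{-1}{}$ inside the algebra. The paper closes the loop with an ingredient absent from your outline: Lemma \ref{updots with downdots}, which computes the commutator of \emph{oppositely oriented} dotted strands, $[\h{1}{x_1^{2a}},\h{-1}{x_1^{2b}}] = -2\bar{d}_{2(a+b)}-\sum_{i+j=2(a+b)-1}(2+4j)\bar{d}_{2i}d_{2j}$ --- here the strands genuinely close up into circles --- followed by Lemma \ref{bubbleDecomp} to rewrite each $\bar{d}_{2k}$ as a polynomial in the $d_{2j}$, after which an induction on $a+b$ produces every $d_{2n}$. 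Your ``simultaneous induction on the bidegree'' correctly diagnoses the entanglement between $d_{2n}$ and $\h{\pm 1}{x_1^{2n}}$, but without this opposite-orientation commutator there is nothing for the induction to act on at rank zero, so the generation of $\End_{\Heis}(1)$ is not actually established.
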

\begin{proof} 

First, Proposition \ref{clockwise bubbles with down explicit} implies that $\h{1}{}$ and $(d_0+d_2)$ allow us generate a differential degree two element $\h{1}{x_1^2}$; since all relations in $\mathcal{H}_{tw}$ are local, we can evaluate the commutator $[\h{1}{x_1^2}, (d_0+d_2)]$ by moving the dot to the bottom of the upward strand and sliding the bubbles over the upper portion. We can therefore apply Lemma \ref{clockwise bubbles with down explicit} repeatedly to show that $\operatorname{ad}(d_0+d_2)^n\h{1}{}$ has a leading term of $\h{1}{x_1^{2n}}$.

By Lemma \ref{upVirasorowithUp}, the elements $\h{-1}{}$ and $\h{2}{x_1+x_2}$ are sufficient to generate $\h{2m+1}{}$ for all integers $m>0$. Then we can generate $\h{2n}{x_1+\ldots+x_n}$ from $\h{1}{x_1^2}$ and $\h{2m+1}{}$ by using Lemma \ref{VirasoroGen}.  Lemma \ref{upVirasorowithDown}, $\h{-1}{}$ and $\h{2n}{x_1+x_2+\ldots + x_n}$ allow us to generate $\h{2r+1}{}$ for all integers $r$.

 Proposition \ref{red to hn} implies that all elements with nonzero rank degree can be written as a sum of elements of the form $\h{ \pm n}{x_1^{\ell} c_1^{k}}$. By Propositions \ref{evenCyclesZero} and \ref{odd with odd dots zero}, all elements of this form except for the ones generated in the preceding paragraphs are 0 in $\TrHEv$, so we have generated all of $\TrHPl$ and $\TrHMi$.

Finally, Lemma \ref{updots with downdots} allows us to generate $d_{2n}$, applying Lemma \ref{bubbleDecomp} to split up the $\overline{d}_{2n}$ terms. \end{proof}

\subsection{The isomorphism}

There is an obvious isomorphism of vector spaces between the Fock space representations of $\TrHEv$ and $W^-$: 

\begin{equation*}\label{VSIsom}
\phi: V= \mathbb{C}[\h{1}{}, \h{2}{}, \ldots] \rightarrow \mathbb{C}[w_{-1,0}, w_{-2,0}, \ldots] = \mathcal{V}_{1,0}.
\end{equation*}

Recall that each algebra acts faithfully on its Fock space representation.

\begin{lemma}\label{Heis commutes} The map $\phi$ in Equation \eqref{VSIsom} commutes with the action of the twisted Heisenberg subalgebras in $V$ and $\mathcal{V}_{1,0}$, i.e.:
$$\phi(\h{r}{}v) =\sqrt{2}w_{-r,0}\phi(v).$$ \end{lemma}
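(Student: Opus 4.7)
The plan is to use the fact that, under the identifications of twisted Heisenberg subalgebras $A \subset \TrHEv$ and $B \subset W^-$ established earlier, both $V$ and $\mathcal{V}_{1,0}$ are realizations of the level-one Fock space of $\mathfrak{h}_{tw}$. Showing that $\phi$ is an isomorphism of $\mathfrak{h}_{tw}$-modules then translates directly to the stated intertwining formula.

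First, I would note that via the embeddings $A \hookrightarrow \TrHEv$ and $B \hookrightarrow W^-$, both $V$ and $\mathcal{V}_{1,0}$ become $\mathfrak{h}_{tw}$-modules. On the trace side, $V$ is freely generated over $\mathbb{C}$ by the monomials $\h{2n_1+1}{} \cdots \h{2n_k+1}{} \cdot 1$ (using Proposition \ref{evenCyclesZero} to discard even indices), with $\h{r}{}$ for $r > 0$ acting as creation and $\h{r}{}$ for $r < 0$ acting as annihilation killing the vacuum $1$. On the $W^-$ side, $\mathcal{V}_{1,0}$ is freely generated by the monomials $w_{-(2n_1+1),0} \cdots w_{-(2n_k+1),0} \cdot 1$, with $w_{-r,0}$ for $r > 0$ acting as creation and $w_{-r,0}$ for $r < 0$ annihilating the vacuum (this is the defining property of $\mathcal{M}_{1,0}$). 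The map $\phi$ sends $\h{r_1}{} \cdots \h{r_k}{} \cdot 1 \mapsto (\sqrt{2})^k w_{-r_1,0} \cdots w_{-r_k,0} \cdot 1$, which is precisely the identification $h_{-(2n+1)} \leftrightarrow \sqrt{2} w_{2n+1,0}$ extended multiplicatively.

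Next, I would verify the claim by induction on the polynomial degree of $v$. For $r$ a positive odd integer, $\phi(\h{r}{}v) = \sqrt{2} w_{-r,0}\phi(v)$ is immediate from the definition of $\phi$, since multiplication by $\h{r}{}$ in the polynomial algebra $V$ corresponds by construction to multiplication by $\sqrt{2} w_{-r,0}$ in $\mathcal{V}_{1,0}$. For $r$ a negative odd integer, the base case $v = 1$ holds because both $\h{r}{} \cdot 1 = 0$ (Fock-space annihilation) and $w_{-r,0} \cdot 1 = 0$ (since $-r > 0$, this is an annihilation operator in $\mathcal{V}_{1,0}$). For the inductive step, write $v = \h{s}{}u$ with $s$ a positive odd integer and apply
\begin{equation*}
\h{r}{}\h{s}{}u = [\h{r}{},\h{s}{}]\, u + \h{s}{}\h{r}{}u,
\end{equation*}
then use the inductive hypothesis on $\h{r}{}u$ and the creation case on $\h{s}{}$; a parallel expansion of $w_{-r,0}w_{-s,0}\phi(u)$ in $\mathcal{V}_{1,0}$ gives the matching identity, provided the scalar commutators agree.

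The only real step to check is that the scalar commutators do match, i.e.\ that $[\h{r}{},\h{s}{}] = [\sqrt{2} w_{-r,0}, \sqrt{2} w_{-s,0}] = 2[w_{-r,0}, w_{-s,0}]$ for odd $r,s$ of opposite sign. This is exactly the compatibility of the rescalings $h_{(2n+1)/2} \mapsto \frac{1}{2}\h{-(2n+1)}{}$ and $h_{(2n+1)/2} \mapsto \frac{1}{\sqrt{2}}w_{2n+1,0}$ identifying $A$ and $B$ each with $\mathfrak{h}_{tw}$, which was verified immediately after Lemma \ref{twistedHeisRels} (and follows from the cocycle formula \eqref{WForm} on the $W^-$ side, with $C$ acting as $1$ on $\mathcal{V}_{1,0}$). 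This is essentially bookkeeping rather than a substantive obstacle; once the scalars agree, the induction closes without difficulty.
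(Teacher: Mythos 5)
Your proposal is correct and takes essentially the same route as the paper: the paper's much terser proof likewise identifies both actions as adjoint actions on the respective subalgebras and reduces the claim to the matching of the twisted Heisenberg commutators in Lemma~\ref{twistedHeisRels} and Proposition~\ref{FockSpaceW action}. Your explicit induction and the verification that $[\h{r}{},\h{s}{}]=2[w_{-r,0},w_{-s,0}]$ simply spell out what the paper leaves implicit.
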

\begin{proof}
The vector space realizations of $V$ and $\mathcal{V}_{1,0}$ in Equation \eqref{VSIsom} imply that the action of $\h{r}{}$ on $V$ is simply the adjoint action of $\h{r}{}$ on the subalgebra $\TrHPl$, and the action of $w_{-r,0}$ on $\phi(v)$ is the adjoint action of $w_{-r,0}$ on $(W^-)^-$. The Lemma follows from our computation of these twisted Heisenberg relations in Propositions \ref{FockSpaceW action} and \ref{twistedHeisRels}.
\end{proof}

\begin{lemma}\label{bubbles commutes} For any $v\in V$ we have $\phi((d_0+d_2)v) = -2w_{0,3} \phi(v)$. \end{lemma}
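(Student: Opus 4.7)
The plan is to prove the identity by induction on the polynomial degree of $v$ in the Heisenberg creation operators generating $V$. For the base case $v = 1$, both sides vanish: the right-hand side because $w_{0,3}\in W^{-,>}$ acts as zero on $\mathbb{C}_{1,0}$ by construction of the induced Fock module, and the left-hand side because the categorified Fock space action of $\TrHEv$ described earlier in the paper computes $d_{2k}$ as multiplication by $\sum_{i=1}^{n} (i\leftrightarrow n+1) J_{n+1}^{2k}(n+1\leftrightarrow i) - c_n(i\leftrightarrow n+1)J_{n+1}^{2k}(n+1\leftrightarrow i)c_1$ on $\mathbb{S}_n$, which for $n=0$ is an empty sum. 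Hence $(d_0+d_2)\cdot 1 = 0$ in $V$.

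For the inductive step, I write $v' = \h{r}{} v$ with $v$ of smaller polynomial degree and $r$ a positive odd integer (noting that $\h{2k}{} = 0$ by Proposition \ref{evenCyclesZero}). Expanding
\begin{equation*}
(d_0+d_2)\h{r}{} v = \h{r}{}(d_0+d_2) v + [d_0+d_2,\h{r}{}] v,
\end{equation*}
applying $\phi$, and using Lemma \ref{Heis commutes} together with the inductive hypothesis on the first summand, the claim reduces to
\begin{equation*}
\phi\bigl([d_0+d_2,\h{r}{}] v\bigr) = -2\sqrt{2}\,[w_{0,3}, w_{-r,0}]\,\phi(v).
\end{equation*}
By Proposition \ref{FockSpaceW action}, the right-hand side equals $-2\sqrt{2}(3r w_{-r,2} - 3r^2 w_{-r,1} + r^3 w_{-r,0})\phi(v)$, which, under the correspondence in Table \ref{table:1}, should be matched by a combination of $\h{r}{x_1^2}$, $\h{r}{}$, and bubble terms acting on $v$.

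On the $\TrHEv$ side, Proposition \ref{clockwise bubbles explicit} computes $[d_0+d_2,\h{1}{}] = 2\h{1}{} + 6\h{1}{x_1^2} - 2\h{1}{} d_0$, and a parallel diagrammatic calculation (or repeated application of the Jacobi identity together with Lemma \ref{upVirasorowithUp} to the identity $\h{r+2}{} = \frac{1}{4r}[\h{2}{x_1+x_2}, \h{r}{}]$) produces $[d_0+d_2,\h{r}{}]$ for general odd $r$. Translating via $\phi$, Lemma \ref{Heis commutes} handles the pure Heisenberg term $\h{r}{}$; the dotted term $\h{r}{x_1^{2a}}$ is handled by a parallel induction establishing $\phi(\h{r}{x_1^{2a}} v) = \sqrt{2}(w_{-r,a} + \text{lower dot terms})\phi(v)$ as dictated by Table \ref{table:1}; and the bubble correction $\h{r}{} d_0$ is dispatched by invoking the inductive hypothesis on the smaller element $d_0\cdot v$.

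The main technical obstacle is the appearance of $\h{r}{x_1^{2a}}$ in the commutator: since this is not itself a Heisenberg generator, Lemma \ref{Heis commutes} does not directly describe its $\phi$-image, so a secondary induction on the dot-degree must be set up in parallel to verify the Table \ref{table:1} correspondence alongside the current statement. Once these intertwined inductions are properly organized, matching the coefficients $3r$, $-3r^2$, and $r^3$ in $[w_{0,3}, w_{-r,0}]$ against the bubble slide identities of Section 4 and the $\Tr(\HC_n)$ combinatorics becomes a routine bookkeeping check.
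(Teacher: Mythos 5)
Your overall strategy --- reducing the intertwining of $(d_0+d_2)$ and $-2w_{0,3}$ to the identity $\phi([d_0+d_2,\h{r}{}]v)=-2\sqrt{2}[w_{0,3},w_{-r,0}]\phi(v)$ by induction on the Heisenberg degree of $v$ --- is essentially the comparison the paper itself makes (its proof is a two-line appeal to Propositions \ref{FockSpaceW action} and \ref{clockwise bubbles explicit} plus ``comparison of the actions on the twisted Heisenberg subalgebras''), and your reduction step is correct. The problem is that the two places where you defer the work are exactly where the content of the lemma lives, and as written they are not fillable from the paper's toolkit. First, $[d_0+d_2,\h{r}{}]$ is only computed for $r=\pm1$ (Propositions \ref{clockwise bubbles explicit} and \ref{clockwise bubbles with down explicit}); your proposed Jacobi route through $\h{r+2}{}=\tfrac{1}{4r}[\h{2}{(x_1+x_2)},\h{r}{}]$ requires $[d_0+d_2,\h{2}{(x_1+x_2)}]$, a new diagrammatic computation that appears nowhere in the paper. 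Second, and more seriously, the commutator produces $\h{r}{x_1^{2}}$, and to match the full polynomial $[w_{0,3},w_{-r,0}]=3rw_{-r,2}-3r^2w_{-r,1}+r^3w_{-r,0}$ you need $\phi(\h{r}{x_1^{2}}v)$ \emph{exactly}, lower-order terms included: a leading-term statement of the form $\sqrt{2}(w_{-r,a}+O(w_{-r,a-1}))$ cannot produce the coefficients $-3r^2$ and $r^3$. You propose to read these off from Table \ref{table:1}, but Table \ref{table:1} is the conclusion of the main theorem, not an available input, so the ``parallel induction'' is circular as stated: it would itself require expanding $\h{r}{x_1^{2}}\h{s}{}$ in the PBW basis of $V$ for every $s$, a computation of the same order of difficulty as the lemma.

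A smaller but genuine issue: the correction term in Proposition \ref{clockwise bubbles explicit} is $d_{2b}\,\h{1}{x_1^{2a}}$ with the bubble on the \emph{left}, and your inductive hypothesis controls $(d_0+d_2)v$ but not $d_0v$ or $d_2v$ separately; commuting the bubble back to the right costs further applications of the same proposition, and handling $d_0 v$ alone requires yet another parallel induction (against $-2w_{0,1}$). None of this is fatal to the strategy --- the paper leaves the same verifications implicit --- but your write-up should not present them as routine bookkeeping; they are the proof.
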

\begin{proof} Propositions \ref{FockSpaceW action} and \ref{clockwise bubbles explicit} give that $w_{0,3}$ maps $w_{-1,0}$ to an element with leading term $w_{-1,2}$, and $(d_0+d_2)$ maps $\h{1}{}$ to an element with leading term $\h{1}{x_1^2}$. Comparision of the actions of these terms on the twisted Heisenberg subalgebras on either side gives that that their images in the endomorphisms of the Fock space are identical. \end{proof}

\begin{lemma} \label{Virasoro commutes} For any $v \in V$ we have $\phi(\h{\pm 2}{(x_1 + x_2)} v) = 2\sqrt{2}(w_{\mp 2, 1} + w_{\mp 2, 0}) \phi(v)$. \end{lemma}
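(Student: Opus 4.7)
The plan is to mirror the proof strategy of Lemmas \ref{Heis commutes} and \ref{bubbles commutes}: exploit the fact that the action of $W^-/\langle C-1, w_{0,0}\rangle$ on $\mathcal{V}_{1,0}$ is faithful (Proposition \ref{FockSpaceW faithful}), and likewise for the trace on its Fock space. Set $A = \h{\pm 2}{(x_1+x_2)}$ acting on $V$ and $B = 2\sqrt{2}(w_{\mp 2,1}+w_{\mp 2,0})$ acting on $\mathcal{V}_{1,0}$; the claim is that $\phi \circ A = B \circ \phi$. Since both Fock spaces are generated by their Heisenberg subalgebras acting on the vacuum $1$ (by Proposition \ref{FockSpaceW} and Proposition \ref{triangularDecomposition}), and since Lemma \ref{Heis commutes} already identifies the Heisenberg actions through $\phi$, it suffices to check (i) that $A \cdot 1$ and $B \cdot 1$ correspond under $\phi$, and (ii) that $[A, \h{m}{}]$ and $[B, \sqrt{2}\,w_{-m,0}]$ correspond under $\phi$ for every odd integer $m$.

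For (i), the vacuum comparison is straightforward: both $A \cdot 1$ and $B \cdot 1$ can be computed directly, and matched via the identification $\h{m+2}{} \leftrightarrow \sqrt{2}\,w_{-(m+2),0}$ from Lemma \ref{Heis commutes} (together with the Table~\ref{table:1} entry for $\h{\pm 2}{(x_1+x_2)}$).

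The real work is in (ii). On the trace side, Lemmas \ref{VirasorowithUp} and \ref{upVirasorowithDown} give closed-form expressions for $[\h{\pm 2}{(x_1+x_2)}, \h{m}{}]$ for every odd $m$: each such commutator lies in the twisted Heisenberg subalgebra and equals an explicit scalar multiple of $\h{m\pm 2}{}$. On the $W^-$ side, direct application of the Lie bracket formula \eqref{WForm} (equivalently, combinations of the commutators in Proposition \ref{FockSpaceW action}) shows that $[w_{\mp 2,1}+w_{\mp 2,0}, w_{-m,0}]$ is a scalar multiple of $w_{-(m\pm 2),0}$. One then uses Lemma \ref{Heis commutes} to translate the $W^-$ commutator back to the trace side, completing the identification on each degree. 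The proof then proceeds by induction on the polynomial degree of $v \in V$: the base case is (i); the inductive step writes $v = \h{m}{} w$, moves $A$ past $\h{m}{}$ via the commutator computed in (ii), and applies Lemma \ref{Heis commutes} together with the inductive hypothesis on $w$.

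The main obstacle will be the careful coefficient bookkeeping: the factor $\sqrt{2}$ from the Heisenberg identification, the factor $2\sqrt{2}$ in front of $B$, and the piecewise coefficients in Lemma \ref{upVirasorowithDown} (which distinguishes the cases $n > m$, $n = m$, and $n < m$) must all conspire so that the trace-side scalars $\pm 4m$ (or their variants in Lemma \ref{upVirasorowithDown}) precisely cancel against the $W^-$-side scalars $(m\pm 2)$ coming from \eqref{WForm}. This verification is essentially a finite check for the generators $\h{\pm 1}{}$ of the Heisenberg subalgebra, since by Lemma \ref{twistedHeisRels} any $\h{m}{}$ with $|m|\geq 3$ is obtained from iterated commutators; the matching then propagates automatically via the Jacobi identity.
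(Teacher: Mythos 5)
Your proposal is correct and follows essentially the same route as the paper: the paper's proof is a one-line comparison of Lemma \ref{upVirasorowithUp} with Proposition \ref{FockSpaceW action}, and your steps (i), (ii) and the induction on the polynomial degree of $v$ are exactly the details that comparison suppresses. The only point to watch is that Lemma \ref{upVirasorowithUp} as stated covers all odd $m$ and hence the $\h{2}{(x_1+x_2)}$ case directly, whereas the $\h{-2}{(x_1+x_2)}$ case needs the orientation-reversed analogue rather than Lemma \ref{upVirasorowithDown} verbatim --- a presentational gap shared with the paper rather than a flaw in your argument.
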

\begin{proof} This follows from comparision of Lemma \ref{upVirasorowithUp} and Proposition \ref{FockSpaceW action}. \end{proof}

Now extend $\phi$ to a map $$\Phi: \TrHEv \longrightarrow W^-/\langle w_{0,0}, C-1 \rangle$$ by mapping $$\h{1}{} \mapsto \sqrt{2}w_{-1,0} \qquad \h{\pm 2}{(x_1 + x_2)} \rightarrow 2\sqrt{2}w_{\mp 2, 1} + w_{\mp 2, 0}  \qquad d_2+d_0 \mapsto -2w_{0,3}$$ and extending algebraically, i.e. $$\Phi(a_1 \ldots a_k) = \Phi(a_1) \ldots \Phi(a_k)$$ for generators $a_1, \ldots, a_k$ of $\TrHEv$. 

\begin{lemma}\label{well defined} The map $\Phi$ above is well defined. \end{lemma}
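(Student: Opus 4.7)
The plan is to realize $\Phi$ as a composition through a free algebra and then invoke the faithful action of $W^-/\langle w_{0,0}, C-1\rangle$ on $\mathcal{V}_{1,0}$ (Proposition \ref{FockSpaceW faithful}) to transfer every relation in $\TrHEv$ to a relation in the target.

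More concretely, I would let $\mathcal{A}$ denote the free associative $\mathbb{C}$-algebra on formal symbols corresponding to the generators of $\TrHEv$ listed in Lemma \ref{GenSetTr}. By that lemma the evaluation map $\pi: \mathcal{A} \twoheadrightarrow \TrHEv$ is surjective, and the prescribed images under $\Phi$ simultaneously determine an algebra homomorphism $\widetilde{\Phi}: \mathcal{A} \to W^-/\langle w_{0,0}, C-1\rangle$. Well-definedness of $\Phi$ is then the statement $\ker \pi \subseteq \ker \widetilde{\Phi}$.

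The first step is to upgrade Lemmas \ref{Heis commutes}, \ref{bubbles commutes}, and \ref{Virasoro commutes} from generators to arbitrary elements of $\mathcal{A}$, establishing the intertwining relation
\begin{equation*}
\phi(\pi(a)\cdot v) \;=\; \widetilde{\Phi}(a)\cdot \phi(v) \qquad \text{for all } a \in \mathcal{A},\ v \in V.
\end{equation*}
This is immediate from the generator case by induction on word length: if $a = a_1 a_2$ with $a_i$ satisfying the relation, then applying it to $a_1$ with the vector $\pi(a_2)\cdot v$ and then to $a_2$ with $v$ gives the relation for $a$, while linearity handles sums.

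Once the intertwining is in hand, the conclusion follows quickly. Given $a \in \ker\pi$, the element $\pi(a) = 0$ annihilates $V$, and the intertwining relation translates this into $\widetilde{\Phi}(a) \cdot \phi(v) = 0$ for every $v \in V$. Since $\phi$ is a vector space isomorphism onto $\mathcal{V}_{1,0}$, the element $\widetilde{\Phi}(a)$ annihilates all of $\mathcal{V}_{1,0}$, and Proposition \ref{FockSpaceW faithful} forces $\widetilde{\Phi}(a) = 0$.

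I do not anticipate any serious obstacle here, since the substantive computational content is already absorbed into Lemmas \ref{Heis commutes}--\ref{Virasoro commutes}; the remaining work is formal. The only minor point that demands attention is verifying that the intertwining relation really does propagate through compositions, which is a short induction provided the $\TrHEv$-action on $V$ is an honest algebra action, as is assumed throughout the paper.
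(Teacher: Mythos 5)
Your proposal is correct and follows essentially the same route as the paper: both arguments reduce well-definedness to the intertwining of the two Fock space actions via Lemmas \ref{Heis commutes}, \ref{bubbles commutes}, and \ref{Virasoro commutes}, and then conclude using the faithfulness of the $W^-$-action on $\mathcal{V}_{1,0}$ from Proposition \ref{FockSpaceW faithful}. The only difference is presentational: you make explicit the free-algebra formulation and the induction on word length that the paper's proof leaves implicit in the step ``so applying $\Phi$ gives.''
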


\begin{proof} Suppose  $A\in \TrHEv$ has two representations in terms of generators, $A= a_{i_1} \ldots a_{i_k} = a_{j_1} \ldots a_{j_{\ell}}$. Then $a_{i_1} \ldots a_{i_k}. V = a_{j_1}\ldots a_{j_{\ell}}.V$, so applying $\Phi$ gives $\Phi(a_{i_1} \ldots a_{i_k}).\mathcal{V}_{1,0}=\Phi(a_{j_1} \ldots a_{j_\ell}).\mathcal{V}_{1,0}$. Hence $\Phi(a_{i_1} \ldots a_{i_k}) = \Phi(a_{j_1} \ldots a_{j_\ell})$ by the faithfulness of the Fock space representation for $W^-$.\end{proof}

\begin{theorem} \label{main thm} The map $\Phi$ is an isomorphism of algebras. \end{theorem}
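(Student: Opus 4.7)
The plan is to establish the isomorphism in three stages: first upgrade the well-definedness of $\Phi$ (Lemma \ref{well defined}) to the statement that $\Phi$ is an algebra homomorphism, then deduce surjectivity from the generating sets on both sides, and finally obtain injectivity by a graded-dimension comparison using the triangular decomposition.

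For the homomorphism property, observe that $\Phi$ is defined on generators and extended by sending products to products, so well-definedness already implies $\Phi(AB)=\Phi(A)\Phi(B)$: for any $A,B\in\TrHEv$ pick expressions for $A$ and $B$ as products of generators and concatenate them to obtain an expression for $AB$, then apply the definition of $\Phi$. Surjectivity is then immediate from Lemma \ref{GenSetW}: the quotient $W^-/\langle w_{0,0},C-1\rangle$ is generated as an algebra by $w_{\pm 1,0}$, $w_{0,3}$ and the combinations $w_{\pm 2,1}\pm w_{\pm 2,0}$, and each of these lies in the image of $\Phi$, either directly from the definitions of $\Phi$ on the generating set of $\TrHEv$ given in Lemma \ref{GenSetTr} (up to nonzero scalars), or by bracketing the images $\Phi(\h{\pm 1}{})$ and $\Phi(\h{\pm 2}{(x_1+x_2)})$ using the identifications in Table 1.

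The core obstacle is injectivity. For this I would compare graded dimensions on the two sides of $\Phi$. Both algebras are $(\mathbb{Z}\times\mathbb{Z}^{\geq 0})$-filtered, and a glance at Table 1 shows that $\Phi$ is compatible with the filtrations. The triangular decomposition of Proposition \ref{triangularDecomposition} together with Proposition \ref{bubbles} writes $\TrHEv$ as a product of $\TrHMi$, the polynomial bubble algebra $\mathbb{C}[d_0,d_2,d_4,\ldots]$, and $\TrHPl$; on the other side, the PBW decomposition of $W^-/\langle w_{0,0},C-1\rangle$ factors it analogously into $(W^-)^<$, $W^{-,0}/\langle w_{0,0}\rangle\cong\mathbb{C}[w_{0,1},w_{0,3},\ldots]$, and $(W^-)^>$. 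The graded dimensions of the outer factors match by Propositions \ref{grDim Walg} and \ref{grDim Tr}, and the central polynomial factors match by Proposition \ref{bubbles} combined with the bidegree shifts recorded at the end of Section 3.6. Because $\Phi$ is a filtered surjection and the associated graded vector spaces have the same finite dimension in each bidegree, the induced map on $\gr$ is forced to be an isomorphism; a standard filtered argument then lifts this to $\Phi$ itself.

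As a sanity check, one can verify injectivity independently using the Fock space: the intertwining identity $\Phi(a).\phi(v)=\phi(a.v)$ is built into the three commutation lemmas and extends to arbitrary products by iteration, so $\Phi(A)=0$ forces $A.V=0$. This route requires knowing that $V$ is a faithful $\TrHEv$-module (which is not stated in the excerpt), so I expect the graded-dimension route above to be the cleaner way to close the argument; the main technical care is in confirming that $\Phi$ strictly respects bidegrees including the degree shifts for bubbles flagged after Proposition \ref{grDim Tr}.
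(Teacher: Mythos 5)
Your homomorphism and surjectivity steps are fine and match the paper. For injectivity, however, you have inverted the paper's logic: the Fock-space argument you relegate to a ``sanity check'' is in fact the paper's entire proof. The paper shows $\Phi(A)\cdot\mathcal{V}_{1,0}=\phi(A\cdot V)$ for any product of generators via Lemmas \ref{Heis commutes}, \ref{bubbles commutes} and \ref{Virasoro commutes}, so $\Phi(A)=0$ forces $A\cdot V=0$, and then $A=0$ by faithfulness of the $\TrHEv$-action on $V$ (asserted just before Lemma \ref{Heis commutes}, parallel to Proposition \ref{FockSpaceW faithful} on the $W^-$ side). You are right that this faithfulness is the crux, but the route you propose in its place does not close the argument.

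The graded-dimension route has a genuine gap in the step ``$\Phi$ is a filtered surjection and the graded dimensions agree, hence $\gr\Phi$ is an isomorphism.'' First, a surjective filtered map need not induce a surjection on associated graded objects: one needs $\Phi$ to be \emph{strict} with respect to the bifiltration, i.e.\ to send an element of exact bidegree $(r,k)$ to an element with nonzero symbol in bidegree $(r,k)$. This strictness is precisely the content you defer to ``technical care,'' and it is not supplied by Table 1, which only records leading terms of specific elements. Second, the bidegree components of the full algebras are \emph{not} finite-dimensional: for every $r\geq 0$ the product $\h{2r+1}{x_1^{2}}\,\h{-(2r+1)}{x_1^{2}}$ lies in total bidegree $(0,2)$, so a dimension count is only meaningful factor-by-factor in the triangular decomposition, which in turn requires first proving that $\Phi$ is compatible with the decompositions $\TrHMi\otimes\mathbb{C}[d_0,d_2,\dots]\otimes\TrHPl$ and $(W^-)^{<}\otimes W^{-,0}\otimes (W^-)^{>}$ --- something nowhere established. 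Third, Propositions \ref{grDim Walg} and \ref{grDim Tr} cover only the strictly positive and negative parts (the paper explicitly notes they are not used in the proof of the main result), and the matching of the middle factors $\mathbb{C}[d_0,d_2,\dots]$ and $W^{-,0}$ under the shifted bubble bidegrees recorded after Proposition \ref{grDim Tr} is asserted but never verified. To repair your argument you would either have to carry out all of these verifications, or simply promote your last paragraph to the main argument and supply (or cite) the faithfulness of the $\TrHEv$-Fock space, as the paper does.
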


\begin{proof} We immediately have that $\Phi$ is surjective, because it maps generators to generators. Thus, it remains to show that $\Phi$ is injective. Let $A := a_{i_1} \ldots a_{i_k}\in \TrHEv$ and assume that $\Phi(A).\mathcal{V}_{1,0} = 0$. Then $\Phi(A) = 0$ by the faithfulness of the representation. But then $\Phi(a_{i_1})\ldots \Phi(a_{i_k}) .\mathcal{V}_{1,0}=0$. Then, by Lemmas \ref{Heis commutes}, \ref{bubbles commutes}, and \ref{Virasoro commutes}, we have $\Phi(a_{i_1})\ldots \Phi(a_{i_k} ). \mathcal{V}_{1,0} = \phi(a_{i_1}\ldots a_{i_k}. V) = \phi(A.v)=0$. But $\phi$ is an isomorphism, so this implies that $A.V=0$. Hence $A = 0$ by the faithfulness of the Fock space representation of $\TrHEv$. \end{proof}

\nocite{*}
\bibliographystyle{amsalpha}
\bibliography{references}

\newcommand{\etalchar}[1]{$^{#1}$}
\providecommand{\bysame}{\leavevmode\hbox to3em{\hrulefill}\thinspace}
\providecommand{\MR}{\relax\ifhmode\unskip\space\fi MR }
\providecommand{\MRhref}[2]{%
  \href{http://www.ams.org/mathscinet-getitem?mr=#1}{#2}
}
\providecommand{\href}[2]{#2}
\begin{thebibliography}{FKRW00}

\bibitem[AFMO94]{AFMO}
H.~{Awata}, M.~{Fukuma}, Y.~{Matsuo}, and S.~{Odake}, \emph{Determinant
  formulae of quasi-finite representation of {$\mathcal{W}_{1+\infty}$} algebra
  at lower levels}, Phys. Lett. B \textbf{332} (1994), no.~3-4, 336--344.

\bibitem[BGHL14]{BGHL}
A.~{Beliakova}, Z.~{Guliyev}, K.~{Habiro}, and A.D. {Lauda}, \emph{Trace as an
  alternative decategorification functor}, Acta Math. Viet. \textbf{39} (2014),
  425--480.

\bibitem[BHLW17]{BHLW}
A.~{Beliakova}, K.~{Habiro}, A.~{Lauda}, and B.~{Webster}, \emph{Current
  algebras and categorified quantum groups}, to appear in J. of the London
  Math. Soc. (2017).

\bibitem[BHLZ16]{BHLZ}
A.~{Beliakova}, K.~{Habiro}, A.D. {Lauda}, and M.~{Zivkovic}, \emph{Trace
  decategorification of categorified quantum $\mathfrak{sl}_2$}, Math. Ann.
  (2016), 1--44.

\bibitem[CLL{\etalchar{+}}16]{CLLSS}
S.~{Cautis}, A.~D. {Lauda}, A.~{Licata}, P.~{Samuelson}, and J.~{Sussan},
  \emph{{The elliptic Hall algebra and the deformed Khovanov Heisenberg
  category}}, arXiv 1609.03506 (2016).

\bibitem[CLLS16]{CLLS}
S.~Cautis, A.D. Lauda, A.M. Licata, and J.~Sussan, \emph{{W}-algebras from
  {H}eisenberg categories}, J. Inst. Math. Jussieu (2016), 1--37.

\bibitem[CS15]{CS}
S.~Cautis and J.~Sussan, \emph{On a categorical {B}oson--{F}ermion
  correspondence}, Comm. Math. Phys. \textbf{336} (2015), no.~2, 649--669.

\bibitem[EL16]{EL}
B.~Elias and A.D. Lauda, \emph{Trace decategorification of the {H}ecke
  category}, Journal of Algebra \textbf{449} (2016), 615--634.

\bibitem[FKRW00]{FKRW}
E.~{Frenkel}, V.~{Kac}, A.~{Radul}, and W.~{Wang}, \emph{{$W_{1+\infty} $} and
  {$ W(\mathfrak{gl}_N) $} with central charge {$N$}}, Commun. Math. Phys.
  \textbf{170} (2000), 337--357.

\bibitem[{Kho}14]{Khovanov}
M.~{Khovanov}, \emph{{H}eisenberg algebra and a graphical calculus}, Fund.
  Math. \textbf{225} (2014), 169--210.

\bibitem[KWY98]{Wang}
V.G. {Kac}, W.~{Wang}, and C.H. {Yan}, \emph{Quasifinite representations of
  classical {L}ie subalgebras of {$W_{1+\infty}$}}, Adv. Math. \textbf{139}
  (1998), no.~1, 56--140.

\bibitem[LRS16]{Jack}
A.~{Licata}, D.~{Rosso}, and A.~{Savage}, \emph{{A graphical calculus for the
  {J}ack inner product on symmetric functions}}, arXiv1610.01862 (2016).

\bibitem[{Naz}97]{Naz}
M.~{Nazarov}, \emph{Young's symmetrizers for projective representations of the
  symmetric group}, Adv. in Math. \textbf{127} (1997), 190--257.

\bibitem[{Ree}17]{Mike}
M.~{Reeks}, \emph{Cocenters of {H}ecke-{C}lifford and spin {H}ecke algebras},
  J. Algebra \textbf{476} (2017), 85--112.

\bibitem[RS16]{Rosso2016}
D.~Rosso and A.~Savage, \emph{A general approach to {H}eisenberg
  categorification via wreath product algebras}, Mathematische Zeitschrift
  (2016), 1--53.

\bibitem[SV13]{SV}
O.~{Schiffmann} and E.~{Vasserot}, \emph{Cherednik algebras, {W}-algebras and
  the equivariant cohomology of the moduli space of instantons on
  $\mathbb{A}^2$}, Publ. Math. Inst. Hautes. Etudes Sci. \textbf{118} (2013),
  213--342.

\bibitem[SVV14]{SVV}
P.~{Shan}, M.~{Varagnolo}, and E.~{Vasserot}, \emph{On the center of
  quiver-{H}ecke algebras}, arXiv:1411.4392v3 (2014).

\bibitem[WW12]{WW}
J.~{Wan} and W.~{Wang}, \emph{Frobenius character formula and spin generic
  degrees for {H}ecke-{C}lifford algebra}, Proc. London Math. Soc. (2012),
  287--317.

\end{thebibliography}
\end{document}